\newtheorem{theorem}{Theorem}[section]
\newtheorem{proposition}[theorem]{Proposition}
\newtheorem{corollary}[theorem]{Corollary}
\newtheorem{lemma}[theorem]{Lemma}
\theoremstyle{definition}
\newtheorem{definition}[theorem]{Definition}
\newtheorem{example}[theorem]{Example}
\newtheorem{remark}[theorem]{Remark}
\numberwithin{equation}{section}
\def\g{\mathfrak g}
\def\oS{\overline{s}}
\def\k{\mathbf k}
\def\Z{\mathbb Z}
\def\P{\mathcal P}
\def\F{\mathcal F}
\def\cQ{\mathcal Q}
\def\Pev{\P^{{\rm ev}}}
\def\A{\mathbb A}
\def\Hom{{\rm Hom}}
\def\degZ{{\rm deg}_\Z}
\def\pa{{\rm p}}
\def\Mod{{\rm Mod }}
\def\wt{{\rm wt}}
\def\Uqg{U_q(\g)}
\def\Uqmg{U_q^-(\g)}
\def\Bqg{B_q(\g)}
\def\Iev{I_{\rm even}}
\def\Iod{I_{\rm odd}}
\def\ve{\varepsilon}
\def\Proj{{\rm Proj}}
\def\Rep{{\rm Rep}}
\newcommand{\ind}{{\rm Ind}}
\newcommand{\res}{{\rm Res}}
\newcommand{\soc}{{\rm soc}}
\newcommand{\hd}{{\rm hd}}
\def\ot{\otimes}
\def\bt{\boxtimes}
\newcommand{\nc}{\newcommand}
\nc{\be}{\begin{enumerate}} \nc{\ee}{\end{enumerate}}
\nc{\bnum}{\be[{\rm(i)}]} \nc{\bna}{\be[{\rm(a)}]}
\nc{\eq}{\begin{eqnarray}} \nc{\eneq}{\end{eqnarray}}
\nc{\eqn}{\begin{eqnarray*}} \nc{\eneqn}{\end{eqnarray*}}
\nc{\Oint}{\mathcal{O}_{\mathrm{int}}} \nc{\noi}{\noindent}
\nc{\hs}{\hspace*} \nc{\ba}{\begin{array}} \nc{\ea}{\end{array}}
\nc{\seteq}{\mathbin{:=}} \nc{\set}[2]{\left\{#1\mid #2\right\}}
\nc{\bl}{\bigl(} \nc{\br}{\bigr)} \nc{\cl}{\colon}
\nc{\To}[1][]{\xrightarrow{\rule{.6ex}{0ex}#1\rule{.6ex}{0ex}}}
\newcommand{\shc}{{\mathscr{C}}}
\nc{\car}{A}
\nc{\vphi}{\varphi}
\nc{\Q}{\mathbb{Q}}
\nc{\eps}{\varepsilon}
\nc{\tp}{\mathrm{top}}
\newcommand{\isoto}[1][]{\mathop{\xrightarrow[#1]%
{\rule{0pt}{.9ex}%
{\raisebox{-.6ex}[0ex][-.7ex]{$\mspace{4mu}\sim\mspace{3mu}$}}}}}
\newcommand{\isofrom}[1][]{\mathop{\xleftarrow[#1]%
{\rule{0pt}{.9ex}%
{{\raisebox{-.6ex}[0ex][-.6ex]{$\mspace{3mu}\sim\mspace{4mu}$}}}}}}
\nc{\soplus}{\mathop{\text{\scriptsize\raisebox{.5ex}{$\displaystyle\bigoplus$}\ }}}
\nc{\id}{\mathrm{id}}
\nc{\tens}{\mathop\otimes\limits}
\nc{\tf}{\tilde{f}}
\nc{\ol}{\overline}
\nc{\ro}{{\rm(}}
\nc{\rf}{{\rm)}}
\nc{\al}{\alpha}
\nc{\te}{\tilde{e}}
\nc{\la}{\lambda}
\nc{\lan}{\langle}
\nc{\ran}{\rangle}
\nc{\on}{\operatorname}
\nc{\Ir}{\on{Irr}}
\nc{\vs}{\vspace*}
\begin{document}
\title[Supercategorification of quantum Kac-Moody algebras]
{Supercategorification of \\ quantum Kac-Moody algebras}
\author[Seok-Jin Kang]{Seok-Jin Kang$^{1}$}
\thanks{$^1$ This work was supported by NRF Grant \# 2012-005700 and NRF Grant \# 2011-0027952.}
\address{Department of Mathematical Sciences and Research Institute of Mathematics,
Seoul National University, 599 Gwanak-ro, Gwanak-gu, Seoul 151-747,
Korea} \email{sjkang@snu.ac.kr}

\author[Masaki Kashiwara]{Masaki Kashiwara$^{2}$}
\thanks{$^2$ This work was supported by Grant-in-Aid for
Scientific Research (B) 22340005, Japan Society for the Promotion of
Science.}
\address{Research Institute for Mathematical Sciences, Kyoto University, Kyoto 606-8502,
Japan, and Department of Mathematical Sciences, Seoul National
University, 599 Gwanak-ro, Gwanak-gu, Seoul 151-747, Korea}
\email{masaki@kurims.kyoto-u.ac.jp}

\author[Se-jin Oh]{Se-jin Oh$^{3}$}
\thanks{$^{3}$ This work was supported by BK21 Mathematical Sciences
Division and NRF Grant \# 2012-005700.}
\address{Department of Mathematical Sciences, Seoul National University,
599 Gwanak-ro, Gwanak-gu, Seoul 151-747, Korea}
\email{sj092@snu.ac.kr}

\subjclass[2000]{05E10, 16G99, 81R10, 17C70}
\keywords{categorification, quiver Hecke superalgebras, cyclotomic
quotients, quantum Kac-Moody algebras}

\begin{abstract}

We show that the quiver Hecke superalgebras and their cyclotomic
quotients provide a supercategorification of quantum Kac-Moody
algebras and their integrable highest weight modules.

\end{abstract}

 \maketitle
\tableofcontents
\vskip 2em

\section{Introduction}

The idea of categorification dates back to I. B. Frenkel. He
proposed that one can construct a tensor category whose Grothendieck
 group is isomorphic to  
the quantum group $U_q({\mathfrak{sl}}_{2})$
(see, for example, \cite{CF94, FK97}).
 Since then, a construction of a
tensor category or a 2-category whose Grothendieck group possesses a
given algebraic structure has been referred to as {\em
categorification}.

One of the most prominent examples of categorification is the
Lascoux-Leclerc-Thibon-Ariki theory, which clarifies the mysterious
connection between the representation theory of quantum affine
algebras of type $A_{n-1}^{(1)}$ and the modular representation
theory of Hecke algebras at roots of unity. In \cite{MM}, Misra and
Miwa constructed an integrable representation of the quantum affine
algebra $U_q(\widehat{{\mathfrak{sl}}_{n}})$, called the Fock space
representation, on the space spanned by colored Young diagrams. They
also showed that the affine crystal consisting of $n$-reduced
colored Young diagrams is isomorphic to the highest weight crystal
$B(\Lambda_0)$. In \cite{LLT96}, using the Misra-Miwa construction,
Lascoux, Leclerc and Thibon discovered an algorithm of computing
Kashiwara's lower global basis (=Lusztig's canonical basis) elements
corresponding to $n$-reduced Young diagrams and conjectured that the
transition matrices evaluated at 1 coincide with the composition
multiplicities of simple modules inside Specht modules over Hecke
algebras.

The Lascoux-Leclerc-Thibon conjecture was proved by Ariki in a more
general form \cite{Ar96}. Combining the geometric method of
Kazhdan-Lusztig and Ginzburg with combinatorics of Young diagrams
and Young tableaux, Ariki proved that the Grothendieck group of the
category of finitely generated projective modules over cyclotomic
Hecke algebras give a categorification of integrable highest weight
modules over affine Kac-Moody algebras of type $A_{n-1}^{(1)}$.
Moreover, he showed that the Kashiwara-Lusztig global basis is
mapped onto the isomorphism classes of projective indecomposable
modules, from which the Lascoux-Leclerc-Thibon conjecture follows.
Actually, he proved the conjecture in a more general form because
his categorification theorem holds for highest weight modules of
arbitrary level.

Next problem is to prove a quantum version or a graded version of
Ariki's categorification theorem. The key to this problem was
discovered by Khovanov-Lauda and Rouquier. In \cite{KL1, KL2, R08},
Khovanov-Lauda and Rouquier independently  introduced a new
family of graded algebras, called the {\it Khovanov-Lauda-Rouquier
algebras} or {\it quiver Hecke algebras}, and proved that the quiver
Hecke algebras provide a categorification of the negative half of
quantum groups associated with all symmetrizable Cartan data.
Furthermore, Khovanov and Lauda conjectured that the {\it cyclotomic
quiver Hecke algebras} give a graded version of Ariki's
categorification theorem in much more generality. That is, the
cyclotomic quiver Hecke algebras should give a
categorification of integrable highest weight modules over all
symmetrizable quantum Kac-Moody algebras. This conjecture was proved
by Kang and Kashiwara \cite{KK11}. Their proof was based on: (i) a
detailed analysis of the structure of quiver Hecke algebras and
their cyclotomic quotients, (ii) the proof of exactness of
restriction and induction functors, (iii) the existence of natural
isomorphisms, which is one of the axioms for the
${\mathfrak{sl}}_{2}$ categorification developed by Chuang and
Rouquier \cite{CR08}.

On the other hand, in \cite{BK01}, Brundan and Kleshchev showed
that, when the defining parameter is a primitive $(2l+1)$-th root of
unity, the representation theory of some blocks of affine
Hecke-Clifford superalgebras and their cyclotomic quotients is
controlled by the representation theory of quantum affine algebras
of type $A_{2l}^{(2)}$ at the crystal level. In \cite{Tsu},
Tsuchioka proved a similar statement for the affine and cyclotomic
Hecke-Clifford superalgebras with the defining parameter at
$2(l+1)$-th root of unity and the quantum affine algebras of type
$D_{l+1}^{(2)}$. On the other hand, in \cite{Wang07}, Wang introduced {\it
spin affine Hecke algebras} (resp.\ {\em cyclotomic spin Hecke
algebras}) and showed that they are {\it Morita superequivalent} to
affine Hecke-Clifford superalgebras (resp.\ cyclotomic
Hecke-Clifford superalgebras).

In \cite{KKT11}, motivated by the works of Brundan-Kleshchev
\cite{BK01}, Kang, Kashiwara and Tsuchioka introduced a new family
of graded superalgebras, called the {\it quiver Hecke
superalgebras}, which is
a super version of Khovanov-Lauda-Rouquier algebras. 
They also defined the notion of {\it quiver Hecke-Clifford superalgebras}
and showed that these superalgebras are weakly Morita superequivalent to the
corresponding quiver Hecke superalgebras. Moreover, they showed
that, after some completion, quiver Hecke-Clifford superalgebras are
isomorphic to affine Hecke-Clifford superalgebras.

Now the natural question arises:
{\it What do these superalgebras categorify?}
\noindent
The purpose of this paper is to provide answers to this
question. We will show that the quiver Hecke superalgebras and their
cyclotomic quotients give a supercategorification of quantum
Kac-Moody algebras and their integrable highest weight modules.
(In \cite{EKL}, Ellis, Khovanov and Lauda dealt with the case when
$I=\{ i \}$.)

Recall that a {\it supercategory} is a category with an endofunctor
$\Pi$ and a natural isomorphism $\xi: \Pi^2 \overset{\sim}
\longrightarrow \text{id}$ such that $\xi \circ \Pi = \Pi \circ
\xi$. Thus a  {\it supercategorification} means a construction of a
supercategory whose Grothendieck group possesses a given
algebraic structure.

To explain our main results more precisely, we fix some notations
and conventions. Let $A= A_{0} \oplus A_{1}$ be a superalgebra and
let $\phi_{A}$ be the involution defined by $\phi_{A}(a) =
(-1)^{\epsilon}a$ for $a \in A_{\epsilon}$ with $\epsilon = 0, 1$.
We denote by $\Mod(A)$ the category of left $A$-modules, which
becomes a supercategory with the functor $\Pi$ induced by
$\phi_{A}$. On the other hand, we denote by $\Mod_{\rm super}(A)$
the category of left $A$-supermodules $M=M_{0} \oplus M_{1}$ with
$\Z_{2}$-degree preserving homomorphisms as morphisms. The category
$\Mod_{\rm super}(A)$ is endowed with a structure of supercategory
given by the parity shift functor $\Pi$.

For each $n \ge 0$, let $R(n)$ be the quiver Heck superalgebra over
a base  field $\k$
generated by $e(\nu) (\nu \in I^n)$, $x_{k}$ ($1 \le k \le n$),
$\tau_{l}$ ($1 \le l< n$) with the defining relations given in
Definition~\ref{def:Quiver Hekce superalg}. Set $R(\beta)= e(\beta) R(n)$, where $\beta \in
\mathtt{Q}^{+}$, $e(\beta)= \sum_{\nu \in I^{\beta}} e(\nu)$. For a
dominant integral weight $\Lambda$, let $R^{\Lambda}(\beta)$ denote
the cyclotomic quiver Hecke superalgebra at $\beta$ (see Definition
\ref{Def: cyclo}). Let $\Mod(R(\beta))$ (resp.\ $\Proj(R(\beta))$)
be the category of $\Z$-graded (resp.\ finitely generated
projective) left $R(\beta)$-modules. We also denote by
$\Rep(R(\beta))$ the category of $\Z$-graded $R(\beta)$-modules that
are
finite-dimensional over $\k$.
We define the categories $\Mod(R^{\Lambda}(\beta))$,
$\Proj(R^{\Lambda}(\beta))$ and $\Rep(R^{\Lambda}(\beta))$ in a
similar manner.

On the other hand, let $U_q(\g)$ be the quantum Kac-Moody algebra
associated with a symmetrizable Cartan datum and let
$U_{\A}^{-}(\g)$ be the $\A$-form of the negative half of
$U_{q}(\g)$ with $\A = \Z[q,q^{-1}]$. For a dominant integral weight
$\Lambda$, we denote by $V(\Lambda)$ the integrable highest weight
module  generated by the  highest weight vector $v_\Lambda$ with weight $\Lambda$.
We also denote by
$V_{\A}(\Lambda)$ the $\A$-form
$U_{\A}^{-}(\g)v_\Lambda$ of $V(\Lambda)$
and by $V_{\A}(\Lambda)^{\vee}$
its dual $\A$-form.

Our goal in this paper is to prove the following isomorphisms
\begin{equation} \label{eq:goal-1}
V_{\A}(\Lambda) \overset{\sim} \longrightarrow [\Proj(R^{\Lambda})],
\quad V_{\A}(\Lambda)^{\vee}  \overset{\sim} \longrightarrow
[\Rep(R^{\Lambda})],
\end{equation}
\begin{equation}\label{eq:goal-2}
U_{\A}^{-}(\g)  \overset{\sim} \longrightarrow [\Proj(R)], \quad
U_{\A}^{-}(\g)^{\vee}  \overset{\sim} \longrightarrow [\Proj(R)],
\quad
\end{equation}
where $$\Proj(R^{\Lambda}) = \soplus_{\beta \in \mathtt{Q}^{+}}
\Proj(R^{\Lambda}(\beta)), \ \  \Rep(R^{\Lambda}) = \soplus_{\beta
\in \mathtt{Q}^{+}} \Rep(R^{\Lambda}(\beta)),$$
$$\Proj(R) = \soplus_{\beta \in \mathtt{Q}^{+}} \Proj(R(\beta)),
\ \ \Rep(R) = \soplus_{\beta \in \mathtt{Q}^{+}} \Proj(R(\beta)),$$
and $[\;\bullet\;]$ denotes the Grothendieck group.

For this purpose, we define  the $i$-restriction and  the $i$-induction
superfunctors $( i\in I)$
\begin{equation*}
\begin{aligned}
E_{i}^{\Lambda} & \cl \Mod(R^{\Lambda}(\beta + \alpha_i))
\longrightarrow \Mod(R^{\Lambda}(\beta)),\\
F_{i}^{\Lambda} & \cl \Mod(R^{\Lambda}(\beta)) \longrightarrow
\Mod(R^{\Lambda}(\beta+ \alpha_i))
\end{aligned}
\end{equation*}
by
\begin{equation*}
\begin{aligned}
E_{i}^{\Lambda}(N) & = e(\beta, i) N = e(\beta, i)
R^{\Lambda}(\beta+ \alpha_i) \otimes_{R^{\Lambda}(\beta+\alpha_i)}
N, \\
F_{i}^{\Lambda}(M) & = R^{\Lambda}(\beta+ \alpha_i) e(\beta, i)
\otimes_{R^{\Lambda}(\beta)} M,
\end{aligned}
\end{equation*}
where $M \in \Mod(R^{\Lambda}(\beta))$ and $N \in
\Mod(R^{\Lambda}(\beta + \alpha_i))$.

The first main result of this paper shows that the functors
$E_{i}^{\Lambda}$ and $F_{i}^{\Lambda}$ are exact and hence they
induce well-defined operators on $[\Proj(R^{\Lambda})]$ and
$[\Rep(R^{\Lambda})]$. To prove this, we take a detailed
analysis of the structure of $R(\beta)$ and $R^{\Lambda}(\beta)$,
and show that $R^{\Lambda}(\beta+\alpha_i) e(\beta, i)$ is a
projective right $R^{\Lambda}(\beta)$-supermodule (Theorem
\ref{Thm: P-injective}).

Next, we prove a super version of
${\mathfrak{sl}}_{2}$-categorification. That is, for $\lambda =
\Lambda - \beta$, we show that there exist natural isomorphisms
given below (Theorem \ref{Thm: Main}):

(i) if $\langle h_i, \lambda \rangle \ge 0$, then we have
\begin{equation*}
\Pi_iq_i^{-2}F^{\Lambda}_iE^{\Lambda}_i \oplus \soplus^{\langle
h_i,\lambda \rangle-1}_{k=0}\Pi_i^k q_i^{2k} \overset{\sim}{\to}
E^{\Lambda}_iF^{\Lambda}_i,
\end{equation*}

(ii) if $\langle h_i,\lambda \rangle < 0$, then we have
\begin{equation*}
\Pi_iq_i^{-2}F^{\Lambda}_iE^{\Lambda}_i \overset{\sim}{\to}
E^{\Lambda}_iF^{\Lambda}_i \oplus \soplus^{-\langle h_i,\lambda
\rangle-1}_{k=0}\Pi_i^{k+1}q_i^{-2k-2},
\end{equation*}
where $\Pi_i =\id$ if $i$ is even and $\Pi_i = \Pi$ if $i$ is odd.

We then show that the endomorphisms induced on
$[\Proj(R^\Lambda)]$ and $[\Rep(R^\Lambda)]$ by the parity shift functor
$\Pi$ coincide with the identity.
Therefore the above isomorphisms (i) and (ii) of functors
show that $[\Proj(R^\Lambda)]$ and $[\Rep(R^\Lambda)]$ have a structure of $U_\A(\g)$-module.

Next, we show that the simple objects of $\Rep(R^\Lambda)$ form a
basis of $[\Rep(R^\Lambda)]$, which shares a particular property of
 {\em strong perfect bases} introduced by
Berenstein-Kazhdan (\cite{BerKaz07}).
By the general theory of strong
perfect bases, we conclude that $[\Rep(R^\Lambda)]$ is isomorphic to
$V_\A(\Lambda)^\vee$. Then by duality, $[\Proj(R^\Lambda)]$ is
isomorphic to
$V_\A(\Lambda)$. 
Finally, by taking the projective limit with respect to $\Lambda$,
we conclude that there exist natural isomorphisms in
\eqref{eq:goal-2} (Theorem \ref{th:main1}, Corollary
\ref{cor:main2}).

\smallskip

 In principle, this paper follows the outline of \cite{KK11}.
However, our argument is substantially different from the one in
\cite{KK11} in the following sense:
\bnum
\item Our supercategorification theorem would give
various
applications. For instance, we can generalize
the Brundan-Kleshchev
 categorification theorem (\cite{BK01}) in the level of crystal to the quantum level.

\item We make use of the strong perfect basis theory to
characterize the $\A$-forms of $V(\Lambda)$ and obtain a
supercategorification of $V(\Lambda)$.

\item By taking a projective limit, we obtain a categorification
of $U_q^-(\g)$, which is opposite to the usual approach.

\item Since we deal with skew polynomials rather than
polynomials, the calculation involved are much more subtle and
complicated.

\ee

\smallskip
Note that any simple object $M$ of $\Rep(R^\Lambda)$ is
self-associate, i.e., $\Pi M\simeq M$. Hence the parity functor
$\Pi$ induces the identity on the Grothendieck group
$[\Rep(R^\Lambda)]$. On the other hand, any simple object of the
category $\Rep_{\mathrm{super}}(R^\Lambda)$ of finite-dimensional
$R^\Lambda$-supermodules is never self-associate.  Hence, $\Pi$
induces a non-trivial action on the Grothendieck group
$[\Rep_{\mathrm{super}}(R^\Lambda)]$, and
$$[\Rep(R^\Lambda)]\simeq\dfrac{[\Rep_{\mathrm{super}}(R^\Lambda)]}
{(\Pi-1)[\Rep_{\mathrm{super}}(R^\Lambda)]}.$$ The study of
$\Rep_{\mathrm{super}}(R^\Lambda)$ will be carried out in a
forthcoming paper. Note that in \cite{HW}, Hill and Wang studied the
categories of supermodules over quiver Hecke superalgebras under an
additional condition on the Cartan datum.

\medskip
This paper is organized as follows. In the following two sections,
we recall some of the basic properties of quantum Kac-Moody
algebras, integrable highest weight modules and supercategories. In
Section 4, we take a detailed analysis of the structure of
quiver Hecke superalgebras. In Section 5, we prove the existence of
natural isomorphisms and short exact sequences which are necessary
in proving our main results. In Section 6, we show that the
Grothendieck group $[\Rep(R)]$ has a strong perfect basis. In
Section 7 and 8, we prove that the  superfunctors
$E_{i}^{\Lambda}$ and $F_{i}^{\Lambda}$ are exact and they send
projectives to projectives. In Section 9, we prove that
$E_{i}^{\Lambda}$ and $F_{i}^{\Lambda}$ satisfy certain commutation
relations, which is a super version of
${\mathfrak{sl}}_{2}$-categorification. In the final section, we
conclude that the quiver Hecke superalgebras and their cyclotomic
quotients provide supercategorification of quantum Kac-Moody
algebras and their integrable highest weight modules.

\section{ Quantum Kac-Moody algebras }

Let $I$ be an index set.  An integral square matrix
$\car=(a_{ij})_{i,j \in I}$ is called a {\it Cartan matrix}  if it
satisfies
$$
\text{(i) $a_{ii}=2$, \quad (ii) $a_{ij} \le 0$ for $i \neq j$, \quad
 (iii) $a_{ij} =0 $ if $a_{ji}=0$.}$$
In this paper, we assume that $\car$ is {\it symmetrizable};  i.e.,
there is a diagonal matrix $\mathtt{D}={\rm diag}(s_i \in \Z_{>0} \
|
 \ i \in I)$ such that $\mathtt{D}\car$ is symmetric.

A \emph{Cartan datum} $(\car,\mathtt{P},\Pi,\Pi^{\vee})$ consists of
\begin{enumerate}
\item[(1)] a Cartan matrix $\car$,
\item[(2)] a free abelian group $\mathtt{P}$, called the \emph{weight lattice},
\item[(3)] $\Pi= \{ \alpha_i \in \mathtt{P} \mid \ i \in I \}$, called  the set of \emph{simple roots},
\item[(4)] $\Pi^{\vee}= \{ h_i \ | \ i \in I  \} \subset \mathtt{P}^{\vee}\seteq
\Hom(\mathtt{P},\Z)$, called
the set of \emph{simple coroots},
\end{enumerate}
satisfying the following conditions:
\begin{enumerate}
\item[(a)] $\langle h_i,\alpha_j \rangle = a_{ij}$ for all $i,j \in I$,
\item[(b)] $\Pi$ is linearly independent.
\end{enumerate}

The weight lattice $\mathtt{P}$ has a symmetric bilinear pairing $(
\ | \ )$ satisfying
$$ (\alpha_i | \lambda) = s_i \langle h_i, \lambda \rangle \text{ for all } \lambda \in \mathtt{P}.$$
Therefore, we have $(\alpha_i | \alpha_j) = s_i a_{ij}$. We set
$\mathtt{P}^+\seteq\{ \Lambda \in \mathtt{P} \ | \ \langle h_i,
\Lambda \rangle \in \Z_{\ge 0}, \text{ for all } i \in I \} $, which
is called  the set of {\it dominant integral weights}. The free
abelian group $\mathtt{Q}\seteq \bigoplus_{i \in I} \Z \alpha_i$ is
called the \emph{root lattice}. Set $\mathtt{Q}^+= \sum_{i \in I}
\Z_{\ge 0} \alpha_i$. For $\beta = \sum k_i \alpha_i \in
\mathtt{Q}^+$,  $|\beta| \seteq \sum_{i\in I} k_i$ is called the {\em
height} of $\beta$.

For an indeterminate $q$, set $q_i=q^{s_i}$
and define the $q$-integers
\begin{equation*}
 \begin{aligned}
 \ &[n]_i =\frac{ q_i^n - q^{-n}_{i} }{ q_i - q^{-1}_{i} },
 \ &[n]_i! = \prod^{n}_{k=1} [k]_i ,
\ &\left[\begin{matrix}m \\ n\\ \end{matrix} \right]_i= \frac{
[m]_i! }{[m-n]_i! [n]_i! }.
 \end{aligned}
\end{equation*}

\begin{definition} \label{Def: KM}
The {\em quantum Kac-Moody algebra} $\Uqg$ associated with a Cartan
datum $(\car,\mathtt{P},\Pi,\Pi^{\vee})$ is the associative algebra
over $\mathbb{Q}(q)$ with ${\bf 1}$ generated by 
$e_i,f_i$ $(i \in I)$ and $q^{h}$ $(h \in \mathtt{P}^{\vee})$
subject to the following defining relations:
\begin{enumerate}
  \item[(Q1)] $q^0=1$,\; $ q^{h}  q^{h'}= q^{h+h'} $ for $ h,h' \in
  \mathtt{P}^{\vee}$,
  \item[(Q2)] $ q^{h} e_i  q^{-h}= q^{\langle h, \alpha_i \rangle}  e_i$,\;
        $q^{h} f_i  q^{-h} = q^{-\langle h, \alpha_i \rangle } f_i$
        for $h \in \mathtt{P}^{\vee}$, $i \in I$,
 \item[(Q3)] $ e_if_j-f_je_i=\delta_{i,j}\dfrac{K_i-K^{-1}_i}{q_i-q^{-1}_i }$ where $K_i= q_i^{
 h_i}$,
 \item[(Q4)] $\displaystyle \sum^{1-a_{ij}}_{r=0} (-1)^r  e^{(1-a_{ij}-r)}_i  e_j  e^{(r)}_i
                     =\sum^{1-a_{ij}}_{r=0} (-1)^r  f^{(1-a_{ij}-r)}_i  f_j  f^{(r)}_i
                     =0$ if $i \ne j $,\\
where $ e^{(k)}_i\seteq\dfrac{e^{k}_i}{[k]_i!}$ and
$f^{(k)}_i\seteq\dfrac{ f^{k}_i}{[k]_i!}$ for $k \in \Z_{\ge 0}$.
\ee
\end{definition}

Let $\Uqmg$ (resp.\ $U_q^+(\g)$) be the $\mathbb{Q}(q)$-subalgebra
of $\Uqg$ generated by $f_i$'s (resp.\ $e_i$'s) and let $\Uqg^0$ be
the $\mathbb{Q}(q)$-subalgebra of $\Uqg$ generated by $q^h$ $(h \in
{\mathtt P}^{\vee})$. Then we have the \emph{ triangular
decomposition}
$$\Uqg \simeq \Uqmg\ot \Uqg^0 \ot U_q^+(\g),$$
and the \emph{root space decomposition}
$$\Uqg= \bigoplus_{\alpha \in \mathtt{Q}} \Uqg_\alpha\quad \text{where
 $\Uqg_\alpha\seteq \set{ x \in \Uqg }{\text{$\ q^h x q^{-h}= q^{\langle h,\alpha \rangle}x$ for any
 $h \in  \mathtt{P}^\vee$}}$.}$$

 Let $\A=\Z[q,q^{-1}]$ and denote by $U_\A(\g)^-$ (resp.\ $U_\A(\g)^+$) the $\A$-subalgebra of
 $\Uqmg$ generated by $f_i^{(n)}$ (resp.\ $e_i^{(n)}$) and denote by
$U^0_\A(\g)$ the $\A$-subalgebra generated by $q^h$ and
$\prod^m_{k=1} \dfrac{1-q^kq^h}{1-q^k}$ for all $m \in \Z_{> 0}$ and
$h \in \mathtt{P}^\vee$. Let $U_\A(\g)$ be the $\A$-subalgebra
generated by $U^0_\A(\g)$, $U^+_\A(\g)$ and $U^-_\A(\g)$. Then
$U_\A(\g)$ also has the triangular decomposition
$$  U_\A(\g) \simeq U^-_\A(\g) \otimes U^0_\A(\g) \otimes U^+_\A(\g).$$

Fix $i \in I$. For any $P\in \Uqmg$, there exist unique $Q,R\in \Uqmg$ such that
$$ e_iP-Pe_i= \dfrac{K_iQ- K^{-1}_iR}{q_i - q^{-1}_i}.$$
We define the endomorphisms $e_i'$, $e_i'' : \Uqmg\to \Uqmg$ by
$$e_i'(P)=R, \quad e_i''(P)=Q.$$
Regarding $f_i$ as the endomorphism of $\Uqmg$ defined by the left
multiplication by $f_i$, we obtain the $q$-boson commutation
relations:
\begin{equation} \label{eq: e_i' ev}
e_i'f_j = q_i^{-a_{ij}}f_je_i' + \delta_{i,j}.
\end{equation}

\begin{definition}[\cite{Kash91}]
The {\it quantum  boson algebra} $\Bqg$ associated with a Cartan
matrix $\car$ is the associative algebra over $\mathbb{Q}(q)$
generated by $e_i', f_i \ (i \in I)$ satisfying the following defining
relations :
\begin{enumerate}[{\rm(1)}]
\item $e_i'f_j = q_i^{-a_{ij}}f_je_i' + \delta_{i,j}$,

\item $\sum\limits_{r=0}^{1-a_{ij}} (-1)^r\left[\begin{matrix}1-a_{ij} \\ r\end{matrix} \right]_i
               e_i'^{1-a_{ij}-r}e_j'e_i'^{r}=
               \sum\limits_{r=0}^{1-a_{ij}} (-1)^r\left[\begin{matrix}1-a_{ij} \\ r \end{matrix} \right]_i
                f_i^{1-a_{ij}-r}f_jf_i^{r} =0$
                if $i\neq j $.
\end{enumerate}
\end{definition}

\begin{lemma}[{\cite[Lemma 3.4.7, Corollary 3.4.9]{Kash91}}] \label{Lem: Boson structure} \
\bna
\item If $x \in \Uqmg$ and $e_i'x=0$ for all $i \in I$, then $x$ is a constant multiple of\/ ${\bf 1}$.
\item $\Uqmg$ is a simple $\Bqg$-module.
\end{enumerate}
\end{lemma}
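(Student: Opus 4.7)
The strategy is to prove (a) first and then deduce (b) by a height-reduction argument that feeds (a) back in at each step.

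For (a), since $e_i'$ shifts the weight grading by $+\alpha_i$, I would reduce at once to the case that $x$ is homogeneous of weight $-\beta$ for some $\beta\in\mathtt{Q}^+$. The case $|\beta|=0$ is immediate from $\Uqmg_0=\Q(q)\cdot\mathbf{1}$, so assume $|\beta|>0$ and aim to show $x=0$. The key tool is Kashiwara's symmetric bilinear form $(\,\cdot\,,\,\cdot\,)$ on $\Uqmg$ constructed in \cite{Kash91}, characterized by $(\mathbf{1},\mathbf{1})=1$ together with the adjoint relation $(f_iP,Q)=(P,e_i'Q)$, and which is non-degenerate on each weight space. Since $|\beta|>0$ and $\Uqmg$ is generated as an algebra by the $f_i$'s, every $y\in\Uqmg_{-\beta}$ admits a decomposition $y=\sum_i f_iy_i$ with $y_i\in\Uqmg_{-\beta+\alpha_i}$. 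Then, using symmetry, adjointness, and the hypothesis $e_i'x=0$,
\begin{equation*}
(x,y)=\sum_i(x,f_iy_i)=\sum_i(f_iy_i,x)=\sum_i(y_i,e_i'x)=0.
\end{equation*}
Since this holds for every $y\in\Uqmg_{-\beta}$, non-degeneracy forces $x=0$, completing (a).

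For (b), let $M\subseteq\Uqmg$ be a nonzero $\Bqg$-submodule. Pick a nonzero $x\in M$ and define $\mathrm{len}(x)$ to be the maximum of $|\beta|$ over weights $-\beta$ such that the $(-\beta)$-weight component of $x$ is nonzero. If $\mathrm{len}(x)=0$, then $x$ is a nonzero scalar multiple of $\mathbf{1}$ and so $\mathbf{1}\in M$. Otherwise, let $x_{-\beta^*}$ be the top-weight component (with $|\beta^*|=\mathrm{len}(x)>0$); part (a) applied to $x_{-\beta^*}$ supplies an $i\in I$ with $e_i'x_{-\beta^*}\neq0$. The element $e_i'x\in M$ has top-weight component $e_i'x_{-\beta^*}$ in weight $-\beta^*+\alpha_i$, while all other homogeneous pieces of $x$ contribute at strictly smaller heights; hence $e_i'x$ is nonzero and $\mathrm{len}(e_i'x)=\mathrm{len}(x)-1$. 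Iterating $\mathrm{len}(x)$ times produces a nonzero scalar multiple of $\mathbf{1}$ in $M$, so $\mathbf{1}\in M$. Finally $\Bqg\cdot\mathbf{1}=\Uqmg$, because every element of $\Uqmg$ is a polynomial in the $f_i$'s applied to $\mathbf{1}$, and therefore $M=\Uqmg$.

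The main obstacle is not the two-step argument above but the invocation of Kashiwara's bilinear form: its consistent construction (well-definedness against the quantum Serre relations, and symmetry) together with non-degeneracy on each weight space. These are the substantive inputs I would import directly from \cite{Kash91}; once they are available, the rest of the argument is elementary weight-combinatorics.
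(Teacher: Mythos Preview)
The paper gives no proof of this lemma; it simply cites \cite{Kash91}. Your argument is correct as written. One point worth flagging: statement (a) and the non-degeneracy of Kashiwara's bilinear form are essentially equivalent, each following from the other by a short induction on height. In Kashiwara's original development the order is the reverse of yours---(a) is established first by a direct argument with the $q$-boson commutation relations, and non-degeneracy is then deduced from (a). So by importing non-degeneracy you are trading one cited fact for an equivalent one rather than giving an independent proof. That is entirely adequate here, since the paper treats both as black boxes from \cite{Kash91}; just do not present this as a self-contained reproof of Kashiwara's lemma.

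A minor imprecision in your argument for (b): when you speak of ``the top-weight component $x_{-\beta^*}$'', there may be several weights $-\beta$ with $|\beta|=\mathrm{len}(x)$, so you should say you pick one such $\beta^*$. Your claim that the other homogeneous pieces of $x$ contribute to $e_i'x$ at strictly smaller heights is also not quite right---other top-height components can contribute at height $\mathrm{len}(x)-1$ as well---but they land in weight spaces different from $-\beta^*+\alpha_i$, so nothing cancels $e_i'x_{-\beta^*}$ and the conclusion $\mathrm{len}(e_i'x)=\mathrm{len}(x)-1$ still holds.
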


The $\mathbb{Q}(q)$-vector space $U_q^{-}(\g)$ has a unique non-degenerate symmetric bilinear form
$( \ , \ )$ satisfying the following properties (\cite[Proposition
3.4.4]{Kash91}):
\begin{equation*}
({\bf 1},{\bf 1})=1, \ \
  (e_i' u,v)=(u, f_iv)\quad \text{for $i\in I$ and $u,v\in \Uqmg$.}
\end{equation*}
The dual of $U^-_\A(\g)$ is defined to be
$$ U_\A^-(\g)^{\vee} \seteq \{ v \in U^-_q(\g) \ | \ (u,v) \in \A \text{ for all } u \in U_\A^-(\g) \}.$$

\begin{definition} \label{Def: Oint}
We denote by $\Oint$ the abelian category consisting of
$\Uqg$-modules $V$ satisfying the following conditions:
\begin{enumerate}
 \item $V$ has a {\em weight decomposition} with finite-dimensional weight spaces; i.e.,
 $$V\seteq \bigoplus_{\mu \in \mathtt{P} }V_{\mu}\quad \text{with}\quad  \dim V_{\mu}< \infty  ,$$
 where $V_{\mu}\seteq\{v \in V \mid q^h \, v = q^{\langle h, \mu \rangle}v \text{ for all }h \in \mathtt{P}^\vee \} $ ,
 \item there are finitely many $\lambda_1, \ldots, \lambda_s \in \mathtt{P}$ such that
 $$\wt(V)\seteq\{\mu \in \mathtt{P} \mid V_{\mu} \neq 0 \} \subset \bigcup^{s}_{i=1}(\lambda_i-\mathtt{Q}^{+}),$$
 \item for any $i\in I$, the action of $f_i$ on $V$ is locally nilpotent.
\end{enumerate}
\end{definition}

For each $\Lambda \in \mathtt{P}^+$, let $V(\Lambda)$ denote the
irreducible highest weight $\Uqg$-module with highest weight
$\Lambda$. It is generated by a unique highest weight vector
$v_{\Lambda}$ with the defining relations:
\begin{equation*}
 q^h v_{\Lambda} = q^{\langle h, \Lambda \rangle} v_{\Lambda}, \ \
 e_i v_{\Lambda} = 0, \ \
 f_i^{\langle h_i, \Lambda \rangle + 1} v_{\Lambda} =0 \ \
 \text{for all} \ h \in \mathtt{P}^{\vee} \text{ and } \ i \in I.
\end{equation*}
It  is known  that the category $\Oint$ is semisimple and
that all the irreducible objects are isomorphic to $V(\Lambda)$ for
some $\Lambda \in \mathtt{P}^{+}$. (See, for example, \cite[Theorem
3.5.4]{HK02}.)

There exists a unique
non-degenerate symmetric bilinear form $( \ , \ )$ on $V(\Lambda)$
$(\Lambda \in \mathtt{P}^+)$
satisfying
\begin{align*}
  (v_{\Lambda},v_{\Lambda})=1, \ \
  (e_i u,v)=(u, f_iv)\quad \text{for any $u,v\in V(\Lambda)$ and $i\in I$.}
 \end{align*}

 For a $\Uqg$-module $M$, an $\A$-form of $M$ is a $U_\A(\g)$-submodule $M_\A$
 such that $M=\Q(q)\otimes_\A M_\A$ and the weight space
 $(M_\A)_\lambda \seteq M_{\A} \cap M_{\lambda}$ is finitely generated over $\A$
 for any $\lambda\in \mathtt{P}$.
We define an $\A$-form $V_\A(\Lambda)$ of $V(\Lambda)$ by
$$ V_\A(\Lambda) = U_\A(\g) v_\Lambda.$$
The dual of $V_\A(\Lambda)$ is defined to be
$$ V_\A(\Lambda)^{\vee} = \{ v \in V(\Lambda) \ | \ (u,v) \in \A \text{ for all } u \in V_\A(\Lambda) \}.$$
Then $V_\A(\Lambda)^{\vee}$ is also an $\A$-form of $V(\Lambda)$. 

\medskip

 Now, we will give the definition of perfect bases and strong
perfect bases, and  prove their basic properties.  These properties
will play a crucial role in proving our main result (Theorem
\ref{th:main1}).

\medskip
Let $V=\soplus_{\lambda\in \mathtt{P}}V_\lambda$ be a $\mathtt{P}$-graded $\Q(q)$-vector space.
We assume that
there are finitely many $\lambda_1, \ldots, \lambda_s \in \mathtt{P}$ such that
 $$\wt(V)\seteq\{\mu \in \mathtt{P} \mid V_{\mu} \neq 0 \} \subset \bigcup^{s}_{i=1}(\lambda_i-\mathtt{Q}^{+}).$$
Furthermore, assume that $e_i$ ($i\in I$) acts on $V$  in such a
way that  $e_iV_\la\subset V_{\lambda+\al_i}$.
 For any $v \in V$ and $i \in I$, we define
$$ \varepsilon_i(v) \seteq \min \{ n \in \Z_{\ge 0} \mid e_i^{n+1} v=0 \}.$$
If $v=0$, then we will use the convention $\varepsilon_i(0)=-\infty$.
One can easily check that, for $k \in \Z_{\ge 0}$,
$$V^{< k}_i \seteq \{ v \in V \mid \varepsilon_i(v) < k \}= {\rm Ker} \ e^k_i.$$

\begin{definition}[\cite{BerKaz07,KOP11a}] \label{Def: perfect, strongly perfect}
\hfill
\bnum
\item A $\Q(q)$-basis $B$ of $V$ is called a {\em perfect basis} if
\begin{enumerate}
\item $B=\bigsqcup_{\mu\in \wt(V)} B_{\mu}$, where $B_{\mu}\seteq B \cap V_{\mu}$,
\item for any $ b \in B$ and $i \in I$ with $e_i(b) \neq 0$,
there exists a unique element $\tilde{\mathsf{e}}_i(b) \in B$ such that
\begin{align*}
    e_ib -c_i(b)\;\tilde{\mathsf{e}}_i(b)\in V^{< \varepsilon_i(b)-1}_i \mbox{ for some }
    c_i(b) \in \mathbb{Q}(q)^{\times},
\end{align*}
\item if $b,b'\in B$ and $i\in I$ satisfy $\eps_i(b)=\eps_i(b')>0$
and $\tilde{\mathsf{e}}_i(b) = \tilde{\mathsf{e}}_i(b')$, then $ b = b'$.
\end{enumerate}
\item We say that a perfect basis is {\em strong} if
$ c_i(b)= [\varepsilon_i(b)]_i$ for any $b \in B$ and $i\in I$; i.e.,
\begin{align}
    e_ib -[\eps_i(b)]_i \tilde{\mathsf{e}}_i(b)\in V^{< \varepsilon_i(b)-1}_i.
   \label{Eqn perfect basis}
\end{align}
\end{enumerate}
\end{definition}

For a perfect basis $B$, we set $\tilde{\mathsf{e}}_i(b)=0$ if $e_ib=0$.
We can easily see that for a perfect basis $B$
\eq
&&V^{<k}_i=\soplus_{b\in B,\,\eps_i(b)<k}\Q(q)b.
\label{prop:perfect}
\eneq

\begin{example}[\cite{Kash91,Kash93b}] Recall that the $U_q(\g)$-module
 $V(\Lambda)$ has the {\em upper global basis}
 $$G^{\vee}(\Lambda)\seteq\{ G^{\vee}(b) \ | \ b \in B(\Lambda)
\},$$
 which is parametrized by the crystal basis $B(\Lambda)$  of
 $V(\Lambda)$.

Then $G^{\vee}(\Lambda)$ is a strong perfect basis of
$V(\Lambda)$. Moreover, the upper global basis
$G^{\vee}(\Lambda)$  is an $\A$-basis of $V_\A(\Lambda)^\vee$.
\end{example}

For any sequence ${\bf i}=(i_1,\dots,i_m) \in I^{m}$ $(m \ge 1)$, we inductively
define a binary relation $\preceq_{{\bf i}}$ on $V \setminus \{0\} $ as
follows:
 \begin{equation*}
 \begin{aligned}
 \mbox{ if } {\bf i}=(i),\  v \preceq_{{\bf i}} v' &\Leftrightarrow \varepsilon_i(v) \le \varepsilon_i(v'), \\
 \mbox{ if } {\bf i}=(i;{\bf i}'), \ v \preceq_{{\bf i}} v'
&\Leftrightarrow
\begin{cases} \varepsilon_i(v) < \varepsilon_i(v')\\ \mbox{ or} \\
             \varepsilon_i(v)=\varepsilon_i(v'),\; e_i^{\varepsilon_i(v)}(v) \preceq_{{\bf i'}} e_i^{\varepsilon_i(v')}(v'). \end{cases}
 \end{aligned}
  \end{equation*}
We write
\begin{itemize}
\item $v \equiv_{{\bf i}} v'$ if $v \preceq_{{\bf i}} v'$ and $v' \preceq_{{\bf i}} v$,
\item $v' \prec_{{\bf i}} v$  if $v' \preceq_{{\bf i}} v$ and $v \not \equiv_{{\bf i}} v'$.
\end{itemize}
We can easily verify:
\be[(1)]
  \item for all $v \in V \setminus \{ 0 \}$,
 $V^{\prec_{{\bf i}} v }\seteq\{ 0 \} \bigsqcup
 \{ v' \in V \setminus \{ 0 \} \mid v' \prec_{{\bf i}} v \}$ forms a $\mathbb{Q}(q)$-linear subspace of V;
 indeed, we have $$V^{\prec_{{\bf i}} v }
 =\set{u\in V}{e_{i_m}^{\ell_m}\cdots e_{i_1}^{\ell_1}u=0, \
 e_{i_k}^{1+\ell_k}e_{i_{k-1}}^{\ell_{k-1}}\cdots e_{i_1}^{\ell_1}u=0\ \text{for $1\le
 k<m$}},$$
 where $v_0=v$, $\ell_k=\eps_{i_k}(v_{k-1})$ and $v_k=e_{i_k}^{\ell_k}v_{k-1}$ ($1\le k\le m$).

 \item if $v \not\equiv_{{\bf i}} v'$, then
 \begin{align*}   v+v' \equiv_{{\bf i}} \begin{cases} v &\text{if } v' \prec_{{\bf i}} v , \\
                                               v' &\text{if } v \prec_{{\bf i}} v' .  \end{cases} \end{align*}
 \ee

 For $i$ and $v\in V\setminus\{0\}$,
 we set
 $${e_i}^{\tp}(v)\seteq e_i^{(\varepsilon_i(v))}v.$$
 We can easily see the following:
 \eq&&\text{
 If $B$ is strong perfect, then
 ${e_i}^{\tp}(b)=\tilde{\mathsf{e}}_i^{\eps_i(b)}b$ for any $b\in B$.}
 \eneq
 For each ${\bf i}=(i_1,\dots,i_m) \in I^{m}$, define a map
 ${e_{{\bf i}}}^{\tp}\cl V\setminus \{ 0 \} \to V\setminus \{ 0 \}$ by
   $$ {e_{{\bf i}}}^{\tp} \seteq e^{\tp}_{i_m}\circ \cdots \circ e^{\tp}_{i_1}.$$
Then ${e_{{\bf i}}}^{\tp}B \subset \Q(q)^\times B$.
If $B$ is strong perfect, then
we have
${e_{{\bf i}}}^{\tp}B \subset B$

 Let $ V^{H}\seteq \set{ v \in V }{e_iv=0 \mbox{ for all } i \in I}$
 be the space of highest weight vectors in
  $V$ and let $ B^{H} =  V^{H} \cap B$ be the set of highest weight vectors in $B$.
Then, \eqref{prop:perfect} implies that
$$V^H=\soplus_{b\in B^H}\Q(q)b.$$

 In \cite{BerKaz07}, Berenstein and Kazhdan proved the following
version of uniqueness theorem for perfect bases.

\begin{theorem}[\cite{BerKaz07}] \label{Thm: perfect morphism}
Let $B$ and $B'$ be perfect bases of $V$ such that $B^H=(B')^H$.
Then there exist a map $\psi\cl B \isoto B'$ and a map $\xi\cl B \to
\mathbb{Q}(q)^\times$ such that $\psi(b)-\xi(b)b \in V^{\prec_{{\bf
i}} b}$ holds for any $b \in B$ and any ${\bf i}=(i_1,\dots,i_m)$
satisfying $e_{{\bf i}}^{\tp}(b) \in V^{H}$. Moreover, such $\psi$
and $\xi$ are unique and $\psi$ commutes with $\tilde{\mathsf{e}}_i$
and $\eps_i$ $(i \in I)$.
\end{theorem}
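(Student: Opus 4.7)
The plan is to construct $(\psi,\xi)$ by induction on the weight of $b$, using the partial order on $\wt(V)\subset\bigcup_k(\lambda_k-\mathtt{Q}^+)$ in which $\mu'>\mu$ iff $\mu'-\mu\in\mathtt{Q}^+\setminus\{0\}$; every strictly ascending chain in $\wt(V)$ terminates, so the induction is well-founded. During the induction I will simultaneously verify the commutations $\psi\circ\tilde{\mathsf{e}}_i=\tilde{\mathsf{e}}_i\circ\psi$ and $\eps_i\circ\psi=\eps_i$, deferring the uniqueness argument to the end. The base case is $b\in B^H=(B')^H$: taking ${\bf i}=(i)$ gives $\eps_i(b)=0$ and hence $V^{\prec_{(i)}b}=\{0\}$, so the defining relation forces $\psi(b)=\xi(b)b$; since $\psi(b)\in B'$ and $b\in B$ are both basis elements, one is forced to set $\psi(b)=b$ and $\xi(b)=1$.

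For the inductive step, fix $b\in B_\mu\setminus B^H$ and choose $i\in I$ with $k\seteq\eps_i(b)>0$. Set $\bar b\seteq\tilde{\mathsf{e}}_i(b)\in B_{\mu+\alpha_i}$; by the induction hypothesis, $\bar b'\seteq\psi(\bar b)\in B'$ and $\xi(\bar b)$ are defined, with $\eps_i(\bar b')=k-1$. The core sub-step is to exhibit a unique $b'\in B'$ with $\eps_i(b')=k$ and $\tilde{\mathsf{e}}_i(b')=\bar b'$; granting this, I set $\psi(b)\seteq b'$ and $\xi(b)\seteq\xi(\bar b)\,c_i(b)/c_i(b')$, the latter being pinned down by requiring $e_i\bl\psi(b)-\xi(b)b\br\in V^{<k-1}_i$. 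The required containment $\psi(b)-\xi(b)b\in V^{\prec_{\bf i}b}$ for any admissible ${\bf i}$ is then verified by applying $e_i^{\tp}$ once, using the explicit description of $V^{\prec_{\bf i}v}$ given in property (1) after Definition~\ref{Def: perfect, strongly perfect}, and invoking the induction hypothesis applied to $\bar b$ with a tail sequence extracted from ${\bf i}$.

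The existence half of the sub-step is the main obstacle, and I address it via a dimension-count argument in the associated graded of the $\eps_i$-filtration. Set
$$W\seteq\frac{V^{<k+1}_i\cap V_\mu}{V^{<k}_i\cap V_\mu},\qquad W'\seteq\frac{V^{<k}_i\cap V_{\mu+\alpha_i}}{V^{<k-1}_i\cap V_{\mu+\alpha_i}};$$
these are basis-independent finite-dimensional $\Q(q)$-vector spaces, each of which receives a basis from either $B$ or $B'$ consisting of the classes of basis elements of the prescribed weight and $\eps_i$-value. The map $\bar e_i\cl W\to W'$ induced by $e_i$ is intrinsic and sends $[b'']\mapsto c_i(b'')[\tilde{\mathsf{e}}_i(b'')]$; axiom (iii) of Definition~\ref{Def: perfect, strongly perfect} for either basis shows that $\bar e_i$ is injective and that the $B'$-preimage of any basis element of $\mathrm{Im}(\bar e_i)\subset W'$ is a single $B'$-basis element of $W$. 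Using $B$ one has $\bar e_i[b]=c_i(b)[\bar b]$, so $[\bar b]\in\mathrm{Im}(\bar e_i)$; the induction hypothesis forces $[\bar b']=\xi(\bar b)[\bar b]$ in $W'$, hence $[\bar b']\in\mathrm{Im}(\bar e_i)$ as well. Pulling back through the $B'$-expansion of $W$ produces the unique $b'\in B'$ with the required properties.

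Finally, the commutations $\psi\circ\tilde{\mathsf{e}}_i=\tilde{\mathsf{e}}_i\circ\psi$ and $\eps_i\circ\psi=\eps_i$ are immediate from the construction. For uniqueness of $(\psi,\xi)$, suppose $(\psi',\xi')$ is another pair satisfying the defining condition. Then for every admissible ${\bf i}$ the element $\bl\psi(b)-\psi'(b)\br-\bl\xi(b)-\xi'(b)\br b$ lies in $V^{\prec_{\bf i}b}$; analyzing this relation in the quotient $V/V^{\prec_{\bf i}b}$, where $\psi(b)$ and $b$ yield proportional non-zero classes, forces $\psi'=\psi$ and $\xi'=\xi$. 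The same uniqueness argument also shows a posteriori that the inductive construction is independent of the choice of $i$ with $\eps_i(b)>0$, completing the plan.
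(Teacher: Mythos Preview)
The paper does not give its own proof of this theorem; it is quoted from \cite{BerKaz07} without argument. So there is nothing in the paper to compare against, and I will simply assess your proposal on its merits.

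Your inductive architecture is the right one, and the passage to the associated graded spaces $W,W'$ of the $\eps_i$-filtration is the correct tool. However, the step ``the induction hypothesis forces $[\bar b']=\xi(\bar b)[\bar b]$ in $W'$'' is not justified, and this is exactly where the existence of the preimage $b'\in B'$ hinges. What the induction hypothesis gives is $\bar b'-\xi(\bar b)\bar b\in V^{\prec_{\mathbf j}\bar b}$ for every admissible $\mathbf j$; but for $\mathbf j=(i;\mathbf j')$ one has $\ell_1=\eps_i(\bar b)=k-1$, and the explicit description in property~(1) after Definition~\ref{Def: perfect, strongly perfect} only yields the constraint $e_i^{\,k}u=0$, i.e.\ $V^{\prec_{\mathbf j}\bar b}\subset V_i^{<k}$, not $V_i^{<k-1}$. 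Thus you cannot conclude that $\bar b'-\xi(\bar b)\bar b\in V_i^{<k-1}$, and hence you do not know that $[\bar b']\in U=\mathrm{Im}(\bar e_i)$. Without this, the $B'$-preimage $b'$ need not exist, and the inductive construction of $\psi(b)$ stalls. One way to repair the argument is to strengthen the induction hypothesis so that it also records, for every $j\in I$, that $\psi(\bar b)-\xi(\bar b)\bar b\in V_j^{<\eps_j(\bar b)}$; but then you must verify this stronger statement at the inductive step, which in turn forces you to confront the independence of the choice of $i$ \emph{inside} the induction rather than ``a posteriori''.

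Two smaller issues: your formula for $\xi(b)$ has the ratio inverted (matching $e_i\bigl(\psi(b)-\xi(b)b\bigr)\in V_i^{<k-1}$ gives $\xi(b)=\xi(\bar b)\,c_i(b')/c_i(b)$, not $c_i(b)/c_i(b')$); and your verification of $\psi(b)-\xi(b)b\in V^{\prec_{\mathbf i}b}$ only treats sequences $\mathbf i$ whose first entry is the particular $i$ you chose, while the theorem requires it for all admissible $\mathbf i$. The latter is tied to the same independence-of-choice issue above.
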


{}From the axioms of strong perfect basis and Theorem \ref{Thm:
perfect morphism}, one can easily prove the following corollary.

\begin{corollary} \label{Cor: strong perfect}
Let $B$ and $B'$ be strong perfect bases of $V$ such that
$B^H=(B')^H$. Then the map $\xi\cl B  \to \mathbb{Q}(q)^\times$
given in Theorem \ref{Thm: perfect morphism} is a trivial map; i.e.,
$$\psi(b)-b \in V^{\prec_{{\bf i}} b}\quad\text{for all $b \in B$.}$$
\end{corollary}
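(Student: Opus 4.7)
The plan is to prove $\xi(b)=1$ for all $b\in B$ by induction on the smallest length $m$ of a sequence ${\bf i}$ with $e_{\bf i}^{\tp}(b)\in V^H$. The base case $b\in B^H=(B')^H$ is immediate: taking ${\bf i}=(i)$ for any $i$, we have $V^{\prec_{\bf i}b}=0$ since $v\prec_{(i)}b$ would require $\eps_i(v)<\eps_i(b)=0$; Theorem~\ref{Thm: perfect morphism} then gives $\psi(b)=\xi(b)b$, and linear independence of $B'$ (which contains both $\psi(b)$ and $b$) forces $\xi(b)=1$.

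For the inductive step, choose $i$ with $n\seteq\eps_i(b)>0$ and set $b_n\seteq e_i^{\tp}(b)=\tilde{\mathsf{e}}_i^n(b)\in B$. Pick ${\bf i}'$ with $e_{{\bf i}'}^{\tp}(b_n)\in V^H$ and let ${\bf i}=(i,{\bf i}')$; then $b_n$ reaches $V^H$ in one fewer step, so $\xi(b_n)=1$ by induction. A routine iteration of the strong perfect basis relation \eqref{Eqn perfect basis} yields $e_i^n u=[n]_i!\,\tilde{\mathsf{e}}_i^n(u)$ whenever $\eps_i(u)=n$, so $e_i^n b=[n]_i!\,b_n$; combining $\psi\circ\tilde{\mathsf{e}}_i=\tilde{\mathsf{e}}_i\circ\psi$ with $\eps_i\circ\psi=\eps_i$ also gives $e_i^n\psi(b)=[n]_i!\,\psi(b_n)$. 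A direct inspection of the inductive definition of $\prec_{\bf i}$ (and its scale invariance) shows that $e_i^n$ carries $V^{\prec_{\bf i}b}$ into $V^{\prec_{{\bf i}'}b_n}$. Applying $e_i^n$ to the containment $\psi(b)-\xi(b)b\in V^{\prec_{\bf i}b}$ from Theorem~\ref{Thm: perfect morphism} therefore yields $\psi(b_n)-\xi(b)b_n\in V^{\prec_{{\bf i}'}b_n}$, and subtracting the analogous relation $\psi(b_n)-b_n\in V^{\prec_{{\bf i}'}b_n}$ (which holds because $\xi(b_n)=1$) leaves $(\xi(b)-1)\,b_n\in V^{\prec_{{\bf i}'}b_n}$.

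The main obstacle is then to verify that $b_n\notin V^{\prec_{{\bf i}'}b_n}$, which will force $\xi(b)=1$ and close the induction. The cleanest route is to extend \eqref{prop:perfect} to the decomposition
$$V^{\prec_{\bf i}v}=\soplus_{b'\in B,\ b'\prec_{\bf i}v}\Q(q)\,b'$$
for any perfect basis $B$, any $v\in V\setminus\{0\}$, and any sequence ${\bf i}$. This is proved by induction on the length of ${\bf i}$: the case $|{\bf i}|=1$ is exactly \eqref{prop:perfect}, and for ${\bf i}=(i,{\bf i}')$ one groups basis vectors by their value of $\eps_i$, applies the inductive hypothesis on ${\bf i}'$ to the $e_i^{\eps_i}$-leading terms, and reassembles. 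Since $b_n\in B$ and $b_n\not\prec_{{\bf i}'}b_n$, this decomposition immediately gives $b_n\notin V^{\prec_{{\bf i}'}b_n}$, completing the proof.
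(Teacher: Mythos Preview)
Your argument is correct in substance; since the paper gives no proof beyond the remark that the corollary follows easily from the axioms and Theorem~\ref{Thm: perfect morphism}, there is no alternative approach to compare against. Two points deserve tightening.

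First, the induction on ``the smallest length $m$'' is not quite well-founded as written. In the paper's convention sequences have length $\ge1$, so when $b\notin B^H$ but $e_i^{\tp}(b)\in B^H$ for some $i$, both $b$ and $b_n$ have minimal length $1$ and the parameter does not drop. Even if you allow the empty sequence so that $b\in B^H$ corresponds to $m=0$, you must still choose $i$ as the first index of a \emph{minimal} sequence for $b$ (so that the tail witnesses $m(b_n)\le m(b)-1$); an arbitrary $i$ with $\eps_i(b)>0$ only gives $m(b_n)\ge m(b)-1$, the wrong inequality. The cleanest fix is to induct on the weight of $b$ in the partial order $\mu\ge\mu'\Leftrightarrow\mu-\mu'\in\mathtt{Q}^+$: since $\wt(V)$ is bounded above and $\wt(b_n)=\wt(b)+n\alpha_i$ with $n>0$, this is well-founded, and the rest of your argument goes through verbatim.

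Second, the closing step is heavier than needed. The paper already records the explicit description
\[
V^{\prec_{{\bf i}'} b_n}
=\bigl\{u\in V:\ e_{i'_{m'}}^{\ell_{m'}}\cdots e_{i'_1}^{\ell_1}u=0,\ \
e_{i'_k}^{1+\ell_k}e_{i'_{k-1}}^{\ell_{k-1}}\cdots e_{i'_1}^{\ell_1}u=0\ (1\le k<m')\bigr\},
\]
with the $\ell_k$ determined by $b_n$. Substituting $u=b_n$, the first condition would force $e_{i'_{m'}}^{\ell_{m'}}\cdots e_{i'_1}^{\ell_1}b_n=0$; but this element is a nonzero scalar multiple of $e_{{\bf i}'}^{\tp}(b_n)\in B^H$, so the condition fails and $b_n\notin V^{\prec_{{\bf i}'} b_n}$ follows at once, without proving the full decomposition $V^{\prec_{\bf i}v}=\bigoplus_{b'\prec_{\bf i}v}\Q(q)b'$. (That decomposition is true and your sketch for it is sound---the key point being that $\tilde{\mathsf{e}}_i^{\,n}$ is injective on $\{b'\in B:\eps_i(b')=n\}$ by axiom~(c)---it is simply more than the argument requires.)
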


\begin{lemma} Let $B$ be a strong perfect basis of $V$.
\bnum
\item For any finite subset $S$ of $B$, there exists
a finite sequence $\mathbf{i}=(i_1,\ldots,i_m)$ of $I$ such that
$e_{\mathbf{i}}^\tp(b)\in B^H$ for any $b\in S$.
\item Let $b_0\in B^H$ and let $\mathbf{i}=(i_1,\ldots,i_m)$
be a finite sequence in $I$.
Then  $S\seteq\set{b\in B}{e_{\mathbf{i}}^\tp(b)=b_0}$
is linearly ordered by $\preceq_{\mathbf{i}}$.
\ee
\end{lemma}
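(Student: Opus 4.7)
The plan is to prove (i) by a weight-based termination argument and (ii) by induction on the length $m$ of the sequence $\mathbf{i}$. Both parts exploit two features of a strong perfect basis noted just before the lemma: that $e_i^{\tp}(b)=\tilde{\mathsf{e}}_i^{\eps_i(b)}b$ lies again in $B$, and the injectivity axiom~(c) of Definition~\ref{Def: perfect, strongly perfect}.

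For (i), the key point is that $\wt(e_i^{\tp}(b))=\wt(b)+\eps_i(b)\al_i$, so applying $e_i^{\tp}$ never decreases the weight of $b$ and keeps it inside the coset $\la_{j(b)}-\mathtt{Q}^+$, where $j(b)$ is chosen so that $\wt(b)\in \la_{j(b)}-\mathtt{Q}^+$. Tracking the trajectory $b=b^{(0)},\,b^{(1)},\,b^{(2)},\dots$ of each $b\in S$ simultaneously, I would introduce the non-negative integer potential
\[
  H_k\seteq\sum_{b\in S}\bigl|\la_{j(b)}-\wt(b^{(k)})\bigr|,
\]
where $|{\cdot}|$ denotes the height on $\mathtt{Q}^+$. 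As long as some $b\in S$ and some $i\in I$ satisfy $\eps_i(b^{(k)})>0$, I append such an $i$ to the sequence and set $b^{(k+1)}=e_i^{\tp}(b^{(k)})$ for every $b\in S$; then $H_{k+1}=H_k-\sum_{b\in S}\eps_i(b^{(k)})<H_k$, so the procedure halts in finitely many steps in a state where $\eps_i(b^{(k)})=0$ for all $b\in S$ and $i\in I$, i.e., $b^{(k)}\in B^H$ for every $b\in S$. The constructed sequence $\mathbf{i}$ is the desired one.

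For (ii), I would proceed by induction on $m$. The case $m=0$ is trivial (then $S=\{b_0\}$); for $m=1$ with $\mathbf{i}=(i)$, any two $b,b'\in S$ are comparable under $\preceq_{(i)}$ because $\eps_i$ takes values in the totally ordered set $\Z_{\ge 0}$. Antisymmetry reduces to the following: if $\eps_i(b)=\eps_i(b')$ and $\tilde{\mathsf{e}}_i^{\eps_i(b)}b=b_0=\tilde{\mathsf{e}}_i^{\eps_i(b')}b'$, then a downward induction on $\eps_i(b)$ applied to $\tilde{\mathsf{e}}_i(b)$ and $\tilde{\mathsf{e}}_i(b')$ gives $\tilde{\mathsf{e}}_i(b)=\tilde{\mathsf{e}}_i(b')$, whence $b=b'$ by axiom~(c). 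For $m\ge 2$, write $\mathbf{i}=(i_1;\mathbf{i}')$, take $b,b'\in S$, and split along the two clauses of the definition of $\preceq_{\mathbf{i}}$: if $\eps_{i_1}(b)\neq \eps_{i_1}(b')$ they are strictly comparable; otherwise $e_{i_1}^{\tp}(b)$ and $e_{i_1}^{\tp}(b')$ both lie in $S'\seteq\set{c\in B}{e_{\mathbf{i}'}^{\tp}(c)=b_0}$, which is linearly ordered by $\preceq_{\mathbf{i}'}$ by the inductive hypothesis. The comparison of $b$ and $b'$ under $\preceq_{\mathbf{i}}$ then reduces to that of their $e_{i_1}^{\tp}$-images under $\preceq_{\mathbf{i}'}$, and antisymmetry propagates through the same reduction.

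The only technical subtlety is that the recursive clause of $\preceq_{\mathbf{i}}$ is phrased in terms of $e_i^{\eps_i(v)}(v)$, whereas $e_i^{\tp}(v)=e_i^{(\eps_i(v))}(v)$ differs from it by the nonzero scalar $[\eps_i(v)]_i!$; but $\preceq_{\mathbf{i}'}$ is manifestly invariant under nonzero rescaling of its arguments, and when $\eps_{i_1}(b)=\eps_{i_1}(b')$ the two rescalings are identical, so the reduction in the inductive step is valid. The main obstacle is therefore bookkeeping rather than depth: in (i) one must be careful that distinct elements of $S$ may reside in different cosets $\la_j-\mathtt{Q}^+$, but since $e_i^{\tp}$ preserves each coset individually, the potential $H_k$ is well-defined and the termination argument goes through cleanly.
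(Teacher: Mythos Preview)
Your proposal is correct and takes essentially the same approach as the paper: the paper calls (i) ``almost evident'' without further argument, and for (ii) shows directly (rather than by induction on $m$) that $b\equiv_{\mathbf{i}}b'$ with $b,b'\in S$ forces $b=b'$ by backward application of axiom~(c) along the sequences $v_k=\tilde{\mathsf{e}}_{i_k}^{\ell_k}v_{k-1}$ and $v'_k=\tilde{\mathsf{e}}_{i_k}^{\ell_k}v'_{k-1}$ from $v_m=v'_m=b_0$, which is precisely your induction on $m$ unrolled. One minor slip in (i): the trajectory of $b$ need not remain in a \emph{fixed} set $\la_{j(b)}-\mathtt{Q}^+$ when several $\la_j$ share a $\mathtt{Q}$-coset, but your termination argument survives if you extend $|\cdot|$ linearly to all of $\mathtt{Q}$ and observe that $H_k$ is still bounded below (since $\wt(b^{(k)})\in\la_{j'}-\mathtt{Q}^+$ for some $j'$ with $\la_{j'}-\la_{j(b)}\in\mathtt{Q}$).
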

\begin{proof}
(i) is almost evident.
In order to see (ii),
it is enough to show that
if $b,b'\in S$ satisfy $b\equiv_{\mathbf{i}}b'$,
then $b=b'$.
If we set $v_0=b$,
 $\ell_k=\eps_{i_k}(v_{k-1})$ and $v_k=e_{i_k}^{(\ell_k)}v_{k-1}$ ($1\le k\le m$),
 then $v_m=b_0$.
 Similarly, if we set $v_0'=b'$, $\ell'_k=\eps_{i_k}(v'_{k-1})$ and $v'_k=e_{i_k}^{(\ell'_k)}v'_{k-1}$ ($1\le k\le m$),
 then $\ell'_k=\ell_k$ and $v'_m=b_0$.
 Thus we have $v_k=\tilde{\mathsf{e}}_{i_k}^{\ell_k} v_{k-1}$ and
 $v'_k=\tilde{\mathsf{e}}_{i_k}^{\ell_k} v'_{k-1}$.
 Hence  Definition~\ref{Def: perfect, strongly perfect} (i) (c)
 shows that  $v'_k=v_k$ for all $k$.
 \end{proof}

The following proposition gives a  characterization of
$V_\A(\Lambda)^\vee$ by using strong perfect bases.

\begin{proposition} \label{Prop: dual-integral form}
Let $M$ be a $\Uqg$-module in $\Oint$ such that $\wt(M) \subset
\Lambda - \mathtt{Q}^+$. Let $M_\A$ be an $\A$-submodule of $M$.
Assume that $e_i^{(n)}M_\A\subset M_\A$, $(M_\A)_\Lambda = \A
v_\Lambda$ and $M$ has a strong perfect basis $B \subset M_{\A}$
such that $B^H= \{ v_{\Lambda} \}$.
Then we
have \bna
\item $M_\A \simeq V_\A(\Lambda)^\vee$,
\item $B$ is an $\A$-basis of $M_\A$.
\end{enumerate}
\end{proposition}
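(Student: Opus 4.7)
The plan is to identify $M$ with $V(\Lambda)$ and to show that both $M_\A$ and $V_\A(\Lambda)^\vee$, viewed as $\A$-submodules of $V(\Lambda)$, coincide with $\A B$. Since $V^H=\soplus_{b\in B^H}\Q(q)b=\Q(q)v_\Lambda$ and $\Oint$ is semisimple with $\wt(M)\subset \Lambda-\mathtt{Q}^+$, the only simple summand of $M$ is $V(\Lambda)$, and I would fix the isomorphism $M\isoto V(\Lambda)$ sending $v_\Lambda$ to the highest weight vector.

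Before starting the main induction, I would first show $M_\A\subset V_\A(\Lambda)^\vee$. For $u\in V_\A(\Lambda)_{\Lambda-\beta}$ and $v\in (M_\A)_{\Lambda-\beta}$, triangular decomposition lets us write $u=Yv_\Lambda$ with $Y\in U_\A^-(\g)_{-\beta}$ an $\A$-combination of monomials in the $f_i^{(n)}$. Iterating $(f_i^{(n)}x,y)=(x,e_i^{(n)}y)$ gives $(u,v)=(v_\Lambda,Y^*v)$, where $Y^*\in U_\A^+(\g)_\beta$ is obtained by reversing $Y$ and replacing each $f_i^{(n)}$ with $e_i^{(n)}$. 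The hypothesis $e_i^{(n)}M_\A\subset M_\A$ forces $Y^*v\in (M_\A)_\Lambda=\A v_\Lambda$, so $(u,v)\in\A$ and hence $v\in V_\A(\Lambda)^\vee$. In particular $B\subset V_\A(\Lambda)^\vee$.

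The heart of the argument would be the following key lemma: any $\A$-submodule $N_\A\subset V(\Lambda)$ containing $B$, stable under every $e_i^{(n)}$, and with $(N_\A)_\Lambda=\A v_\Lambda$, must equal $\A B$. I would prove it by induction on $|\beta|$. Given $w\in (N_\A)_{\Lambda-\beta}$ written as $w=\sum_{b\in B_{\Lambda-\beta}}c_b b$ with $c_b\in\Q(q)$, I fix $i\in I$ and set $N=\max\{\eps_i(b):c_b\ne 0\}$. The strong perfect basis identity $e_i^{(\eps_i(b))}b=\tilde{\mathsf{e}}_i^{\eps_i(b)}b\in B$, together with $e_i^{(N)}b=0$ when $\eps_i(b)<N$, yields
\begin{equation*}
e_i^{(N)}w=\sum_{\eps_i(b)=N}c_b\,\tilde{\mathsf{e}}_i^N(b).
\end{equation*}
The left side lies in $(N_\A)_{\Lambda-\beta+N\alpha_i}=\A B_{\Lambda-\beta+N\alpha_i}$ by induction, and the $\tilde{\mathsf{e}}_i^N(b)$ appearing are pairwise distinct elements of that $\A$-basis (by iterated application of axiom (c) of perfect bases); hence $c_b\in\A$ for each $b$ with $\eps_i(b)=N$. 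Subtracting the extracted terms and iterating down in $N$ gives $c_b\in\A$ for every $b$ with $\eps_i(b)>0$. Running through all $i\in I$ exhausts every $b\in B_{\Lambda-\beta}$, because $B_{\Lambda-\beta}\cap B^H=\emptyset$ whenever $\beta\ne 0$.

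Applying the key lemma to $N_\A=M_\A$ gives (b); applying it to $N_\A=V_\A(\Lambda)^\vee$ (which is $U_\A(\g)$-stable, contains $B$ by the second paragraph, and has $\A v_\Lambda$ as its top weight space) yields $V_\A(\Lambda)^\vee=\A B=M_\A$, which is (a). The main subtlety lies in the extraction step: the ``strong'' in strong perfect basis --- which fixes the leading coefficient as exactly $[\eps_i(b)]_i$ rather than merely a unit --- is precisely what allows $e_i^{(N)}w$ to be read off as a clean $\A$-combination of basis elements, whereas a merely perfect basis would introduce unknown units obstructing $\A$-integrality.
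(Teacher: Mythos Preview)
Your proof is correct and follows essentially the same strategy as the paper: identify $M\simeq V(\Lambda)$, show $M_\A\subset V_\A(\Lambda)^\vee$ via stability under divided powers of the $e_i$, then use the strong perfect basis identity $e_i^{(\eps_i(b))}b=\tilde{\mathsf{e}}_i^{\eps_i(b)}b\in B$ to extract $\A$-integrality of the coefficients. The only organizational difference is in the extraction step: the paper chooses a single sequence $\mathbf{i}$ sending all of $B_\lambda$ to $v_\Lambda$ and runs a descending induction on the total order $\preceq_{\mathbf{i}}$ (relying on the lemma immediately preceding the proposition), whereas you induct on $|\beta|$ and, within each weight space, peel off one simple root at a time by descending on $\eps_i$. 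Both schemes work; yours avoids the $\preceq_{\mathbf{i}}$ machinery at the cost of iterating over $i\in I$, while the paper's packages everything into one linear order.
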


\begin{proof}
Since $M$ has only one highest weight vector $v_\Lambda$, $M$ is isomorphic to $V(\Lambda)$.

Since $(M_\A)_\Lambda = \A v_\Lambda$ and
\begin{align*}
V_\A(\Lambda)^\vee_\lambda  =\Bigl \{ u \in V(\Lambda)_\lambda \ \big| \
\begin{matrix}
e^{(a_1)}_{i_1} \cdots e^{(a_\ell)}_{i_\ell} u \in \A v_\Lambda \text{ for all } (i_1,\cdots,i_\ell)  \\
\text{ such that } \sum_{k=1}^\ell a_{k} \alpha_{i_k}+\lambda= \Lambda\\ \end{matrix}
 \Bigr \},
\end{align*}
it is obvious that $M_\A \subset V_\A(\Lambda)^\vee$.

Conversely, for $u\in V_\A(\Lambda)^\vee$ , write $u=\sum_{b\in B}c_bb$
with $c_b\in \Q(q)$. Let us show $c_b\in \A$ for any $\lambda\in \mathtt{P}$ and $b\in B_\lambda$.
We take $\mathbf{i}=(i_1,\ldots,i_m)$ such that
$e^{\mathrm{top}}_{\bf{i}}b=v_{\Lambda}$ for any $b\in B_\lambda$.
We shall show $c_b\in \A$ by the descending induction with respect to the linear order
$\preceq_{\mathbf{i}}$.
By the induction hypothesis, we may assume that
$c_{b'}\in\A$ for any $b'\in B$ such that $b\prec_{\mathbf{i}}b'$.

Then setting $v_0=b$, $\ell_k=\eps_{i_k}(v_{k-1})$ and
$v_k=e_{i_k}^{(\ell_k)}v_{k-1}$ ($1\le k\le m$), we have
$e_{i_m}^{(\ell_m)}\cdots e_{i_1}^{(\ell_1)}u=c_bv_\Lambda
+\sum\limits_{b\prec_{\mathbf{i}}b'}c_{b'}e_{i_m}^{(\ell_m)}\cdots
e_{i_1}^{(\ell_1)}b' \in V_\A(\Lambda)^\vee$. Hence we obtain
$c_b\in\A$.
\end{proof}

\section{ Supercategories and superbimodules} \label{Sec: supers}
In this section, we briefly review the notion of supercategory,
superfunctor, superbimodule and their basic properties. (See
\cite[Section 2]{KKT11} for more details.)

\subsection{ Supercategories } \hfill

\begin{definition} \
\bnum
\item
A {\em supercategory} is a category $\shc$ equipped with an
endofunctor $\Pi_\shc$ of $\shc$ and an isomorphism
$\xi_\shc\cl\Pi_\shc^2\overset{\sim}{\to}{\rm id}_{\shc}$ such that
$\xi_\shc\circ\Pi_\shc=\Pi_\shc\circ\xi_\shc \in
\Hom(\Pi_\shc^3,\Pi_\shc)$.
\item
For a pair of supercategories $(\shc,\Pi,\xi)$ and
$(\shc',\Pi',\xi')$, a {\em superfunctor} from $(\shc,\Pi,\xi)$ to
$(\shc',\Pi',\xi')$ is a pair $(F,\alpha_F)$ of a functor $F\cl
\shc\to\shc'$ and an isomorphism $\alpha_F\cl F\circ \Pi\overset{\sim}{\to}
\Pi'\circ F$ such that the diagram \eq&&\xymatrix@C=7ex{
F\circ\Pi^2\ar[r]^-{\alpha_F\circ\Pi}\ar[d]^-{F\circ \xi}&\Pi'\circ
F\circ\Pi
\ar[r]^-{\Pi'\circ\alpha_F}&\Pi'{}^2\circ F\ar[d]_{\xi'\circ F}\\
F\ar[rr]^{{\rm id}_F}&&F}
\eneq
commutes.
If $F$ is an equivalence of categories,
we say that $(F,\alpha_F)$ is an {\em equivalence of supercategories}.

\item Let $(F,\alpha_F)$ and $(F',\alpha_{F'})$ be superfunctors
from a supercategory  $(\shc,\Pi,\xi)$ to $(\shc',\Pi',\xi')$.
A morphism from $(F,\alpha_F)$ to $(F',\alpha_{F'})$ is a
morphism of functors $\varphi\cl F\to F'$ such that
$$\xymatrix@C=8ex@R=4ex{
F\circ\Pi\ar[r]^{\varphi\circ\Pi}\ar[d]_{\alpha_F}&F'\circ\Pi\ar[d]_{\alpha_{F'}}\\
\Pi'\circ F\ar[r]^{\Pi'\circ\varphi}&\Pi'\circ F'
}$$
commutes.
\end{enumerate}
\end{definition}
\noindent In this paper, a supercategory is assumed to be an
additive category.

For superfunctors $F\cl\shc\to\shc'$ and $F'\cl\shc'\to \shc''$, the
composition $F'\circ F\cl \shc\to\shc''$ becomes a superfunctor by
taking the composition
$$\xymatrix{F'\circ F\circ\Pi_{\shc}\ar[r]^{F'\circ \al_F}&
F'\circ\Pi_{\shc'}\circ F\ar[r]^{\al_{F'}\circ F}&\Pi_{\shc''}\circ F'\circ F}$$
as $\al_{F'\circ F}$.

The functors ${\rm id}_{\shc}$ and $\Pi$ are  superfunctors by
taking  $\alpha_{{\rm id}_{\shc}}={\rm id}_{\Pi}\cl{\rm
id}_{\shc}\circ \Pi \to \Pi \circ {\rm id}_{\shc}$ and
$\alpha_{\Pi}= - {\rm id}_{\Pi^2}\cl\Pi \circ \Pi \to \Pi \circ
\Pi$. {\it Note the sign}. The morphism $\alpha_{F}\cl F \circ \Pi
\to \Pi' \circ F$ is a morphism of superfunctors.

\subsection{Superalgebras}\label{sec:superalgebra}
A superalgebra is a $\Z_2$-graded algebra.
Let $A=A_0 \oplus A_1$ be a superalgebra. We denote by $\phi_{A}$ the
involution of $A$ given by $\phi_{A}(a)=(-1)^{\pa(\epsilon)}a$ for
$a\in A_\epsilon$ with $\epsilon=0,1$.
We call $\phi_A$ the {\em parity involution}.

The category of
$A$-modules $\Mod(A)$ is naturally endowed with a structure of
supercategory. The functor $\Pi$ is induced by the parity involution $\phi_A$. Namely, for
$M\in\Mod(A)$, $\Pi M\seteq\set{ \pi(x)}{x \in M }$ with
$\pi(x)+\pi(x')=\pi(x+x')$ and $a \pi(x) = \pi(\phi_A(a)x)$ for $a
\in A$. The morphism $\xi\cl \Pi^2\to\id$ is given by
$\pi\bl\pi(x)\br\mapsto x$.

An $A$-supermodule is an  $A$-module with a decomposition $M= M_0
\oplus M_1$ such that $A_\epsilon M_{\epsilon'} \subset
M_{\epsilon+\epsilon'}$ ($\epsilon,\epsilon' \in \Z_2$).

Let $\Mod_{{\rm super}}(A)$ be the category of $A$-supermodules. The
morphisms in this category are $A$-linear homomorphisms which
preserve the $\Z_2$-grading. Then $\Mod_{{\rm super}}(A)$ is also
endowed with a structure of supercategory. The functor $\Pi$ is
given by the parity change: namely, $(\Pi
M)_\epsilon\seteq\set{\pi(x)}{x\in M_{1-\epsilon}}$ ($\epsilon=0,1$)
and $a\pi(x)=\pi(\phi_{A}(a)x)$ for $a\in A$ and $x\in M$. The
isomorphism $\xi_M\cl\Pi^2M\to M$ is given by
$\pi\bl\pi(x)\br\mapsto x$ ($x\in M$). Then there is a canonical
superfunctor $\Mod_{{\rm super}}(A)\to\Mod(A)$. For an
$A$-supermodule $M$, the {\em parity involution} $\phi_M\cl M\to M$
of $M$ is defined by
$\phi_M|_{M_\epsilon}=(-1)^\epsilon\,\id_{M_\epsilon}$. Then we have
$\phi_M(ax)=\phi_{A}(a)\phi_M(x)$ for any $a\in A$ and $x\in M$.

Let $A$ and $B$ be superalgebras. We define the multiplication on
the tensor product $A \otimes B$  by
\begin{align*}
(a_1\otimes b_1)(a_2\otimes b_2)=(-1)^{\eps'_1\eps_2}(a_1a_2)\otimes (b_1b_2)
\end{align*}
for $a_i\in A_{\eps_i}$, $b_i\in B_{\eps'_i}$ ($\eps_i,\eps_{i}'=0,1$). Then $A \otimes B$ is
again a superalgebra.  Note that we have $A\otimes
B\cong B\otimes A$  as a superalgebra by the supertwist map
\begin{align*}
A\otimes B\isoto B\otimes A,\quad a\otimes b\longmapsto (-1)^{\eps_1\eps_2}b\otimes a
\quad \text{($a\in A_{\eps_1}$, $b\in B_{\eps_2}$).}
\end{align*}
If $M$ and $N$ are an $A$-module and an $B$-module respectively, then
$M\otimes N$ has a structure of $A\otimes B$-module by
\begin{align*}
(a\otimes b)(u\otimes v)=(-1)^{\eps\eps'}(au)\otimes (bv)
\end{align*}
for $a\in A$, $b\in B_{\eps}$, $u\in M_{\eps'}$, $v\in N$ ($\eps,\eps'=0,1$).

\subsection{Superbimodules}
Let $A$ and $B$ be superalgebras. An {\em
$(A,B)$-superbimodule} is an $(A,B)$-bimodule with a
$\Z_2$-grading compatible with the left action of $A$ and the right
action of $B$. For an $(A,B)$-superbimodule $L$, let $F_L\cl
\Mod(B)\to\Mod(A)$ be the functor $N \mapsto L\otimes_BN$. Then
$F_L$ is a superfunctor, where
$$\alpha_{F_L}\cl F_L\Pi N=L\otimes_B\Pi N\to \Pi F_L N=\Pi(L\otimes_BN)$$
is given by $s\otimes \pi(x)\mapsto \pi(\phi_L(s)\otimes x) \ (s\in L, \ x\in N)$.

For an $(A,B)$-superbimodule $L$, its parity twist $\Pi L$ is $\Pi
L$ as a left $A$-module and its right $B$-module structure is given
by $a\pi(s)b=\pi(\phi_A(a)sb)$ ($s\in L$, $a\in A$, $b\in B$). Then
there exists a canonical isomorphism of superfunctors $\eta\cl
F_{\Pi L}\overset{\sim}{\to} \Pi\circ F_L$. The isomorphism
$\eta_N\cl (\Pi L)\otimes_BN\overset{\sim}{\to} \Pi(L\otimes_BN)$ is
given by $\pi(s)\otimes x\mapsto \pi(s\otimes x)$. It is an
isomorphism of superfunctors since one can easily check the
commutativity of the following diagram:
$$\xymatrix@C=10ex@R=2ex{
F_{\,\Pi L}\circ\Pi\ar[r]^-{\eta\circ\Pi}\ar[dd]_{\alpha_{F_{\Pi L}}}
&\Pi\circ F_L\circ \Pi\ar[dr]^{\Pi\circ\alpha_{F_L}}\ar[dd]^{\alpha_{(\Pi\circ F_L)}}\\
&&\Pi\circ\Pi\circ F_L\ar[dl]^(.3){\hs{3ex}\alpha_\Pi\circ F_L=-{\rm id}_{\Pi\circ\Pi\circ F_L}}\\
\Pi\circ F_{\,\Pi L}\ar[r]^-{\Pi\circ\eta}&\Pi\circ\Pi\circ F_L.
}
$$
by using $\phi_{\Pi L}(\pi(s))=-\pi(\phi_L(s))$.

\section{The quiver Hecke superalgebras}
\label{Sec: Quiver Hecke superalgebra} In this section we recall the
construction of quiver Hecke superalgebras
and investigate its basic
properties. We take as a base ring a graded commutative ring $\k=
\bigoplus_{n \in \Z_{\ge 0}} \k_n$.
We assume the following condition:
\eq
&&\text{$2$ is invertible in $\k_0$.}
\eneq

\subsection{ Definition of quiver Hecke superalgebras}
We assume that a decomposition $I = \Iev \bigsqcup \Iod$ is given.
We say that a Cartan matrix $\car=(a_{ij})_{i,j \in I}$ is {\it
colored} by $\Iod$ if
$$ a_{ij} \in 2 \Z \quad \text{ for all } i \in \Iod \text{ and } \ j \in I. $$
{}From now on, we assume that $\car$ is colored by $\Iod$.

We define the {\it parity} function $\pa \cl I \to \{ 0, 1 \}$ by
$$\pa(i)=1  \quad \text{ if } i \in \Iod \quad \text{ and } \quad \pa(i)=0  \quad \text{ if } i \in \Iev.$$
Then we naturally extend the parity function on $I^n$ and $\mathtt{Q}^+$ as follows:
\begin{align*}
& \pa(\nu) \seteq \sum^n_{k=1} \pa(\nu_k) \quad \text{for all $\nu \in I^n$,} \\
& \pa(\beta) \seteq  \sum_{k=1}^r\pa(i_k)\quad \text{for $\beta=
\sum^r_{k=1} \alpha_{i_k} \in \mathtt{Q}^+$.}
\end{align*}

For $i \neq j \in I$ and $r,s \in \Z_{\ge0}$, we take
$t_{i,j;(r,s)} \in \k_{-2(\alpha_i|\alpha_j)-r(\alpha_i|\alpha_i)-s(\alpha_j|\alpha_j)}$
such that
$$
\begin{aligned}
& t_{i,j;(-a_{ij},0)} \in \k_0^{\times},  \ \
t_{i,j;(r,s)}=t_{j,i;(s,r)}, \\
& t_{i,j;(r,s)}=0 \ \ \text{ if $i \in \Iod$ and $r$ is odd}.
\end{aligned}
$$
\noindent Let $\P_{ij}\seteq \k\langle w,z \rangle/\langle zw
-(-1)^{\pa(i)\pa(j)}wz \rangle$ be the $\Z\times\Z_2$-graded
$\k$-algebra where  $w$ and $z$ have the $\Z\times \Z_2$-degree
$((\alpha_i |\alpha_i),\pa(i))$ and $((\alpha_j |\alpha_j),\pa(j))$,
respectively. Let  $\cQ_{i,j}$ be an element of $\P_{ij}$ which is
of the form
\begin{equation} \label{Def: Qij}
 \cQ_{i,j}(w,z)= \begin{cases}
                            \sum_{r,s\in\Z_{\ge0}} t_{i,j;(r,s)} w^r z^s & \text{ if } i \neq j,\\
0 & \text{ if } i = j.                           \end{cases}
\end{equation}

Then  $\cQ_{i,j}(w,z)$ is an even element and $\cQ=(\cQ_{i,j})_{i,j\in I}$ satisfies
\begin{equation} \label{eqn:Q even}
\begin{aligned}
&\cQ_{i,j}(w,z) = \cQ_{j,i}(z,w) \quad \text{for  $i,j \in I $,} \\
&\cQ_{i,j}(w,z)=\cQ_{i,j}(-w,z) \quad  \text{for $i \in \Iod$ and $j
\in I$.}
\end{aligned}
\end{equation}

\begin{definition}[\cite{KKT11}] \label{def:Quiver Hekce superalg} \
The {\em quiver Hecke superalgebra} $R(n)$ of degree $n$ associated with
the Cartan datum $(\car,\mathtt{P},\Pi,\Pi^{\vee})$ and
$(\cQ_{i,j})_{i,j\in I}$
 is the superalgebra over $\k$ generated by $e(\nu)$ $(\nu \in I^n)$, $x_k$ $(
 1 \le k \le n)$, $\tau_{a}$ $(1 \le a \le n-1)$ with the parity
 \begin{equation} \label{eqn: parity}
 \pa(e(\nu))=0, \quad \pa(x_k e(\nu))= \pa(\nu_k), \quad \pa(\tau_a e(\nu))=\pa(\nu_a)\pa(\nu_{a+1})
 \end{equation}
 subject to the following defining relations:
\begin{enumerate}[{\rm(R1)}]
\item $e(\mu)e(\nu)=\delta_{\mu,\nu}e(\nu) \text{ for all } \mu, \ \nu \in I^n$,
     and
      $1 = \sum_{\nu \in I^n} e(\nu)$,

\item $x_px_qe(\nu)= (-1)^{\pa(\nu_p)\pa(\nu_q)}x_qx_p e(\nu) \quad \text{ if } p \neq q,$
\item $x_pe(\nu)=e(\nu)x_p$ and $ \tau_ae(\nu) = e(s_a \,\nu) \tau_a$,
where $s_a=(a,a+1)$ is the transposition on the set of sequences,
\item $\tau_ax_pe(\nu)= (-1)^{\pa(\nu_p)\pa(\nu_a)\pa(\nu_{a+1})}x_p\tau_ae(\nu)$, if $p \neq a, \ a+1,$
\item \begin{align*}
     (\tau_ax_{a+1}-(-1)^{\pa(\nu_a)\pa(\nu_{a+1})}x_{a}\tau_a)e(\nu)
      &=(x_{a+1}\tau_a-(-1)^{\pa(\nu_a)\pa(\nu_{a+1})}\tau_ax_{a})e(\nu) \\
      &=\delta_{\nu_a,\nu_{a+1}}e(\nu), \end{align*}
\item $\tau_a^2e(\nu)=\cQ_{\nu_a,\nu_{a+1}}(x_a,x_{a+1})e(\nu)$,
\item $\tau_a\tau_be(\nu)=(-1)^{\pa(\nu_a)\pa(\nu_{a+1})\pa(\nu_b)\pa(\nu_{b+1})} \tau_b\tau_ae(\nu)$
    if $|a-b|>1$,
\item \begin{align*}
    & (\tau_{a+1}\tau_{a}\tau_{a+1}-\tau_{a}\tau_{a+1}\tau_{a})e(\nu) \\
    & \quad = \begin{cases}
    \dfrac{\cQ_{\nu_{a},\nu_{a+1}}(x_{a+2},x_{a+1})-
    \cQ_{\nu_{a},\nu_{a+1}}(x_{a},x_{a+1}) }{x_{a+2}-x_{a}}e(\nu)
        \qquad \text{ if } \nu_{a}=\nu_{a+2} \in \Iev, \\
    (-1)^{\pa(\nu_{a+1})}(x_{a+2}-x_a)
\dfrac{\cQ_{\nu_{a},\nu_{a+1}}(x_{a+2},x_{a+1})-\cQ_{\nu_{a},\nu_{a+1}}(x_{a},x_{a+1})}{x^2_{a+2}-x^2_{a}}e(\nu) \\
    \qquad\qquad\qquad\qquad\qquad\qquad\qquad\qquad\qquad\qquad\quad
    \text{ if } \nu_{a}=\nu_{a+2} \in \Iod, \\
    \qquad\qquad\qquad\qquad\quad 0
    \qquad\qquad\qquad\qquad\qquad\quad \ \
    \text{ otherwise }.
    \end{cases}\end{align*}
\end{enumerate}%
The $\Z$-grading on
$R(n)$ is given by:
 \begin{equation} \label{eqn: Z-grading}
 \degZ(e(\nu))=0, \quad \degZ(x_k e(\nu))= (\alpha_{\nu_k}|\alpha_{\nu_k}),
 \quad \degZ(\tau_a e(\nu))=-(\alpha_{\nu_a}|\alpha_{\nu_{a+1}}).
 \end{equation}
 \end{definition}

\bigskip
 We understand $R(0) \simeq \k$, and $R(1)$ is isomorphic to $\k^I[x_1]$ where $\k^I=\bigoplus_{i \in I} \k e(i)$
is the direct sum of the copies $\k e(i)$ of the algebra $\k$.

For $\nu=(\nu_1,\ldots,\nu_n) \in I^{n}$ and $1 \le m \le n$, we set
{\allowdisplaybreaks
\begin{align}
\nu_{<m} \seteq(\nu_1,\ldots,\nu_{m-1}) \quad \text{ and } \quad \nu_{>m}\seteq(\nu_{m+1},\ldots,\nu_{n}).
\end{align}
\noindent For $a,b \in \{ 1, \ldots, n \}$ with $a \neq b$, we
define the elements of $R(n)$ by
\begin{align}
e^{{\rm ev}}_{a,b}= \sum_{ \substack{\nu \in I^n, \\ \nu_a=\nu_b \in \Iev} }e(\nu), \quad
e^{{\rm od}}_{a,b}= \sum_{ \substack{\nu \in I^n, \\ \nu_a=\nu_b \in \Iod} }e(\nu)
\quad \text{ and } \quad e_{a,b}=e^{{\rm ev}}_{a,b} + e^{{\rm od}}_{a,b}.
\end{align}

For any $\nu \in I^n$ ($n \ge 2$), let
$$\P_\nu\seteq \k \langle x_1,\ldots,x_n \rangle /
    \langle x_ax_b - (-1)^{\pa(\nu_a)\pa(\nu_b)}x_bx_a \rangle_{1 \le a < b \le n}$$
be the superalgebra generated by $x_k$ ($1\le k\le n$) with
$\Z\times\Z_2$-degree $\bl(\alpha_{\nu_k}|\alpha_{\nu_k}),
\pa({\nu_k})\br$ ($k=1,\ldots$,n). Let $\Pev_\nu$ be the subalgebra
of $\P_\nu$ generated by $x_k^{1+\pa(\nu_k)}$ $(1 \le k \le n)$.
Then $\Pev_\nu$ is isomorphic to the polynomial ring
$\k[x_1^{1+\pa(\nu_1)}, \cdots x_n^{1+\pa(\nu_n)}]$. Set
$$\P_n=\bigoplus_{\nu \in I^n} \P_\nu e(\nu)\quad \text{ and } \quad
\Pev_n=\bigoplus_{\nu \in I^n} \Pev_\nu e(\nu).$$
Then $\Pev_n$ is contained in the center of $\P_n$ and
\begin{equation} \label{eqn: center Q}
\cQ_{\nu_a,\nu_{a+1}}(x_a,x_{a+1})e(\nu) \text{ belongs to }
\Pev_n \text{ for all } \nu \in I^n \text{ and } 1 \le a < n.
\end{equation}

For $1 \le k < n$, we define the algebra endomorphism $\oS_k$ of
$\P_n$ by \eq &&\oS_k(x_pe(\nu))=
(-1)^{\pa(\nu_k)\pa(\nu_{k+1})\pa(\nu_p)}x_{s_k(p)}e(s_k \nu) \quad
\text{for $1 \le p \le n$,}\label{def:sk} \eneq where $s_k=(k,k+1)
\in S_n$ is the transposition which acts on $I^n$ in a natural way.
\noindent For $f \in \P_n$ and $1 \le k < n$, define
\begin{equation} \label{Eqn: partial}
\begin{aligned}
& \partial_k f = \dfrac{f-\oS_k f}{x_{k+1}-x_k}e^{{\rm ev}}_{k,k+1} +
\dfrac{(x_{k+1}-x_k)f-\oS_k f(x_{k+1}-x_k)}{x_{k+1}^2-x_k^2}e^{{\rm od}}_{k,k+1}, \\
& f^{\partial_k} = \dfrac{f-\oS_k f}{x_{k+1}-x_k}e^{{\rm ev}}_{k,k+1} +
\dfrac{f(x_{k+1}-x_k)-(x_{k+1}-x_k)(\oS_k f)}{x_{k+1}^2-x_k^2}e^{{\rm od}}_{k,k+1}.
\end{aligned}
\end{equation}
\noindent Then one can easily show that
\begin{align} \label{Eqn: tau f}
& \partial_k f, \ f^{\partial_k} \in \P_n, \quad
 \tau_k f = (\oS_k f) \tau_k+ \partial_k f, \quad  f \tau_k  = \tau_k (\oS_k f) + f^{\partial_k}
\end{align}
and
\begin{align*}
& \partial_k(x_j) = (x_j)^{\partial_k}=\delta_{j,k+1}e^{{\rm ev}}_{k,k+1}-\delta_{j,k}e^{{\rm ev}}_{k,k+1}+
\delta_{j,k+1}e^{{\rm od}}_{k,k+1}+\delta_{j,k}e^{{\rm od}}_{k,k+1}, \\
& \partial_k(fg) =(\partial_k f)  g+ (\oS_k f) \partial_k g, \quad
  (fg)^{\partial_k} =f (g^{\partial_k})  + (f^{\partial_k})  \oS_k g.
\end{align*}

For $\beta \in \mathtt{Q}^+$ with $|\beta|=n$, set
\begin{align*}
& I^\beta=\{ \nu=(\nu_1,\ldots,\nu_n) \in I^n \ | \ \alpha_{\nu_1}+ \cdots + \alpha_{\nu_n}=\beta \}.
\end{align*}
We define
\begin{align*}
& R(m,n)=R(m)\otimes_{\k}R(n)\subset R(m+n), \\
& e(n)=\sum_{\nu \in I^n}e(\nu), \quad e(\beta)=\sum_{\nu \in I^\beta}e(\nu), \quad
       e(\alpha,\beta)=\sum_{\mu \in I^\alpha, \ \nu \in I^\beta}e(\mu,\nu), \\
& R(\beta)=e(\beta)R(n),  \quad R(\alpha,\beta)=R(\alpha)\otimes_{\k}R(\beta)\subset R(\alpha+\beta),  \\
& e(n,i^k)=\sum_{ \substack{\nu \in I^{n+k}, \\ \nu_{n+1}=\cdots=\nu_{n+k}=i} }e(\nu), \quad
  e(i^k,n)=\sum_{ \substack{\nu \in I^{n+k}, \\ \nu_{1}=\cdots=\nu_{k}=i} }e(\nu),\\
& e(\beta,i^k)=e(\beta,k\alpha_i)= e(\beta+k\alpha_i)e(n,i^k),
\\
&e(i^k,\beta)=e(k\alpha_i,\beta)=e(\beta+k\alpha_i)e(i^k,n)
\end{align*}
for $\alpha,\beta \in \mathtt{Q}^+$.

\begin{proposition} [{\cite[Corollary 3.15]{KKT11}}] \label{Prop: PBW}
For each $w \in S_n$, we choose a reduced expression $s_{i_1}\cdots
s_{i_\ell}$ of $w$ and write $\tau_w=\tau_{i_1}\cdots\tau_{i_\ell}$.
Then
$$\{ x_1^{a_1}\cdots x_n^{a_n} \tau_w e(\nu) \ | \ a=(a_1,\ldots,a_n) \in \Z_{\ge 0}^n, \ w \in S_n, \ \nu \in I^n \} $$
forms a basis of the $\k$-module $R(n)$.
\end{proposition}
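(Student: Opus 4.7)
The plan is to establish both spanning and linear independence for the proposed set.

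For spanning, I would proceed by a double induction on total word length and, for fixed length, on the number of $\tau$-letters. Any element of $R(n)$ is a $\k$-linear combination of monomial words in $e(\nu)$, $x_k$, and $\tau_a$. Using (R1) and (R3), I may assume each word carries a single idempotent $e(\nu)$ at the right end. Using (R2), (R4), and the two identities in (R5), I can push every $x_k$ to the left of every $\tau_a$; the correction terms produced by (R5) have strictly smaller $\tau$-length and are handled by induction. It remains to show that any pure $\tau$-word $\tau_{i_1}\cdots\tau_{i_\ell}e(\nu)$ lies in the span. If the expression $s_{i_1}\cdots s_{i_\ell}$ is not reduced in $S_n$, then (R7) lets me commute adjacent $\tau$'s of commuting indices and eventually bring two equal $\tau$'s next to each other, at which point (R6) replaces $\tau_a^2$ by $\cQ_{\nu_a,\nu_{a+1}}(x_a,x_{a+1})$, strictly reducing the $\tau$-length. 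If the expression is reduced for some $w\in S_n$, then by Matsumoto's theorem any two reduced expressions for $w$ are connected by braid and commutation moves, and (R7) together with (R8) converts the chosen $\tau_w$ into $\tau_{i_1}\cdots\tau_{i_\ell}$ modulo strictly shorter $\tau$-terms, which lie in the span by induction.

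For linear independence, I would construct a faithful representation of $R(n)$ on the super polynomial module $M=\bigoplus_{\nu\in I^n}\P_\nu\cdot\mathbf{1}_\nu$. Let $e(\mu)$ act as the projector onto the $\mu$-summand, let $x_k$ act by left multiplication on each $\P_\nu$, and let $\tau_a$ act by the super divided-difference operator sending $\P_\nu\cdot\mathbf{1}_\nu$ to $\P_{s_a\nu}\cdot\mathbf{1}_{s_a\nu}$ when $\nu_a\neq\nu_{a+1}$ and into $\P_\nu\cdot\mathbf{1}_\nu$ when $\nu_a=\nu_{a+1}$, built out of the operators $\partial_a$, $\oS_a$, and the polynomial $\cQ_{\nu_a,\nu_{a+1}}$ in the same manner as in the classical KLR case. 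Checking that these operators satisfy (R1)--(R8) is a direct computation using the formulas for $\partial_k$, the twist $\oS_k$, and the Leibniz-type identity $\partial_k(fg)=(\partial_k f)g+(\oS_k f)\partial_k g$. Once the action is defined, faithfulness follows by applying the proposed basis elements to $\mathbf{1}_\nu$ and noting that $\tau_w\cdot\mathbf{1}_\nu$ carries a leading term, with respect to a filtration by Bruhat length $\ell(w)$ or by polynomial degree, that distinguishes the permutation $w$; left multiplication by $x_1^{a_1}\cdots x_n^{a_n}$ then distinguishes the multi-index $a$.

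The hard part is the verification of (R8) on the super polynomial module, especially the odd case $\nu_a=\nu_{a+2}\in\Iod$, where the right-hand side involves the sign $(-1)^{\pa(\nu_{a+1})}$ and the denominator $x_{a+2}^2-x_a^2$. In the super setting the variables $x_a$, $x_{a+1}$, $x_{a+2}$ only super-commute, so the classical divided-difference identities acquire anticommutator corrections; carefully tracking the parity signs coming from $\oS_k$, from super-commutation of the $x_k$'s, and from the split definition of $\partial_k$ across the even and odd cases is the main combinatorial obstacle. Once (R8) is established, everything else --- the remaining relations on the polynomial module, the spanning argument, and the leading-term comparison for faithfulness --- goes through by standard arguments adapted from the Khovanov-Lauda-Rouquier (ungraded) setting, with the added bookkeeping for parities.
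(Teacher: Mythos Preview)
The paper does not prove this proposition at all: it is stated as a citation of \cite[Corollary 3.15]{KKT11} and no argument is given. So there is nothing in the present paper to compare your proof against.

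That said, your outline is the standard strategy and is essentially the one carried out in \cite{KKT11} (following \cite{KL1} in the non-super case): a straightening argument for spanning, and a faithful polynomial (``super Demazure'') representation for linear independence. Your identification of the odd case of (R8) as the crux is accurate; in \cite{KKT11} this is handled by a careful direct computation with the operators $\oS_a$ and $\partial_a$, tracking exactly the parity signs you mention. One point to be more careful about in the spanning argument: when $s_{i_1}\cdots s_{i_\ell}$ is not reduced, you cannot always use only (R7) to bring two equal generators adjacent --- you may need (R8) as well (Matsumoto-type moves in $S_n$ use braid relations, not just commutations), and the correction terms from (R8) are polynomials in the $x$'s times shorter $\tau$-words, which is still fine for the induction. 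Also, for the faithfulness step, the cleaner argument (and the one used in \cite{KKT11}) is to filter by $\ell(w)$ and observe that on the associated graded the $\tau_w$ act by $\oS_w$ times an invertible polynomial factor, making the leading-term comparison transparent; your sketch gestures at this but leaves the ``leading term'' unspecified.
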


\begin{remark} \label{Rmk: group algebra}
In general, $\tau_w$ depends on the choice of reduced expressions of
$w$. However, we still  write $\tau_w$ after choosing a reduced
expression of $w$. In $ I =\{ i \}$ case, by the axioms  in
Definition $\ref{def:Quiver Hekce superalg}$, $\pm\tau_w$ does not depend on the
choice of reduced expressions of $w \in S_n$;
 i.e., for any two reduced expressions
$w=s_{i_1} \cdots s_{i_r}=s_{j_1} \cdots s_{j_r}$, we have
 $$ \tau_{i_1} \cdots \tau_{i_r} = \pm \tau_{j_1} \cdots \tau_{j_r}.$$
\end{remark}

By the proposition above we have:
\begin{lemma}  The algebra $R(n+1)$ has a direct sum decomposition 
\begin{equation} \label{eqn: R(n+1) wrt R(n,1)}
R(n+1)= \soplus_{a=1}^{n+1}R(n,1)\tau_n \cdots \tau_a =
\soplus_{a=1}^{n+1}R(n)\otimes \k^I[x_{n+1}] \tau_n \cdots \tau_a.
\end{equation}
In particular, $R(n+1)$ is a free $R(n,1)$-module of rank
$n+1$.
\end{lemma}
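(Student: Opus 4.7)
The plan is to deduce the decomposition directly from the PBW basis theorem (Proposition~\ref{Prop: PBW}), by matching it against the decomposition of $S_{n+1}$ into right cosets modulo the parabolic subgroup $S_n$ fixing the letter $n+1$.

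The right cosets $S_n\backslash S_{n+1}$ are parametrized by $a\in\{1,\ldots,n+1\}$ via $a=w^{-1}(n+1)$, and the unique minimal-length representative in each coset is $w_a\seteq s_ns_{n-1}\cdots s_a$ (with $w_{n+1}=e$), of length $n+1-a$. Every $w\in S_{n+1}$ thus admits a unique factorization $w=u\cdot w_a$ with $u\in S_n$ and $\ell(w)=\ell(u)+\ell(w_a)$. We may consequently choose, in Proposition~\ref{Prop: PBW}, a reduced expression for $w$ that concatenates a chosen reduced expression for $u$ with $s_ns_{n-1}\cdots s_a$, yielding $\tau_w=\tau_u\cdot\tau_n\tau_{n-1}\cdots\tau_a$. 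Using relation (R3) to push the idempotent past the final tau-string, any PBW basis element becomes
$$ x_1^{b_1}\cdots x_{n+1}^{b_{n+1}}\tau_we(\nu)=\bigl(x_1^{b_1}\cdots x_{n+1}^{b_{n+1}}\tau_u\,e(w_a\nu)\bigr)\cdot\tau_n\tau_{n-1}\cdots\tau_a. $$
The factor in parentheses lies in $R(n,1)=R(n)\otimes_\k R(1)$, since $\tau_u$ involves only $\tau_1,\ldots,\tau_{n-1}$, each $x_k$ lies in $R(n,1)$, and every idempotent $e(\mu)$ with $\mu\in I^{n+1}$ equals $e(\mu_{<n+1})\otimes e(\mu_{n+1})$. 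This shows $R(n+1)=\sum_{a=1}^{n+1}R(n,1)\cdot\tau_n\cdots\tau_a$.

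To see the sum is direct and each summand is free of rank one over $R(n,1)$, note that for each fixed $a$ the left $R(n,1)$-linear map $R(n,1)\to R(n+1)$, $r\mapsto r\cdot\tau_n\cdots\tau_a$, sends the PBW basis $\{x_1^{b_1}\cdots x_{n+1}^{b_{n+1}}\tau_ue(\mu)\}_{b,\,u\in S_n,\,\mu\in I^{n+1}}$ of $R(n,1)$ to $\{x_1^{b_1}\cdots x_{n+1}^{b_{n+1}}\tau_u\tau_n\cdots\tau_a\,e(w_a^{-1}\mu)\}$, which is exactly the portion of the PBW basis of $R(n+1)$ indexed by the coset $S_n\cdot w_a$. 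Since $\mu\mapsto w_a^{-1}\mu$ is a bijection of $I^{n+1}$ and $(u,a)\mapsto uw_a$ parametrizes $S_{n+1}$ bijectively, these portions partition the PBW basis of $R(n+1)$ as $a$ ranges over $\{1,\ldots,n+1\}$. Hence $\bigoplus_{a=1}^{n+1}R(n,1)\simeq R(n+1)$ as $\k$-modules, each summand is free of rank one over $R(n,1)$ with generator $\tau_n\cdots\tau_a$, and $R(n+1)$ is free of rank $n+1$ over $R(n,1)$.

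This is essentially bookkeeping on top of Proposition~\ref{Prop: PBW}, and no substantive obstacle arises. The only point demanding care is the coherent choice of reduced expressions along the factorization $w=uw_a$, which ensures the PBW basis visibly splits along right $S_n$-cosets; the superalgebra sign conventions among the $x_k$'s and between $\tau_k$ and $x_p$ are irrelevant for the present $\k$-module statement, since they do not affect linear independence of the PBW basis.
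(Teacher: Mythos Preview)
Your proof is correct and follows exactly the approach the paper intends: the lemma is stated immediately after Proposition~\ref{Prop: PBW} with the remark ``By the proposition above we have'', and your argument simply fills in the coset bookkeeping that the paper leaves implicit. There is nothing to add.
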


Let $\Mod(R(\beta))$ (resp.\ $\Proj(R(\beta))$) be the category of
arbitrary (resp.\ finitely generated projective) $\Z$-graded
$R(\beta)$-modules. We denote by $\Rep(R(\beta))$ the category of
$\Z$-graded left $R(\beta)$-modules which are coherent over $\k_0$.
Note that we assume modules to be only $\Z$-graded but not
$\Z\times\Z_2$-graded. The morphisms in these categories are
$R(\beta)$-linear homomorphisms which preserve $\Z$-grading. Then
$\Mod(R(\beta))$ is an abelian category, and $\Proj(R(\beta))$,
$\Rep(R(\beta))$ are subcategories of $\Mod(R(\beta))$ stable under
extensions. Hence they are exact categories.

Since $R(\beta)$ is a superalgebra, the category
$\Mod(R(\beta))$ has a supercategory structure induced by the
parity involution $\phi\seteq\phi_{R(\beta)}$ as seen in \S\;\ref{sec:superalgebra}.

For a $\Z$-graded $R(\beta)$-module $M=\bigoplus_{i\in \Z}M_i$, let
$M\langle k \rangle$ denote the $\Z$-graded $R(\beta)$-module
obtained from $M$ by shifting the grading by $k$; i.e., $M \langle k
\rangle_i \seteq \bigoplus_{i\in \Z} M_{i+k}$. We also denote by $q$
the  grading  shift functor \eq&&(qM)_i=M_{i-1}.\eneq

Let $\pi$ be an {\em odd} element with the defining equation
$\pi^2=1$. For any superring $R$, we define
$$R^\pi\seteq R \ot_\Z \Z[\pi]\simeq R\oplus R\pi.$$
Thus $\A^\pi=\Z[q,q^{-1},\pi]$ with $\pi^2=1$. We denote by
$[\Proj(R(\beta))]$ and $[\Rep(R(\beta))]$ the Grothendieck group of
$\Proj(R(\beta))$ and $\Rep(R(\beta))$, respectively. Then $[\Proj
(R(\beta))]$ and $[\Rep (R(\beta))]$ have the $\A^\pi$-module
structure given by $q[M] = [qM]$ and $\pi[M]=[\Pi M]$, where $[M]$
is the isomorphism classes of an $R(\beta)$-module $M$.

For $\alpha, \beta \in \mathtt{Q}^+$, consider the natural embedding
\begin{align*}
\iota_{\alpha,\beta}\cl R(\alpha) \otimes R(\beta) \hookrightarrow R(\alpha+\beta),
\end{align*}
which maps $e(\alpha) \otimes e(\beta)$ to $e(\alpha,\beta)$. For $M \in \Mod(R(\alpha,\beta))$ and
$N \in \Mod(R(\alpha+\beta))$, we define
\begin{align*}
\ind_{\alpha, \beta} M &= R(\alpha+\beta)e(\alpha,\beta)\otimes_{R(\alpha,\beta)}M
\in \Mod(R(\alpha+\beta)), \\
\res_{\alpha, \beta} N &= e(\alpha, \beta) N  \in \Mod(R(\alpha,\beta)).
\end{align*}
Then the Frobenius reciprocity
holds:
\begin{align} \label{Eq:reciprocity}
\Hom_{R(\alpha+\beta)}(\ind_{\alpha,\beta}M, N) \simeq \Hom_{R(\alpha,\beta)}(M, \res_{\alpha,\beta}N).
\end{align}

Given $\alpha, \alpha', \beta, \beta' \in \mathtt{Q}^+$ with $\alpha+\beta = \alpha' + \beta'$, let
$$ _{\alpha, \beta}R_{\alpha', \beta'} \seteq e(\alpha, \beta) R(\alpha+\beta) e(\alpha', \beta') . $$
We also write ${}_{\alpha+\beta}R_{\alpha, \beta}\seteq
R(\alpha+\beta)e(\alpha, \beta)$ and
${}_{\alpha',\beta'}R_{\alpha'+\beta'}\seteq e(\alpha',
\beta')R(\alpha'+\beta')$. Note that $_{\alpha, \beta}R_{\alpha',
\beta'}$ is a $\Z \times \Z_2$-graded $(R(\alpha,\beta),
R(\alpha',\beta'))$-superbimodule. Now we obtain Mackey's Theorem
for quiver Hecke superalgebras.

\begin{proposition} \label{Prop:Mackey}
The $\Z \times \Z_2$-graded $(R(\alpha) \otimes R(\beta), R(\alpha') \otimes R(\beta'))$-superbimodule 
$_{\alpha,\beta}R_{\alpha', \beta'}$ has a
graded filtration with graded subquotients isomorphic to
$$ \Pi^{\pa(\gamma)\pa(\beta+\gamma-\beta')}
({_{\alpha}R_{\alpha-\gamma, \gamma}}) \otimes
({_{\beta}R_{\beta+\gamma-\beta', \beta'-\gamma}}) \otimes_{R'}
({_{\alpha-\gamma, \alpha'+\gamma-\alpha}R_{\alpha'}}) \otimes
({_{\gamma, \beta'-\gamma}R_{\beta'}}) \langle -(\gamma | \beta +
\gamma - \beta') \rangle ,$$ where $R' = R(\alpha-\gamma)\otimes
R(\gamma)\otimes R(\beta + \gamma - \beta') \otimes R(\beta' -
\gamma)$ and $\gamma$ ranges over the set of $\gamma \in
\mathtt{Q}^+$ such that $\alpha-\gamma$, $\beta'-\gamma$  and
$\beta+\gamma-\beta'=\alpha'+\gamma-\alpha$ belong to
$\mathtt{Q}^+$. 
\end{proposition}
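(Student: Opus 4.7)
The proof will follow the classical Mackey-filtration strategy of Khovanov--Lauda for KLR algebras, adapted to keep track of the extra $\Z_2$-signs and parity twists coming from the super structure. First, I would parametrize the minimal-length $\bigl(S_{|\alpha|}\times S_{|\beta|}, S_{|\alpha'|}\times S_{|\beta'|}\bigr)$-double coset representatives in $S_{|\alpha+\beta|}$ by the set of $\gamma\in\mathtt{Q}^+$ with $\alpha-\gamma,\beta'-\gamma,\beta+\gamma-\beta'\in\mathtt{Q}^+$. For each such $\gamma$ I would choose a canonical reduced expression for the shuffle permutation $w_\gamma$ whose only crossings swap the $\gamma$-block past the $(\beta+\gamma-\beta')$-block, so that it rearranges the block pattern $(\alpha-\gamma,\gamma,\beta+\gamma-\beta',\beta'-\gamma)$ into $(\alpha-\gamma,\beta+\gamma-\beta',\gamma,\beta'-\gamma)$.

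Next, by the PBW basis of Proposition~\ref{Prop: PBW}, the bimodule ${}_{\alpha,\beta}R_{\alpha',\beta'}$ is spanned by elements of the form $p\,\tau_w\,p'$ where $w$ runs over double-coset representatives and $p,p'$ are polynomials in the $x_k$. Ordering these by $\ell(w_\gamma)$ produces an ascending filtration. The main check is that this filtration is stable under both left $R(\alpha,\beta)$- and right $R(\alpha',\beta')$-multiplication: whenever one tries to push a generator $\tau_a$ of $R(\alpha,\beta)$ past $\tau_{w_\gamma}$, the only obstructions arise from relations (R5), (R6) and (R8), which produce strictly shorter words and therefore land in strictly smaller filtration pieces. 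Similarly, multiplying a polynomial past $\tau_{w_\gamma}$ via \eqref{Eqn: tau f} produces terms of the form $(\oS_{w_\gamma}p)\,\tau_{w_\gamma}$ plus Demazure-type corrections $\partial\!\cdots p$ attached to shorter $\tau_v$'s.

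Then, for each $\gamma$, I would identify the associated graded piece as the displayed tensor product. The factors ${}_{\alpha}R_{\alpha-\gamma,\gamma}$ and ${}_{\beta}R_{\beta+\gamma-\beta',\beta'-\gamma}$ account for the ``top half'' (splitting $\alpha$ and $\beta$ on the left), the factors ${}_{\alpha-\gamma,\alpha'+\gamma-\alpha}R_{\alpha'}$ and ${}_{\gamma,\beta'-\gamma}R_{\beta'}$ account for the ``bottom half'' (recombining into $\alpha'$ and $\beta'$ on the right), and the tensor product over $R'=R(\alpha-\gamma)\otimes R(\gamma)\otimes R(\beta+\gamma-\beta')\otimes R(\beta'-\gamma)$ records the compatibility of the four inner slots that $\tau_{w_\gamma}$ connects. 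The isomorphism sends the class of $e(\alpha,\beta)\,\tau_{w_\gamma}\,e(\alpha',\beta')$ to the tensor of the four idempotents bridged through $R'$; that this extends to a bimodule isomorphism follows from freeness of $R(n+1)$ over $R(n,1)$ applied inductively along a reduced expression of $w_\gamma$.

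The main obstacle will be bookkeeping of signs and degrees. The $\Z$-degree shift $-(\gamma\mid\beta+\gamma-\beta')$ is exactly $\degZ(\tau_{w_\gamma})=-\sum(\alpha_i\mid\alpha_j)$ summed over the crossings of the $\gamma$- and $(\beta+\gamma-\beta')$-blocks, and similarly $\pa(\gamma)\pa(\beta+\gamma-\beta')$ is the total $\Z_2$-degree of $\tau_{w_\gamma}$ from \eqref{eqn: parity}, which is where the parity twist $\Pi^{\pa(\gamma)\pa(\beta+\gamma-\beta')}$ comes from. The delicate step is to verify that every ``lower-order'' correction produced by the odd versions of (R5), (R6), (R8) --- in particular those with the extra factor $(-1)^{\pa(\nu_{a+1})}$ or denominators $x_{a+2}^2-x_a^2$ from the $\Iod$ case --- lies in a strictly smaller piece of the filtration, so that the stated subquotient is clean. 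Once these sign-tracking lemmas are in place, assembling the graded pieces into the claimed filtration is formal.
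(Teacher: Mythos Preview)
Your proposal is correct and is essentially the same approach as the paper's: the paper does not give a detailed proof but simply states that the argument is similar to that of \cite[Proposition~2.18]{KL1}, which is precisely the Khovanov--Lauda Mackey-filtration strategy you outline. Your additional bookkeeping of the $\Z_2$-parity of $\tau_{w_\gamma}$ to account for the twist $\Pi^{\pa(\gamma)\pa(\beta+\gamma-\beta')}$ is exactly the super modification needed, and your identification of the degree shift with $\degZ(\tau_{w_\gamma})$ is the right one.
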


The proof is similar to that of \cite[Proposition 2.18]{KL1}.

\medskip

Hereafter, {\em an $R(n)$-module always means  a $\Z$-graded
$R(n)$-module.}

\subsection{ The algebra $R(n\alpha_i)$ }

In this subsection, we briefly review the results about
$R(n\alpha_i)$ developed in \cite{EKL,HW,KL1,R08}. For the sake of
simplicity, we assume that \eq&&\text{ $\k_0$ is a field and the
$\k_i$'s are finite-dimensional over $\k_0$} \label{cond:k0} \eneq
throughout this subsection.

Under the condition \eqref{cond:k0},
the $\Z$-graded algebra $R(\beta)$ satisfies the conditions:
\be[{\rm(a)}]
\item its $\Z$-grading is bounded below and
\item each homogeneous
subspace $R(\beta)_t$ is finite-dimensional over $\k_0$ ($t \in \Z$). \ee Hence
we have \eq&&\parbox{75ex}{ \bnum
\item $R(\beta)$ has the Krull-Schmidt direct sum property for finitely generated modules,
\item Any simple module in $\Mod(R(\beta))$ is finite-dimensional over $\k_0$ and has an indecomposable
finitely generated projective cover
(unique up to isomorphism),
\item there are finitely many simple modules in $\Rep(R(\beta))$
up to grade shifts and isomorphisms.
\ee
}\label{property of R(beta)-mod}
\eneq

We now consider the case $\beta=n\alpha_i$.
For $1 \le k < n$, let $\mathbf{b}_k\seteq \tau_k x_{k+1} \in R(n\al_i)$. Then, by 
a direct computation, we have 
\begin{equation} \label{eqn: braid b_s}
\begin{aligned}
& \mathbf{b}_r \mathbf{b}_s = \mathbf{b}_s \mathbf{b}_r \quad \text{ if } |r-s|>1, \\
& \mathbf{b}_r \mathbf{b}_{r+1} \mathbf{b}_r = \mathbf{b}_{r+1}\mathbf{b}_r\mathbf{b}_{r+1}, \quad
\text{ for } 1 \le r < n-1.
\end{aligned}
\end{equation}
Thus, for $w \in S_n$, $\mathbf{b}_w$ is well-defined.

We denote by $w[1,n]$  the longest element of the symmetric group $S_n$,  and set
\begin{equation} \label{eqn: b(w[1,n])}
\mathbf{b}(i^n)\seteq\mathbf{b}_{w[1,n]}.
\end{equation}
Then \eqref{eqn: braid b_s} implies
$$\mathbf{b}(i^n)^2=\mathbf{b}(i^n) \quad \text{ and } \quad
\mathbf{b}_k\mathbf{b}(i^n)=\mathbf{b}(i^n)\mathbf{b}_k=\mathbf{b}(i^n) \quad \text{ for } 1 \le k < n.$$

Set  \eq &&\Pi_i=\Pi^{\pa(i)}, \qquad \pi_i\seteq\pi^{\pa(i)}
\eneq  and define
\begin{equation}
 \begin{aligned}
 \ &[n]^\pi_i =\frac{ (\pi_iq_i)^n - q^{-n}_{i} }{ \pi_iq_i - q^{-1}_{i} },
 \ &[n]^\pi_i! = \prod^{n}_{k=1} [k]^\pi_i .
 \end{aligned}
\end{equation}
Recall that $q_i=q^{(\alpha_i|\alpha_i)/2}$.
The algebra $R(n\alpha_i)$ decomposes into the direct sum of  indecomposable
projective $\Z \times \Z_2$-graded modules as follows:
\eq\label{eq:divided}
&&R(n\alpha_i) \simeq [n]_i^\pi !  P(i^{n})=\soplus_{k=0}^{n-1}q_i^{1-n+2k}\Pi_i^{k}P(i^{n}),
\eneq
where
$$ P(i^{n})\seteq \Pi_i^{\frac{n(n-1)}{2}} R(n\alpha_i) \mathbf{b}(i^n)
                \left \langle \dfrac{n(n-1)}{4} (\alpha_i | \alpha_i) \right \rangle.$$
Note that $P(i^{n})$ is an indecomposable projective $\Z \times \Z_2$-graded module unique up to isomorphism and
 $\Z \times \Z_2$-grading shift.
 Note also that
 $R(n\alpha_i) \mathbf{b}(i^n)\simeq R(n\alpha_i)/\sum\limits_{k=1}^{n-1}R(n\alpha_i)\tau_k$.

On the other hand, there exists an irreducible $\Z\times \Z_2$-graded $R(n\alpha_i)$-module $L(i^n)$
which is unique up to isomorphism and
 $\Z \times \Z_2$-grading shift:
\begin{equation} \label{Eqn: Def of L(i^n)}
 L(i^n)\seteq \ind^{R(n\alpha_i)}_{\k[x_1] \ot \cdots \ot \k[x_n]} {\mathbf 1},
\end{equation}
where ${\mathbf 1}$ is the trivial $\k[x_1] \ot \cdots \ot \k[x_n]$-module
which is isomorphic to $\k_0$.
Hence $L(i^n)$ is the $R(n\alpha_i)$-module generated by the element
$u(i^n)$ of $\Z\times \Z_2$ degree 
$(0,0)$ with the defining relation $$x_ku(i^n)=0 \ (1\le k\le n), \
\  \k_su(i^n)=0 \  (s>0).$$

By Proposition \ref{Prop: PBW},
the $R(n\alpha_i)$-module $L(i^n)$ has a $\k_0$-basis
$$\set{ \tau_w \cdot u(i^n) }{ \ w \in S_n  }.$$
Set
$$ L_k\seteq \{ v \in L(i^n) \ | \ x_n^k \cdot v =0 \} \quad (k \ge 0).$$
Since $x_n$ anticommutes with all $x_i$ ($i=1,\ldots,n-1$) and
$\tau_j$ ($j=1,\ldots,n-2$),  $L_k$ has an
$R((n-1)\alpha_i)$-module structure. Moreover $L_k$ has a
$\k_0$-basis
$$\set{ \tau_w \tau_{n-1} \cdots \tau_{s} u(i^n) }{ w \in S_{n-1}, n-k+1\le s\le n  },$$
and $L_{n}=L(i^n)$. Thus we have a supermodule isomorphism
\begin{equation} \label{Eqn: Decom L(i^n) into L(i^n-1) L(1)}
L_k / L_{k-1} \simeq  \Pi^{k-1}_i L(i^{n-1}) \left\langle (1-k)(\alpha_i|\alpha_i) \right\rangle
\quad \text{ for } 1\le k \le n.
\end{equation}
Here the $\Z\times\Z_2$-grading shift is caused by
the ($\Z\times\Z_2$)-degree of $\tau_{n-1}\cdots\tau_{n-k+1}u(i^n)$.
Note that $L_k=x_n^{n-k}L(i^n)$ for $0 \le k \le n$.

\begin{lemma} \label{Lem: R(n alpha i) tau w[1,n]} Let $w[1,n]$ be the longest element of $ S_n$.
Then we have
$$R(n\alpha_i) \tau_{w[1,n]}R(n\alpha_i) = R(n\alpha_i).$$
\end{lemma}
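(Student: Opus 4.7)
The plan is to prove the identity by contradiction. Set $J\seteq R(n\alpha_i)\tau_{w[1,n]}R(n\alpha_i)$ and suppose $J$ were a proper two-sided ideal of $R(n\alpha_i)$. The goal is to derive that $\tau_{w[1,n]}$ must annihilate the simple module $L(i^n)$, contradicting the explicit basis of $L(i^n)$ recorded after~\eqref{Eqn: Def of L(i^n)}.

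First, by the decomposition~\eqref{eq:divided} combined with the Krull--Schmidt property in~\eqref{property of R(beta)-mod}, every indecomposable projective $R(n\alpha_i)$-module is a $\Z\times\Z_2$-grading shift of $P(i^n)$; passing to simple quotients via projective covers, $L(i^n)$ is the unique simple $R(n\alpha_i)$-module up to grading and parity shift, as indeed asserted right after~\eqref{Eqn: Def of L(i^n)}.

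Second, if $J\subsetneq R(n\alpha_i)$, then the quotient algebra $\ol R\seteq R(n\alpha_i)/J$ is nonzero and, by~\eqref{cond:k0}, inherits a $\Z$-grading that is bounded below and whose homogeneous components are finite-dimensional over the field $\k_0$. A standard graded Nakayama/Jacobson argument then supplies a simple graded $\ol R$-module, which pulls back to a simple $R(n\alpha_i)$-module annihilated by $\tau_{w[1,n]}$. By the uniqueness of simples recalled above, this module is a grading and parity shift of $L(i^n)$; since such shifts leave invariant whether a given algebra element acts as zero, we deduce $\tau_{w[1,n]}\cdot L(i^n)=0$. This contradicts $\tau_{w[1,n]}\cdot u(i^n)\ne 0$, which is immediate from the $\k_0$-basis $\{\tau_w u(i^n)\mid w\in S_n\}$ of $L(i^n)$.

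The main subtlety is extracting a simple graded module from the nonzero quotient $\ol R$: this is the graded analogue of the fact that any nonzero unital ring has a maximal left ideal, and it is valid here precisely because the grading is bounded below with finite-dimensional homogeneous components over the field $\k_0$, both guaranteed by~\eqref{cond:k0}. Everything else in the argument is bookkeeping about grading/parity shifts, which do not interfere with the vanishing or non-vanishing of $\tau_{w[1,n]}$-actions.
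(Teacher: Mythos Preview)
Your argument is correct under the standing hypothesis~\eqref{cond:k0} of this subsection, and it is a genuinely different route from the paper's. The paper gives a direct computational proof: writing $\tau_{w[1,n]}=\tau_{w[1,n-1]}\tau_{n-1}\cdots\tau_1$, it shows by descending induction on $a$ that
\[
\tau_{w[1,n-1]}\tau_{n-1}\cdots\tau_{a+1}\in R(n\alpha_i)\,\tau_{w[1,n-1]}\tau_{n-1}\cdots\tau_a\,R(n\alpha_i),
\]
by left-multiplying by $x_n$ and pushing it through the $\tau$'s using (R5), which produces the shorter product plus a term in the desired ideal. Your approach instead deduces the result from the representation theory: uniqueness of the simple $L(i^n)$ via~\eqref{eq:divided} and~\eqref{property of R(beta)-mod}, existence of a simple module over $R(n\alpha_i)/J$ from the graded Nakayama argument, and the explicit nonvanishing $\tau_{w[1,n]}u(i^n)\neq 0$ from the PBW basis.

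The key difference in what each buys: the paper's computation is purely algebraic and makes no use of~\eqref{cond:k0}, so the lemma holds over an arbitrary graded base ring $\k$. This matters, because the lemma is invoked later in the proof of Lemma~\ref{lem: integrability 1} (Section~7), where~\eqref{cond:k0} is \emph{not} in force. Your proof, by contrast, genuinely requires $\k_0$ to be a field with finite-dimensional graded pieces, both for~\eqref{eq:divided} and~\eqref{property of R(beta)-mod} and for the existence of a simple quotient of $R(n\alpha_i)/J$. So while your proof is valid for the lemma as stated in this subsection, it would not suffice for the later application without a separate reduction to the field case.
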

\begin{proof}
We have
$$ \tau_{w[1,n]}=\tau_{w[1,n-1]}\tau_{n-1}\tau_{n-2}\cdots \tau_{1}.$$
Hence by induction, it is enough to show that for $1\le a \le n-1$
\begin{align} \label{eqn: ind w[1,n]}
\tau_{w[1,n-1]}\tau_{n-1}\tau_{n-2}\cdots \tau_{a+1}
\in R(n\alpha_i)\tau_{w[1,n-1]}\tau_{n-1}\tau_{n-2}\cdots \tau_{a}R(n\alpha_i).
\end{align}
Note that
\begin{align*}
x_n \tau_{w[1,n-1]}\tau_{n-1}\tau_{n-2}\cdots \tau_{a}
& = \pm \tau_{w[1,n-1]}x_n\tau_{n-1}\tau_{n-2}\cdots \tau_{a} \\
& = \pm \tau_{w[1,n-1]}(\pm \tau_{n-1}x_{n-1}+1)\tau_{n-2}\cdots \tau_{a} \\
& = \pm \tau_{w[1,n-1]}\tau_{n-1}x_{n-1}\tau_{n-2}\cdots \tau_{a}\\
& \qquad \qquad  \vdots \\
& = \pm \tau_{w[1,n-1]}\tau_{n-1}\tau_{n-2}\cdots \tau_{a+1}x_{a+1}\tau_{a} \\
& = \pm \tau_{w[1,n-1]}\tau_{n-1}\tau_{n-2}\cdots \tau_{a+1}(\pm \tau_{a}x_a +1).
\end{align*}
Thus we have $\eqref{eqn: ind w[1,n]}$ and our assertion follows.
\end{proof}

\section{The superfunctors $E_i$, $F_i$ and $\overline{F}_i$}

In this section, we define the superfunctors $E_i$, $F_i$ and
$\overline{F}_i$ on $\Mod(R(\beta))$ and investigate their relations
among themselves. In the later sections, these relations  will
play an important role in proving
the categorification theorem for cyclotomic quiver Hecke
superalgebras.

{\em Recall that an $R(n)$-module means a $\Z$-graded $R(n)$-module.}

\subsection{Definitions of simple root superfunctors}

Let
\begin{align*}
& E_i\cl \Mod(R(\beta+\alpha_i)) \to \Mod(R(\beta)),\\
& F_i\cl \Mod(R(\beta)) \to \Mod(R(\beta+\alpha_i))
\end{align*}
be the superfunctors given by
\begin{align*}
 &E_i(N)=e(\beta,i)N \simeq e(\beta,i)R(\beta+\alpha_i)\otimes_{R(\beta+\alpha_i)}N \\
 &\hs{18ex}\simeq \Hom_{R(\beta+\alpha_i)}(R(\beta+\alpha_i)e(\beta,i),N), \\
 & F_i(M)=R(\beta+\alpha_i)e(\beta,i)\otimes_{R(\beta)}M
\end{align*}
for $N \in \Mod(R(\beta+\alpha_i))$ and $M \in \Mod(R(\beta))$.

Then $(F_i,E_i)$ is an adjoint pair; i.e.,
$\Hom_{R(\beta+\al_i)}(F_iM,N)\simeq\Hom_{R(\beta)}(M,E_iN)$.
 Let $n=|\beta|$. There are natural transformations:
\eqn
&& x_{E_i}\cl E_i \to \Pi_i q^{-2}_iE_i,\hs{20ex} x_{F_i}\cl F_i \to \Pi_i q^{-2}_iF_i,  \\
&&\tau_{E_{ij}}\cl E_iE_j \to \Pi^{\pa(i)\pa(j)} q^{(\alpha_i|\alpha_j)} E_jE_i,
\hs{6ex} \tau_{F_{ij}}\cl F_iF_j \to \Pi^{\pa(i)\pa(j)} q^{(\alpha_i|\alpha_j)} F_jF_i\\
&&\hs{40ex}\text{(recall that $q_i\seteq q^{(\al_i|\al_i)/2}$ and $\Pi_i\seteq\Pi^{\pa(i)}$)}
\eneqn
induced by
\bna
\item the left multiplication by $x_{n+1}$ on $e(\beta,i)N$ for $N \in \Mod(R(\beta+\alpha_i))$,
\item the right multiplication by $x_{n+1}$ on the kernel $R(\beta+\alpha_i)e(\beta,i)$ of the functor
$F_i$,
\item the left multiplication by $\tau_{n+1}$ on $e(\beta,i,j)N$
for $N \in \Mod(R(\beta+\alpha_i+\alpha_j))$,
\item the right multiplication by $\tau_{n+1}$ on the kernel $R(\beta+\alpha_i+\alpha_j)e(\beta,j,i)$
of the functor $F_iF_j$.
\end{enumerate}
By the adjunction, $\tau_{E_{ij}}$ induces  a natural
transformation
$$F_jE_i \to \Pi^{\pa(i)\pa(j)}q^{(\alpha_i|\alpha_j)} E_iF_j.$$
(See Theorem~\ref{Thm: Comm E_i F_j} for more detail.)

Let $\xi_n\cl R(n) \to R(n+1)$ be the algebra homomorphism given by
$$ \xi_n(x_k) = x_{k+1}, \quad \xi_n(\tau_\ell)=\tau_{\ell+1}, \quad
   \xi_n(e(\nu))= \sum_{i \in I} e(i,\nu) $$
for all $1 \le k \le n$, $1 \le \ell <n$ and $\nu \in I^n$.
We denote by $R^1(n)$ the image of $\xi_n$.

For each $i \in I$ and $\beta \in \mathtt{Q}^+$, let
$\overline{\F}_{i,\beta}\seteq R(\beta+\alpha_i)v(i,\beta)$ be the
$R(\beta+\alpha_i)$-supermodule
generated by
$v(i,\beta)$ of $\Z\times\Z_2$-degree $(0,0)$
with the defining relation
$e(i,\beta)v(i,\beta)=v(i,\beta)$. The supermodule
$\overline{\F}_{i,\beta}$ has an
$(R(\beta+\alpha_i),R(\beta))$-superbimodule structure whose right
$R(\beta)$-action is given by
$$ a v(i,\beta) \cdot b = a \xi_{n}(b) v(i,\beta) \quad \text{ for } a \in R(\beta+\alpha_i) \text{ and }
  b \in R(\beta).$$
In a similar way, we define  the $(R(n+1),R(n))$-superbimodule
structure on $R(n+1)v(1,n)$  by
\begin{align*}
a v(1,n) \cdot b= a \xi_{n}(b) v(1,n) \quad \text{for $a \in R(n+1)$ and $b \in R(n)$.}
\end{align*}
Hence $$R(n+1)v(1,n) \simeq \soplus_{i \in I,\;|\beta|=n}
R(\beta+\al_i)v(i,\beta).$$

Now, for each $i \in I$, we define the superfunctor
$$ \overline{F}_i\cl \Mod(R(\beta)) \to \Mod(R(\beta+\alpha_i)) \text{ by } N \mapsto
\overline{\F}_{i,\beta} \otimes_{R(\beta)}N.$$

\subsection{Functorial relations} We shall investigate the commutation relations for the superfunctors $E_i$,
$F_i$ and $\ol{F}_i$ $(i \in I)$.

\begin{proposition} \label{Prop: twist by tau n}
The homomorphism of $(R(n),R(n-1))$-superbimodules
$$ \tilde{\rho}\cl R(n)e(n-1,j)\tens_{R(n-1)}q^{-(\alpha_i|\alpha_j)}\Pi^{\pa(i)\pa(j)}e(n-1,i) R(n)
 \longrightarrow e(n,i)R(n+1)e(n,j)$$
given by
$$ x \otimes \pi^{\pa(i)\pa(j)} y \longmapsto x \tau_n y \quad (x \in R(n)e(n-1,j), \ y \in e(n-1,i) R(n))$$
induces an isomorphism of $(R(n),R(n))$-superbimodules
\eq \label{Eq: E_iF_i}
 &&\ba{l}\hs{1ex}\rho\cl  R(n) e(n-1,j) \tens_{R(n-1)}q^{-(\alpha_i|\alpha_j)}\Pi^{\pa(i)\pa(j)}e(n-1,i) R(n)
 \oplus e(n,i)R(n,1)e(n,j) \\[1ex]
\hs{35ex}
 \overset{\sim}{\longrightarrow} e(n,i)R(n+1)e(n,j).
 \ea
\eneq
\end{proposition}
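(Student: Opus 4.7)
The plan proceeds in four stages. First, I verify well-definedness: any $r\in R(n-1)\subset R(n+1)$ acts on positions $1,\ldots,n-1$ while $\tau_n$ only involves positions $n,n+1$, so $r$ commutes with $\tau_n$, and hence $(xr)\tau_n y=x\tau_n(ry)$ shows that $\tilde\rho$ descends to the balanced tensor product. Moreover, for $x=ae(n-1,j)$ and $y=e(n-1,i)b$, expanding the idempotents viewed in $R(n+1)$ and applying (R3) in the form $e(\mu,j,l)\tau_n=\tau_n e(\mu,l,j)$ collapses the sums to
\begin{equation*}
x\tau_n y \;=\; a\,\tau_n\,e(n-1,i,j)\,b \;=\; a\,e(n-1,j,i)\,\tau_n\,b,
\end{equation*}
which lies in $e(n,i)R(n+1)e(n,j)$ since $e(n,i)\,e(n-1,j,i)=e(n-1,j,i)$ and $e(n-1,i,j)\,e(n,j)=e(n-1,i,j)$.

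Next, I invoke the PBW decomposition \eqref{eqn: R(n+1) wrt R(n,1)} to write
\begin{equation*}
e(n,i)R(n+1)e(n,j) \;=\; e(n,i)R(n,1)e(n,j) \;\oplus\; \soplus_{a=1}^{n}e(n,i)R(n,1)\tau_{n}\tau_{n-1}\cdots\tau_{a}e(n,j).
\end{equation*}
The $a=n+1$ piece is the second summand of the statement, on which $\rho$ acts by the identity; hence it suffices to produce an $(R(n),R(n))$-bimodule isomorphism from the tensor product to $\soplus_{a=1}^{n}e(n,i)R(n,1)\tau_{n}\cdots\tau_{a}e(n,j)$.

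Applying \eqref{eqn: R(n+1) wrt R(n,1)} with $n$ replaced by $n-1$ gives $R(n)=\soplus_{a=1}^{n}R(n-1,1)\tau_{n-1}\cdots\tau_{a}$, hence $e(n-1,i)R(n)=\soplus_{a} R(n-1)\otimes e(i)\k[x_{n}]\cdot\tau_{n-1}\cdots\tau_{a}$ as a free left $R(n-1)$-module. Taking the balanced tensor with $R(n)e(n-1,j)$ over $R(n-1)$ yields
\begin{equation*}
R(n)e(n-1,j)\otimes_{R(n-1)}e(n-1,i)R(n) \;\simeq\; \soplus_{a=1}^{n} R(n)e(n-1,j)\otimes_{\k} e(i)\k[x_{n}]\cdot\tau_{n-1}\cdots\tau_{a}.
\end{equation*}
In parallel, pushing $e(n,i)$ through $\tau_{n}\cdots\tau_{a}$ via (R3) imposes the idempotent constraint $\nu_a=i$ on the $R(n,1)$-factor and identifies
\begin{equation*}
e(n,i)R(n,1)\tau_{n}\cdots\tau_{a}e(n,j) \;=\; R(n)e(n-1,j)\otimes_{\k}e(i)\k[x_{n+1}]\cdot\tau_{n}\cdots\tau_{a}.
\end{equation*}
Both sides are then free $\k$-modules with bases in a natural bijection via $x_n\leftrightarrow x_{n+1}$ and $\tau_{n-1}\cdots\tau_{a}\leftrightarrow\tau_{n}\tau_{n-1}\cdots\tau_{a}$.

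The final step is to verify that $\tilde\rho$ realises this bijection modulo $e(n,i)R(n,1)e(n,j)$, where the remainder is absorbed by the second direct summand of $\rho$. This reduces to iterating (R5) to obtain
\begin{equation*}
\tau_n x_n^k\, e(\mu) \;\equiv\; (-1)^{k\pa(\mu_n)\pa(\mu_{n+1})} x_{n+1}^k\, \tau_n\, e(\mu) \pmod{R(n,1)},
\end{equation*}
so that the leading contribution of $\tilde\rho(x\otimes y)$ matches the target under the bijection, while all lower-order correction terms lie in $e(n,i)R(n,1)e(n,j)$. A triangular argument with respect to the $a$-filtration of \eqref{eqn: R(n+1) wrt R(n,1)} then yields the bijectivity of $\rho$. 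The main technical obstacle is the careful bookkeeping of the parity twists arising from the odd cases of (R5) and (R8); these are exactly what the grading shift $q^{-(\alpha_i|\alpha_j)}\Pi^{\pa(i)\pa(j)}$ in the statement is designed to compensate for.
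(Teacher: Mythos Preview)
Your overall strategy is the same as the paper's: reduce to showing that $\tilde\rho$ induces an isomorphism onto the quotient $e(n,i)R(n+1)e(n,j)/e(n,i)R(n,1)e(n,j)$, decompose both sides as free modules using the PBW basis, and match the pieces by pushing $x_n$-powers across $\tau_n$. The paper packages the last step via the operator $\oS_n$ and the identity $\tau_a\cdots\tau_{n-1}f\tau_n\equiv\tau_a\cdots\tau_n(\oS_nf)$ modulo $R(n,1)$, which is exactly your (R5) iteration.

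There is, however, a genuine gap in your well-definedness argument. You assert that $r\in R(n-1)$ commutes with $\tau_n$ because they involve disjoint positions. This is false in the super setting: relations (R4) and (R7) give only \emph{skew}-commutation,
\[
r\,e(n-1,j)\tau_ne(n-1,i)\;=\;e(n-1,j)\tau_ne(n-1,i)\,\phi^{\pa(i)\pa(j)}(r),
\]
where $\phi$ is the parity involution. Your claimed identity $(xr)\tau_n y=x\tau_n(ry)$ is therefore wrong for odd $r$ when $\pa(i)\pa(j)=1$. The map still descends, but precisely because the right tensor factor carries the twist $\Pi^{\pa(i)\pa(j)}$: the balancing condition reads $xr\otimes\pi^{\pa(i)\pa(j)}y=x\otimes\pi^{\pa(i)\pa(j)}\phi^{\pa(i)\pa(j)}(r)y$, which matches the skew-commutation above. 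So the $\Pi^{\pa(i)\pa(j)}$ is not merely a bookkeeping device for the later (R5)/(R8) signs as you suggest at the end---it is already essential for the map to be well defined over $R(n-1)$. The paper makes this point explicitly in its first sentence of proof; you should do the same.
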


\begin{proof} The homomorphism $\tilde{\rho}$ is well-defined since we have
$$a \ e(n-1,j) \tau_n e(n-1,i) = e(n-1,j) \tau_n e(n-1,i) \; \phi^{\pa(i)\pa(j)}(a)
\ \text{for any $a \in R(n-1)$,}$$
where $\phi$ is the parity involution defined in \S\;\ref{sec:superalgebra}.
Thus it induces a homomorphism
$$ \rho'\cl  R(n) e(n-1,j) \tens_{R(n-1)}q^{-(\alpha_i|\alpha_j)}\Pi^{\pa(i)\pa(j)}e(n-1,i) R(n)
\to \dfrac{ e(n,i)R(n+1)e(n,j)}{e(n,i)R(n,1)e(n,j)}. $$ Thus it is
enough to show that $\rho'$ is an isomorphism. Since $$R(n)=
\bigoplus_{a=1}^n \tau_a \cdots \tau_{n-1} \k^I[x_n] \ot_\k
R(n-1),$$ we have \eqn
&&R(n) e(n-1,j) \ot_{R(n-1)} e(n-1,i) R(n) \\
&&\hs{10ex} = \left(\bigoplus_{a=1}^n \tau_a \cdots \tau_{n-1}
\k [x_ne(j)] \ot_\k R(n-1)\right)
\ot_{R(n-1)} e(n-1,i) R(n)\\
&&\hs{10ex} \simeq \bigoplus_{a=1}^n \tau_a \cdots \tau_{n-1} \k [x_ne(j)] \ot_\k e(n-1,i) R(n).
\eneqn
On the other hand,
\eqn
\dfrac{ e(n,i)R(n+1)e(n,j)}{e(n,i)R(n,1)e(n,j)}
&\simeq &\dfrac{\bigoplus_{a=1}^{n+1}e(n,i)\tau_a \cdots \tau_n \k [x_{n+1}e(j)] \ot_\k e(n-1,i)R(n)}
{e(n,i)\k [x_{n+1}e(j)] \ot_\k R(n)}\\
&\simeq&\bigoplus_{a=1}^{n}e(n,i)\tau_a \cdots \tau_n \k
[x_{n+1}e(j)] \ot_\k e(n-1,i)R(n). \eneqn
By $\eqref{Eqn: tau f}$,
for $f \in \k [x_{n}e(j)]$, $y \in e(n-1,i)R(n)$ and $1 \le a \le
n$, we have
\begin{align*}
\tau_{a}\cdots\tau_{n-1}f\tau_n y &
= \tau_{a}\cdots\tau_{n-1} \bl\tau_n (\oS_n f)+(f{}^{\partial_n })e_{n,n+1}\br y \\
&\equiv  \tau_{a}\cdots\tau_{n}(\oS_n f) y \quad \mod e(n,i)R(n,1)e(n,j).
\end{align*}
Hence $\rho'$ is right $R(n)$-linear and
$\rho'(\tau_{a}\cdots\tau_{n-1}f)=\tau_{a}\cdots\tau_{n}(\oS_n f)$.
Since $f\mapsto \oS_n f$ induces an isomorphism $\k[x_ne(j)] \simeq \k[x_{n+1}e(j)]$, our assertion follows.
\end{proof}

\begin{theorem} \label{Thm: Comm E_i F_j}
There exist natural isomorphisms
\begin{align*}
E_iF_j \overset{\sim}{\to}
\begin{cases}
 q^{-(\alpha_i|\alpha_j)}F_j \Pi^{\pa(i)\pa(j)} E_i & \text{ if } i \neq j, \\
 q^{-(\alpha_i|\alpha_i)}F_i \Pi_i E_i \oplus \k[t_i] \ot {\rm Id}  & \text{ if } i = j,
\end{cases}
\end{align*}
where $t_i$ is an indeterminate of $(\Z \times \Z_2)$-degree $\bl(\alpha_i|\alpha_i),\pa(i)\br$ and
 $$\k[t_i] \ot {\rm Id} \cl\Mod(R(\beta)) \to \Mod(R(\beta))$$ is the
functor defined by $M \mapsto \k[t_i] \ot M$.
\end{theorem}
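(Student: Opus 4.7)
The plan is to realize $E_iF_j$ as the functor of tensoring with an explicit $(R(\beta+\alpha_j-\alpha_i),R(\beta))$-superbimodule and then apply Proposition~\ref{Prop: twist by tau n} to decompose that superbimodule. Setting $n=|\beta|$, the definitions of $E_i$ and $F_j$ immediately give
\[
E_iF_j(M)\simeq e(\beta+\alpha_j-\alpha_i,i)\,R(\beta+\alpha_j)\,e(\beta,j)\otimes_{R(\beta)}M\quad\text{for }M\in\Mod(R(\beta)),
\]
so the superbimodule of interest is the direct summand of $e(n,i)R(n+1)e(n,j)$ cut out by imposing weight $\beta$ on the first $n$ letters.

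First, I would restrict the isomorphism of Proposition~\ref{Prop: twist by tau n} to this weight component. Its left-hand first summand becomes
\[
R(\beta+\alpha_j-\alpha_i)\,e(\beta-\alpha_i,j)\otimes_{R(\beta-\alpha_i)}q^{-(\alpha_i|\alpha_j)}\Pi^{\pa(i)\pa(j)}e(\beta-\alpha_i,i)\,R(\beta),
\]
which is exactly the superbimodule representing the superfunctor $q^{-(\alpha_i|\alpha_j)}F_j\Pi^{\pa(i)\pa(j)}E_i$, since $F_j$ commutes with $\Pi$ up to canonical isomorphism. The natural transformation realizing this identification is induced by the map $x\otimes\pi^{\pa(i)\pa(j)}y\mapsto x\tau_n y$ of Proposition~\ref{Prop: twist by tau n}.

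Next, I would analyze the residual summand $e(n,i)R(n,1)e(n,j)$ after the same weight restriction. Using $R(n,1)=R(n)\otimes\k^I[x_{n+1}]$ together with the orthogonality $e(i)e(j)=\delta_{ij}e(i)$ in $R(1)$, this summand vanishes when $i\neq j$, giving the first case of the theorem. When $i=j$, it becomes $R(\beta)\otimes\k[x_{n+1}]e(n,i)$ as an $(R(\beta),R(\beta))$-superbimodule; tensoring with $M$ and relabelling $x_{n+1}$ as a formal super-indeterminate $t_i$ of $(\Z\times\Z_2)$-degree $\bl(\alpha_i|\alpha_i),\pa(i)\br$ identifies this piece with the functor $\k[t_i]\otimes\mathrm{Id}$. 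The first summand for $i=j$ simplifies to $q^{-(\alpha_i|\alpha_i)}F_i\Pi_iE_i$ because $\pa(i)\pa(i)=\pa(i)$ forces $\Pi^{\pa(i)\pa(j)}=\Pi_i$, so together one obtains the second case.

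The main obstacle will be the parity and degree bookkeeping: I must verify that the internal parity twist $\Pi^{\pa(i)\pa(j)}$ sitting inside the tensor product in Proposition~\ref{Prop: twist by tau n} corresponds, under the bimodule-to-superfunctor dictionary of Section~\ref{Sec: supers}, to the composition $F_j\circ\Pi^{\pa(i)\pa(j)}\circ E_i$ with a single overall grading shift $q^{-(\alpha_i|\alpha_j)}$, and that the right $R(\beta)$-action on $R(\beta)\otimes\k[x_{n+1}]e(n,i)$, inherited from its embedding in $R(\beta+\alpha_j)$, realizes precisely the super-commutation signs needed to make the identification $\k[x_{n+1}]\otimes M\simeq(\k[t_i]\otimes\mathrm{Id})(M)$ functorial in the supercategorical sense.
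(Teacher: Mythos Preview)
Your proposal is correct and follows essentially the same route as the paper: both identify the kernel of $E_iF_j$ as $e(\beta+\alpha_j-\alpha_i,i)R(\beta+\alpha_j)e(\beta,j)$ and then apply the exact functor $\bullet\,e(\beta,j)$ to the isomorphism \eqref{Eq: E_iF_i} of Proposition~\ref{Prop: twist by tau n}, after which the first summand is recognized as the kernel of $q^{-(\alpha_i|\alpha_j)}F_j\Pi^{\pa(i)\pa(j)}E_i$ and the second summand $e(n,i)R(n,1)e(n,j)e(\beta,j)$ either vanishes ($i\neq j$) or yields $\k[t_i]\otimes\id$ ($i=j$). The parity and grading bookkeeping you flag is routine once the superbimodule dictionary of Section~\ref{Sec: supers} is invoked, and the paper does not elaborate on it either.
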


\begin{proof}
Note that the kernels of $F_jE_i$ and $E_iF_j$ on $\Mod(R(\beta))$ are given by
$$
\begin{aligned}
& R(\beta-\alpha_i+\alpha_j)
e(\beta-\alpha_i,j)\kern-2ex\tens_{R(\beta-\alpha_i)}\kern-2ex
e(\beta-\alpha_i,\alpha_i)R(\beta) \quad \text{ and } \\
& e(\beta+\alpha_j-\alpha_i,i)R(\beta+\alpha_j)e(\beta,j),
\end{aligned}
$$
respectively. Since
\begin{align*}
& R(n)e(n-1,j)\otimes_{R(n-1)}e(n-1,i)R(n)e(\beta) \\
& \qquad \qquad \qquad  \simeq R(\beta-\alpha_i+\alpha_j) e(\beta-\alpha_i,j) \tens_{R(\beta-\alpha_i)}
e(\beta-\alpha_i,\alpha_i)R(\beta)\quad\text{and} \\
& e(n,i)R(n+1)e(n,j)e(\beta,j)=e(\beta+\alpha_j-\alpha_i,i)R(\beta+\alpha_j)e(\beta,j),
\end{align*}
our assertion is obtained by applying the exact functor $ \ \bullet \ e(\beta,j)$ on $\eqref{Eq: E_iF_i}$.
\end{proof}

\begin{remark} For an $R(\beta)$-module $M$, the $R(\beta)$-module structure on $\k[t_i] \ot M$ is given by
$$ a (t^k_i \ot s) = t^k_i \ot \phi^{k\pa(i)}(a)s \quad \text{ for } a \in R(\beta), \ s \in M.$$
Thus we have an isomorphism of functors
$$\k[t_i] \ot {\rm Id} \simeq \bigoplus_{k \ge 0} (q^{(\alpha_i|\alpha_i)} \Pi_i)^k.$$
\end{remark}

\begin{remark}\label{rem:tx} The morphism $\k[t_i] \ot {\rm Id} \to E_iF_i$ intertwines
$$t_i\cl \k[t_i] \ot {\rm Id} \to q_i^{-2}\Pi_i \k[t_i] \ot {\rm Id} \quad
\text{ and } \quad E_i x_{F_i}\cl E_iF_i \to  q_i^{-2}E_i\Pi_i
F_i.$$ Furthermore, the morphism $E_iF_i \to q_i^{-2}F_i \Pi_i E_i$
intertwines
$$ E_i x_{F_i} \quad \text{ and } \quad q_i^{-2}x_{F_i} \Pi_i E_i.$$
\end{remark}

\begin{proposition} \label{Pro: R(n)R1(n)}
There exists an injective  $(R(n), R(n))$-bimodule
homomorphism
$$ \Phi\cl R(n)v(1,n-1) \otimes_{R(n-1)} R(n) \to R(n+1)v(1,n) $$
given by
$$x \, v(1,n-1) \otimes y \longmapsto x\,\xi_n(y)v(1,n).$$
Moreover, its image $R(n)R^1(n)$ has a decomposition
$$R(n)R^1(n) = \bigoplus_{a=2}^{n+1}R(n,1)\tau_n \cdots \tau_a =
                  \bigoplus_{a=0}^{n-1}\tau_a \cdots \tau_1 R(1,n). $$
\end{proposition}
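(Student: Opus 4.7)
My plan is to exhibit $\Phi$ as an injective bimodule homomorphism whose image admits both stated direct-sum descriptions, via an explicit identification of the source with a direct sum of coset-like pieces. The key technical input is the \emph{opposite} (left-multiplication) analog of \eqref{eqn: R(n+1) wrt R(n,1)}, namely
\[
R(m) = \bigoplus_{b=0}^{m-1}\tau_b\tau_{b-1}\cdots\tau_1\,R(1,m-1)\qquad\text{for every $m\ge 1$,}
\]
which follows by the obvious left-right symmetric variant of the argument yielding \eqref{eqn: R(n+1) wrt R(n,1)}.

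For well-definedness, after identifying $R(n+1)v(1,n)\cong R(n+1)$, the $R(n-1)$-balanced property of $\Phi$ reduces to the identity $\xi_{n-1}(z)=\xi_n(z)$ inside $R(n+1)$ for all $z\in R(n-1)$, where on one side $\xi_{n-1}(z)\in R(n)$ is further embedded via the natural inclusion $R(n)\hookrightarrow R(n+1)$, and on the other side $z\in R(n-1)\subset R(n)$ is mapped via $\xi_n$. A direct comparison on the generators $x_k$, $\tau_l$, and $e(\nu)$ of $R(n-1)$ shows that both routes embed $R(n-1)$ as acting on strands $2,\ldots,n$ of $R(n+1)$, hence coincide. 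The $(R(n),R(n))$-bimodule property, with the right $R(n)$-action on the target given by $\xi_n$, is then immediate from the formula.

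To compute source and image, the opposite PBW decomposition at level $n$, combined with $R(1,n-1)=\k^I[x_1]\otimes R^1(n-1)$ inside $R(n)$, presents $R(n)v(1,n-1)\cong R(n)$ as a free right $R(n-1)$-module via $\xi_{n-1}$, with $\k$-basis $\{\tau_b\cdots\tau_1\,x_1^k\,e(i)\}_{0\le b\le n-1,\,k\ge 0,\,i\in I}$. Consequently,
\[
R(n)v(1,n-1)\otimes_{R(n-1)}R(n)\cong\bigoplus_{b=0}^{n-1}\tau_b\cdots\tau_1\cdot\bl\k^I[x_1]\otimes_\k R(n)\br.
\]
The superalgebra embedding $\iota_{1,n}\cl R(1)\otimes R(n)\isoto R(1,n)\subset R(n+1)$, which sends $f\otimes g\mapsto f\cdot\xi_n(g)$, identifies the right-hand side with $\bigoplus_{b=0}^{n-1}\tau_b\cdots\tau_1\,R(1,n)\subset R(n+1)$, and along this chain $\Phi$ becomes the natural inclusion. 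Applying the opposite PBW decomposition at level $n+1$ yields $R(n+1)=\bigoplus_{a=0}^{n}\tau_a\cdots\tau_1\,R(1,n)$, so the image of $\Phi$ is a direct summand of $R(n+1)$. This simultaneously gives injectivity of $\Phi$ and the second stated decomposition of the image.

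For the first decomposition, the formula for $\Phi$ identifies its image with $R(n)R^1(n)\subset R(n+1)$. For every $a\in\{2,\ldots,n+1\}$ we have $\tau_n\cdots\tau_a=\xi_n(\tau_{n-1}\cdots\tau_{a-1})\in R^1(n)$, and $\k^I[x_{n+1}]\subset R(n)R^1(n)$ since $x_{n+1}=\xi_n(x_n)\in R^1(n)$ while each idempotent factors as $e(\mu)=\iota\bl e(\mu_1,\ldots,\mu_n)\br\cdot\xi_n\bl e(\mu_2,\ldots,\mu_{n+1})\br$ by orthogonality of the $e(\nu)$'s. Hence $R(n,1)\tau_n\cdots\tau_a\subset R(n)R^1(n)$ for each such $a$, giving one containment. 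The reverse follows by a graded Poincaré series count: the two excluded summands $R(n,1)\tau_n\cdots\tau_1$ and $\tau_n\cdots\tau_1\,R(1,n)$ are each free of rank one, respectively over $R(n,1)$ and $R(1,n)$, and the super-twist $R(n,1)\cong R(1,n)$ matches their graded dimensions with the same shift $\degZ(\tau_n\cdots\tau_1)$; this forces $\bigoplus_{a=2}^{n+1}R(n,1)\tau_n\cdots\tau_a$ to coincide with the image of $\Phi$. The main obstacle is the verification of the opposite PBW decomposition together with the idempotent factorization; both reduce to direct but parity-sensitive computations from the defining relations and the orthogonality of the $e(\nu)$.
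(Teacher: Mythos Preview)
Your approach is essentially the natural one and matches what the paper has in mind (it simply refers to \cite[Proposition~3.7]{KK11}).  The well-definedness of $\Phi$, the left PBW decomposition $R(m)=\bigoplus_{b=0}^{m-1}\tau_b\cdots\tau_1\,R(1,m-1)$, and the identification of the source with $\bigoplus_{b=0}^{n-1}\tau_b\cdots\tau_1\,R(1,n)$ are all correct, and this immediately gives injectivity together with the second decomposition of the image.

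The one genuine gap is your Poincar\'e series argument for the \emph{reverse} inclusion $R(n)R^1(n)\subset\bigoplus_{a=2}^{n+1}R(n,1)\tau_n\cdots\tau_a$.  You are arguing that from $R(n+1)=A\oplus B=C\oplus D$ with $C\subset A$ and $B\cong D$ as graded $\k$-modules, one may cancel to conclude $A=C$.  Concretely this yields $D\cong (A/C)\oplus B\cong (A/C)\oplus D$, and you would need a cancellation property for graded $\k$-modules.  Since $\k$ is only assumed to be an arbitrary graded commutative ring (the hypothesis \eqref{cond:k0} that $\k_0$ is a field is \emph{not} in force in this section), and $D$ has infinite $\k$-rank, such cancellation is not available in general.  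Even restricting to a single $\Z$-degree, $D_d$ need not be finitely generated over $\k_0$, so a rank comparison does not go through.

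The fix is immediate and avoids any counting.  Apply the PBW decomposition \eqref{eqn: R(n+1) wrt R(n,1)} at level $n$, namely $R(n)=\bigoplus_{a=1}^{n}R(n-1,1)\tau_{n-1}\cdots\tau_a$, and push forward by $\xi_n$:
\[
R^1(n)=\xi_n\bl R(n)\br=\sum_{a=1}^{n}\xi_n\bl R(n-1,1)\br\,\tau_n\cdots\tau_{a+1}.
\]
Since $\xi_n\bl R(n-1,1)\br$ is generated by $x_2,\ldots,x_{n+1}$ and $\tau_2,\ldots,\tau_{n-1}$, it lies in $R(n,1)$, and hence $R(n)\cdot\xi_n\bl R(n-1,1)\br\subset R(n,1)$.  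Therefore
\[
R(n)R^1(n)\subset\sum_{a=1}^{n}R(n,1)\,\tau_n\cdots\tau_{a+1}
=\bigoplus_{a'=2}^{n+1}R(n,1)\,\tau_n\cdots\tau_{a'},
\]
which, together with the containment you already proved, gives equality.  With this replacement your argument is complete.
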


\begin{proof} Since the proof is similar to that of \cite[Proposition 3.7]{KK11}, we omit it.
\end{proof}

By a direct calculation, for $1 \le k \le n$, $1 \le \ell \le n-1$
and $\nu \in I^\beta$, we can easily see that
\begin{align*}
& x_k e(\nu,i) \tau_n \cdots \tau_1 e(i,\nu)\equiv
    (-1)^{\pa(i)\pa(\beta)\pa(\nu_{k})}\tau_n \cdots \tau_1 x_{k+1}e(i,\nu), \\
&\tau_\ell e(\nu,i) \tau_n \cdots \tau_1 e(i,\nu) \equiv
 (-1)^{\pa(i)\pa(\beta)\pa(\nu_\ell)\pa(\nu_{\ell+1})}\tau_n \cdots \tau_1 \tau_{\ell+1}e(i,\nu),
    \\
& x_{n+1}e(\nu,i) \tau_n \cdots \tau_1 e(i,\nu)\equiv
    (-1)^{\pa(i)\pa(\beta)}\tau_n \cdots \tau_1 x_1 e(i,\nu) \quad {\rm mod} \ R(n)R^1(n).
\end{align*}
Note that
\begin{align*}
& \pa(\tau_n \cdots \tau_1 e(i,\nu)) = \pa(i)\pa(\beta),  \quad
\pa(x_k e( \nu,i))= \pa(\nu_k), \\
& \pa(\tau_\ell e(\nu,i)) = \pa(\nu_{\ell})\pa(\nu_{\ell+1}), \quad
 \pa(x_{n+1}e(\nu,i)) = \pa(i).
\end{align*}
Hence
\eq
&&
\ba{rl}a \tau_n \cdots \tau_1 e(i,\beta) \equiv& \tau_n \cdots \tau_1 e(i,\beta)
\phi^{\pa(i)\pa(\beta)}(\xi_n(a)),\\[1ex]
x_{n+1}e(\beta,i) \tau_n \cdots \tau_1 e(i,\beta)\equiv&
    (-1)^{\pa(i)\pa(\beta)}\tau_n \cdots \tau_1 x_1 e(i,\beta)\\[1ex]
    &\hs{5ex} \text{$\mod R(n)R^1(n)$
\quad for any $a \in R(\beta)$.}
\ea
 \label{Rmk: superlagebra iso of tensors}
\eneq

By Proposition \ref{Pro: R(n)R1(n)}, there exists a right $R(n)$-linear map
$$\varphi_1\cl R(n+1) v(1,n) \to R(n) \otimes
\k^I[x_{n+1}]$$ given by \begin{equation} \label{Eq: varphi 1}
\begin{aligned}
R(n+1)v(1,n) \to & {\rm Coker}(\Phi)
\cong \dfrac{\bigoplus_{a=1}^{n+1}R(n,1)\tau_n\cdots \tau_a}{\bigoplus_{a=2}^{n+1}R(n,1)\tau_n\cdots \tau_a}
\overset{\sim}{\gets} R(n,1)\tau_n\cdots \tau_1  \\
&\overset{\sim}{\gets} R(n,1) \cong R(n) \ot \k^I[x_{n+1}]   \cong  R(n) \ot \k^I[t].
\end{aligned}
\end{equation}
Similarly, there is another map $\varphi_2\cl R(n+1)v(1,n) \to \k^I[x_1]\otimes R(n)$ given by
\begin{equation} \label{Eq: varphi 2}
\begin{aligned}
R(n+1)v(1,n) \to & {\rm Coker}(\Phi)
\cong \dfrac{\bigoplus_{a=0}^{n}\tau_a\cdots \tau_1 R(1,n)}{\bigoplus_{a=0}^{n-1}\tau_a\cdots \tau_1R(1,n)}
\overset{\sim}{\gets} \tau_n\cdots \tau_1 R(1,n)  \\
& \overset{\sim}{\gets} R(1,n) \cong \k^I[x_{1}] \otimes R(n) \cong \k^I[t] \otimes R(n).
\end{aligned}
\end{equation}
By restricting $\Phi$ to
\begin{align*}
R(\beta+\alpha_j-\alpha_i)v(j,\beta-\alpha_i)\ot_{R(\beta-\alpha_i)} e(\beta-\alpha_i,i)R(\beta),
\end{align*}
which is the kernel of $\overline{F}_j E_i$ on $\Mod(R(\beta))$,
\eqref{Eq: varphi 1} and \eqref{Eq: varphi 2} can be rewritten as
\begin{align*}
& e(\beta+\alpha_j-\alpha_i,i)R(\beta+\alpha_j)v(j,\beta)  \\
& \hs{10ex}\To[\vphi_1] {\rm Coker}(\Phi)
\cong \dfrac{\bigoplus_{a=1}^{n+1}R(\beta+\alpha_j-\alpha_i,i)\tau_n\cdots \tau_a e(j,\beta)}
{\bigoplus_{a=2}^{n+1}R(\beta+\alpha_j-\alpha_i,i)\tau_n\cdots \tau_ae(j,\beta)} \\
& \hs{10ex}\isofrom \delta_{i,j}R(\beta,i)\tau_n\cdots \tau_1
 \isofrom \delta_{i,j}R(\beta,i) \\
& \hs{10ex} \simeq \delta_{i,j} R(\beta) \ot \k [x_{n+1}e(i)]   \simeq  \delta_{i,j} R(\beta) \ot \k[t_i]
\end{align*}
and
\begin{align*}
& e(\beta+\alpha_j-\alpha_i,i)R(\beta+\alpha_j)v(j,\beta) \\
& \hs{10ex}\To[\varphi_2] {\rm Coker}(\Phi)
\cong \dfrac{\bigoplus_{a=0}^{n}e(\beta+\alpha_j-\alpha_i,i)\tau_a\cdots \tau_1 R(j,\beta)}
{\bigoplus_{a=0}^{n-1}e(\beta+\alpha_j-\alpha_i,i)\tau_a\cdots \tau_1R(j,\beta)} \\
& \hs{10ex}\isofrom\delta_{i,j}\tau_n\cdots \tau_1 R(i,\beta)
 \isofrom \delta_{i,j}R(i,\beta) \\
& \hs{10ex}\simeq \delta_{i,j} \k [x_{1}e(i)] \otimes R(\beta) \simeq \delta_{i,j} \k[t_i] \otimes R(\beta).
\end{align*}

Therefore by \eqref{Rmk: superlagebra iso of tensors}, $\varphi_1$
and $\varphi_2$ coincide and we obtain:

\begin{theorem} \label{Thm: Comm E_i bar F_j }
\bnum
\item There is a natural isomorphism
 $$ \overline{F}_j E_i \overset{\sim}{\to} E_i \overline{F}_j \quad \text{ for } i \neq j.$$
\item There is an exact sequence in $\Mod(R(\beta))$
 $$ 0\to \overline{F}_i E_i M \to E_i \overline{F}_i M \to
 \Pi^{\pa(i)\pa(\beta)}q^{-(\alpha_i | \beta )} \k[t_i] \ot M  \to 0,$$
 which is functorial in $M\in\Mod(R(\beta))$. Here $t_i$ is an indeterminate of $(\Z \times \Z_2)$-degree
 $((\alpha_i|\alpha_i),\pa(i))$.
\end{enumerate}
\end{theorem}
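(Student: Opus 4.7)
The plan is to derive both parts of the theorem at once by tensoring with $M$ a single short exact sequence of superbimodules obtained from Proposition~\ref{Pro: R(n)R1(n)}. Setting $n=|\beta|$, that proposition together with the decomposition $R(n+1) = R(n)R^1(n) \oplus R(n,1)\tau_n\cdots\tau_1$ stated right after it yields the short exact sequence of $(R(n+1),R(n))$-superbimodules
\begin{equation*}
0\to R(n)v(1,n-1)\otimes_{R(n-1)}R(n)\xrightarrow{\,\Phi\,}R(n+1)v(1,n)\to R(n,1)\tau_n\cdots\tau_1\to 0,
\end{equation*}
in which the quotient is identified with $R(n,1)\tau_n\cdots\tau_1$ as in \eqref{Eq: varphi 1}--\eqref{Eq: varphi 2}.

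Next, I would multiply this sequence on the left by $e(\beta+\alpha_j-\alpha_i,i)$ and on the right by $e(j,\beta)$ (the right action being via $\xi_n$). The first term becomes the kernel of the superfunctor $\overline{F}_jE_i$, the middle term becomes the kernel of $E_i\overline{F}_j$, and the cokernel becomes $\delta_{i,j}R(\beta,i)\tau_n\cdots\tau_1$. Using \eqref{Rmk: superlagebra iso of tensors}, the latter is identified, as an $(R(\beta),R(\beta))$-superbimodule, with $\Pi^{\pa(i)\pa(\beta)}q^{-(\alpha_i|\beta)}(R(\beta)\otimes\k[t_i])$: the parity shift $\Pi^{\pa(i)\pa(\beta)}$ arises from the sign $\phi^{\pa(i)\pa(\beta)}$ in that relation, and the grading shift $q^{-(\alpha_i|\beta)}$ is the common $\Z$-degree of the elements $\tau_n\cdots\tau_1 e(i,\nu)$ for $\nu\in I^\beta$.

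The cokernel is free (hence flat) as a right $R(\beta)$-module, so tensoring the restricted sequence with $M$ over $R(\beta)$ preserves exactness. This produces the exact sequence
\begin{equation*}
0\to\overline{F}_jE_iM\to E_i\overline{F}_jM\to\delta_{i,j}\Pi^{\pa(i)\pa(\beta)}q^{-(\alpha_i|\beta)}\k[t_i]\otimes M\to 0,
\end{equation*}
natural in $M$. For $i\neq j$ the rightmost term vanishes, giving (i); for $i=j$ this is exactly the sequence in (ii).

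The only delicate step is the parity and grading bookkeeping in the bimodule identification of the cokernel, and this is handled entirely by \eqref{Rmk: superlagebra iso of tensors} together with a short degree count of $\tau_n\cdots\tau_1e(i,\nu)$. Once these shifts are correct, everything else is routine.
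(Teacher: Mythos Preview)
Your approach is essentially the same as the paper's: both use Proposition~\ref{Pro: R(n)R1(n)} to realize $\overline{F}_jE_i$ as a sub-bimodule of the kernel of $E_i\overline{F}_j$, identify the cokernel via \eqref{Eq: varphi 1}--\eqref{Eq: varphi 2}, and invoke \eqref{Rmk: superlagebra iso of tensors} to pin down the bimodule structure (parity and degree shifts) on the quotient. One small inaccuracy: the displayed sequence is not one of $(R(n+1),R(n))$-superbimodules, since the first term carries only a left $R(n)$-action and the image of $\Phi$ is not $R(n+1)$-stable; it is an $(R(n),R(n))$-superbimodule sequence, which is all you need after cutting down by the idempotents. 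Your explicit mention of flatness of the cokernel to justify exactness after $\otimes_{R(\beta)}M$ is a point the paper leaves implicit.
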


\section{Crystal structure and strong perfect bases}

In this section, we will show that we can choose a set of
irreducible $R(\beta)$-modules ($\beta \in \mathtt{Q}^+$) which
gives a strong perfect basis of $[\Rep(R(\beta))]$. We will also
show that irreducible modules are always isomorphic to their parity
changes.
 To prove these facts, we
need to employ the categorical crystal theories which are developed
in \cite{KL1,K05,LV09}. However, in the quiver Hecke superalgebra
case, we can use the arguments similar to those given in
\cite[Section 3.2]{KL1} with slight modifications. Therefore we only
state the required results  (I1)--(I4) below, and we will focus on
the basic properties of perfect bases.

\medskip
In this section, we assume \eqref{cond:k0}; i.e., $\k_0$ is a field
and the $\k_i$'s are finite-dimensional over $\k_0$.

\medskip
For $M \in \Rep(R(\beta))$ and $i\in I$, define
\begin{equation} \label{eqn: ceystal operators}
\begin{aligned}
&\Delta_{i^k} M = e( \beta- k\alpha_i,i^k) M \in \Rep(R(\beta-k\alpha_i,k\alpha_i)), \\
& \varepsilon_i(M) = \max\{ k \ge 0 \mid \Delta_{i^k} M \ne 0 \}, \\
& E_i(M) = e(\beta-\alpha_i,i)M \in \Rep(R(\beta-\alpha_i)), \\
& F_i'(M) = \ind_{\beta,\alpha_i}(M \bt L(i)) \in \Rep(R(\beta+\alpha_i)), \\
& \tilde{e}_i(M) = \soc(E_i(M)) \in \Rep(R(\beta-\alpha_i)), \\
& \tilde{f}_i(M) = \hd(F_i' M) \in \Rep(R(\beta+\alpha_i)).
\end{aligned}
\end{equation}
Here, $\soc(M)$ means the {\it socle} of $M$, the largest semisimple subobject of $M$ and
$\hd(M)$ means the {\it head} of $M$, the largest semisimple quotient of $M$.
We set $\eps_i(M)=-\infty$ for $M=0$.

Then we have the following statements.
\be[{({I}1)}]
\item If $M$ is an irreducible $R(\beta)$-module and $\eps_i(M)>0$, then
$\te_iM$ is irreducible.
\item If $M$ is an irreducible $R(\beta)$-module, then
$\tilde{f}_iM$ is irreducible.
\item Let $M$ be an irreducible $R(\beta)$-module
and $\ve=\ve_i(M)$. Then $\Delta_{i^\ve}M$ is isomorphic to $N \bt
L(i^\ve)$ for some irreducible $R(\beta-\ve\alpha_i)$-module $N $
with $\ve_i(N)=0$. Moreover, $N\simeq\te_i^\eps(M)$.\label{I1}
\item Let $M$ be an irreducible $R(\beta)$-module. Then $\tilde{e}_i\tilde{f}_iM \simeq M$.
If $\ve_i(M)>0$, then we have $\tilde{f}_i\tilde{e}_iM \simeq
M$.\label{I4} \ee

\begin{lemma}
Let $z$ be an element of $[\Rep(R(\beta))]$ such that $[E_i^k]z=0$.
Here $[E_i^k]$ is the map from $[\Rep(R(\beta))]$ to
$[\Rep(R(\beta-k\al_i))]$ induced by the exact functor $E_i^k$. Then
$z$ is a linear combination of $[M]$'s, where $M$ are irreducible
$R(\beta)$-modules with $\eps_i(M)<k$.
\end{lemma}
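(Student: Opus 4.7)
The plan is to expand $z$ in the basis of simples, introduce the $\eps_i$-filtration, and derive a contradiction by isolating a leading coefficient. By the Krull--Schmidt property recorded in \eqref{property of R(beta)-mod}, fix one representative for each isomorphism class of simple $R(\beta)$-module up to $\Z\times\Z_2$-grading shift and write $z=\sum_{M}c_M[M]$ with $c_M\in\A^\pi$, only finitely many of them nonzero. Split the sum as $z=z_{<k}+z_{\geq k}$ according to whether $\eps_i(M)<k$ or $\eps_i(M)\geq k$. Since $E_i^k M=\Delta_{i^k}M=0$ whenever $\eps_i(M)<k$, one has $[E_i^k]z_{<k}=0$, hence $[E_i^k]z_{\geq k}=0$; it suffices to prove $z_{\geq k}=0$.

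The heart of the argument is the following structural claim, to be established for each simple $M$ with $\eps_i(M)=\varepsilon\geq k$: every composition factor $L$ of $E_i^k M$ satisfies $\eps_i(L)\leq \varepsilon-k$, and up to $\Z\times\Z_2$-shifts the unique composition factor attaining equality is $\te_i^k M$, appearing with nonzero $\A^\pi$-multiplicity $\gamma_M$. The upper bound comes from the identity $\Delta_{i^{\varepsilon-k+1}}(E_i^k M)=\Delta_{i^{\varepsilon+1}}M=0$, valid because of the idempotent computation $e(\beta-(\varepsilon+1)\al_i,i^{\varepsilon-k+1})\cdot e(\beta-k\al_i,i^k)=e(\beta-(\varepsilon+1)\al_i,i^{\varepsilon+1})$ together with $\eps_i(M)=\varepsilon$. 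For uniqueness, compare $\Delta_{i^{\varepsilon-k}}(E_i^k M)\simeq \Delta_{i^\varepsilon}M\simeq \te_i^\varepsilon M\bt L(i^\varepsilon)$---the last isomorphism by (I3)---with the fact that any composition factor $L$ of $E_i^k M$ achieving $\eps_i(L)=\varepsilon-k$ also satisfies $\Delta_{i^{\varepsilon-k}}L\simeq \te_i^{\varepsilon-k}L\bt L(i^{\varepsilon-k})$ by (I3); matching composition factors at the $R(\beta-\varepsilon\al_i)$-level forces $\te_i^{\varepsilon-k}L\simeq \te_i^\varepsilon M=\te_i^{\varepsilon-k}(\te_i^k M)$, and iterated (I4) then yields $L\simeq \te_i^k M$ up to shifts. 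Nonvanishing of $\gamma_M$ follows from the adjunction $\Hom(\te_i^k M,E_i^k M)\simeq \Hom(F_i^k\te_i^k M,M)$: by (I2) and iterated (I4), $\tf_i^k\te_i^k M\simeq M$ is a quotient of $F_i^k\te_i^k M$, providing a nonzero element in this Hom space and hence an embedding $\te_i^k M\hookrightarrow E_i^k M$.

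Granting the structural claim, suppose for contradiction $z_{\geq k}\neq 0$ and set $\varepsilon\seteq\max\set{\eps_i(M)}{c_M\neq 0}\geq k$; choose a simple $M$ with $\eps_i(M)=\varepsilon$ and $c_M\neq 0$, and inspect the $\A^\pi$-coefficient of $[\te_i^k M]$ in $[E_i^k]z_{\geq k}=0$. For any other representative $M'$ contributing nontrivially to this coefficient, the upper bound in the claim forces $\varepsilon-k\leq \eps_i(M')-k$, so $\eps_i(M')=\varepsilon$ by maximality of $\varepsilon$, and the uniqueness statement then forces $M'\simeq M$ up to shifts, contradicting the choice of distinct representatives. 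Hence only the diagonal term survives, yielding $c_M\gamma_M=0$ in $\A^\pi$, a contradiction since $c_M\neq 0$ and $\gamma_M\neq 0$. The main obstacle is establishing the structural claim, in particular the uniqueness and the nonvanishing of the multiplicity of $\te_i^k M$ at the extremal $\eps_i$-value; once the compatibility $\Delta_{i^{\varepsilon-k}}\circ E_i^k=\Delta_{i^\varepsilon}$ is unwound, the remainder follows cleanly from (I1)--(I4), with extra care to track the $\Z\times\Z_2$-grading shifts intrinsic to the super setting.
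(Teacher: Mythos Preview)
Your argument is correct in outline but substantially more elaborate than the paper's, and the last step has a gap. The paper does not apply $E_i^k$ and analyse the resulting $\eps_i$-filtration. Instead it writes $z=\sum a_M[M]$ with $a_M\in\Z$ and $M$ ranging over \emph{all} isomorphism classes, sets $\ell=\max\{\eps_i(M):a_M\neq0\}$, and applies $E_i^\ell$. For $\eps_i(M)=\ell$, statement (I3) gives $\Delta_{i^\ell}M\simeq\te_i^\ell M\bt L(i^\ell)$ on the nose, so $[E_i^\ell M]$ is the graded dimension of $L(i^\ell)$ times $[\te_i^\ell M]$; since $E_i^\ell=E_i^{\ell-k}E_i^k$ and $M\mapsto\te_i^\ell M$ is injective by (I4), one obtains a contradiction immediately. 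No analysis of intermediate composition factors is needed. Your route proves a sharper intermediate statement (a description of the top $\eps_i$-layer of $E_i^kM$ for arbitrary $k\le\eps_i(M)$), which is essentially the content of Proposition~\ref{Prop: strong perfect of irr modules}, but it is unnecessary here.

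The gap is at the very end: from $c_M\gamma_M=0$ you conclude a contradiction because both factors are nonzero, but $\A^\pi=\Z[q,q^{-1},\pi]/(\pi^2-1)$ is \emph{not} an integral domain (e.g.\ $(1+\pi)(1-\pi)=0$). Relatedly, writing $z=\sum c_M[M]$ with $c_M\in\A^\pi$ presumes the simples up to $\Z\times\Z_2$-shift form a free $\A^\pi$-basis, which you have not established at this stage; in fact Theorem~\ref{Thm: Pi-invariant}, proved only afterwards using this very lemma, shows $\Pi M\simeq M$, so they do \emph{not}. The repair is easy: either work over $\Z$ with all isomorphism classes as the paper does, or note that comparing $\Delta_{i^{\varepsilon-k}}(E_i^kM)\simeq\te_i^\varepsilon M\bt L(i^\varepsilon)$ with $\gamma_M\cdot[\te_i^\varepsilon M\bt L(i^{\varepsilon-k})]$ yields $\gamma_M\cdot\operatorname{gdim}L(i^{\varepsilon-k})=\operatorname{gdim}L(i^\varepsilon)$, and the right-hand side is a non-zero-divisor in $\A^\pi$ (its specialisations at $\pi=\pm1$ are both nonzero).
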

\begin{proof}
Write $z=\sum a_M[M]$,
where $a_M\in \Z$ and $M$ ranges over the set of isomorphic classes of irreducible $R(\beta)$-modules.
Let $\ell$ be the largest $\eps_i(M)$ with $a_M\not=0$.
 Then by (I\ref{I1}),
$[E_i^\ell]z=\dim
L(i^\ell)\sum\limits_{\eps_i(M)=\ell}a_M[\te_i^\ell M]$. Hence if
$\ell\ge k$, then $[E_i^\ell]z=0$, which is a contradiction. Hence
we obtain the desired result.
\end{proof}

\begin{proposition} \label{Prop: strong perfect of irr modules}
Let $M$ be an irreducible module in $\Rep(R(\beta))$. Assume that
$\ve\seteq\ve_i(M)>0$. Then we have
\begin{equation*}
[E_i M] = \pi_i^{1-\ve}q^{1-\ve}_i[\ve]^\pi_i [\tilde{e}_iM] + \sum_{k} [N_k],
\end{equation*}
where $N_k$ are irreducible modules with $\ve_i(N_k) < \ve_i(\tilde{e}_iM)=\ve-1$.
\end{proposition}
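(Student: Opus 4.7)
The plan is to extract the leading coefficient of $[E_i M]$ by applying the exact functor $\Delta_{i^{\ve-1}}$ and using the identity $\Delta_{i^{\ve-1}}(E_i M) = \Delta_{i^\ve}(M)$, which reduces the computation to the well-understood structure of $\Delta_{i^\ve} M$ provided by (I1). Throughout I would work in the Grothendieck group of $R(\beta-\ve\alpha_i)\otimes R((\ve-1)\alpha_i)$, where $L(i^\ve)$ is viewed as an $R((\ve-1)\alpha_i)$-module by restriction.

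First I would bound the composition factors: since $\Delta_{i^{k-1}}(E_i M) = e(\beta-k\alpha_i,i^k)M = \Delta_{i^k}(M)$ for every $k$, the vanishing $\Delta_{i^{\ve+1}}(M) = 0$ combined with exactness of $\Delta_{i^\ve}$ forces every composition factor $N$ of $E_i M$ to satisfy $\ve_i(N) \le \ve-1$. Next I would identify those composition factors attaining $\ve_i(N) = \ve-1$. For such an $N$, (I1) gives $\Delta_{i^{\ve-1}}(N) \simeq \te_i^{\ve-1} N \bt L(i^{\ve-1})$, which is an irreducible outer tensor product appearing as a subquotient of $\Delta_{i^{\ve-1}}(E_i M) = \te_i^\ve M \bt L(i^\ve)$ restricted to $R(\beta-\ve\alpha_i)\otimes R((\ve-1)\alpha_i)$. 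Using the filtration \eqref{Eqn: Decom L(i^n) into L(i^n-1) L(1)}, every composition factor of $L(i^\ve)$ as an $R((\ve-1)\alpha_i)$-module is a $\Z\times\Z_2$-shift of $L(i^{\ve-1})$; hence by uniqueness of outer tensor factorization of irreducibles I would obtain $\te_i^{\ve-1} N \simeq \te_i^\ve M$ up to shifts, and then iterated (I4) (vacuous when $\ve=1$) inverts $\te_i^{\ve-1}$ to give $N \simeq \te_i M$ with appropriate shifts.

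Writing $[E_i M] = \alpha\,[\te_i M] + \sum_k [N_k]$ with $\alpha \in \A^\pi$ and $\ve_i(N_k) < \ve-1$, and applying the exact $[\Delta_{i^{\ve-1}}]$, the tail vanishes and I would get
\[ \alpha\,[\te_i^\ve M \bt L(i^{\ve-1})] = [\Delta_{i^{\ve-1}}(E_i M)] = [\te_i^\ve M] \cdot [L(i^\ve)]_{R((\ve-1)\alpha_i)}. \]
From the filtration \eqref{Eqn: Decom L(i^n) into L(i^n-1) L(1)}, in $[\Rep(R((\ve-1)\alpha_i))]$,
\[ [L(i^\ve)] = \sum_{k=1}^{\ve} \pi_i^{k-1} q_i^{2-2k}\,[L(i^{\ve-1})] = \pi_i^{1-\ve} q_i^{1-\ve}\,[\ve]_i^\pi\,[L(i^{\ve-1})], \]
the last equality being a reindexing using $[\ve]_i^\pi = \sum_{k=0}^{\ve-1} \pi_i^k q_i^{2k-\ve+1}$. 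Cancelling the nonzero class $[\te_i^\ve M \bt L(i^{\ve-1})]$ of the irreducible outer tensor product yields $\alpha = \pi_i^{1-\ve} q_i^{1-\ve}\,[\ve]_i^\pi$, as desired. The main obstacle will be bookkeeping the $\Z$-grading and $\Z_2$-parity shifts so that the filtration \eqref{Eqn: Decom L(i^n) into L(i^n-1) L(1)} telescopes cleanly into the factor $\pi_i^{1-\ve} q_i^{1-\ve}\,[\ve]_i^\pi$, and that the shift of $\te_i M$ inferred in Step~2 is consistent with this leading coefficient; beyond this, the three ingredients fit together essentially by exactness.
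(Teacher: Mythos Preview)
Your proof is correct and follows essentially the same strategy as the paper. The paper streamlines your Steps~1--2 by isolating them into a preceding lemma (if $[E_i^k]z=0$ then $z$ is a $\Z[q,q^{-1},\pi]$-combination of classes $[M]$ with $\ve_i(M)<k$, proved using (I3) and the injectivity of $\te_i^\ell$ on irreducibles with $\ve_i=\ell$ from (I4)); it then simply computes $[E_i^{\ve-1}]$ of both $[E_iM]$ and $\pi_i^{1-\ve}q_i^{1-\ve}[\ve]_i^\pi[\te_iM]$, observes they agree via (I3) applied to $M$ and to $\te_iM$ together with the filtration of $L(i^\ve)$, and invokes the lemma. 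Your version unpacks that lemma inline---you explicitly bound $\ve_i$ of the composition factors and identify those with $\ve_i=\ve-1$ as shifts of $\te_iM$ via outer-tensor uniqueness and (I4)---but the underlying content is identical.
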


\begin{proof}
By (I\ref{I1}), we have $$\Delta_{i^\ve}M \simeq \te_i^\eps M \bt L(i^\ve). $$
Similarly, we have 
$$\Delta_{i^{\eps-1}}\te_i M\simeq  \te_i^\eps M \bt L(i^{\eps-1}).$$
On the other hand, \eqref{Eqn: Decom L(i^n) into L(i^n-1) L(1)}
implies that
$$[L(i^\ve)]= \pi_i^{1-\ve}q^{1-\ve}_i[\ve]^\pi_i [L(i^{\ve-1})]$$
as an element of  $[\Rep(R(0))]$. Thus we obtain
$$[E_i^{\eps-1}]\bl[E_iM]-\pi_i^{1-\ve}q^{1-\ve}_i[\ve]^\pi_i[\te_iM]\br=0.$$
Hence the desired result follows from the preceding lemma.
\end{proof}

By a similar argument to the one in \cite[Corollary 3.19]{KL1}, we
have the following lemma.
\begin{lemma}\label{lem:abs_irr}
For any irreducible $R(\beta)$-module $M$,
$$ \k_0 \simeq {\rm End}_{R(\beta)}(M).$$
\end{lemma}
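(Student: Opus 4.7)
The plan is to argue by induction on $|\beta|$, in parallel with \cite[Corollary~3.19]{KL1}. The base case $|\beta|=0$ is immediate: since $R(0)\simeq \k=\soplus_{n\ge 0}\k_n$ with $\k_0$ a field and $M$ is finite-dimensional over $\k_0$ by \eqref{property of R(beta)-mod}(ii), a graded Nakayama-type argument (using that the grading is bounded below) forces $\k_{>0}M=0$, so $M$ is a one-dimensional $\k_0$-vector space concentrated in a single degree, whence $\End_{R(0)}(M)=\k_0$.

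For the inductive step, set $D\seteq\End_{R(\beta)}(M)$. Schur's lemma (in the graded setting: any non-zero degree-preserving endomorphism of a graded simple module is an isomorphism) shows $D$ is a division ring, and $D$ is finite-dimensional over $\k_0$ since so is $M$; as $\k_0\cdot\id_M\subseteq D$, it remains to prove the reverse inclusion. Because $\beta\neq 0$, we may choose $\nu\in I^\beta$ with $e(\nu)M\neq 0$; setting $i\seteq \nu_{|\beta|}$, we have $e(\nu)M\subseteq e(\beta-\alpha_i,i)M=E_iM$, so $\eps_i(M)\ge 1$. By (I1), the $R(\beta-\alpha_i)$-module $\te_iM=\soc(E_iM)$ is irreducible, and the induction hypothesis yields $\End_{R(\beta-\alpha_i)}(\te_iM)=\k_0$.

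Now let $f\in D$. Because $f$ is $R(\beta)$-linear it commutes with the idempotent $e(\beta-\alpha_i,i)$, so $f$ stabilizes $E_iM$ and restricts there to an $R(\beta-\alpha_i)$-endomorphism. If $f\neq 0$, then $f$ is an automorphism of $M$ by Schur, hence an automorphism of $E_iM$, and therefore preserves the socle $\te_iM$. The induction hypothesis then gives $f|_{\te_iM}=c\cdot\id_{\te_iM}$ for some $c\in\k_0^\times$. The key trick is then to consider $g\seteq f-c\cdot\id_M\in D$: one has $g|_{\te_iM}=0$, and if $g\neq 0$ then $g$ would be an automorphism of $M$ (again by Schur), contradicting $\te_iM\neq 0\subseteq\ker g$. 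Hence $g=0$, i.e.\ $f=c\cdot\id_M$, which closes the induction.

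The only non-routine point is verifying that the super/parity structure does not obstruct this restriction-to-socle argument, i.e.\ that $R(\beta)$-linearity correctly induces $R(\beta-\alpha_i)$-linearity on $E_iM$ and that automorphisms preserve the socle taken in $\Rep(R(\beta-\alpha_i))$. Both are immediate from the definitions in Section \ref{Sec: Quiver Hecke superalgebra}, since morphisms in $\Rep(R(\beta))$ are required only to preserve $\Z$-grading (not the full $\Z\times\Z_2$-grading) and no parity signs enter; this is why the classical KL argument transfers to the super setting without modification.
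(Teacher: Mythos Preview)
Your proposal is correct and follows essentially the same approach as the paper: the paper does not give a proof but merely refers to \cite[Corollary~3.19]{KL1}, and what you have written is precisely a reconstruction of that inductive argument (with the routine checks that the $\Z$-graded super setting causes no trouble). There is nothing to add.
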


Now we are ready to prove the following fundamental result on irreducible modules over
quiver Hecke superalgebras.
\begin{theorem} \label{Thm: Pi-invariant}
 For any irreducible $R(\beta)$-module $M$, we have
$$\Pi M\simeq M.$$
In particular, $\Pi$ acts as the identity on $[\Rep(R(\beta))]$ and
$[\Proj(R(\beta))]$.
\end{theorem}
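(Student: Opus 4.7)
The plan is to prove $\Pi M\simeq M$ for irreducible $M$ by induction on $|\beta|$ using the categorical crystal operators. The base case $|\beta|=0$ is immediate since $R(0)\simeq\k$ is concentrated in even parity, so the parity involution $\phi_\k$ is the identity and $\Pi$ acts trivially on the unique simple up to grading shift.

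For the inductive step, suppose $M$ is an irreducible $R(\beta)$-module with $|\beta|>0$. First I would observe that some $\eps_i(M)>0$: since $M=\soplus_\nu e(\nu)M$ and $M\ne 0$, there exists $\nu\in I^\beta$ with $e(\nu)M\ne 0$, and then for $i=\nu_{|\beta|}$ we have $E_iM\supseteq e(\nu)M\ne 0$. Fix such an $i$. Next I would use that $E_i=F_L$ for the $(R(\beta),R(\beta+\al_i))$-superbimodule $L=e(\beta,i)R(\beta+\al_i)$, so $E_i$ is a superfunctor: there is a natural isomorphism $E_i\circ\Pi\isoto \Pi\circ E_i$. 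Consequently $E_i(\Pi M)\simeq\Pi E_iM$, and since $\Pi$ is an autoequivalence of $\Mod(R(\beta-\al_i))$ that preserves simple subobjects, it commutes with taking socles. Thus
\begin{equation*}
\te_i(\Pi M)=\soc\bl E_i(\Pi M)\br\simeq \soc\bl\Pi E_iM\br\simeq \Pi\,\soc(E_iM)=\Pi\,\te_iM.
\end{equation*}
By the inductive hypothesis applied to the irreducible $R(\beta-\al_i)$-module $\te_iM$ (irreducibility is (I1)), we have $\Pi\,\te_iM\simeq \te_iM$, whence $\te_i(\Pi M)\simeq\te_iM$. Moreover $\eps_i(\Pi M)=\eps_i(M)>0$ because $E_i^k(\Pi M)\simeq\Pi E_i^k(M)$. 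Note also that $\Pi M$ is itself irreducible since $\Pi$ is an equivalence. Now applying (I4) twice,
\begin{equation*}
\Pi M\simeq \tf_i\te_i(\Pi M)\simeq \tf_i\te_iM\simeq M,
\end{equation*}
completing the induction.

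Finally, the ``in particular'' assertion follows formally. For $[\Rep(R(\beta))]$, the classes of simple modules (up to grading shift) form an $\A$-basis by Jordan--Hölder, and on each such class $\pi[M]=[\Pi M]=[M]$; hence $\Pi$ acts as the identity on $[\Rep(R(\beta))]$. For $[\Proj(R(\beta))]$, the classes of indecomposable projective covers $P(M)$ of simples $M$ form a basis, and $\Pi P(M)$ is the projective cover of $\Pi M\simeq M$, hence $\Pi P(M)\simeq P(M)$ by uniqueness. I do not expect any serious obstacle; the only delicate point is the verification that $\Pi$ commutes with the socle (which is immediate from $\Pi$ being an equivalence) and that $E_i$ is genuinely a superfunctor, which is recorded in Section~\ref{Sec: supers}.
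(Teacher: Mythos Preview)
Your proof is correct and follows essentially the same approach as the paper: induction on $|\beta|$, using that $\Pi$ commutes with $E_i$ (hence with $\te_i$), applying the induction hypothesis to $\te_iM$, and concluding via (I4). You have simply spelled out more of the details (the base case, why some $\eps_i(M)>0$, why $\Pi$ commutes with socles, and the projective-cover argument for $[\Proj(R(\beta))]$) that the paper leaves implicit.
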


\begin{proof}
We shall prove it by induction on $|\beta|$. If $|\beta|>0$, there exists $i \in I$ such that
$\ve_i(M)>0$.
Since the endofunctor $\Pi$ commutes with the functor $E_i$.
$$\tilde{e}_i (\Pi M) \simeq (\Pi \tilde{e}_i  M).$$
By induction hypothesis, $\Pi \tilde{e}_i (M) \simeq  \tilde{e}_i  M$. Hence we obtain
$$ \tilde{e}_i M \simeq \tilde{e}_i \Pi M.$$
Then $\Pi M\simeq M$ follows from (I\ref{I4}).

By \eqref{property of R(beta)-mod}, our assertion also holds for $[\Proj(R(\beta))]$.
\end{proof}

Together with Proposition~\ref{Prop: strong perfect of irr modules},
we obtain the following corollary.

\begin{corollary}
For any irreducible $R(\beta)$-module $M$, we have
\begin{align}\label{Eqn: strong perfect of irr modules}
[E_i M] = q^{1-\ve}_i[\ve]_i [\tilde{e}_iM] + \sum_{k} [N_k],
\end{align}
where $N_k$'s are irreducible modules with $\eps_i(N_k)<\eps_i(M)-1$.
\end{corollary}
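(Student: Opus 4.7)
The plan is to derive this corollary as an immediate specialization of Proposition~\ref{Prop: strong perfect of irr modules} once the $\pi$-action is trivialized via Theorem~\ref{Thm: Pi-invariant}.

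First, recall that Proposition~\ref{Prop: strong perfect of irr modules} already establishes the identity
\begin{equation*}
[E_i M] = \pi_i^{1-\ve}\,q_i^{1-\ve}\,[\ve]_i^{\pi}\,[\tilde{e}_iM] + \sum_{k}[N_k]
\end{equation*}
in the $\A^\pi$-module $[\Rep(R(\beta-\alpha_i))]$, with $N_k$ irreducible and $\eps_i(N_k)<\eps-1$. So the only thing left to do is to show that, once we pass to the ring where $\pi$ acts by $1$, the $\pi$-twisted coefficient collapses to the classical $q$-integer coefficient.

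The key input is Theorem~\ref{Thm: Pi-invariant}, which asserts that $\Pi M \simeq M$ for every irreducible $R(\beta)$-module and hence that the endomorphism $\pi$ of $[\Rep(R(\beta))]$ is the identity. This means that the $\A^\pi$-module structure factors through $\A^\pi/(\pi-1)\simeq \A$, so in particular $\pi_i = \pi^{\pa(i)}$ acts as $1$ on $[\Rep(R(\beta-\alpha_i))]$. Substituting $\pi_i=1$ into
\begin{equation*}
[\ve]_i^{\pi} = \frac{(\pi_i q_i)^{\ve}-q_i^{-\ve}}{\pi_i q_i - q_i^{-1}}
\end{equation*}
yields $[\ve]_i^{\pi}=[\ve]_i$, and the prefactor $\pi_i^{1-\ve}$ becomes $1$. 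Combined with the proposition, this gives exactly \eqref{Eqn: strong perfect of irr modules}.

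There is essentially no obstacle: all the hard work has been done in the previous lemma (to pin down the coefficient of $[\tilde{e}_iM]$ via $\Delta_{i^\ve}$ and the filtration in \eqref{Eqn: Decom L(i^n) into L(i^n-1) L(1)}) and in Theorem~\ref{Thm: Pi-invariant} (to trivialize $\Pi$ on irreducibles via $\tilde{e}_i,\tilde{f}_i$). The corollary is just the combination of these two, and the proof proposal is therefore a one-line appeal: replace $\pi_i$ by $1$ in the conclusion of Proposition~\ref{Prop: strong perfect of irr modules}, using Theorem~\ref{Thm: Pi-invariant}.
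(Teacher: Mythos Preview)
Your proposal is correct and matches the paper's approach exactly: the paper states this corollary immediately after Theorem~\ref{Thm: Pi-invariant} with the remark ``Together with Proposition~\ref{Prop: strong perfect of irr modules}, we obtain the following corollary,'' which is precisely the substitution $\pi_i=1$ you describe.
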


Let $\psi\cl R(\beta) \to R(\beta)$ be the involution
given by
\begin{align}\label{def:psi}
\psi(ab)=\psi(b)\psi(a), \quad \psi(e(\nu))=e(\nu),\quad \psi(x_k)=x_k, \quad  \psi(\tau_l)=\tau_l,
\end{align}
for all $a,b \in R(\beta)$.

For any $M \in \Mod(R(\beta))$, we denote by $M^* =
\Hom_{\k_0}(M,\k_0)$ the $\k_0$-dual of $M$ whose left
$R(\beta)$-module structure is induced by the involution
$\psi$: namely, $(af)(s)=f(\psi(a)s)$ for $f\in \Hom_{\k_0}(M,\k_0)$,
$a\in R(\beta)$ and $s\in M$.
 We say that $M$ is {\em self-dual} if $ M^*\simeq M$.

The following lemma tells that $\te_i$ commutes with the duality up
to a grade shift.
\begin{lemma}
For any irreducible $R(\beta)$-module $M$ such that $\eps_i(M)>0$,
we have
\eq
\bl q_i^{1-\eps_i(M)}\te_iM\br^*\simeq q_i^{1-\eps_i(M)}\te_i(M^*).
\eneq
\end{lemma}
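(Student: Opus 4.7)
The plan is to deduce the claim by comparing, inside the Grothendieck group $[\Rep(R(\beta-\alpha_i))]$, the identity \eqref{Eqn: strong perfect of irr modules} applied to $M$ and to $M^*$, after first verifying that $E_i$ commutes with $\k_0$-duality as graded functors.

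First I would establish a natural isomorphism
\[
(E_iN)^* \simeq E_i(N^*)
\]
of graded $R(\beta-\alpha_i)$-modules, for any $N \in \Rep(R(\beta))$. This reduces to the observation that the involution $\psi$ fixes each idempotent $e(\nu)$ and hence, under the embedding $R(\beta-\alpha_i) \hookrightarrow R(\beta)$, $a \mapsto a\cdot e(\beta-\alpha_i, i)$, restricts to the involution of $R(\beta-\alpha_i)$. As an immediate consequence $\eps_i(M^*) = \eps_i(M) = \eps$, and both $(\te_iM)^*$ and $\te_i(M^*)$ are irreducible $R(\beta-\alpha_i)$-modules with $\eps_i$-value $\eps-1$.

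Next I would apply \eqref{Eqn: strong perfect of irr modules} to both $M$ and $M^*$, obtaining
\[
[E_iM] = q_i^{1-\eps}[\eps]_i[\te_iM] + z, \qquad [E_i(M^*)] = q_i^{1-\eps}[\eps]_i[\te_i(M^*)] + z',
\]
where $z,\,z'$ lie in the $\Z$-subgroup of $[\Rep(R(\beta-\alpha_i))]$ spanned by classes of irreducibles with $\eps_i$-value strictly less than $\eps-1$. Taking the $\k_0$-dual of the first identity --- which sends $q \mapsto q^{-1}$, leaves $[\eps]_i$ invariant, and preserves that ``lower'' subgroup --- and using the first step, I would then produce the alternative expansion
\[
[E_i(M^*)] = q_i^{\eps-1}[\eps]_i[(\te_iM)^*] + z''.
\]
Working modulo the subgroup of classes with $\eps_i < \eps-1$ and using that graded irreducibles form a $\Z$-basis of the Grothendieck group, the two expansions force the multiset identity
\[
\{q_i^{-2k}\,\te_i(M^*)\}_{k=0}^{\eps-1} = \{q_i^{2k}\,(\te_iM)^*\}_{k=0}^{\eps-1},
\]
from which I read off $\te_i(M^*) \simeq q_i^{2\eps-2}(\te_iM)^*$, equivalently $(\te_iM)^* \simeq q_i^{2-2\eps}\te_i(M^*)$. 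The claimed isomorphism is then immediate from
\[
\bl q_i^{1-\eps}\te_iM\br^* = q_i^{\eps-1}(\te_iM)^* \simeq q_i^{\eps-1}\cdot q_i^{2-2\eps}\te_i(M^*) = q_i^{1-\eps}\te_i(M^*).
\]

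The main obstacle I expect is the compatibility check of the first step: the left $R(\beta-\alpha_i)$-action on $E_iN$ uses the non-unital embedding $a \mapsto a\cdot e(\beta-\alpha_i,i)$, so one must verify carefully that the dual action on $(E_iN)^*$ agrees with the action on $E_i(N^*)$. Once this is settled, the remaining Grothendieck-group comparison reduces to elementary $q$-power bookkeeping.
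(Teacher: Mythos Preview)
Your proposal is correct and follows essentially the same argument as the paper: both dualize the expansion \eqref{Eqn: strong perfect of irr modules} for $M$, compare it with the expansion for $M^*$, and read off the conclusion modulo the subgroup of classes with smaller $\eps_i$. The paper simply writes ``Hence we obtain the desired result'' where you spell out the multiset comparison explicitly; your version is a faithful elaboration of the same proof.
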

\begin{proof} Set $\eps=\eps_i(M)$.
By \eqref{Eqn: strong perfect of irr modules}, we have
$$ [E_i M] = [\ve]_i [q^{1-\ve}_i \tilde{e}_i M] + \sum_{k} [N_k].$$
Here $N_k$'s are irreducible modules with $\eps_i(N_k)<\eps_i(M)-1$.
Since $E_i$ commutes with the duality functor, we have
$$ [E_i (M^*)] = [\ve]_i [\bl q^{1-\ve}_i \tilde{e}_i M\br^*] + \sum_{k} [(N_k)^*].$$
On the other hand, applying \eqref{Eqn: strong perfect of irr modules} to $M^*$, we obtain
$$[E_i (M^*)] = [\ve]_i [q^{1-\ve}_i \tilde{e}_i (M^*)] + \sum_{k'} [N_{k'}']$$
with $\eps_i(N'_{k'})<\eps_i(M)-1$.
Hence we obtain the desired result.
\end{proof}

\begin{proposition} \label{Prop: self-dual}
For any irreducible $R(\beta)$-module $M$, there exists $r \in \Z$ such that $q^{r}M$ is self-dual, that is,
$$(q^{r}M)^* \simeq q^{r}M.$$
\end{proposition}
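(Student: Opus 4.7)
The plan is to argue by induction on $|\beta|$. When $|\beta|=0$, $R(0)\simeq\k$ has only $\k_0$ (and its grade shifts) as irreducible modules, and $\k_0$ is visibly self-dual; this gives the base case.

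For the inductive step with $|\beta|>0$, I would pick $i\in I$ with $\eps\seteq\eps_i(M)>0$ (such $i$ exists because $M\ne 0$) and set $N\seteq\tilde{e}_iM$. By (I1), $N$ is irreducible of smaller height, so the inductive hypothesis supplies some $r_0\in\Z$ with $N^*\simeq q^{2r_0}N$. The crucial input is the preceding lemma, which asserts the twisted compatibility
\[
\bl q_i^{1-\eps}\tilde{e}_iM\br^*\simeq q_i^{1-\eps}\tilde{e}_i(M^*).
\]
Rearranging the grading shift $q_i=q^{s_i}$ and substituting the inductive hypothesis yields
\[
\tilde{e}_i(M^*)\simeq q_i^{2(\eps-1)}N^*\simeq q^{2r_0+2(\eps-1)s_i}\tilde{e}_iM.
\]

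Next I would apply $\tilde{f}_i$ to both sides. Because $\tilde{f}_i$ is defined as the head of an induced module and both induction and the head construction are grade-preserving, $\tilde{f}_i$ commutes with the $\Z$-grading shift $q$. Invoking (I4), which provides $\tilde{f}_i\tilde{e}_iL\simeq L$ for any irreducible $L$ with $\eps_i(L)>0$, one concludes
\[
M^*\simeq\tilde{f}_i\tilde{e}_i(M^*)\simeq q^{2r_0+2(\eps-1)s_i}M.
\]
Setting $r\seteq -r_0-(\eps-1)s_i\in\Z$ then gives $(q^rM)^*\simeq q^rM$, completing the induction.

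The only delicate point is the legitimacy of applying (I4) to $M^*$, which requires $\eps_i(M^*)>0$. This follows from the same lemma driving the argument: since $\tilde{e}_iM\ne 0$, the isomorphism $\bl q_i^{1-\eps}\tilde{e}_iM\br^*\simeq q_i^{1-\eps}\tilde{e}_i(M^*)$ forces $\tilde{e}_i(M^*)\ne 0$, hence $\eps_i(M^*)\ge 1$. Everything else is routine bookkeeping of the grading shifts produced by the lemma, and the proof reduces to one invocation of the inductive hypothesis together with (I1) and (I4).
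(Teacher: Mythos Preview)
Your proof is correct and follows essentially the same strategy as the paper's: induction on $|\beta|$, choosing $i$ with $\eps_i(M)>0$, applying the inductive hypothesis to $\tilde{e}_iM$, invoking the preceding lemma, and then using (I4) to pass back from $\tilde{e}_i$ to the modules themselves. The only differences are cosmetic---the paper absorbs the grading shifts into a single $r$ up front rather than tracking $r_0$ and $(\eps-1)s_i$ separately, and it leaves the check $\eps_i(M^*)>0$ implicit, whereas you spell it out.
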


\begin{proof}
Using induction on $|\beta|$, we shall show that there exists $r \in
\Z$ such that
 $$q^{r}M \ \text{ is self-dual.}$$
 Assume $|\beta|>0$ and take $i \in I$ such that $\eps\seteq \ve_i(M)>0$.

 Then, by the induction hypothesis, there exists $r \in \Z$  such that
 $q^{r}q^{1-\ve}_i \tilde{e}_i M$ is self-dual.
 Then the preceding lemma implies
 \eqn
q^{1-\ve}_i \tilde{e}_i (q^rM)
\simeq \bl q^{1-\ve}_i \tilde{e}_i(q^r M)\br^*
\simeq q^{1-\ve}_i \tilde{e}_i \bl(q^rM)^*\br.
 \eneqn
 Hence by (I\ref{I4}), we get $q^rM\simeq(q^rM)^*$.
\end{proof}

Finally we obtain the following theorem
which shows the existence of strong perfect basis of $[\Rep R(\beta)]$.
\begin{theorem} \label{Thm: choice of strong perfect basis}
For $\beta \in \mathtt{Q}^+$, let ${\rm Irr}_0R(\beta)$
be the set of isomorphism classes of self-dual irreducible $R(\beta)$-modules. Then
$$ \set{ \; [M] }{M \in {\rm Irr}_0R(\beta) } $$
is an $\A$-basis of $[\Rep R(\beta)]$. Moreover, it is a strong
perfect basis; i.e., it satisfies the property  $\eqref{Eqn perfect
basis}$.
\end{theorem}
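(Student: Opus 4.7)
The plan is to prove the theorem in two stages. For the basis claim, the starting point is Proposition~\ref{Prop: self-dual}, which gives every irreducible $R(\beta)$-module $M$ a self-dual grade shift $q^rM$. I first show this shift is unique: if $q^rM$ and $q^sM$ are both self-dual, then the identities $(q^rM)^*\simeq q^{-r}M^*$ and $(q^sM)^*\simeq q^{-s}M^*$ combine to give $q^{2(r-s)}M\simeq M$ as graded $R(\beta)$-modules, forcing $r=s$ by Lemma~\ref{lem:abs_irr}. Hence every isomorphism class of graded irreducible $R(\beta)$-modules is uniquely of the form $q^rM_0$ for some $M_0\in{\rm Irr}_0R(\beta)$ and $r\in\Z$, and the classes of graded irreducibles give a $\Z$-basis of the Grothendieck group by the finite-length property~\eqref{property of R(beta)-mod}. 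It follows that $\set{[M_0]}{M_0\in{\rm Irr}_0R(\beta)}$ is an $\A$-basis of $[\Rep R(\beta)]$, and Theorem~\ref{Thm: Pi-invariant} ensures that the $\A^\pi$-action descends to an $\A$-action, making the basis claim consistent with the ambient module structure.

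For the strong perfect basis claim, I take $V\seteq\soplus_{\beta\in\mathtt{Q}^+}\Q(q)\otimes_\A[\Rep R(\beta)]$ graded by $-\mathtt{Q}^+\subset\mathtt{P}$, with $e_i$ acting as $[E_i]$ and full basis $B\seteq\bigsqcup_\beta{\rm Irr}_0R(\beta)$. The candidate operator is
\[ \tilde{\mathsf{e}}_i([M])\seteq[q_i^{1-\ve}\,\te_iM]\ \text{if}\ \ve\seteq\ve_i(M)>0, \qquad \tilde{\mathsf{e}}_i([M])\seteq 0\ \text{otherwise.} \]
The lemma preceding Proposition~\ref{Prop: self-dual} shows $q_i^{1-\ve}\te_iM$ is again self-dual, so $\tilde{\mathsf{e}}_i$ indeed takes $B$ into $B\sqcup\{0\}$. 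The identity~\eqref{Eqn: strong perfect of irr modules} then reads
\[ [E_iM]\;=\;[\ve]_i\,\tilde{\mathsf{e}}_i([M])\;+\;\sum_k[N_k], \qquad \ve_i(N_k)<\ve-1, \]
which is precisely the strong perfect basis equation~\eqref{Eqn perfect basis}, provided the Grothendieck-group invariant $\eps_i([M])$ agrees with the module-theoretic $\ve_i(M)$.

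To settle this last identification I observe that $E_i^\ell M=\Delta_{i^\ell}M$ as an $R(\beta-\ell\alpha_i)$-module, which is nonzero exactly when $\ell\le\ve_i(M)$; since the Grothendieck group of a finite-length abelian category detects nonvanishing, this gives $\eps_i([M])=\ve_i(M)$ directly. Condition~(c) of Definition~\ref{Def: perfect, strongly perfect} is immediate from (I\ref{I4}): if $\tilde{\mathsf{e}}_i([M])=\tilde{\mathsf{e}}_i([M'])$ with $\ve_i(M)=\ve_i(M')>0$, then the matching grade shifts yield $\te_iM\simeq\te_iM'$, and applying $\tf_i$ recovers $M\simeq M'$ via $\tf_i\te_i\simeq\id$.

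The principal technical hurdle will be the grade-shift bookkeeping: choosing the twist $q_i^{1-\ve}$ in the definition of $\tilde{\mathsf{e}}_i$ so that it simultaneously preserves self-duality and makes the coefficient in the expansion of $[E_iM]$ exactly $[\ve]_i$ rather than some $\pi$- or $q$-shifted variant. Theorem~\ref{Thm: Pi-invariant} is indispensable here: without the triviality of $\pi$ on the Grothendieck group, the coefficient coming from Proposition~\ref{Prop: strong perfect of irr modules} would be the $\pi$-twisted quantity $\pi_i^{1-\ve}q^{1-\ve}_i[\ve]^\pi_i$, which is incompatible with the classical (non-super) perfect basis framework underlying Berenstein--Kazhdan's uniqueness theorem.
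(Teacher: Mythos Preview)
Your proposal is correct and follows essentially the same approach as the paper: the paper's proof is a single line invoking Proposition~\ref{Prop: self-dual} and equation~\eqref{Eqn: strong perfect of irr modules}, and you have simply unpacked what ``immediate consequence'' means here---the uniqueness of the self-dual shift, the identification $\eps_i([M])=\ve_i(M)$, the use of the lemma preceding Proposition~\ref{Prop: self-dual} to ensure $\tilde{\mathsf{e}}_i$ preserves self-duality, and the verification of axiom~(c) via (I\ref{I4}). Your explicit bookkeeping with the grade shift $q_i^{1-\ve}$ and your remark on the necessity of Theorem~\ref{Thm: Pi-invariant} are exactly the points the paper leaves implicit.
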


\begin{proof}
The proof is an immediate consequence of Proposition \ref{Prop: self-dual} and
\eqref{Eqn: strong perfect of irr modules}.
\end{proof}

The following lemma is a categorification of the $q$-boson relation
\eqref{eq: e_i' ev}.
\begin{lemma} \label{Lem: BqG module}
For all $\beta \in \mathtt{Q}^+$ and  $M \in \Rep(R(\beta))$, we have isomorphisms and exact sequences.
\begin{equation} \label{eqn: Bqg structure}
\begin{aligned}
& E_iF_j'M \simeq  \Pi^{\pa(i)\pa(j)} q^{-(\alpha_i|\alpha_j)}
F_j'E_iM \quad \text{ for } i \neq j, \\
& 0 \to M \to E_iF_i'M \to \Pi_i q_i^{-2} F_i'E_iM \to 0.
\end{aligned}
\end{equation}
\end{lemma}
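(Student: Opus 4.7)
The plan is to reduce the lemma to Theorem~\ref{Thm: Comm E_i F_j} using the identification $F_j'M \simeq F_jM/x_{n+1}F_jM$, which holds because $L(j)\simeq R(\alpha_j)/R(\alpha_j)x_1$. This gives a short exact sequence $0\to x_{n+1}F_jM\to F_jM\to F_j'M\to 0$, and since the exact superfunctor $E_i$ commutes with right multiplication by $x_{n+1}$ (because $e(\beta+\alpha_j-\alpha_i,i)$ does), applying $E_i$ yields
\[0\to x_{n+1}E_iF_jM\to E_iF_jM\to E_iF_j'M\to 0.\]

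For $i\ne j$, Theorem~\ref{Thm: Comm E_i F_j} supplies a natural isomorphism $E_iF_jM\simeq q^{-(\alpha_i|\alpha_j)}\Pi^{\pa(i)\pa(j)}F_jE_iM$ coming from the map $\rho$ of Proposition~\ref{Prop: twist by tau n}. The key step is to verify that $\rho$ intertwines the two $x_{n+1}$-actions: tracing through $\rho(x\otimes y)=x\tau_ny$ and applying (R5), one has $\tau_nx_{n+1}e(\cdots,i,j)=(-1)^{\pa(i)\pa(j)}x_n\tau_ne(\cdots,i,j)$ (no $\delta_{i,j}$ contribution since $i\ne j$), so right multiplication by $x_{n+1}$ on $E_iF_jM$ translates into right multiplication by $x_n$ on $R(n)e(n-1,j)$, which is precisely $x_{F_j}E_i$. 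Taking cokernels of both actions then gives the desired isomorphism $E_iF_j'M\simeq q^{-(\alpha_i|\alpha_j)}\Pi^{\pa(i)\pa(j)}F_j'E_iM$.

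For $i=j$, Theorem~\ref{Thm: Comm E_i F_j} gives the direct sum decomposition $E_iF_iM\simeq q_i^{-2}\Pi_iF_iE_iM\oplus\k[t_i]\otimes M$, and the same (R5) computation now yields the extra $\delta_{i,i}$ term $\tau_nx_{n+1}e(\cdots,i,i)=(-1)^{\pa(i)}x_n\tau_ne(\cdots,i,i)+e(\cdots,i,i)$. Consequently the right $x_{n+1}$-action preserves $\k[t_i]\otimes M$ (acting as multiplication by $t_i$), while on the first summand it has a diagonal piece $q_i^{-2}\Pi_ix_{F_i}E_i$ together with an off-diagonal contribution landing in the constant part $M\subset \k[t_i]\otimes M$. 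This produces a short exact sequence
\[0\to \k[t_i]\otimes M\to E_iF_iM\to q_i^{-2}\Pi_iF_iE_iM\to 0\]
of modules-with-$x_{n+1}$-action, with the indicated actions on sub and quotient.

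The final step is to apply the snake lemma. Injectivity of $x_{n+1}$ on every $F_iN$ follows from the freeness of $R(\beta+\alpha_i)e(\beta,i)$ as a right $R(\beta)\otimes\k[x_{n+1}]$-module (via Proposition~\ref{Prop: PBW} and \eqref{eqn: R(n+1) wrt R(n,1)}), and this descends to $E_iF_iM$, $F_iE_iM$, and $\k[t_i]\otimes M$ (the last being obvious). The snake lemma applied to the SES above, together with multiplication by $x_{n+1}$ in each column, then produces
\[0\to q_i^{-2}\Pi_iM\to q_i^{-2}\Pi_iE_iF_i'M\to q_i^{-4}F_i'E_iM\to 0,\]
where the left term comes from $\mathrm{coker}(t_i)=q_i^{-2}\Pi_iM$ and the right from $\mathrm{coker}(q_i^{-2}\Pi_ix_{F_i}E_i)=q_i^{-4}F_i'E_iM$. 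Shifting by $\Pi_iq_i^2$ (and using $\Pi_i^2\simeq\id$) gives the stated sequence $0\to M\to E_iF_i'M\to \Pi_iq_i^{-2}F_i'E_iM\to 0$. The main obstacle is the careful tracking of the degree and parity shifts, and in particular verifying the off-diagonal contribution of $x_{n+1}$ that is responsible for the nontrivial extension by $M$ in the $i=j$ case.
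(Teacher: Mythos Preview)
Your proof is correct and follows essentially the same route as the paper: both reduce to Theorem~\ref{Thm: Comm E_i F_j}, observe that the short exact sequence $0\to\k[t_i]\otimes M\to E_iF_iM\to q_i^{-2}\Pi_iF_iE_iM\to 0$ is compatible with the $x_{n+1}$-action (the paper cites Remark~\ref{rem:tx} for this, while you re-derive it from (R5)), and then take cokernels via the snake/3$\times$3 lemma using injectivity of $x_{n+1}$. One small remark: your emphasis on the off-diagonal contribution of $x_{n+1}$ is unnecessary for the argument, since the lemma only asserts exactness, not non-splitness, and the snake lemma needs only that the sequence is preserved by $x_{n+1}$, which already follows from Remark~\ref{rem:tx}.
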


\begin{proof} By
Theorem~\ref{Thm: Comm E_i F_j} and Remark~\ref{rem:tx},
all the columns and rows in the following commutative diagram are exact except the bottom row.
\begin{align*}
\xymatrix{
&0\ar[d]&0\ar[d]&0\ar[d]\\
0\ar[r]&\delta_{ij}q_i^2\Pi_i\k[t_i]\otimes M\ar[r]\ar[d]^{t_i}&
q_i^2\Pi_iE_i F_j M\ar[r]\ar[d]^{x_{F_j}} &
q_i^2\Pi_i\Pi^{\pa(i)\pa(j)} q^{-(\alpha_i|\alpha_j)}F_jE_iM\ar[r]\ar[d]^{x_{F_j}}&0\\
0\ar[r]&\delta_{ij}\k[t_i]\otimes M\ar[r]\ar[d]&
E_i F_j M\ar[r]\ar[d]&
\Pi^{\pa(i)\pa(j)} q^{-(\alpha_i|\alpha_j)}F_jE_iM\ar[r]\ar[d]&0\\
0\ar[r]&\delta_{ij} M\ar[r]\ar[d]&
E_i F'_j M\ar[r]\ar[d]&
\Pi^{\pa(i)\pa(j)} q^{-(\alpha_i|\alpha_j)}F_j'E_iM\ar[r]\ar[d]&0\\
&0&0&0
}
\end{align*}
Hence the bottom row is also exact.
\end{proof}

\section{Cyclotomic quiver Hecke superalgebras}

In this section, we define the cyclotomic quiver Hecke superalgebra
$R^{\Lambda}$ and study its elementary properties.

\subsection{Definition of cyclotomic quotients}
For $\Lambda \in \mathtt{P}^+$ and $i \in I$, we choose a monic polynomial of degree
$\langle h_i,\Lambda \rangle$
\begin{equation} \label{Eq: cylotomic polynomial}
\begin{aligned}
a^{\Lambda}_i(u)= \sum_{k=0}^{\langle h_i,\Lambda \rangle} c_{i;k}u^{\langle h_i,\Lambda \rangle-k} \\
\end{aligned}
\end{equation}
with $c_{i;k} \in \k_{k(\alpha_i|\alpha_i)}$ such that $c_{i,0}=1$ and
$c_{i;k}=0$ if $i \in \Iod$ and $k$ is odd.
Hence $a^{\Lambda}_i(x_1)e(i)$ has the $\Z \times \Z_2$-degree
$$(\langle h_i, \Lambda \rangle(\alpha_i|\alpha_i), \pa(i)\langle h_i, \Lambda \rangle ).$$

For $1 \le k \le n$, define
$$ a^{\Lambda}(x_k) = \sum_{\nu \in I^n} a^{\Lambda}_{\nu_k}(x_k)e(\nu) \in R(n) .$$

\begin{definition} \label{Def: cyclo}
Let $\beta \in \mathtt{Q}^+$ and $\Lambda\in \mathtt{P}^{+}$.
The {\it cyclotomic quiver Hecke superalgebra} $R^{\Lambda}(\beta)$ at $\beta$ is the quotient algebra
$$ R^{\Lambda}(\beta) = \dfrac{R(\beta)}{R(\beta)a^{\Lambda}(x_1)R(\beta)}\,.$$
\end{definition}

\subsection{Structure of cyclotomic quotients}
We shall prove that the cyclotomic quotients are finitely generated
over $\k$. For the definition of $\oS_a$ and $\partial_a$, see
\eqref{def:sk} and \eqref{Eqn: partial}.
\begin{lemma} \label{Lem: shift}
 Assume that $f e(\nu) M =0$ for $M \in \Mod(R(n))$, $f \in \P_n$, $\nu \in I^n$ and $1 \le a <n$ such that
$\nu_a=\nu_{a+1}=i$.
Then we have
\begin{align*}  & (\partial_{a}f)(x_{a}-x_{a+1})^{\pa(i)} e(\nu)M = 0 \quad \text{ and } \\
& (x^2_{a}+x^2_{a+1})^{\pa(i)} (\oS_{a}f) e(\nu)M = (\oS_{a}f) (x^2_{a}+x^2_{a+1})^{\pa(i)} e(\nu)M = 0.
\end{align*}
\end{lemma}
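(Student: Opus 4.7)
The plan is to exploit the fundamental commutation $\tau_a f = (\oS_a f) \tau_a + \partial_a f$ from $\eqref{Eqn: tau f}$ together with the vanishing $\tau_a^2 e(\nu) = \cQ_{i,i}(x_a,x_{a+1}) e(\nu) = 0$, which follows from $\nu_a = \nu_{a+1} = i$ and the convention $\cQ_{i,i} = 0$. First I will apply $\tau_a$ on the left to $f e(\nu) M = 0$, using $\tau_a e(\nu) = e(\nu) \tau_a$ (because $s_a\nu = \nu$), to extract the basic identity
\[(\oS_a f) \tau_a v + (\partial_a f) v = 0 \quad \text{for every } v \in e(\nu) M. \qquad(\ast)\]
Substituting $\tau_a v$ for $v$ in $(\ast)$ and invoking $\tau_a^2 v = 0$ will yield the auxiliary relation $(\partial_a f) \tau_a v = 0$ for all $v \in e(\nu) M$, which will be used in both the even and odd cases.

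For the odd case $(\pa(i) = 1)$, the plan is to extract the first identity by comparing two derivatives of $(\ast)$. Substituting $v \mapsto (x_a - x_{a+1}) v$ in $(\ast)$ and then applying the commutation $\tau_a (x_a - x_{a+1}) = (x_a - x_{a+1}) \tau_a$ on $e(\nu)$, which follows directly from the two odd-case relations in R5, gives one equation. Multiplying $(\ast)$ on the left by $(x_a - x_{a+1})$ and moving this factor past $\oS_a f$ (of parity $\pa(f)$) and $\partial_a f$ (of parity $\pa(f)+1$) via the supercommutation rule of $\P_\nu$ gives a second equation. These two equations share the $(\oS_a f)(x_a - x_{a+1}) \tau_a v$ term but have $(\partial_a f)(x_a - x_{a+1}) v$ terms of opposite sign, so their difference, combined with the invertibility of $2$ in $\k_0$, yields the first identity $(\partial_a f)(x_a - x_{a+1}) e(\nu) M = 0$. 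The second identity then follows quickly from the defining formula of $\partial_a$ in $\eqref{Eqn: partial}$: the odd part reads $(x_{a+1} - x_a) f - (\oS_a f)(x_{a+1} - x_a) = (x_{a+1}^2 - x_a^2) \partial_a f$; applying this on $e(\nu) M$, cancelling $(x_{a+1} - x_a) f v = 0$, multiplying on the right by $(x_{a+1} - x_a)$, and using both the first identity and the anticommutation $x_a x_{a+1} + x_{a+1} x_a = 0$ (which gives $(x_{a+1} - x_a)^2 = x_a^2 + x_{a+1}^2$) will produce $(\oS_a f)(x_a^2 + x_{a+1}^2) e(\nu) M = 0$. Centrality of $x_a^2 + x_{a+1}^2$ delivers both left- and right-multiplied forms at once.

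For the even case $(\pa(i) = 0)$, $\P_\nu$ is commutative and R5 gives instead $(x_{a+1} - x_a) \tau_a + \tau_a (x_{a+1} - x_a) = 2\, e(\nu)$ on $e(\nu)$. The plan is to multiply the auxiliary relation $(\partial_a f) \tau_a v = 0$ on the left by $(x_{a+1} - x_a)$ (which commutes with $\partial_a f$ by commutativity), invoke this even-case R5 identity to rewrite, and then observe that $(\partial_a f) \tau_a \bigl((x_{a+1} - x_a) v\bigr) = 0$ by reapplying the auxiliary relation to $v' = (x_{a+1} - x_a) v$; this will leave $2(\partial_a f) v = 0$, whence $(\partial_a f) e(\nu) M = 0$ by the invertibility of $2$. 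The second identity $(\oS_a f) e(\nu) M = 0$ then is immediate from the even-case decomposition $f = \oS_a f + (x_{a+1} - x_a) \partial_a f$ coming from the definition of $\partial_a$. The most delicate step in the whole argument will be the super-sign bookkeeping in the odd case; the common thread is the invertibility of $2$ in $\k_0$, which closes both cases.
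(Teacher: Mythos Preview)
Your even-case argument is correct, and your derivation of the second identity in the odd case from the first is fine. The gap is in your derivation of the first identity when $\pa(i)=1$.

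The step ``moving $(x_a-x_{a+1})$ past $\oS_a f$ (of parity $\pa(f)$) and $\partial_a f$ (of parity $\pa(f)+1$)'' presupposes that $f$ has a well-defined $\Z_2$-parity. The lemma, however, is stated for arbitrary $f\in\P_n$, and the module $M$ is only assumed $\Z$-graded, not $\Z\times\Z_2$-graded (the paper is explicit about this). Consequently the hypothesis $f\,e(\nu)M=0$ does \emph{not} split as $f_0\,e(\nu)M=0$ and $f_1\,e(\nu)M=0$, so you cannot reduce to the homogeneous case. If you carry your two equations through with $f=f_0+f_1$, their sum and difference give only
\[
(\oS_a f_0)(x_a-x_{a+1})\tau_a v+(\partial_a f_1)(x_a-x_{a+1})v=0,\qquad
(\oS_a f_1)(x_a-x_{a+1})\tau_a v+(\partial_a f_0)(x_a-x_{a+1})v=0,
\]
which do not yield $(\partial_a f)(x_a-x_{a+1})v=0$. (Note also that your ``Equation~A'' is literally $(\ast)$ evaluated at $(x_a-x_{a+1})v$ and rewritten via $\tau_a(x_a-x_{a+1})=(x_a-x_{a+1})\tau_a$; all the content must come from the supercommutation in Equation~B, and that is exactly where the parity assumption enters.)

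The fix --- and this is what the paper does --- is to avoid pushing the \emph{odd} element $(x_a-x_{a+1})$ through $\oS_a f$ or $\partial_a f$ altogether. Multiply your auxiliary relation $(\partial_a f)\tau_a v=0$ on the left by the \emph{even} element $x_{a+1}^2-x_a^2$ and use the defining identity
\[
(x_{a+1}^2-x_a^2)\,\partial_a f=(x_{a+1}-x_a)f-(\oS_a f)(x_{a+1}-x_a),
\]
valid for every $f$. Since $f\tau_a v=0$, this gives $(\oS_a f)(x_{a+1}-x_a)\tau_a v=0$; commuting $\tau_a$ with $(x_{a+1}-x_a)$ and plugging into $(\ast)$ at $(x_{a+1}-x_a)v$ then yields $(\partial_a f)(x_{a+1}-x_a)v=0$ with no homogeneity hypothesis and no appeal to the invertibility of~$2$.
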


\begin{proof}
By \eqref{Eqn: partial}, we have 
\begin{align*}
& \ (x^{1+\pa(i)}_{ a+1 }-x^{1+\pa(i)}_{a}) \tau_{a}f\tau_a e(\nu) 
= \left((x_{a+1}-x_a)^{\pa(i)}f - \oS_af(x_{a+1}-x_a)^{\pa(i)}\right)\tau_a e(\nu) \\
&=\left( (x_{a+1}-x_a)^{\pa(i)}f\tau_a-(-1)^{\pa(i)}\oS_af \tau_a (x_{a+1}-x_a)^{\pa(i)} \right)e(\nu)\\
& =\left( (x_{a+1}-x_a)^{\pa(i)}f\tau_a -(-1)^{\pa(i)} \tau_a f (x_{a+1}-x_a)^{\pa(i)} +(-1)^{\pa(i)}\partial_a f(x_{a+1}-x_a)^{\pa(i)}
   \right)e(\nu).
\end{align*}
Hence we have $(\partial_af)(x_{a}-x_{a+1})^{\pa(i)}e(\nu)M=0$. It follows that
\begin{align*}
0 &= (x^{1+\pa(i)}_{a}-x^{1+\pa(i)}_{a+1})(\partial_af)(x_{a}-x_{a+1})^{\pa(i)}e(\nu)M  \\
  & = \left((x_{a}-x_{a+1})^{\pa(i)}f(x_{a}-x_{a+1})^{\pa(i)}-\oS_af(x_{a}-x_{a+1})^{2\pa(i)} \right) e(\nu)M.
\end{align*}
Thus $(x_{a}-x_{a+1})^{2\pa(i)}(\oS_af)e(\nu)M=(x^2_{a}+x^2_{a+1})^{\pa(i)}(\oS_af)e(\nu)M =0.$
\end{proof}

\begin{lemma} \label{Lem: finite dimension}
There exists a monic polynomial $g(u)$ with coefficients in $\k$ such that $g(x_a)=0$ in
$R^{\Lambda}(\beta)$ $(1 \le a \le n)$.
\end{lemma}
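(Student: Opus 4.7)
The plan is to argue by induction on $a\in\{1,\ldots,n\}$ and produce, for each $\nu\in I^n$ and each $a$, a monic polynomial $g_{a,\nu}(u)\in\k[u]$ with $g_{a,\nu}(x_a)e(\nu)=0$ in $R^\Lambda(\beta)$. The lemma then follows by taking $g$ to be a common monic multiple (for instance the product) of all the $g_{a,\nu}$, indexed over the finite set $\{1,\ldots,n\}\times I^\beta$. The base case $a=1$ is immediate from the defining relation of $R^\Lambda(\beta)$: setting $g_{1,\nu}(u)=a^\Lambda_{\nu_1}(u)$ one has $g_{1,\nu}(x_1)e(\nu)=a^\Lambda(x_1)e(\nu)=0$.

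For the inductive step, fix $\nu$ and set $i=\nu_a$, $j=\nu_{a+1}$. When $i=j$, I would apply Lemma~\ref{Lem: shift} to $f=g_{a,\nu}(x_a)$ with $M=R^\Lambda(\beta)$ seen as a left $R(\beta)$-module; the hypothesis $fe(\nu)M=0$ is exactly the inductive assumption. The conclusion
\[
(x_a^2+x_{a+1}^2)^{\pa(i)}\,\oS_a(g_{a,\nu}(x_a))\,e(\nu)=0
\]
reduces, after unravelling $\oS_a(x_a)=(-1)^{\pa(i)}x_{a+1}$, to $g_{a,\nu}(x_{a+1})e(\nu)=0$ in the even subcase, and to $(x_a^2+x_{a+1}^2)\,g_{a,\nu}(-x_{a+1})\,e(\nu)=0$ in the odd subcase. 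For the odd subcase I would first pass from $g_{a,\nu}$ to its symmetrization $\pm g_{a,\nu}(u)g_{a,\nu}(-u)$, which remains monic, is even in $u$, and still annihilates $x_a e(\nu)$; iteration of the resulting relation using the super-commutation of $x_a^2$ with $x_{a+1}$ then produces a monic polynomial in $x_{a+1}$ alone.

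When $i\neq j$, relation (R5) with $\delta_{i,j}=0$ yields $\tau_a x_a e(\nu)=(-1)^{\pa(i)\pa(j)}x_{a+1}\tau_a e(\nu)$, hence $\tau_a g_{a,\nu}(x_a)e(\nu)=g_{a,\nu}(\epsilon x_{a+1})\tau_a e(\nu)=0$ with $\epsilon=(-1)^{\pa(i)\pa(j)}$. Multiplying on the right by $\tau_a$ and invoking (R6) gives
\[
g_{a,\nu}(\epsilon x_{a+1})\,\cQ_{\nu_{a+1},\nu_a}(x_a,x_{a+1})\,e(s_a\nu)=0,
\]
and the symmetric argument with $\nu$ replaced by $s_a\nu$ supplies
\[
g_{a,s_a\nu}(\epsilon x_{a+1})\,\cQ_{i,j}(x_a,x_{a+1})\,e(\nu)=0.
\]
Now I would view multiplication by $\cQ_{i,j}(x_a,x_{a+1})$ as a $\k[x_{a+1}]$-linear endomorphism of the free module $\k[x_a]/(g_{a,\nu}(x_a))$ of rank $N=\deg g_{a,\nu}$ (recalling that $g_{a,\nu}(x_a)e(\nu)=0$ by the inductive hypothesis). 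The leading coefficient $t_{i,j;(-a_{ij},0)}\in\k_0^\times$ of $\cQ_{i,j}$ in $x_a$ is a unit, so the characteristic polynomial $P(T)\in\k[x_{a+1}][T]$ of this endomorphism is monic of degree $N$, and its constant term $P(0)$ is a monic polynomial in $x_{a+1}$. Using $P(\cQ_{i,j})=0$ on $e(\nu)$ together with the relation $g_{a,s_a\nu}(\epsilon x_{a+1})\cQ_{i,j}e(\nu)=0$ (which kills every term of $P$ involving a positive power of $\cQ_{i,j}$) then extracts the desired monic annihilator $g_{a+1,\nu}(x_{a+1})e(\nu)=0$, after multiplication by a suitable power of $g_{a,s_a\nu}(\epsilon x_{a+1})$.

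The principal obstacle is Case II ($i\neq j$): extracting a clean polynomial in $x_{a+1}$ alone from the product relation requires a Cayley--Hamilton/determinant argument that exploits the unit leading coefficient of $\cQ_{i,j}$ in $x_a$, and handles the super anti-commutation between $x_a$ and $x_{a+1}$ when both $i,j$ are odd. The symmetry $\cQ_{i,j}(w,z)=\cQ_{i,j}(-w,z)$ for $i\in\Iod$ from \eqref{eqn:Q even} is the key input that makes the sign bookkeeping consistent throughout this reduction.
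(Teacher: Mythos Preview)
Your approach parallels the paper's closely: both proceed by induction on $a$, use Lemma~\ref{Lem: shift} when $\nu_a=\nu_{a+1}$, and use $\tau_a^2=\cQ$ together with elimination of $x_a$ when $\nu_a\neq\nu_{a+1}$. The main simplification you are missing is to work throughout with polynomials in $x_a^2$ rather than in $x_a$. The paper's base case is $g(x_1^2)=\prod_i a^\Lambda_i(x_1)a^\Lambda_i(-x_1)$, and the inductive step produces $h$ with $h(x_{a+1}^2)e(\nu)M=0$ from $g(x_a^2)M=0$ on the whole module. Since $x_a^2$ is always even and central in $\P_n$, this eliminates all sign bookkeeping at once: for $\nu_a\neq\nu_{a+1}$ one gets directly $g(x_{a+1}^2)\cQ_{\nu_a,\nu_{a+1}}(x_a,x_{a+1})e(\nu)M=\tau_a g(x_a^2)\tau_a e(\nu)M=0$, and since $\cQ$ is (up to a unit) monic in $x_{a+1}^{1+\pa(\nu_{a+1})}$ with coefficients in $\k[x_a^{1+\pa(\nu_a)}]$, a standard integrality argument in the commutative ring $\k[x_a^{1+\pa(\nu_a)},x_{a+1}^{1+\pa(\nu_{a+1})}]$ produces the desired $h$.

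Your Cayley--Hamilton step has a genuine gap when $i,j\in\Iod$: there $x_a$ and $x_{a+1}$ anticommute, so ``the free $\k[x_{a+1}]$-module $\k[x_a]/(g_{a,\nu}(x_a))$'' and its characteristic polynomial have no direct meaning, and the argument does not apply as written. You correctly flag this as the principal obstacle but do not resolve it. The fix is exactly the symmetrization you already invoke in the $i=j$ odd subcase: replace $g_{a,\nu}$ and $g_{a,s_a\nu}$ by even polynomials, note that $\cQ_{i,j}$ is already a polynomial in $x_a^2,x_{a+1}^2$ by \eqref{eqn:Q even}, and run the elimination in the commutative ring $\k[x_a^2,x_{a+1}^2]$ --- which is precisely the paper's device. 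A smaller slip: the monicity of $P(0)$ in $x_{a+1}$ follows from the leading coefficient of $\cQ_{i,j}$ in $x_{a+1}$, namely $t_{i,j;(0,-a_{ji})}=t_{j,i;(-a_{ji},0)}\in\k_0^\times$, not from the leading coefficient $t_{i,j;(-a_{ij},0)}$ in $x_a$ that you cite.
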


\begin{proof}
If $a=1$, $g(x_1^2) = \prod_{i \in I} a^\Lambda_i(-x_1)a^\Lambda_i(x_1)$ satisfies the condition.
Hence,
by induction on $a$, it is enough to show the following statement:
\begin{center}
For any monic polynomial $g(u) \in \k[u]$ and $\nu \in I^n$, we can find
\\ a monic polynomial $h(u)\in \k[u]$ such that
\\ $h(x_{a+1}^2)e(\nu)M=0$  for any $R(\beta)$-module $M$ with
$g(x_a^2)M=0$.
\end{center}

(i) Suppose $\nu_a \neq \nu_{a+1}$. In this case, we have
$$
g(x_{a+1}^2) \cQ_{\nu_a,\nu_{a+1}}(x_a,x_{a+1})e(\nu)M
=g(x_{a+1}^2) \tau_a^2e(\nu)M
 =\tau_a g(x_{a}^2) \tau_a e(\nu)M =0. $$

Since $\cQ_{\nu_a,\nu_{a+1}}(x_a,x_{a+1})$ is a monic polynomial in
$x^{1+p(\nu_{a+1})}_{a+1}$ with coefficients in
$\k[x_a^{1+\pa(\nu_{a})}]$, there exists a monic polynomial $h(u)$ such that
\begin{align*}
h(x_{a+1}^2) \in &\k[x_a^{1+\pa(\nu_a)}e(\nu),x_{a+1}^{1+\pa(\nu_{a+1})}e(\nu)]g(x_a^2)+ \\
& \qquad \k[x_a^{1+\pa(\nu_a)}e(\nu),x_{a+1}^{1+\pa(\nu_{a+1})}e(\nu)] g(x_{a+1}^2)
\cQ_{\nu_a,\nu_{a+1}}(x_a,x_{a+1}).
\end{align*}
Then $h(x_{a+1}^2)e(\nu)M =0$.

(ii) Suppose $\nu_a=\nu_{a+1}$. 
 Then Lemma \ref{Lem: shift} implies
$$g(x_{a+1}^2)(x_a^2+x_{a+1}^2)^{\pa(\nu_a)}e(\nu)M=0.$$
Then we can apply the same argument as (i).
\end{proof}

\begin{lemma} \label{lem: integrability 1}
 Let $f \in \P_{n+1} $ be a monic
polynomial of degree $m$ in $x_{n+1}$ whose coefficients are contained in
$\P_n \ot \k^I$.  Set $\ol{R}=e(n,i^{m+1})R(n+m+1)e(n,i^{m+1})$. Then we have
$$\ol{R}f\ol{R}=\ol{R}.$$
\end{lemma}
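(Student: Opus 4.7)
The plan is to prove the lemma by induction on $m$. The base case $m=0$ is immediate, since any monic polynomial of degree $0$ is $1$ and hence $\ol{R} f \ol{R}=\ol{R}$ trivially.

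For the inductive step with $m\ge 1$, the goal is to exhibit the idempotent $e(n,i^{m+1}) = 1_{\ol{R}}$ as an element of $\ol{R} f \ol{R}$. This is a quiver-Hecke-superalgebra analog of the classical divided-difference identity
\begin{equation*}
\partial_{w[1,m+1]}\bl x_1^{m}x_2^{m-1}\cdots x_m\br = \pm 1,
\end{equation*}
where $w[1,m+1]$ is the longest element of the symmetric group on $\{n+1,\ldots,n+m+1\}$. The idea is to let $w$ denote this longest element (regarded as acting on positions $n+1,\ldots,n+m+1$ inside $S_{n+m+1}$) and to study the element $\tau_w\cdot f\cdot g\cdot \tau_w\,e(n,i^{m+1})\in\ol{R} f \ol{R}$, where $g$ is a chosen monomial in $x_{n+2},\ldots,x_{n+m+1}$ (in the even case $g= x_{n+2}^{m-1}x_{n+3}^{m-2}\cdots x_{n+m}^{1}$) so that $f\cdot g$ has leading monomial $x_{n+1}^{m}x_{n+2}^{m-1}\cdots x_{n+m}^{1}$ of total degree matching $\ell(w)=m(m+1)/2$.

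The key computational tool is the repeated application of the commutation relation (R5): for consecutive equal colors $\nu_a=\nu_{a+1}=i$,
\begin{equation*}
\tau_a x_{a+1}\,e(\nu) - (-1)^{\pa(i)}x_a\tau_a\,e(\nu) = e(\nu).
\end{equation*}
Iterating this and using the braid relations (R7),(R8), one shows that $\tau_w\cdot (f\cdot g)\cdot \tau_w\,e(n,i^{m+1})$ equals $e(n,i^{m+1})$ (up to a unit in $\k_0^{\times}$) modulo a sum of terms that contain strictly shorter $\tau_{w'}$'s. Because every such correction is itself of the form $(\text{element of }\ol{R})\cdot f\cdot(\text{element of }\ol{R})$, it remains in $\ol{R} f \ol{R}$, so the leading term $e(n,i^{m+1})$ also lies in $\ol{R} f \ol{R}$. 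This is the algebraic manifestation of Lemma~\ref{Lem: R(n alpha i) tau w[1,n]}, which already asserts $R((m+1)\alpha_i)\tau_w R((m+1)\alpha_i)=R((m+1)\alpha_i)$; here we are refining it by inserting $f$ between the two copies of $\tau_w$.

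The main obstacle is the odd case $i\in\Iod$. In $\P_\nu$ the variables $x_{n+1},\ldots,x_{n+m+1}$ anticommute, the divided difference $\partial_a$ carries the correction $e^{\rm od}_{k,k+1}$ term with $x_{a+1}^{2}-x_{a}^{2}$ in the denominator (from \eqref{Eqn: partial}), and (R6),(R8) take different shape. Consequently one must work inside the even subalgebra $\Pev_\nu$ generated by $x_k^{1+\pa(i)}$ (which is central in $\P_\nu$), invoking Lemma~\ref{Lem: shift} and \eqref{eqn: center Q} to convert each $f\cdot g$-contribution into an element of $\Pev$ before applying the iterated divided-difference identity. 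The combinatorics of lengths still matches so that the "top" divided difference produces a unit in $\k_0$, but the odd-case bookkeeping of signs and squared powers must be carried out with care.
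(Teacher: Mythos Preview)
Your approach contains a real error in the central computation and does not close the odd case.

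In $\ol R=e(n,i^{m+1})R(n+m+1)e(n,i^{m+1})$ all colors at positions $n+1,\ldots,n+m+1$ equal $i$, so $\tau_a^2=\cQ_{i,i}=0$; since $s_aw_0<w_0$ for the longest element $w_0=w[n+1,n+m+1]$ and every $a$, one has $\tau_{w'}\tau_{w_0}=0$ for all $w'\neq e$. Writing $\tau_{w_0}P=\sum_{w'\le w_0}P_{w'}\tau_{w'}$ and multiplying on the right by $\tau_{w_0}$ therefore yields
\[
\tau_{w_0}\,(fg)\,\tau_{w_0}=\bl\partial_{w_0}(fg)\br\,\tau_{w_0},
\]
\emph{not} a unit times $e(n,i^{m+1})$ plus corrections. (Already for $m=1$, $i\in\Iev$, $f=x_{n+1}+c$, direct computation gives $\tau_{n+1}f\tau_{n+1}=-\tau_{n+1}$.) Your assertion that ``every such correction is of the form (element of $\ol R$)$\cdot f\cdot$(element of $\ol R$)'' is unjustified: the corrections come from divided differences of $fg$ and do not factor through $f$. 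If $\partial_{w_0}(fg)$ happens to be a unit, the argument can be salvaged --- then $\tau_{w_0}\in\ol Rf\ol R$ and Lemma~\ref{Lem: R(n alpha i) tau w[1,n]} finishes --- but as written you reach the wrong target, the handling of the corrections is a gap, and the induction on $m$ is set up but never invoked.

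For $i\in\Iod$ you give no argument that $\partial_{w_0}(fg)$ is a unit. The variables anticommute, $f$ itself is not in $\Pev_\nu$, and invoking Lemma~\ref{Lem: shift} does not by itself convert $fg$ into an even element; a degree count alone is insufficient in the odd nilHecke setting. This is a genuine gap.

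The paper bypasses both issues with a different induction: one shows for $n+1\le k\le n+m+1$ that
\[
\bigl(\partial_{k-1}\cdots\partial_{n+1}f\bigr)\,e(n,i^{m+1})\;\tau_{w[n+1,k]}\in\ol R f\ol R.
\]
The step from $k$ to $k+1$ multiplies on the right by the tail $\tau_{w[n+1,k]^{-1}w[n+1,k+1]}$ to pass to $\tau_{w[n+1,k+1]}$, then on the left by $\tau_k$; since $\tau_k\tau_{w[n+1,k+1]}=0$, the identity $\tau_k h=(\oS_kh)\tau_k+\partial_kh$ leaves exactly $\partial_k$ applied to the previous polynomial. At the end $\partial_{n+m}\cdots\partial_{n+1}f$ is a unit (the coefficients of $f$ lie in $\P_n\ot\k^I$, which each $\partial_j$ with $j>n$ annihilates, and $f$ is monic of degree $m$ in $x_{n+1}$), and Lemma~\ref{Lem: R(n alpha i) tau w[1,n]} concludes. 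This treats the even and odd cases uniformly with no separate sign bookkeeping required.
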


\begin{proof}
We will prove the following statement by induction on $k$:
\begin{align} \label{eqn: integrabiltity using group algebra}
\partial_{k-1} \cdots \partial_{n+1}fe(n,i^{m+1}) \tau_{w[n+1,k]} \in \ol{R}f\ol{R}
\end{align}
for $n+1 \le k \le n+m+1$. Here $w[n+1,k]$ is the longest element of the subgroup $S_{[n+1,k]}$
generated by $s_a$ ($n+1\le a<k$). (See Remark \ref{Rmk: group algebra}.)
Assuming \eqref{eqn: integrabiltity using group algebra},
by multiplying $\tau_{w[n+1,k]^{-1}w[n+1,k+1]}$ from the right,
we have
$$\partial_{k-1} \cdots \partial_{n+1}fe(n,i^{m+1}) \tau_{w[n+1,k+1]} \in \ol{R}f\ol{R}.$$
By multiplying
$\tau_k$ from the left, we have
\eqn \label{eq: partial step 1}
&&
\tau_k(\partial_{k-1} \cdots \partial_{n+1}f)e(n,i^{m+1})\tau_{w[n+1,k+1]} \\
&&\hs{10ex}=\bl\oS_k(\partial_{k-1} \cdots \partial_{n+1}f)\tau_k+\partial_{k} \cdots \partial_{n+1} f\br
e(n,i^{m+1}) \tau_{w[n+1,k+1]}\\
&&\hs{10ex}=\partial_{k} \cdots \partial_{n+1} f e(n,i^{m+1}) \tau_{w[n+1,k+1]}\\
&&\hs{15ex}\in \ol{R}f\ol{R}. \eneqn Here we have used the fact that
$\tau_k\tau_{w[n+1,k+1]}=0$.

Thus the induction proceeds and  we obtain \eqref{eqn: integrabiltity using group algebra}
for any $k$.
Since $\partial_{n+m} \cdots \partial_{n+1}f=1$, our assertion follows from
$ \overline{R}\tau_{w[n+1,n+m+1]}\overline{R}=\overline{R}$ in Lemma \ref{Lem: R(n alpha i) tau w[1,n]}.
\end{proof}

\begin{corollary} \label{Cor: integrability}
For $\beta \in \mathtt{Q}^+$ with $|\beta|=n$ and $i \in I$, there exists $m$ such that
$$R^\Lambda(\beta+k\alpha_i)=0 \text{ for any } k \ge m.$$
\end{corollary}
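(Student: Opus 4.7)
The plan is to combine Lemma~\ref{Lem: finite dimension} (which produces a polynomial annihilator for $x_{n+1}$ in the cyclotomic quotient) with Lemma~\ref{lem: integrability 1} (which converts such an annihilator into the vanishing of an idempotent). First, I would apply Lemma~\ref{Lem: finite dimension} to each of the finitely many $R^\Lambda(\gamma+\alpha_i)$ with $|\gamma|=n$ and take the product of the resulting monic polynomials; this yields a single monic $g(u)\in\k[u]$ of some degree $m$ such that $g(x_{n+1})\,e(n,i)=0$ in $R^\Lambda(n+1)$. Since the embedding $R(n+1)\hookrightarrow R(n+m+1)$ sends $a^\Lambda(x_1)$ to $a^\Lambda(x_1)$, the cyclotomic relation persists in the larger algebra, giving $g(x_{n+1})\,e(n,i^{m+1})=0$ in $R^\Lambda(n+m+1)$.

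Next I would apply Lemma~\ref{lem: integrability 1} with $f=g(x_{n+1})$. The identity $\ol{R}\,f\,\ol{R}=\ol{R}$ expresses every element of $\ol{R}$ as $\sum_j a_j\,f\,b_j$ with $a_j,b_j\in\ol{R}$; since each $b_j$ absorbs $e(n,i^{m+1})$ on the left and $x_{n+1}$ commutes with this idempotent, one has $f\,b_j=g(x_{n+1})\,e(n,i^{m+1})\,b_j=0$ in $R^\Lambda(n+m+1)$. Therefore $\ol{R}$ vanishes in the cyclotomic quotient, so $e(n,i^{m+1})=0$ in $R^\Lambda(n+m+1)$, and by orthogonality $e(\nu,i^{m+1})=0$ for every $\nu\in I^n$.

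To conclude that $R^\Lambda(\beta+k\alpha_i)=0$ for all $k$ sufficiently large, I would run the same argument with the $i^{m+1}$-block placed at an arbitrary starting position $p+1$ rather than $n+1$: the proof of Lemma~\ref{lem: integrability 1} uses only divided differences and the longest-element $\tau_{w[n+1,n+m+1]}$ inside the $i^{m+1}$-block, so it goes through verbatim with $\tau_{w[p+1,p+m+1]}$ and the idempotent $e(\mu_1,i^{m+1},\mu_2)$ for arbitrary $\mu_1,\mu_2$. A pigeonhole argument then shows that once $k$ is large enough, every $\nu\in I^{\beta+k\alpha_i}$ contains $m+1$ consecutive $i$'s starting at some $p+1$, forcing $e(\nu)=0$ and hence $R^\Lambda(\beta+k\alpha_i)=0$. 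The main obstacle is establishing this mid-block version of Lemma~\ref{lem: integrability 1} uniformly in the prefix, together with a uniform bound on the polynomial degrees across the relevant prefixes; both are handled by noting that the non-$i$ content of any prefix of a sequence in $I^{\beta+k\alpha_i}$ is bounded by that of $\beta$, so the recursion in Lemma~\ref{Lem: finite dimension} produces a bound $m$ depending only on $\beta,i,\Lambda$.
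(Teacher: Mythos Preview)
Your first two steps match the paper's proof (up to a harmless index shift: the paper produces a monic $g$ of degree $m$ with $g(x_n)R^\Lambda(\beta)=0$ and then invokes Lemma~\ref{lem: integrability 1} to get $e(n,i^k)R^\Lambda(\beta+k\alpha_i)=0$ for $k>m$). The divergence is in the passage from this to $R^\Lambda(\beta+k\alpha_i)=0$. The paper does \emph{not} run a mid-block/pigeonhole argument; it simply cites \cite[Lemma~4.3(b)]{KKO11}, which handles this propagation by a different mechanism (essentially an induction on the non-$i$ height, not by producing new annihilating polynomials at every position).

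Your third step has a genuine gap when $i\in\Iod$. The assertion that ``the recursion in Lemma~\ref{Lem: finite dimension} produces a bound $m$ depending only on $\beta,i,\Lambda$'' is not supported by that recursion. Trace the $\nu_a=\nu_{a+1}=i\in\Iod$ case: Lemma~\ref{Lem: shift} gives $(x_a^2+x_{a+1}^2)\,g(x_{a+1}^2)e(\nu)M=0$, and eliminating $x_a^2$ against $g(x_a^2)=0$ yields $g(-x_{a+1}^2)g(x_{a+1}^2)e(\nu)M=0$; the degree \emph{doubles}. A prefix $\mu_1$ of a sequence in $I^{\beta+k\alpha_i}$ can contain arbitrarily many $i$'s (its non-$i$ content is bounded by that of $\beta$, but its $i$-content is not), so the degree of the annihilator of $x_{p+1}$ on $e(\mu_1,i,\ldots)$ is not bounded independently of $p$. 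Hence there is no single $m$ for which the pigeonhole step applies. (For $i\in\Iev$ your heuristic is fine, since then $\pa(i)=0$ and the $i$--$i$ step preserves the degree; the failure is precisely in the odd case that is the subject of this paper.)

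To repair the argument you must avoid re-running Lemma~\ref{Lem: finite dimension} at every prefix. One clean route is to fix $m$ from the end-block case once and for all, and then argue by induction on the non-$i$ part of $\beta$ (using that $R^\Lambda(\beta_0-\alpha_j+\ell\alpha_i)=0$ for large $\ell$ and each $j\neq i$ forces the corresponding idempotents $e(\beta_0-\alpha_j+\ell\alpha_i,j)$ to vanish in the larger algebra), rather than trying to manufacture a uniform polynomial annihilator at an arbitrary interior position.
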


\begin{proof} By Lemma \ref{Lem: finite dimension}, there exists a monic polynomial
$g(u)$ of degree $m$ such that $$g(x_n)R^\Lambda(\beta)=0.$$ Lemma
\ref{lem: integrability 1} implies
$e(n,i^k)R^\Lambda(\beta+k\alpha_i)=0$ for $k> m$. Now our assertion
follows from similar arguments to \cite[Lemma 4.3\;(b)]{KKO11}.
\end{proof}

\section{The superfunctors $E^\Lambda_i$ and $F^\Lambda_i$}
In this section, we define the superfunctors $E_i^{\Lambda}$ and
$F_i^{\Lambda}$ on $\Mod(R^\Lambda(\beta))$ and show that they
induce well-defined exact functors on $\Proj(R^\Lambda(\beta))$ and
$\Rep(R^\Lambda(\beta))$.

For each $i \in I$, we define the superfunctors
\begin{align*}
&E_i^{\Lambda}\cl \Mod(R^{\Lambda}(\beta+\alpha_i)) \to \Mod(R^{\Lambda}(\beta)),\\
&F_i^{\Lambda}\cl \Mod(R^{\Lambda}(\beta)) \to \Mod(R^{\Lambda}(\beta+\alpha_i))
\end{align*}
by
\begin{align*}
&E_i^{\Lambda}(N)=e(\beta,i)N = e(\beta,i)R^{\Lambda}(\beta+\alpha_i) \otimes_{R^{\Lambda}(\beta+\alpha_i)}N, \\
&F_i^{\Lambda}(M)=R^{\Lambda}(\beta+\alpha_i)e(\beta,i)\otimes_{R^{\Lambda}(\beta)}M
\end{align*}
for $M \in \Mod(R^{\Lambda}(\beta))$ and $N\in \Mod(R^{\Lambda}(\beta+\alpha_i))$.

For each $i \in I$, $\beta \in \mathtt{Q}^+$ and $m \in \Z$, let
$${\mathcal K}^m_{i,\beta}\seteq R(\beta+\alpha_i)v(i,\beta)T^m_i$$ be
the $R(\beta+\alpha_i)$-supermodule generated by $v(i,\beta)T^m_i$
with the defining relation
$$e(i,\beta)v(i,\beta)T^m_i=v(i,\beta)T^m_i.$$ We assign to
$v(i,\beta)T^m_i$ the $(\Z \times \Z_2)$-degree $(0,0)$. The
supermodule ${\mathcal K}^m_{i,\beta}$ has an
$(R(\beta+\alpha_i),\k[t_i] \ot R(\beta))$-superbimodule structure
whose right $\k[t_i] \ot R(\beta)$-action is given by
\begin{align} \label{eqn: K_1 action}
&\ba{l}
 a v(i,\beta)T^m_i \cdot b = a \xi_{n}(b) v(i,\beta)T^m_i, \\[.5ex]
 a v(i,\beta)T^m_i \cdot t_i = a \phi_i^m(x_1) v(i,\beta)T^m_i=(-1)^{\pa(i)}a x_1 v(i,\beta)T^m_i
 \ea
\end{align}
for $a \in R(\beta+\alpha_i)$ and $b \in R(\beta)$. Here,
$\phi_i\seteq\phi^{\pa(i)}$ and $\phi$ is the parity involution (see
\S\;\ref{sec:superalgebra}).

{\em In the sequel, we sometimes omit the $\Z$-grading shift functor $q$ when $\Z$-grading can be neglected. }

Set $\Lambda_i\seteq\langle h_i, \Lambda \rangle$. We introduce
$(R(\beta+\alpha_i),R^{\Lambda}(\beta))$-superbimodules
\begin{equation}\label{Def: Kernels}
\begin{aligned}
 F^{\Lambda}&\seteq R^{\Lambda}(\beta+\alpha_i)e(\beta,i), \\
 K_0 &\seteq R(\beta+\alpha_i)e(\beta,i) \otimes_{R(\beta)} R^{\Lambda}(\beta), \\
 K_1 &\seteq {\mathcal K}^{\Lambda_i}_{i,\beta} \otimes_{R(\beta)} \Pi_i^{\Lambda_i+\pa(\beta)}R^{\Lambda}(\beta) \\
  & = R(\beta+\alpha_i)v(i,\beta)T^{\Lambda_i}_i\otimes_{R(\beta)} \Pi_i^{\Lambda_i+\pa(\beta)}R^{\Lambda}(\beta).
\end{aligned}
\end{equation}

For $i \in I$, let $t_i$ be an indeterminate of $\Z \times
\Z_2$-degree $((\alpha_i|\alpha_i),\pa(i))$. Then $\k[t_i]$ is a
superalgebra. The superalgebra $\k[t_i]$ acts on $K_1$ from the
right by the formula given in \eqref{eqn: K_1 action}. Namely,
\begin{align*}
\Bigl( a v(i,\beta)T^{\Lambda_i}_i \ot \pi_i^{\Lambda_i+\pa(\beta)} b \Bigr ) t_i
 = a \phi_i^{\pa(\beta)}(x_1)v(i,\beta)T^{\Lambda_i}_i \ot \pi_i^{\Lambda_i+\pa(\beta)} \phi_i(b)
\end{align*}
for $a \in R(\beta+\alpha_i)$ and $b \in R^{\Lambda}(\beta)$. Here $\pi_i\seteq\pi^{\pa(i)}$
and $\phi_i=\phi^{\pa(i)}$.
On the other hand, $\k[t_i]$ acts on
$R(\beta+\alpha_i)e(\beta,i)$, $F^{\Lambda}$ and $K_0$ by
multiplying $x_{n+1}$ from the right. Thus $K_0$, $F^{\Lambda}$ and $K_1$ have a graded
$\bl R(\beta+\alpha_i),\k[t_i] \ot R^{\Lambda}(\beta)\br$-superbimodule structure.

By  a similar argument to the one given
in \cite[Lemma 4.8, Lemma 4.16]{KK11}, we obtain the following two lemmas:
\begin{lemma} \label{Lem: Proj} \hfill
\bnum
\item
Both $K_1$ and $K_0$ are finitely generated projective right $ \k[t_i] \ot R^{\Lambda}(\beta)$-supermodules.
\item For any monic \ro skew\rf-polynomial  $f(t_i)\in
\P_n[t_i]$,
the right multiplication by $f(t_i)$ on
$K_1$ induces an injective endomorphism of $K_1$. \ee
\end{lemma}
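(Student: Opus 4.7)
The plan is to reduce both parts to freeness statements over the super-tensor product $R(\beta)\otimes\k[t_i]$, closely following the strategy of \cite[Lemmas 4.8 and 4.16]{KK11}, with the additional care required to track super-signs arising from super-commutation in $R(\beta+\alpha_i)$.

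For (i), I first show that $R(\beta+\alpha_i)e(\beta,i)$ is a finitely generated free right module over $R(\beta)\otimes\k[t_i]$, where $R(\beta)$ acts via the natural embedding $R(\beta)\otimes R(\alpha_i)\hookrightarrow R(\beta+\alpha_i)$ and $t_i$ acts by right multiplication by $x_{n+1}$. By the decomposition $\eqref{eqn: R(n+1) wrt R(n,1)}$, a free basis is $\{\tau_n\tau_{n-1}\cdots\tau_a e(\beta,i)\}_{a=1}^{n+1}$. Symmetrically, $\mathcal{K}^{\Lambda_i}_{i,\beta}\simeq R(\beta+\alpha_i)e(i,\beta)$ is free as a right module over $R(\beta)\otimes\k[t_i]$, where now $R(\beta)$ acts via $\xi_n$ and $t_i$ acts as $\pm x_1$; relations (R2) and (R4) ensure that the subalgebra $\k[x_1 e(i)]\cdot\xi_n(R(\beta))\subset R(\beta+\alpha_i)$ really is the super-tensor product, and a free basis $\{\tau_a\tau_{a-1}\cdots\tau_1 e(i,\beta)\}_{a=0}^{n}$ is obtained by PBW (Proposition~\ref{Prop: PBW}) from reduced expressions of the opposite coset representatives of $S_{n+1}/S_{\{2,\ldots,n+1\}}$. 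Applying $\otimes_{R(\beta)}R^\Lambda(\beta)$ preserves freeness, yielding free, hence projective, right $\k[t_i]\otimes R^\Lambda(\beta)$-supermodules of finite rank for both $K_0$ and $K_1$; the parity twist $\Pi_i^{\Lambda_i+\pa(\beta)}$ in $K_1$ only relabels the $\Z_2$-grading of the basis elements.

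For (ii), the right action of $f(t_i)\in \P_n[t_i]$ on $K_1$ factors through the image $\bar f(t_i)$ in $R^\Lambda(\beta)[t_i] = \k[t_i]\otimes R^\Lambda(\beta)$, which remains monic in $t_i$ since the leading coefficient $1$ descends to $1$. Because $K_1$ is free over $\k[t_i]\otimes R^\Lambda(\beta)$ by~(i), it suffices to verify that right multiplication by $\bar f(t_i)$ is injective on $R^\Lambda(\beta)[t_i]$ itself. This follows from the standard leading-term argument: if $g = \sum_{k\leq d}a_k t_i^k$ is nonzero with $a_d\neq 0$, the product $g\cdot\bar f(t_i)$ has $t_i$-degree exactly $d+\deg_{t_i}(\bar f)$ with nonzero leading coefficient $\pm a_d$, the sign coming from super-commuting $t_i^d$ past the $\P_n$-coefficients of $\bar f$, so $g\cdot\bar f(t_i)\neq 0$.

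The principal obstacle is the identification of $\k[x_1 e(i)]\cdot\xi_n(R(\beta))$ (and its counterpart at position $n+1$) as super-tensor products, together with the bookkeeping of super-signs needed to verify that the PBW bases descend to genuine free generators once the super-tensor product structure and the parity twist in $K_1$ are both accounted for. Once these signs are under control, the argument runs in parallel with the non-super case treated in \cite[Section 4]{KK11}.
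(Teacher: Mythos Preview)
Your approach is correct and is precisely the one the paper intends: the paper itself gives no argument beyond the reference ``by a similar argument to the one given in \cite[Lemma 4.8, Lemma 4.16]{KK11}'', and what you have written is exactly that argument with the super-sign bookkeeping spelled out.

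One small cosmetic point: the decomposition \eqref{eqn: R(n+1) wrt R(n,1)} exhibits $R(n+1)$ as a free \emph{left} $R(n,1)$-module with basis $\{\tau_n\cdots\tau_a\}$; for the \emph{right} $R(\beta)\otimes\k[t_i]$-module structure on $R(\beta+\alpha_i)e(\beta,i)$ you want the mirror basis $\{\tau_a\cdots\tau_n\,e(\beta,i)\}_{a=1}^{n+1}$ coming from the minimal representatives of $S_{n+1}/S_n$ (equivalently, apply the anti-involution $\psi$ to \eqref{eqn: R(n+1) wrt R(n,1)}). The same remark applies symmetrically to the basis you list for $\mathcal{K}^{\Lambda_i}_{i,\beta}$. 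This does not affect the argument, only the labels.
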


\begin{lemma} \label{Lem: Decom} For $i \in I$ and $\beta \in \mathtt{Q}^+$ with $|\beta|=n$, we have
\bnum
\item $R(\beta+\alpha_i)a^{\Lambda}(x_1)R(\beta+\alpha_i) =\sum_{a=0}^{n}R(\beta+\alpha_i)
a^{\Lambda}(x_1)\tau_1 \cdots \tau_a$,
\item
$R(\beta+\alpha_i)a^{\Lambda}(x_1)R(\beta+\alpha_i)e(\beta,i)$

$=R(\beta+\alpha_i)a^{\Lambda}(x_1)R(\beta)e(\beta,i)
+R(\beta+\alpha_i)a^{\Lambda}(x_1)\tau_1 \cdots
\tau_n e(\beta,i)$.
\end{enumerate}
\end{lemma}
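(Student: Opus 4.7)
The plan is to deduce (ii) as a direct consequence of (i), and to prove (i) by the classical ``two-sided ideal'' strategy: show that the right-hand side of (i) is a right ideal; since it is manifestly a left ideal containing the element $a^{\Lambda}(x_1)$, it will then be a two-sided ideal containing $a^\Lambda(x_1)$, and must therefore coincide with the two-sided ideal on the left-hand side.

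For (ii), applying (i) to the left-hand side yields $\sum_{a=0}^{n} R(\beta+\alpha_i)\,a^{\Lambda}(x_1)\,\tau_1\cdots\tau_a\, e(\beta,i)$. For $0\le a\le n-1$, the element $\tau_1\cdots\tau_a$ involves only indices strictly less than $n$ and so lies in the subalgebra $R(\beta)\otimes 1 \subset R(\beta,\alpha_i)\subset R(\beta+\alpha_i)$; consequently that summand is contained in $R(\beta+\alpha_i)\,a^{\Lambda}(x_1)\,R(\beta)\,e(\beta,i)$. The $a=n$ summand is precisely $R(\beta+\alpha_i)\,a^{\Lambda}(x_1)\,\tau_1\cdots\tau_n\,e(\beta,i)$, and the reverse inclusion is obvious.

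For (i), set $N \seteq \sum_{a=0}^{n} R(\beta+\alpha_i)\,a^{\Lambda}(x_1)\,\tau_1 \cdots \tau_a$. Then $N$ is a left ideal contained in the LHS and contains $a^{\Lambda}(x_1)$ (the $a=0$ term), so it suffices to verify that $N$ is also a right ideal. By Proposition~\ref{Prop: PBW}, closure under right multiplication by all of $R(\beta+\alpha_i)$ reduces to closure under right multiplication by the generators $e(\nu)$, $x_p$ and $\tau_k$. The $e(\nu)$-case is immediate from (R3). For multiplication by $x_p$, one uses (R4) and (R5) to transport $x_p$ leftward through $\tau_1\cdots\tau_a$: each step either super-commutes $x_p$ past a $\tau_k$ (when $p\notin\{k,k+1\}$) or invokes $\tau_k x_{k+1}\equiv \pm x_k\tau_k + \delta_{\nu_k,\nu_{k+1}}$ (or its analogue for $\tau_k x_k$), in which case the correction term has strictly fewer $\tau$'s. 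The outcome is a sum of expressions $f_b\,\tau_1\cdots\tau_b\,e(\nu')$ with $b\le a$, and pushing each polynomial $f_b$ past $a^{\Lambda}(x_1)$ via the super-commutation of $x_2,\ldots,x_{n+1}$ with $x_1$ returns the element to $N$.

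For right multiplication by $\tau_k$, we split into cases. If $k>a+1$, then by (R7) $\tau_k$ super-commutes with each of $\tau_1,\ldots,\tau_a$ and by (R4) also with $a^{\Lambda}(x_1)$ (since $k\ge 2$), giving an element of $N$. If $k=a+1\le n$, the product is exactly the $(a+1)$-summand of $N$. If $k=a$, then (R6) replaces $\tau_a^2$ by the polynomial $\cQ_{\nu_a,\nu_{a+1}}(x_a,x_{a+1})\,e(\nu)$ and the $x_p$-case applies. Finally, if $k<a$, one uses the braid relation (R8) in conjunction with (R7) to push $\tau_k$ leftward through the suffix $\tau_k\tau_{k+1}\cdots\tau_a$, each braid move producing either the opposite side of the braid identity or a polynomial correction with a strictly shorter $\tau$-word; an induction on $\tau$-length concludes. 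The main technical obstacle is the careful bookkeeping of super-signs and, in particular, handling the odd-parity case of (R8) for $\nu_a=\nu_{a+2}\in\Iod$, where a factor $(x_{a+2}-x_a)$ appears; but since all correction terms always carry strictly fewer $\tau$-factors, the induction drives the argument through.
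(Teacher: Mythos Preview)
Your overall strategy for (i)---show that $N=\sum_{a=0}^{n} R(\beta+\alpha_i)\,a^{\Lambda}(x_1)\,\tau_1\cdots\tau_a$ is a two-sided ideal containing $a^\Lambda(x_1)$---is sound, and (ii) does follow from (i) exactly as you say. However, the execution of the $x_p$-case contains a false claim. You assert that after transporting $x_p$ leftward through $\tau_1\cdots\tau_a$, ``the outcome is a sum of expressions $f_b\,\tau_1\cdots\tau_b\,e(\nu')$ with $b\le a$''. This is not true. Already for $a=2$, $p=3$ one computes
\[
\tau_1\tau_2 x_3 e(\nu)=\pm x_1\tau_1\tau_2 e(\nu)\;\pm\; e_{1,2}\,\tau_2\, e(\nu)\;\pm\; e_{2,3}\,\tau_1\, e(\nu),
\]
and the middle term $e_{1,2}\tau_2$ is not of the form $f_b\,\tau_1\cdots\tau_b$ for any $b$. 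In general the correction terms that arise are of the shape $\tau_1\cdots\tau_{k-1}\,\tau_{k+1}\cdots\tau_a$ (one $\tau_k$ deleted), not initial segments. The same phenomenon recurs in your $k<a$ case for $\tau_k$, so the ``induction on $\tau$-length'' as you have set it up does not close.

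The repair is easy once you see it: in a term $a^\Lambda(x_1)\,\tau_1\cdots\tau_{k-1}\,\tau_{k+1}\cdots\tau_a$, every index in the tail $\tau_{k+1}\cdots\tau_a$ is $\ge 2$ (so it super-commutes with $a^\Lambda(x_1)$ by (R4) and \eqref{Eq: cylotomic polynomial}) and differs by at least $2$ from every index $\le k-1$ (so it super-commutes with $\tau_1\cdots\tau_{k-1}$ by (R7)). Hence the tail slides all the way to the left and is absorbed into the ambient $R(\beta+\alpha_i)$, leaving $a^\Lambda(x_1)\tau_1\cdots\tau_{k-1}\in N$. With this observation your argument can be completed.

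The paper itself omits the proof and refers to \cite{KK11}, whose argument bypasses all of these case analyses. The key point there is that $a^\Lambda(x_1)$ super-commutes with every generator $x_1,\ldots,x_{n+1},\tau_2,\ldots,\tau_n,e(\nu)$ of the subalgebra $R(1,n)$, so $a^\Lambda(x_1)R(1,n)=R(1,n)a^\Lambda(x_1)$. Combined with the coset decomposition $R(n+1)=\sum_{a=0}^{n}R(1,n)\,\tau_1\cdots\tau_a$ (the mirror of \eqref{eqn: R(n+1) wrt R(n,1)}, obtained from Proposition~\ref{Prop: PBW} by writing each $w\in S_{n+1}$ as $w'\cdot(s_1\cdots s_a)$ with $w'\in S_{[2,n+1]}$), this gives (i) in one line:
\[
R(n+1)a^\Lambda(x_1)R(n+1)=\sum_a R(n+1)a^\Lambda(x_1)R(1,n)\tau_1\cdots\tau_a
=\sum_a R(n+1)a^\Lambda(x_1)\tau_1\cdots\tau_a.
\]
This is really the same underlying mechanism as your repair above, packaged more efficiently.
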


For $1 \le a \le n$, $1 \le \ell < n$ and $\nu \in I^n$,
by applying a similar argument given in \cite[Lemma 4.7]{KKO11}, we have
\begin{equation} \label{Eqn: twisting for P}
\begin{aligned}
x_{a+1}  a^{\Lambda}(x_1)\tau_1 \cdots \tau_n e(\nu,i) & \equiv
(-1)^{\pa(\nu_a)(\Lambda_i+\pa(\nu))\pa(i)} a^{\Lambda}(x_1)\tau_1 \cdots \tau_n  x_ae(\nu,i),    \\
\tau_{\ell+1}  a^{\Lambda}(x_1)\tau_1 \cdots \tau_n e(\nu,i) & \equiv
(-1)^{\pa(\nu_{\ell})\pa(\nu_{\ell+1})(\Lambda_i+\pa(\nu))\pa(i)}
 a^{\Lambda}(x_1)\tau_1 \cdots \tau_n  \tau_\ell  e(\nu,i),  \\
x_1 a^{\Lambda}(x_1)\tau_1 \cdots \tau_n e(\nu,i) & \equiv
(-1)^{\pa(\nu)\pa(i)} a^{\Lambda}(x_1)\tau_1 \cdots \tau_n  x_{n+1} e(\nu,i) \\
& \qquad \qquad \ {\rm mod} \  R(n+1)a^{\Lambda}(x_1)R(n)e(n,i).
\end{aligned}
\end{equation}
Hence we have
\begin{align} \label{eqn: anticommute P}
&\ba{ll}
 a^{\Lambda}(x_1)\tau_1 \cdots \tau_n e(\beta,i)c \equiv&
\phi^{\Lambda_i+\pa(\beta)}_i(\xi_n(c)) a^{\Lambda}(x_1)\tau_1 \cdots \tau_n e(\beta,i)\\[1ex]
 &\mod  R(n+1)a^{\Lambda}(x_1)R(\beta)e(\beta,i)
\ea
\end{align}
for any $\beta \in \mathtt{Q}^+$ with $|\beta|=n$ and $c \in R(n)$.

Let
$P\cl K_1 \to K_0$ be the homomorphism defined by
\begin{align} \label{eqn: The map P}
x v(i,\beta) T^{\Lambda_i}_i \ot \pi^{\Lambda_i+\pa(\beta)}_i y
\longmapsto x a^{\Lambda}(x_1)\tau_1 \cdots \tau_n e(\beta,i)\otimes y
\end{align}
for $x \in R(\beta+\alpha_i)$ and $y \in R^\Lambda(\beta)$.
Then, by $\eqref{Eqn: twisting for P}$ and $\eqref{eqn: anticommute P}$, $P$
becomes an $(R(\beta+\alpha_i),\k[t_i]\ot R(\beta))$-superbimodule homomorphism.

Let ${\rm pr}\cl K_0 \to F^\Lambda$ be the canonical projection.
Using Lemma \ref{Lem: Decom}, one can easily see  that
\begin{align} \label{Eq: tilde P}
{\rm Im}(P) = {\rm Ker} ({\rm pr})
=\dfrac{R(\beta+\alpha_i) a^{\Lambda}(x_1)
R(\beta+\alpha_i)e(\beta,i)}{R(\beta+\alpha_i)a^{\Lambda}(x_1)R(\beta)e(\beta,i)}
\subset K_0.
\end{align}

Hence we obtain an exact sequence of $\bl
R(\beta+\alpha_i),\k[t_i]\ot R(\beta)\br$-superbimodules
$$ K_1  \overset{P}{\longrightarrow} K_0 \overset{{\rm pr}}{\longrightarrow} F^{\Lambda} \longrightarrow 0.$$

We will show that $P$ is actually injective by constructing an
$(R(\beta+\alpha_i),\k[t_i] \ot R(\beta))$ -bilinear homomorphism
$Q$ such that $Q \circ P$ is injective.

For $1 \le a \le n$, we define the elements $\varphi_a$ and $g_a$ of $R(\beta+\alpha_i)$ by
\begin{equation} \label{Eq: sigma_a}
\begin{aligned}
\varphi_a = \sum_{ \substack{\nu \in I^{\beta+\alpha_i},\\ \nu_a \neq \nu_{a+1}}} \tau_a e(\nu)
& + \sum_{ \substack{\nu \in I^{\beta+\alpha_i},\\ \nu_a = \nu_{a+1} \in \Iev}}
 (1- (x_{a+1}-x_a)\tau_a)e(\nu) \\
& + \sum_{ \substack{\nu \in I^{\beta+\alpha_i},\\ \nu_a = \nu_{a+1} \in \Iod}}
 ((x_{a+1}-x_a)- (x_{a+1}^2-x_a^2)\tau_a)e(\nu)
\end{aligned}
\end{equation}
and
\begin{equation} \label{Eq: g_a}
\begin{aligned}
g_a =\sum_{ \substack{\nu \in I^{\beta+\alpha_i},\\ \nu_a \neq \nu_{a+1}}} \tau_a e(\nu)
& + \sum_{ \substack{\nu \in I^{\beta+\alpha_i},\\ \nu_a = \nu_{a+1} \in \Iev}}
 (x_{a+1}-x_a)\bl1- (x_{a+1}-x_a)\tau_a\br e(\nu) \\
& + \sum_{ \substack{\nu \in I^{\beta+\alpha_i},\\ \nu_a = \nu_{a+1} \in \Iod}}(x_{a+1}^2-x_a^2)
 \bl(x_{a+1}-x_a)- (x_{a+1}^2-x_a^2)\tau_a\br e(\nu).
\end{aligned}
\end{equation}
The elements $\varphi_a$'s are called {\it intertwiners}, and
$g_a$'s are their variants.

Note that if $\nu_a=\nu_{a+1} \in \Iev$,
\begin{equation} \label{Eq: reverse varphi_a ev}
\begin{aligned}
\varphi_a e(\nu) & = (x_a \tau_a - \tau_a x_a)e(\nu) = (\tau_a x_{a+1}-x_{a+1} \tau_a)e(\nu) \\
& =(1 - \tau_a(x_a-x_{a+1}))e(\nu),
\end{aligned}
\end{equation}
and if $\nu_a=\nu_{a+1} \in \Iod$,
\begin{equation} \label{Eq: reverse varphi_a od}
\begin{aligned}
\varphi_a e(\nu) & = (x_a^2 \tau_a - \tau_a x_a^2)e(\nu) = (\tau_a x^2_{a+1}-x^2_{a+1} \tau_a)e(\nu) \\
& =((x_a-x_{a+1}) - \tau_a(x^2_a-x^2_{a+1}))e(\nu).
\end{aligned}
\end{equation}

\begin{lemma} For $1\le a \le n$ and $\nu \in I^{n+1}$, we have
\begin{equation} \label{Eq: Commute sigma_a}
\begin{aligned}
& \varphi_a e(\nu) = e(s_a \nu) \varphi_a,\\
&x_{s_a(b)}\varphi_a e(\nu)=(-1)^{\pa(\nu_a)\pa(\nu_{a+1})\pa(\nu_b)} \varphi_a x_be(\nu)\quad(1 \le b \le n+1), \\
& \tau_b \varphi_{a}e(\nu)
=(-1)^{\pa(\nu_a)\pa(\nu_{a+1})\pa(\nu_b)\pa(\nu_{b+1})}
\varphi_{a}\tau_b e(\nu) \quad
\text{ if } |b-a| >1 , \\
& \tau_a \varphi_{a+1} \varphi_a =  \varphi_{a+1} \varphi_a \tau_{a+1},
\end{aligned}
\end{equation}
and
\begin{equation} \label{Eq: Commute g_a}
\begin{aligned}
& g_a e(\nu) = e(s_a \nu) g_a, \\
& x_{s_a(b)}g_ae(\nu) = (-1)^{\pa(\nu_a)\pa(\nu_{a+1})\pa(\nu_b)}g_a x_be(\nu) \quad (1 \le b \le n+1), \\
& \tau_b g_{a}e(\nu)
=(-1)^{\pa(\nu_a)\pa(\nu_{a+1})\pa(\nu_b)\pa(\nu_{b+1})}  g_{a}
\tau_b e(\nu) \quad
\text{ if } |b-a| >1 , \\
& \tau_a g_{a+1} g_a =  g_{a+1} g_a \tau_{a+1}.
\end{aligned}
\end{equation}
\end{lemma}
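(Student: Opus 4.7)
The plan is to verify each of the four identities in \eqref{Eq: Commute sigma_a} (and then \eqref{Eq: Commute g_a}) by direct computation from the defining relations (R1)--(R8), proceeding case by case on whether $\nu_a=\nu_{a+1}$ and, when equal, on the parity of $\nu_a$. Throughout, the key observation is that $\varphi_a$ is designed as a ``renormalization'' of $\tau_a$ whose purpose is precisely to absorb the $\delta_{\nu_a,\nu_{a+1}}$-correction appearing in (R5).

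First, the identity $\varphi_a e(\nu) = e(s_a\nu)\varphi_a$ follows termwise from (R3) together with the fact that the idempotent summations in \eqref{Eq: sigma_a} are indexed by conditions on $\{\nu_a,\nu_{a+1}\}$ that are invariant under $s_a$. Next, for the $x$-commutation, the case $b\neq a,a+1$ is immediate from (R4). The crux is $b\in\{a,a+1\}$: when $\nu_a\neq\nu_{a+1}$, relation (R5) (with $\delta_{\nu_a,\nu_{a+1}}=0$) gives the result directly, and the sign $(-1)^{\pa(\nu_a)\pa(\nu_{a+1})\pa(\nu_b)}$ emerges from the twist in (R5). When $\nu_a=\nu_{a+1}\in\Iev$, one computes $x_{a+1}\varphi_a e(\nu)=x_{a+1}(1-(x_{a+1}-x_a)\tau_a)e(\nu)$ and applies (R5) to the $x_{a+1}\tau_a$ term; the $\delta_{\nu_a,\nu_{a+1}}$ correction is exactly cancelled by the ``$1$'' in $\varphi_a$, producing $\varphi_a x_a e(\nu)$, as can be checked via a few lines of rewriting using \eqref{Eq: reverse varphi_a ev}. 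The odd case $\nu_a=\nu_{a+1}\in\Iod$ is analogous, using \eqref{Eq: reverse varphi_a od} and the squared differences $x_{a+1}^2-x_a^2$, which are central in $\P_\nu$. The $\tau$-commutation for $|b-a|>1$ is then immediate from (R7) applied to the $\tau_a$ factor of $\varphi_a$ and from (R4) applied to the $x_a,x_{a+1}$ factors.

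The braid-like identity $\tau_a\varphi_{a+1}\varphi_a=\varphi_{a+1}\varphi_a\tau_{a+1}$ is the main obstacle, and I expect this to consume most of the proof. I would split into cases on the triple $(\nu_a,\nu_{a+1},\nu_{a+2})$: if all three are distinct, the identity reduces to the ordinary braid relation for $\tau$'s by the vanishing of the quadratic relation (R6) corrections and of (R8). The delicate case is $\nu_a=\nu_{a+2}$, where (R8) produces a nontrivial correction given by a divided-difference of $\cQ$; the polynomial prefactors appearing in \eqref{Eq: sigma_a} (namely $1$, $x_{a+1}-x_a$, or $x_{a+1}^2-x_a^2$) are built so that this correction is exactly compensated. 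The computation here uses the already-established $x$-commutations for $\varphi_a$, together with (R5), (R6), (R8) and the fact that $\Pev_n$ lies in the center of $\P_n$. The remaining cases ($\nu_a=\nu_{a+1}\neq\nu_{a+2}$, etc.) are lighter variants of the same computation.

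Finally, the corresponding identities \eqref{Eq: Commute g_a} for $g_a$ will be deduced from those for $\varphi_a$ via the observation that $g_a e(\nu)=h_a(x_a,x_{a+1})\,\varphi_a e(\nu)$, where $h_a=1$, $x_{a+1}-x_a$, or $x_{a+1}^2-x_a^2$ according to whether $\nu_a\neq\nu_{a+1}$, $\nu_a=\nu_{a+1}\in\Iev$, or $\nu_a=\nu_{a+1}\in\Iod$. Since $h_a$ is even in all cases (the odd difference $x_{a+1}-x_a$ is not even but it commutes with $x_a$ and $x_{a+1}$ up to the same sign that appears on the right-hand side), the $\varphi_a$ identities propagate to $g_a$ after a short sign bookkeeping; the braid identity for $g_a$ similarly reduces to that for $\varphi_a$ using that the $h_a$-factors on both sides of $\tau_a g_{a+1}g_a=g_{a+1}g_a\tau_{a+1}$ match after applying the commutation relations already verified.
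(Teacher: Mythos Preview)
Your approach to the first three identities in \eqref{Eq: Commute sigma_a} matches the paper's: a case split on $\nu_a$ versus $\nu_{a+1}$, with the new odd case $\nu_a=\nu_{a+1}\in\Iod$ handled by direct computation using (R5). (One small slip: when $\nu_a=\nu_{a+1}\in\Iev$, the element $x_{a+1}-x_a$ is genuinely even, since both $x_a e(\nu)$ and $x_{a+1}e(\nu)$ have parity zero; your parenthetical remark about it being ``not even'' is off, though harmless.)

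The substantive divergence is in the braid identity $\tau_a\varphi_{a+1}\varphi_a=\varphi_{a+1}\varphi_a\tau_{a+1}$. You propose a case analysis on the triple $(\nu_a,\nu_{a+1},\nu_{a+2})$, expanding both sides and checking that the (R8) correction terms are compensated by the polynomial prefactors in $\varphi$. This would work but is laborious --- the number of cases is large and each one involves tracking (R6), (R8), and several applications of (R5). The paper instead sets $S=\tau_a\varphi_{a+1}\varphi_a-\varphi_{a+1}\varphi_a\tau_{a+1}$ and, using only the already-established $x$-commutation for $\varphi_a$ together with (R5), shows uniformly that $Sx_b=\pm x_{s_{a,a+2}(b)}S$ for every $b$. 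One then concludes $S=0$ by a degree argument as in \cite[Lemma~4.12]{KK11}: an element satisfying such a commutation with all $x_b$ must lie in a specific graded piece of the PBW filtration, and the $\Z$-degree of $S$ is too low to allow a nonzero contribution. This route never touches (R8) directly and avoids the case explosion entirely. Your method buys nothing extra here; the paper's trick is simply shorter.

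For \eqref{Eq: Commute g_a}, your reduction via $g_a=h_a\varphi_a$ is exactly what the paper has in mind when it says these follow easily from \eqref{Eq: Commute sigma_a}.
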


\begin{proof}  By the defining relations of quiver Hecke
superalgebras, the third equality can be verified immediately.
If $\nu_a \neq \nu_{a+1}$ or $\nu_a=\nu_{a+1} \in \Iev$, the
first and second equalities  were proved in \cite[Lemma
4.12]{KK11}. We will prove the second equality in \eqref{Eq: Commute
sigma_a} when $\nu_a=\nu_{a+1} \in \Iod$. Let $b=a$. Then
\begin{align*}
x_{a+1} \varphi_a e(\nu) & = x_{a+1}^2-x_{a+1}x_a  - (x_{a+1}^2-x_a^2)(x_{a+1}\tau_a)e(\nu) \\
& = x_{a+1}^2-x_{a+1}x_a  - (x_{a+1}^2-x_a^2)(-\tau_ax_{a}+1)e(\nu),
\end{align*}
and
\begin{align*}
\varphi_a x_a e(\nu) & = x_{a+1}x_a-x_a^2 - (x_{a+1}^2-x_a^2)(\tau_a x_a)e(\nu).
\end{align*}
Therefore we have
$$ x_{a+1} \varphi_a e(\nu) + \varphi_a x_a e(\nu) =0.$$
Similarly, we can prove the equality when $b=a+1$.

Let $S=\tau_a \varphi_{a+1} \varphi_a- \varphi_{a+1}
\varphi_a\tau_{a+1}$. Using the second  equality,  we have 
\begin{align*}
(\tau_a \varphi_{a+1} \varphi_a)x_ae(\nu) &=
(-1)^{\pa(\nu_a)(\pa(\nu_a)\pa(\nu_{a+1})+\pa(\nu_a)\pa(\nu_{a+2})
+\pa(\nu_{a+1})\pa(\nu_{a+2}))}
 x_{a+2}(\tau_a \varphi_{a+1} \varphi_a)e(\nu),\\
(\varphi_{a+1} \varphi_a \tau_{a+1})x_ae(\nu)&=(-1)^{\pa(\nu_a)
(\pa(\nu_{a+1})\pa(\nu_{a+2})+\pa(\nu_a)\pa(\nu_{a+2})+\pa(\nu_a)\pa(\nu_{a+1}))}
 x_{a+2}( \varphi_{a+1} \varphi_a \tau_{a+1})e(\nu),\\
 (\tau_a \varphi_{a+1} \varphi_a)x_{a+1}e(\nu) &= (-1)^{\pa(\nu_{a+1})
 (\pa(\nu_{a})\pa(\nu_{a+1})+\pa(\nu_{a})\pa(\nu_{a+2}))}
 \tau_ax_a\varphi_{a+1}\varphi_ae(\nu)\\
 &= (-1)^{\pa(\nu_{a+1})(\pa(\nu_{a+1})\pa(\nu_{a})+\pa(\nu_{a})\pa(\nu_{a+2})
 +\pa(\nu_{a+1})\pa(\nu_{a+2}))}\\
 &\hs{30ex}(x_{a+1}\tau_a-e_{a,a+1})\varphi_{a+1}\varphi_{a}e(\nu), \\
(\varphi_{a+1} \varphi_a \tau_{a+1})x_{a+1}e(\nu) &= (-1)^{\pa(\nu_{a+1})\pa(\nu_{a+2})}
\varphi_{a+1} \varphi_a(x_{a+2}\tau_{a+1}-e_{a+1,a+2})e(\nu)\\
&= (-1)^{\pa(\nu_{a+1})(\pa(\nu_{a+1})\pa(\nu_{a+2})+\pa(\nu_{a})\pa(\nu_{a+2})+\pa(\nu_{a})\pa(\nu_{a+1}))}
x_{a+1}\varphi_{a+1}\varphi_a\tau_{a+1}e(\nu) \\
& \qquad \qquad - (-1)^{\pa(\nu_{a+1})\pa(\nu_{a+2})}\varphi_{a+1}\varphi_{a}e_{a+1,a+2}e(\nu),\\
(\tau_a \varphi_{a+1} \varphi_a)x_{a+2}e(\nu) &=
(-1)^{\pa(\nu_{a+2})(\pa(\nu_{a})\pa(\nu_{a+1})+\pa(\nu_{a+2})\pa(\nu_{a}))}
\tau_a x_{a+1}\varphi_{a+1}\varphi_a e(\nu) \\
&= (-1)^{\pa(\nu_{a+2})(\pa(\nu_{a})\pa(\nu_{a+1})+\pa(\nu_{a+2})\pa(\nu_{a}))}\\
&\hs{20ex}
((-1)^{\pa(\nu_{a+1})\pa(\nu_{a+2})}x_a\tau_a+e_{a,a+1})\varphi_{a+1}\varphi_{a}e(\nu) \\
& =(-1)^{\pa(\nu_{a+2})(\pa(\nu_{a})\pa(\nu_{a+1})+\pa(\nu_{a+2})\pa(\nu_{a})+\pa(\nu_{a+1})\pa(\nu_{a+2}))}
x_a\tau_a\varphi_{a+1}\varphi_a e(\nu) \\
& \qquad \qquad + (-1)^{\pa(\nu_{a+2})(\pa(\nu_{a})\pa(\nu_{a+1})+\pa(\nu_{a+2})\pa(\nu_{a}))}\varphi_{a+1}\varphi_ae_{a+1,a+2} e(\nu), \\
(\varphi_{a+1} \varphi_a \tau_{a+1})x_{a+2}e(\nu) & = \varphi_{a+1} \varphi_a
((-1)^{\pa(\nu_{a+1})\pa(\nu_{a+2})}x_{a+1}\tau_{a+1}+e_{a+1,a+2})e(\nu)\\
&= (-1)^{\pa(\nu_{a+2})(\pa(\nu_{a})\pa(\nu_{a+1})+\pa(\nu_{a+2})\pa(\nu_{a})+\pa(\nu_{a+1})\pa(\nu_{a+2}))}
x_a\tau_a\varphi_{a+1}\varphi_a e(\nu) \\
& \qquad \qquad  + \varphi_{a+1}\varphi_a e_{a+1,a+2}e(\nu).
\end{align*}
Hence we have $S x_{b} = \pm x_{s_{a,a+2}b } S$ for all $b$. Using a
similar argument given in \cite[Lemma 4.12]{KK11}
we conclude that $S=0$.

 The equalities in \eqref{Eq: Commute g_a} easily follows from \eqref{Eq: Commute sigma_a}.
\end{proof}

Hence we have
\begin{equation} \label{Eqn: twisting for Q}
\begin{aligned}
&a  g_n \cdots g_1 e(i,\beta) =
g_n \cdots g_1 e(i,\beta) \phi_i^{\pa(\beta)}(\xi_n(a))\quad\text{for any $a\in R(\beta)$ and,}  \\
&x_{n+1}  g_n \cdots g_1 e(i,\beta)  =
(-1)^{\pa(\beta)\pa(i)} g_n \cdots g_1  e(i,\beta)x_1.
\end{aligned}
\end{equation}
 Using a similar method to the construction of $P$, we
obtain the following proposition:

\begin{proposition} \
There is an $(R(\beta+\alpha_i),\k[t_i] \ot R^\Lambda(\beta))$-bilinear homomorphism
$$Q\cl K_0 \to K_1'\seteq R(\beta+\alpha_i)v(i,\beta) \ot_{R(\beta)}\Pi^{\pa(\beta)}_i R^\Lambda(\beta)$$
defined by
$$a e(\beta,i)\ot b \longmapsto a g_n \cdots g_1  v(i,\beta) \ot \pi_i^{\pa(\beta)} b$$
for $a \in R(\beta+\alpha_i)$ and $b \in R^\Lambda(\beta)$. Here,
the right action of $t_i$ on $K_1'$ is given by
$$av(i,\beta)\otimes \pi_i^{\pa(\beta)}b\longmapsto (-1)^{\pa(i)\pa(\beta)}ax_1v(i,\beta)
\otimes\pi_i^{\pa(\beta)}\phi_i(b).$$
\end{proposition}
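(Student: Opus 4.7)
The plan is to mimic the construction of $P$ from \eqref{eqn: The map P}, replacing $a^{\Lambda}(x_1)\tau_1\cdots\tau_n$ by $g_n\cdots g_1$ and using \eqref{Eqn: twisting for Q} in place of \eqref{Eqn: twisting for P} and \eqref{eqn: anticommute P}. First, I would define a left $R(\beta+\alpha_i)$-linear map
$$\tilde Q\cl R(\beta+\alpha_i)e(\beta,i)\longrightarrow R(\beta+\alpha_i)v(i,\beta),\qquad a\,e(\beta,i)\longmapsto a\,g_n\cdots g_1\,v(i,\beta).$$
Iterating the relation $g_a\,e(\nu)=e(s_a\nu)\,g_a$ from \eqref{Eq: Commute g_a}, together with $v(i,\beta)=e(i,\beta)v(i,\beta)$, yields $g_n\cdots g_1\,v(i,\beta)=e(\beta,i)\,g_n\cdots g_1\,v(i,\beta)$; hence right multiplication by $e(\beta,i)$ is absorbed and $\tilde Q$ is well-defined and homogeneous of the appropriate $\Z\times\Z_2$-degree.

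Second, I would show that $\tilde Q$ descends to $Q\cl K_0\to K_1'$ by verifying the $R(\beta)$-balancing. For $c\in R(\beta)$ embedded in $R(\beta+\alpha_i)$ as the first tensor factor, the first identity of \eqref{Eqn: twisting for Q} gives
$$c\,g_n\cdots g_1\,v(i,\beta)=g_n\cdots g_1\,e(i,\beta)\,\phi_i^{\pa(\beta)}(\xi_n(c))\,v(i,\beta)=\bigl(g_n\cdots g_1\,v(i,\beta)\bigr)\cdot\phi_i^{\pa(\beta)}(c),$$
where the last equality uses the right $R(\beta)$-action $\xi_n(c')\,v(i,\beta)=v(i,\beta)\cdot c'$. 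Passing $\phi_i^{\pa(\beta)}(c)$ across the tensor product over $R(\beta)$, its left action on $\pi_i^{\pa(\beta)}b\in\Pi_i^{\pa(\beta)}R^{\Lambda}(\beta)$ introduces a second $\phi_i^{\pa(\beta)}$, and $\phi_i^{2\pa(\beta)}=\id$ cancels the two twists, yielding $Q(ae(\beta,i)c\otimes b)=Q(ae(\beta,i)\otimes cb)$.

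Third, right $\k[t_i]$-linearity follows from the second identity of \eqref{Eqn: twisting for Q}: since right multiplication by $t_i$ on $K_0$ is multiplication by $x_{n+1}$ on the first factor, we compute
$$Q\bigl((ae(\beta,i)\otimes b)\cdot t_i\bigr)=a\,x_{n+1}\,g_n\cdots g_1\,v(i,\beta)\otimes \pi_i^{\pa(\beta)}b=(-1)^{\pa(\beta)\pa(i)}\,a\,g_n\cdots g_1\,x_1\,v(i,\beta)\otimes\pi_i^{\pa(\beta)}b,$$
which matches the prescribed $t_i$-action on $K_1'$ once the $\phi_i$ twist on $b$ is reconciled in the tensor product by the same $\phi_i^{2\pa(\beta)}=\id$ cancellation used in the previous step.

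The main obstacle is the careful bookkeeping of parity signs: the definition of $K_1'$ with the twist $\Pi_i^{\pa(\beta)}$ on $R^{\Lambda}(\beta)$ and the prescribed $t_i$-action incorporating the involution $\phi_i$ are precisely tuned so that the commutation relations \eqref{Eq: Commute g_a} for the $g_a$'s, encoded in \eqref{Eqn: twisting for Q}, produce a well-defined $\bigl(R(\beta+\alpha_i),\k[t_i]\otimes R^{\Lambda}(\beta)\bigr)$-superbimodule homomorphism. This parallels exactly the sign analysis behind the well-definedness of $P$.
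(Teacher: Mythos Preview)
Your approach is exactly what the paper intends: its entire proof is the single sentence ``Using a similar method to the construction of $P$, we obtain the following proposition,'' and you have spelled out that parallel construction correctly, using \eqref{Eqn: twisting for Q} in place of \eqref{eqn: anticommute P}. The first two steps are fine.

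There is one imprecision in your third step. You write the right $t_i$-action on $K_0$ as $(a e(\beta,i)\otimes b)\cdot t_i = a x_{n+1}\otimes b$, and then appeal to a ``$\phi_i^{2\pa(\beta)}=\id$ cancellation'' to reconcile the stray $\phi_i(b)$ appearing in the $K_1'$-action. That cancellation does not apply here: there is only a single $\phi_i$ to account for, not a square. The resolution is rather that the $t_i$-action on $K_0$ already carries the twist. Since $\k[t_i]\otimes R^\Lambda(\beta)$ is a \emph{super} tensor product and $t_i$ has parity $\pa(i)$, the element $t_i\otimes 1$ must pass through $b$, so that
\[
(a e(\beta,i)\otimes b)\cdot t_i \;=\; a\,x_{n+1}\,e(\beta,i)\otimes \phi_i(b).
\]
With this correction your computation gives
\[
Q\bigl((a e(\beta,i)\otimes b)\cdot t_i\bigr)
=(-1)^{\pa(i)\pa(\beta)}\,a\,g_n\cdots g_1\,x_1\,v(i,\beta)\otimes \pi_i^{\pa(\beta)}\phi_i(b),
\]
which matches the prescribed $t_i$-action on $K_1'$ on the nose, with no further reconciliation needed. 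This is the same sign mechanism that makes $P$ in \eqref{eqn: The map P} $\k[t_i]$-linear.
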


\begin{theorem} \label{Thm: A nu}
For each $\nu \in I^{\beta}$, set
$$ \mathsf{A}_{\nu}(t_i) = a^{\Lambda}_i(t_i)
\prod_{\substack{1 \le a \le n, \\[.8ex] \nu_a \neq i}}\cQ_{i,\nu_a}(t_i,x_a)
\prod_{\substack{1 \le a \le n, \\[.8ex] \nu_a = i \in \Iod }} (x_a-t_i)^{2}e(\nu),$$
and define
$$\mathsf{A}(t_i)\seteq\sum_{\nu \in I^\beta}\mathsf{A}_{\nu}(t_i) \in \k[t_i] \ot R^\Lambda(\beta).
$$
Then the composition
$$Q \circ P\cl K_1 \to K_1'$$
coincides with the right multiplication by
$(-1)^{p(i)\Lambda_i\pa(\beta)}\mathsf{A}(t_i)$; i.e.,
$$
\begin{aligned}
av(i,\beta)T_i^{\Lambda_i}\otimes \pi_i^{\Lambda_i+\pa(\beta)}b &
\longmapsto \bl av(i,\beta)\otimes
\pi_i^{\pa(\beta)}\phi_i^{\Lambda_i}(b) \br
(-1)^{p(i)\Lambda_i\pa(\beta)}\mathsf{A}(t_i)
\\
&=av(i,\beta)\mathsf{A}(t_i)\otimes \pi_i^{\pa(\beta)}b.
\end{aligned}
$$
\end{theorem}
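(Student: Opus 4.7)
The plan is a direct computation. Applied to the generator $v(i,\beta)T_i^{\Lambda_i}\otimes \pi_i^{\Lambda_i+\pa(\beta)}$, the map $P$ produces
\[
a^\Lambda(x_1)\tau_1\cdots\tau_n\,e(\beta,i)\otimes 1 \;=\; a^\Lambda_i(x_1)\,\tau_1\cdots\tau_n\,e(\beta,i)\otimes 1,
\]
since $\tau_1\cdots\tau_n\,e(\beta,i) = e(i,\beta)\,\tau_1\cdots\tau_n$ (the word $s_1 s_2\cdots s_n$ moves position $n+1$ to position $1$). Applying $Q$ then yields
\[
a^\Lambda_i(x_1)\,\tau_1\tau_2\cdots\tau_n\,g_n g_{n-1}\cdots g_1\,v(i,\beta)\otimes \pi_i^{\pa(\beta)},
\]
so the theorem is reduced to a precise analysis of the element $\tau_1\cdots\tau_n\,g_n\cdots g_1$ acting on $e(i,\nu)\,v(i,\beta)$, idempotent-wise for each $\nu\in I^\beta$.

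The heart of the proof would be the polynomial identity: for each $\nu\in I^\beta$,
\[
e(i,\nu)\,\tau_1\cdots\tau_n\,g_n\cdots g_1\,e(i,\nu) \;\equiv\;  \pm\,\mathsf{B}_\nu\cdot e(i,\nu)
\]
modulo terms killed upon right multiplication by $v(i,\beta)$, where
\[
\mathsf{B}_\nu \;=\; \prod_{\substack{1\le a\le n\\ \nu_a\ne i}}\cQ_{i,\nu_a}(x_1,x_{a+1}) \prod_{\substack{1\le a\le n\\ \nu_a=i\in\Iod}}(x_{a+1}-x_1)^2.
\]
I would prove this by induction on $n$, using the braid identities $\tau_a g_{a+1}g_a = g_{a+1}g_a\tau_{a+1}$ in \eqref{Eq: Commute sigma_a} and \eqref{Eq: Commute g_a} to telescope the product one $(\tau_a,g_a)$ pair at a time. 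At each step, when a $\tau_a$ meets its partner $g_a$ on the idempotent whose $a$-th and $(a{+}1)$-st entries are $i$ and $\nu_a$, the trichotomy in the definition of $g_a$ (cf.\ \eqref{Eq: g_a}) gives exactly one of the three types of factors: $\cQ_{i,\nu_a}(x_a,x_{a+1})=\tau_a^2$ if $\nu_a\ne i$; zero contribution if $\nu_a=i\in\Iev$ (since $\tau_a^2=\cQ_{ii}=0$, while the prefactor $(x_{a+1}-x_a)$ in $g_a$ reduces the remaining $\tau_a$-term to something absorbed by further braidings); and $(x_{a+1}-x_a)^2$ if $\nu_a=i\in\Iod$ (from the two-step product $(x_{a+1}^2-x_a^2)\cdot(x_{a+1}-x_a)$ after using $\tau_a^2=0$).

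The final step is to transfer the polynomial identity to $K_1'$ and reconcile signs. In $K_1'$, every $x_{a+1}$ multiplying $v(i,\beta)$ from the left corresponds via $\xi_n$ and \eqref{Eqn: twisting for Q} to $x_a\in R^\Lambda(\beta)$ acting on the right (with a controlled $\pa(\beta)$-sign), while $x_1$ corresponds to $t_i$ with sign $(-1)^{\pa(i)\pa(\beta)}$. Tracking these signs across $\mathsf{B}_\nu$ converts $\mathsf{B}_\nu(x_1,\ldots,x_{n+1})\,v(i,\beta)$ into $v(i,\beta)\cdot\bigl(\text{the analogous polynomial in }t_i\text{ and the }x_a\text{'s}\bigr)$; combining with the left factor $a^\Lambda_i(x_1)\leftrightarrow a^\Lambda_i(t_i)$ assembles the result into $v(i,\beta)\,\mathsf{A}_\nu(t_i)\otimes\pi_i^{\pa(\beta)}$. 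The cumulative global sign is precisely $(-1)^{\pa(i)\Lambda_i\pa(\beta)}$, arising from moving the $\Lambda_i$-degree polynomial $a^\Lambda_i(x_1)$ through the supercommutative bookkeeping.

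The main obstacle will be the case $\nu_a=i\in\Iev$ in the inductive identity: $\tau_a g_a e(\nu)$ does not vanish as a raw element of $R(\beta+\alpha_i)$, and I will need to show that its non-vanishing residue is absorbed into terms of the form $R(\beta+\alpha_i)\tau_b R(\beta+\alpha_i)$ with $b\ne a$, which subsequently convert via further braid relations to terms containing $\tau_c^2 e(\ldots,i,i,\ldots)=0$ or to polynomial corrections that get captured by the inductive hypothesis for smaller $\beta$. A secondary difficulty is the supersign bookkeeping when $i\in\Iod$, where $g_a$ involves the factor $(x_{a+1}^2-x_a^2)$ rather than $(x_{a+1}-x_a)$ and where Lemma \ref{Lem: shift}-type identities must be applied systematically to keep the monomial factors in the correct even subalgebra $\Pev_n$.
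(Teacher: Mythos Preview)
Your overall strategy---reduce to the identity for
$a^\Lambda_i(x_1)\tau_1\cdots\tau_n g_n\cdots g_1 e(i,\nu)$ and then induct on
$n$ by peeling off the pair $(\tau_n,g_n)$---is exactly what the paper does.
But your diagnosis of where the difficulty lies, and your proposed treatment
of the hard case, are both off.

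First, the case split.  Since $i$ is a fixed index, the alternative
``$\nu_a=i\in\Iev$'' versus ``$\nu_a=i\in\Iod$'' is not a genuine trichotomy:
only one can ever occur.  For $i\in\Iev$ the paper simply cites
\cite[Theorem~4.15]{KK11}, so the case you flag as the ``main obstacle'' is
not part of the new argument at all.  The new content is entirely the case
$i\in\Iod$, where the two-case split is $\nu_n\ne i$ versus $\nu_n=i$.

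Second, and more seriously, your computation in the case $\nu_n=i\in\Iod$ is
wrong.  Using $\tau_n^2e(\nu,i)=0$ one finds (this is the paper's
\eqref{Eq: tau n g n})
\[
\tau_n e(\nu,i)\,g_n \;=\; \tau_n(x_{n+1}^2-x_n^2)(x_{n+1}-x_n)\,e(\nu,i),
\]
which is \emph{not} a polynomial: the $\tau_n$ is still there.  It certainly
does not equal $(x_{n+1}-x_n)^2$ (that expression has the wrong degree, and in
the skew ring $x_{n+1}^2-x_n^2$ does not factor through $x_{n+1}-x_n$).  So
the inductive step does not close in the way you describe, and the braid
relation $\tau_a g_{a+1}g_a=g_{a+1}g_a\tau_{a+1}$ will not rescue it---the
leftover $\tau_n$ has nowhere to telescope to.

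The paper's key extra ingredient, which your proposal is missing, is the
observation
\[
a^\Lambda(x_1)\,\tau_1\cdots\tau_{n-1}\,g_n\cdots g_1\,e(i,\nu)\;\equiv\;0
\quad\bmod\ R(\beta+\alpha_i)\,a^\Lambda(x_2)\,R^1(\beta),
\]
obtained by using \eqref{Eqn: twisting for Q} to slide $a^\Lambda(x_1)$ past
$g_n\cdots g_1$ into position~$2$.  Expanding $g_n$ via
\eqref{Eq: reverse varphi_a od} and combining with \eqref{Eq: tau n g n}
yields two congruences whose difference, after invoking the injectivity of
right multiplication by the monic polynomial $x_{n+1}^2-x_1^2$ on $K_1'$
(Lemma~\ref{Lem: Proj}), produces exactly the factor $(x_{n+1}-x_1)^2$
predicted by $\mathsf{A}_\nu$.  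Note that this step genuinely requires the
presence of $a^\Lambda(x_1)$ and the ideal one is working modulo; trying to
prove a bare identity for $\tau_1\cdots\tau_n g_n\cdots g_1$ alone, as you set
it up, cannot work.
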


\begin{proof}
If $i \in \Iev$, our assertion was proved in \cite[Theorem
4.15]{KK11}. If $i \in \Iod$, then it suffices to show that
\begin{equation} \label{Eq: claim1}
\begin{aligned}
&a^{\Lambda}(x_1)\tau_1 \cdots \tau_n g_n \cdots g_1 e(i,\nu)
= a^{\Lambda}(x_1)\tau_1 \cdots \tau_n e(\nu,i) g_n \cdots g_1  \\
& \qquad \qquad \qquad  \equiv \mathsf{A}'_{\nu} \quad {\rm mod} \ R(\beta+\alpha_i)
a^{\Lambda}(x_2) R^1(\beta),
\end{aligned}
\end{equation}
where
$$ \mathsf{A}'_{\nu} = a^{\Lambda}_i(x_1)
\prod_{\substack{1 \le a \le n, \\[.8ex] \nu_a \neq i}}\cQ_{i,\nu_a}(x_1,x_{a+1})
\prod_{\substack{1 \le a \le n, \\[.8ex] \nu_a = i \in \Iod }} (x_{a+1}-x_1)^{2}e(i,\nu).$$

We will prove $\eqref{Eq: claim1}$ by induction on $|\beta|=n$. If $n=0$, the assertion is obvious.
Thus we may assume that $n \ge 1$.

Note that, by \eqref{Eq: reverse varphi_a od}, we have
\begin{equation} \label{Eq: tau n g n}
\begin{aligned}
\tau_n e(\nu,i) g_n=
\left\{
\begin{array}{ll}
   \tau_n e(\nu,i)\tau_n = \cQ_{i,\nu_n}(x_n,x_{n+1})e(\nu_{<n},i,\nu_n) &  \text{ if } \nu_n \neq i,\\
 \tau_n(x_{n+1}^2-x_n^2)(x_{n+1}-x_n) e(\nu,i) &  \text{ if } \nu_n = i.  \\
\end{array}
\right.
\end{aligned}
\end{equation}

(i) We first assume that $\nu_n \neq i$. Then, by \eqref{eqn: center
Q}, we have
\begin{align*}
& a^{\Lambda}(x_1)\tau_1 \cdots \tau_n g_n \cdots g_1 e(i,\nu) \\
& =a^{\Lambda}(x_1)\tau_1 \cdots \tau_{n-1}\cQ_{i,\nu_n}(x_n,x_{n+1})g_{n-1} \cdots g_1 e(i,\nu) \\
& =a^{\Lambda}(x_1)\tau_1 \cdots \tau_{n-1}g_{n-1} \cdots g_1 e(i,\nu) \cQ_{i,\nu_n}(x_1,x_{n+1}) \\
& \equiv \mathsf{A}'_{\nu_{<n}}\cQ_{i,\nu_n}(x_1,x_{n+1}) =
\mathsf{A}'_{\nu} \quad   \mod
R(\beta+\alpha_i)a^{\Lambda}(x_2)R^1(\beta)e(i,\beta).
\end{align*}

(ii) Assume that $\nu_n = i$. Then we have
\begin{equation} \label{Eq ; nu n i}
\begin{aligned}
& a^{\Lambda}(x_1)\tau_1 \cdots \tau_n g_n \cdots g_1 e(i,\nu) \\
&=a^{\Lambda}(x_1)\tau_1 \cdots \tau_n (x_{n+1}-x_n)(x_{n+1}^2-x_n^2)
g_{n-1} \cdots g_1 e(i,\nu).\\
\end{aligned}
\end{equation}
Note that
\begin{equation} \label{eqn: form of ind}
\begin{aligned}
 & a^\Lambda(x_1) \tau_1 \cdots \tau_{n-1} g_n \cdots g_1e(i,\nu) = \pm g_n \cdots g_1 a^\Lambda(x_2) \tau_2 \cdots \tau_n
\equiv 0 \\
& \qquad\qquad\qquad\qquad\qquad\qquad\qquad\qquad
      \ \mod R(\beta+\alpha_i)a^{\Lambda}(x_2)R^1(\beta)e(i,\beta).
\end{aligned}
\end{equation}
By \eqref{Eq: reverse varphi_a od}, the formula \eqref{eqn: form of
ind} can be written as
$$ a^\Lambda(x_1) \tau_1 \cdots \tau_{n-1} (\tau_n (x^2_{n+1}-x^2_n)- (x_{n+1}-x_n))(x^2_{n+1}-x^2_n)
g_{n-1} \cdots g_1e(i,\nu) \equiv 0.$$
Thus
\begin{align*}
& a^\Lambda(x_1) \tau_1 \cdots \tau_{n-1} \tau_n (x^2_{n+1}-x^2_n)^2 g_{n-1} \cdots g_1e(i,\nu)  \\
& \equiv a^\Lambda(x_1) \tau_1 \cdots \tau_{n-1} (x_{n+1}-x_n)(x^2_{n+1}-x^2_n) g_{n-1} \cdots g_1
e(i,\nu) \\
& \equiv (-1)^{\pa(\nu_{<n})}\mathsf{A}'_{\nu_{<n}}(x_{n+1}-x_1)(x^2_{n+1}-x^2_1).
\end{align*}
Since the right multiplication by $(x_{n+1}^2-x_1^2)$ and
$(x_{n+1}-x_1)$ on $K_1$ are injective by Lemma~\ref{Lem: Proj}, we
conclude that
\begin{align*}
& a^\Lambda(x_1) \tau_1 \cdots \tau_{n-1} \tau_n (x^2_{n+1}-x^2_n) g_{n-1} \cdots g_1e(i,\nu) \\
&\qquad\qquad\qquad\qquad\qquad\qquad
\equiv (-1)^{\pa(\nu_{<n})}\mathsf{A}'_{\nu_{<n}}(x_{n+1}-x_1),
\end{align*}
which implies
\begin{align*}
& (-1)^{\pa(\nu_{<n})}
a^\Lambda(x_1) \tau_1 \cdots \tau_{n-1} \tau_n(x^2_{n+1}-x^2_n)(x_{n+1}-x_n)g_{n-1} \cdots g_1e(i,\nu) \\
& \qquad\qquad\qquad\qquad\qquad\qquad
\equiv (-1)^{\pa(\nu_{<n})}\mathsf{A}'_{\nu_{<n}}(x_{n+1}-x_1)^2.
\end{align*}
 Then, $\eqref{Eq ; nu n i}$, together with $\mathsf{A}'_{\nu} =
\mathsf{A}'_{\nu_{<n}}(x_{n+1}-x_1)^2$, implies  the desired result.
\end{proof}

By applying the same argument given in \cite[Lemma 4.19]{KK11}, we
have the following lemma.

\begin{corollary} \label{cor: B nu}
Set $$K_0'\seteq
R(\beta+\al_i)e(\beta,i)T_i^{\Lambda_i}\otimes_{R(\beta)}
\Pi_i^{\Lambda_i}R^\Lambda(\beta).$$
Then the following diagram
commutes
$$\xymatrix{K_1\ar[r]^P\ar[d]_{(-1)^{\pa(i)\Lambda_i\pa(\beta)}\mathsf{A}(t_i)}
&K_0\ar[d]^{(-1)^{\pa(i)\Lambda_i\pa(\beta)}\mathsf{A}(t_i)}\ar[dl]|Q\\
K'_1\ar[r]_{P'}&K_0'.}
$$
Here, $P'\cl K'_1\to K'_0$ is given by
$$av(i,\beta)\otimes \pi_i^{\pa(\beta)}b\longmapsto
a
a^\Lambda(x_1)\tau_1\cdots\tau_ne(\beta,i)T_i^{\Lambda_i}\otimes\pi_i^{\Lambda_i}b,$$
and $(-1)^{\pa(i)\Lambda_i\pa(\beta)}\mathsf{A}(t_i)\cl K_0\to K'_0$
is given by $$a\otimes b\longmapsto\bl
aT_i^{\Lambda_i}\otimes\pi_i^{\Lambda_i}\phi_i^{\Lambda_i}(b)\br
(-1)^{\pa(i)\Lambda_i\pa(\beta)}\mathsf{A}(t_i) =
(-1)^{\pa(i)\Lambda_i\pa(\beta)}a\mathsf{A}(t_i)T_i^{\Lambda_i}\otimes\pi_i^{\Lambda_i}b
.$$ In particular, for any $\nu\in I^\beta$, we have \eq &&
\ba{l}g_n\cdots g_1 a^\Lambda(x_1)\tau_1 \cdots \tau_n e(\nu,i)\otimes e(\beta)  \\[1ex]
\hs{5ex}= (-1)^{\pa(i)\Lambda_i\pa(\beta)}a^{\Lambda}_i(x_{n+1})\hs{-1ex}
\prod\limits_{\substack{1 \le a \le n, \\[.8ex] \nu_a \neq i}} \hs{-1ex}\cQ_{\nu_a,
i}(x_a,x_{n+1})\hs{-2ex} \prod\limits_{\substack{1 \le a \le n, \\[.8ex] \nu_a = i \in
\Iod }}\hs{-1ex}(x_{a}-x_{n+1})^{2}e(\nu,i)\otimes e(\beta) \ea
\eneq
in
$R(\beta+\al_i)e(\beta,i)\otimes_{R(\beta)}R^\Lambda(\beta)$ .
\end{corollary}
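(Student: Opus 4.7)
The upper triangle of the diagram --- the equality $Q\circ P = (-1)^{\pa(i)\Lambda_i\pa(\beta)}\mathsf{A}(t_i)$ as maps $K_1 \to K_1'$ --- is exactly Theorem~\ref{Thm: A nu}. The new content is the lower triangle: $P'\circ Q = (-1)^{\pa(i)\Lambda_i\pa(\beta)}\mathsf{A}(t_i)$ as maps $K_0 \to K_0'$. Once the lower triangle is established, the displayed ``in particular'' formula follows by evaluating both sides at the generator $e(\nu,i)\otimes e(\beta) \in K_0$, cancelling the common $T_i^{\Lambda_i}$ and $\pi_i^{\Lambda_i}$ on both sides, and using the symmetry $\cQ_{i,\nu_a}(x_{n+1},x_a) = \cQ_{\nu_a,i}(x_a,x_{n+1})$ from \eqref{eqn:Q even} to rewrite the product.

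Since both $P'\circ Q$ and the right multiplication by $\mathsf{A}(t_i)$ are $(R(\beta+\alpha_i),\,\k[t_i]\otimes R^\Lambda(\beta))$-superbimodule maps, it suffices to verify equality on the generators $e(\nu,i)\otimes e(\beta)$ for each $\nu\in I^\beta$. Unwinding the definitions, $P'(Q(e(\nu,i)\otimes e(\beta))) = g_n\cdots g_1\, a^\Lambda_i(x_1)\tau_1\cdots\tau_n\, e(\nu,i)\,T_i^{\Lambda_i}\otimes \pi_i^{\Lambda_i}e(\beta)$ (using $e(i,\nu)a^\Lambda(x_1) = a^\Lambda_i(x_1)e(i,\nu)$ and $g_n\cdots g_1\, e(i,\nu) = e(\nu,i)\, g_n\cdots g_1$), while the right action of $\mathsf{A}(t_i)$ on $K_0'$ --- which replaces $t_i$ by $x_{n+1}$ --- produces $(-1)^{\pa(i)\Lambda_i\pa(\beta)}\, a^\Lambda_i(x_{n+1})\prod_{\nu_a\ne i}\cQ_{\nu_a,i}(x_a,x_{n+1})\prod_{\nu_a=i\in\Iod}(x_a-x_{n+1})^2\, e(\nu,i)\,T_i^{\Lambda_i}\otimes\pi_i^{\Lambda_i}e(\beta)$. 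The task is thus to prove the displayed identity of the corollary in $K_0$.

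I would prove this identity in two steps, paralleling the proof of Theorem~\ref{Thm: A nu}. First, using the intertwining $x_{s_a(b)}g_a e(\mu) = (-1)^{\pa(\mu_a)\pa(\mu_{a+1})\pa(\mu_b)}g_a x_b e(\mu)$ from \eqref{Eq: Commute g_a} iteratively on the successive $g_a$'s, one extracts $g_n\cdots g_1\, a^\Lambda_i(x_1)\, e(i,\nu) = (-1)^{\pa(i)\Lambda_i\pa(\beta)} a^\Lambda_i(x_{n+1})\, g_n\cdots g_1\, e(i,\nu)$; here it is essential that $c_{i;k}=0$ for $k$ odd when $i\in\Iod$, so that the individual monomial signs combine to the stated total via $\pa(\beta) = \sum_a \pa(\nu_a)$. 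Second, one shows $g_n\cdots g_1\,\tau_1\cdots\tau_n\, e(\nu,i)\otimes e(\beta) = \prod_{\nu_a\ne i}\cQ_{\nu_a,i}(x_a,x_{n+1})\prod_{\nu_a=i\in\Iod}(x_a-x_{n+1})^2\, e(\nu,i)\otimes e(\beta)$ in $K_0$, by induction on $n$: pair the leftmost $g_n$ with the rightmost $\tau_n$ using the braid relation $\tau_a g_{a+1}g_a = g_{a+1}g_a \tau_{a+1}$ from \eqref{Eq: Commute g_a}, apply \eqref{Eq: tau n g n} to simplify $\tau_n e(\nu,i) g_n$, and reduce to $|\beta|=n-1$.

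The main obstacle is the sign analysis in the odd case $i\in\Iod$, and especially the subcase $\nu_n = i \in \Iod$ of the induction. Here one must work modulo the ideal $R(\beta+\alpha_i)a^\Lambda(x_2)R^1(\beta)$ and replay the delicate cancellation argument of Theorem~\ref{Thm: A nu}(ii): the identity $a^\Lambda(x_1)\tau_1\cdots\tau_{n-1}g_n\cdots g_1 e(i,\nu) \equiv 0$ modulo that ideal, combined with the explicit form of $g_n$ on $\nu_n = i \in \Iod$ from \eqref{Eq: g_a} and \eqref{Eq: reverse varphi_a od}, and the injectivity of right multiplication by $(x_{n+1}-x_n)$ and $(x_{n+1}^2-x_n^2)$ guaranteed by Lemma~\ref{Lem: Proj}(ii), together yield the expected factor $(x_n-x_{n+1})^2$. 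The remaining sign bookkeeping then follows the same pattern as in the proof of Theorem~\ref{Thm: A nu}.
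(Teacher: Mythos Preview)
Your overall framing is correct: the upper triangle is Theorem~\ref{Thm: A nu}, and the new content is the lower triangle $P'\circ Q = (-1)^{\pa(i)\Lambda_i\pa(\beta)}\mathsf{A}(t_i)$. But your inductive mechanism for the direct computation of $g_n\cdots g_1\,\tau_1\cdots\tau_n\,e(\nu,i)$ is misconceived. You propose to ``pair the leftmost $g_n$ with the rightmost $\tau_n$'' and invoke \eqref{Eq: tau n g n} to simplify $\tau_n e(\nu,i) g_n$. That identity was used in Theorem~\ref{Thm: A nu} because there the product is $a^\Lambda(x_1)\tau_1\cdots\tau_n\,g_n\cdots g_1$, so the innermost adjacent pair is $\tau_n g_n$. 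Here the order is reversed: in $g_n\cdots g_1\,\tau_1\cdots\tau_n$ the only adjacent mixed pair is the innermost $g_1\tau_1$, and the braid relation $\tau_a g_{a+1}g_a = g_{a+1}g_a\tau_{a+1}$ gives you no way to transport $\tau_n$ across the block $g_{n-1}\cdots g_1\tau_1\cdots\tau_{n-1}$ to meet $g_n$. A direct induction does work, but it must peel off $g_1\tau_1$ (computing $g_1 e(i,\nu)\tau_1 = \cQ_{\nu_1,i}(x_1,x_2)e(\nu_1,i,\nu_{>1})$ when $\nu_1\neq i$, and the analogous odd case), then commute the resulting even factor leftward past $g_n\cdots g_2$ so that $x_2$ becomes $x_{n+1}$, and only then invoke the inductive hypothesis on the shifted product $g_n\cdots g_2\,\tau_2\cdots\tau_n$.

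The paper avoids this second computation altogether. Its argument, following \cite[Lemma 4.19]{KK11}, is: the outer square $P'\circ\mathsf{A}(t_i) = \mathsf{A}(t_i)\circ P$ commutes because both $P$ and $P'$ are left multiplication by $a^\Lambda(x_1)\tau_1\cdots\tau_n$ while $\mathsf{A}(t_i)$ is a right action. Combined with the upper triangle this gives $(P'\circ Q - \mathsf{A}(t_i))\circ P = 0$, so the $(\k[t_i]\otimes R^\Lambda(\beta))$-linear map $P'\circ Q - \mathsf{A}(t_i)$ factors through $\on{Coker}(P) = F^\Lambda$. But $F^\Lambda$ is $t_i$-torsion (Lemma~\ref{Lem: finite dimension}: $x_{n+1}$ satisfies a monic polynomial in $R^\Lambda(\beta+\alpha_i)$), whereas $K_0'$ is projective over $\k[t_i]\otimes R^\Lambda(\beta)$ by Lemma~\ref{Lem: Proj} and hence $t_i$-torsion-free. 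Therefore $P'\circ Q - \mathsf{A}(t_i) = 0$, and the displayed identity follows by evaluating at $e(\nu,i)\otimes e(\beta)$. This bypasses both the sign bookkeeping and the delicate odd case you flag as the main obstacle.
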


Since $K_1$ is a projective $R^{\Lambda}(\beta)\otimes
\k[t_i]$-supermodule by Lemma \ref{Lem: Proj} and $\mathsf{A}(t_i)$ is a
monic (skew)-polynomial (up to a multiple of an invertible element) in
$t_i$, by  a similar argument to the one  in \cite[Lemma 4.17, Lemma 4.18]{KK11},
we conclude:

\begin{theorem} \label{Thm: P-injective}
The module $F^{\Lambda}$ is a projective right
$R^{\Lambda}(\beta)$-supermodule and we have a short exact sequence
consisting of right projective $R^{\Lambda}(\beta)$-supermodules:
\begin{align}\label{Eq: The exact seq}
0 \to K_1  \xrightarrow{\;P\;}
K_0 \to F^{\Lambda} \to 0.
\end{align}
\end{theorem}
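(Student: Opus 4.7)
The plan is to prove this in two stages: first, that $P$ is injective (so that the sequence is exact), and second, that all three terms are projective as right $R^\Lambda(\beta)$-supermodules.

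For the injectivity of $P$, I will exploit Theorem \ref{Thm: A nu}, which identifies $Q\circ P$ (up to an overall sign) with right multiplication by $\mathsf{A}(t_i)\in\k[t_i]\otimes R^\Lambda(\beta)$. A direct inspection of the formula for $\mathsf{A}_\nu(t_i)$ shows that its leading $t_i$-coefficient is $1$, inherited from the monicity of $a^\Lambda_i(t_i)$ together with the leading $t_i$-terms of the factors $\cQ_{i,\nu_a}(t_i,x_a)$ and $(x_a-t_i)^2$. Hence $\mathsf{A}(t_i)$ is a monic skew-polynomial in $t_i$, and Lemma \ref{Lem: Proj}(ii) forces right multiplication by $\mathsf{A}(t_i)$ on $K_1$ to be injective. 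Consequently $P(x)=0$ implies $(Q\circ P)(x)=0$ and hence $x=0$, so $P$ is injective and \eqref{Eq: The exact seq} is exact.

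For projectivity, Lemma \ref{Lem: Proj}(i) gives that $K_0$ and $K_1$ are finitely generated projective right $\k[t_i]\otimes R^\Lambda(\beta)$-supermodules; since $\k[t_i]$ is free as a graded $\k$-module, $\k[t_i]\otimes R^\Lambda(\beta)$ is free over $R^\Lambda(\beta)$, so $K_0$ and $K_1$ are already projective over $R^\Lambda(\beta)$. The substantive task is therefore the projectivity of $F^\Lambda$. Following the template of \cite[Lemma 4.18]{KK11}, the key input is the general fact that if $M$ is a projective right $\k[t_i]\otimes R^\Lambda(\beta)$-supermodule and right multiplication by a monic skew-polynomial $f(t_i)$ of degree $d$ is an injective endomorphism of $M$, then $M/Mf(t_i)$ is a projective right $R^\Lambda(\beta)$-supermodule. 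The reason is that modding a free $\k[t_i]\otimes R^\Lambda(\beta)$-module by a monic polynomial in $t_i$ of degree $d$ yields a free right $R^\Lambda(\beta)$-module of $d$ times the original rank, and direct summands of free modules remain projective. Combined with the factorization $Q\circ P = \mathsf{A}(t_i)$ and the commutative diagram of Corollary \ref{cor: B nu}, this identifies $F^\Lambda$ with a cokernel of exactly that form, yielding its projectivity.

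The main obstacle is parity bookkeeping in the super setting. For $i\in\Iod$ the generator $t_i$ is odd, $\k[t_i]\otimes R^\Lambda(\beta)$ is only skew-commutative in the $t_i$-variable, and the coefficients of $\mathsf{A}(t_i)$ interact with the right $R^\Lambda(\beta)$-action via the parity involution $\phi_i$. One must check that the standard cokernel-of-monic-polynomial argument survives these sign corrections and that the identification of $F^\Lambda$ with the relevant cokernel is compatible with the superbimodule structure. Fortunately, the signs have already been tracked consistently in Theorem \ref{Thm: A nu} and Corollary \ref{cor: B nu}, so the remaining work is careful verification rather than the introduction of any genuinely new structural idea.
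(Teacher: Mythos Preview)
Your approach is essentially the paper's own: both deduce injectivity of $P$ from Theorem~\ref{Thm: A nu} together with Lemma~\ref{Lem: Proj}(ii) and the monicity of $\mathsf{A}(t_i)$, and both defer the projectivity of $F^\Lambda$ to the argument of \cite[Lemma 4.17, Lemma 4.18]{KK11} using the commutative diagram of Corollary~\ref{cor: B nu}.

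One caution on your wording: the sentence ``this identifies $F^\Lambda$ with a cokernel of exactly that form'' overstates the situation. The module $F^\Lambda=\mathrm{Coker}(P)$ is not literally $M/Mf(t_i)$ for a monic $f$; rather, the argument in \cite{KK11} uses \emph{both} triangles of the diagram, $Q\circ P=\mathsf{A}(t_i)$ on $K_1$ and $P'\circ Q=\mathsf{A}(t_i)$ on $K_0$, so that $F^\Lambda$ sits in short exact sequences with the genuinely projective quotients $K_1'/K_1'\mathsf{A}(t_i)$ and $K_0/K_0\mathsf{A}(t_i)$, and projectivity is extracted from that configuration rather than from a direct identification. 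Your invocation of Corollary~\ref{cor: B nu} and \cite[Lemma 4.18]{KK11} covers this, but the informal summary you give of what that lemma buys you is not quite accurate.
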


Since $K_1$, $K_0$ and $F^{\Lambda}$ are the kernels of functors
$\overline{F}_i$, $F_i$ and $F^{\Lambda}_i$, respectively, we have:

\begin{corollary} \label{Cor: Exa F_i bar F_i}
 For any $i \in I$ and $\beta \in \mathtt{Q}^+$, there exists an exact sequence of
$R(\beta+\alpha_i)$-modules
\begin{align} \label{Eq: Fi bar Fi Fi Lam}
0 \to \Pi_i^{\Lambda_i+\pa(\beta)}q^{(\alpha_i|2\Lambda-\beta)}
\overline{F}_i M \to F_i M \to F^{\Lambda}_i M \to 0,
\end{align}
which is functorial in $M \in \Mod (R^{\Lambda}(\beta))$.
\end{corollary}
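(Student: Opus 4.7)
The plan is to apply the functor $(-) \otimes_{R^\Lambda(\beta)} M$ to the short exact sequence of right $R^\Lambda(\beta)$-supermodules
$$0 \to K_1 \xrightarrow{\,P\,} K_0 \to F^\Lambda \to 0$$
provided by Theorem~\ref{Thm: P-injective}, and then to read off the three resulting terms. Since $F^\Lambda$ is a projective right $R^\Lambda(\beta)$-supermodule (Theorem~\ref{Thm: P-injective} again), this sequence splits on the right, so tensoring with $M$ preserves its exactness.

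Two of the three terms are immediate from the definitions: by construction one has
$$K_0 \otimes_{R^\Lambda(\beta)} M = R(\beta+\alpha_i)e(\beta,i) \otimes_{R(\beta)} M = F_i M \quad\text{and}\quad F^\Lambda \otimes_{R^\Lambda(\beta)} M = F^\Lambda_i M.$$
For the leftmost term, observe that the left $R(\beta+\alpha_i)$-module underlying $\mathcal{K}^{\Lambda_i}_{i,\beta}$ agrees with $\overline{\F}_{i,\beta}$ (they differ only in the right $\k[t_i]$-action), so after using the standard compatibility between parity twists and tensor products from \S\;\ref{Sec: supers}, one obtains
$$K_1 \otimes_{R^\Lambda(\beta)} M \;=\; \mathcal{K}^{\Lambda_i}_{i,\beta} \otimes_{R(\beta)} \Pi_i^{\Lambda_i+\pa(\beta)} M \;\simeq\; \Pi_i^{\Lambda_i+\pa(\beta)} \overline{F}_i M$$
as left $R(\beta+\alpha_i)$-modules. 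This accounts for the parity factor $\Pi_i^{\Lambda_i+\pa(\beta)}$ in the corollary.

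The remaining $q$-shift $q^{(\alpha_i \mid 2\Lambda - \beta)}$ is extracted from the map $P$ itself. By \eqref{eqn: The map P}, $P$ sends the generator $v(i,\beta) T_i^{\Lambda_i}$, which has $\Z\times\Z_2$-degree $(0,0)$, to $a^\Lambda(x_1)\tau_1\cdots\tau_n\, e(\beta,i)$; applying the grading rules of Definition~\ref{def:Quiver Hekce superalg}, the $\Z$-degree of this target element equals
$$\Lambda_i(\alpha_i\mid\alpha_i) \;-\; (\alpha_i\mid\beta) \;=\; 2(\alpha_i\mid\Lambda) - (\alpha_i\mid\beta) \;=\; (\alpha_i\mid 2\Lambda - \beta).$$
Hence $P$ is degree-preserving precisely when viewed as a map $q^{(\alpha_i \mid 2\Lambda - \beta)} K_1 \to K_0$, which produces the required grade shift in the tensored sequence. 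Functoriality in $M$ is automatic from the naturality of the tensor product.

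Since the substantive technical content---the injectivity of $P$, the projectivity of $F^\Lambda$, and the explicit description of $Q\circ P$---has already been concentrated in Theorem~\ref{Thm: P-injective} and Theorem~\ref{Thm: A nu}, there is no real obstacle left; the only bookkeeping to verify is that the parity and grade shifts in $K_1$ match, under these identifications, the factor $\Pi_i^{\Lambda_i+\pa(\beta)} q^{(\alpha_i\mid 2\Lambda - \beta)}$ stated in the corollary.
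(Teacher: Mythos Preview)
Your proposal is correct and follows exactly the approach the paper intends: the corollary is obtained by tensoring the short exact sequence of Theorem~\ref{Thm: P-injective} with $M$ over $R^\Lambda(\beta)$, and identifying $K_1$, $K_0$, $F^\Lambda$ as the kernels of $\overline{F}_i$, $F_i$, $F_i^\Lambda$ respectively. You supply more detail than the paper (which simply records the identification of kernels), in particular by explicitly computing the $\Z$-degree $(\alpha_i\mid 2\Lambda-\beta)$ of $a^\Lambda(x_1)\tau_1\cdots\tau_n e(\beta,i)$ to justify the grade shift; this is helpful bookkeeping but not a different argument.
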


For $\alpha \in \mathtt{Q}^+$, let
$\Proj(R^\Lambda(\alpha))$ denote the category of finitely generated
projective $\Z$-graded $R^\Lambda(\alpha)$-modules,
and by $\Rep(R^\Lambda(\alpha))$ the category of
$\Z$-graded $R^\Lambda(\alpha)$-modules coherent over $\k_0$.
Then we conclude that the functors $E^\Lambda_i$ and $F^\Lambda_i$ are well-defined on
$\soplus_{\al\in\mathtt{Q}^+}\Proj (R^\Lambda(\al))$ and $\soplus_{\al\in\mathtt{Q}^+}\Rep (R^\Lambda(\al))$:

\begin{theorem} \label{Thm: Exact}
Set
\begin{align*}
\Proj(R^{\Lambda}) = \bigoplus_{\alpha \in \mathtt{Q}^+}\Proj(R^{\Lambda}(\alpha)),
 \quad \Rep(R^{\Lambda}) = \bigoplus_{\alpha \in \mathtt{Q}^+}\Rep(R^{\Lambda}(\alpha)).
\end{align*}
 Then the functors $E^{\Lambda}_i$ and $F^{\Lambda}_i$ are well-defined exact functors on
$\Proj(R^{\Lambda})$  and $\Rep(R^{\Lambda})$, and they induce endomorphisms
of the Grothendieck groups $[\Proj(R^{\Lambda})]$ and $[\Rep(R^{\Lambda})]$.
\end{theorem}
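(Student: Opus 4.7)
The proof reduces to four verifications: exactness of $E_i^\Lambda$ and $F_i^\Lambda$, and preservation of $\Proj$ and $\Rep$ by each. The essential input has already been established in Theorem \ref{Thm: P-injective}, which asserts that the superbimodule $F^\Lambda = R^\Lambda(\beta+\alpha_i)e(\beta,i)$ is projective as a right $R^\Lambda(\beta)$-supermodule; the plan is to deduce the present theorem as a formal consequence. Exactness of $E_i^\Lambda$ is immediate, since $E_i^\Lambda(N) = e(\beta,i)N$ is a direct-summand functor: $N \simeq e(\beta,i)N \oplus (1-e(\beta,i))N$ as $\k_0$-modules, so the first projection is exact. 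Exactness of $F_i^\Lambda = F^\Lambda \otimes_{R^\Lambda(\beta)}(-)$ follows because projective implies flat.

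For preservation of $\Proj$, the key observation is that $F^\Lambda = R^\Lambda(\beta+\alpha_i)e(\beta,i)$ is a direct summand of $R^\Lambda(\beta+\alpha_i)$ as a left $R^\Lambda(\beta+\alpha_i)$-module via the orthogonal idempotent decomposition. Hence for any $M \in \Proj(R^\Lambda(\beta))$, writing $M$ as a summand of some free module $(R^\Lambda(\beta))^{\oplus J}$ exhibits $F_i^\Lambda(M)$ as a summand of $(R^\Lambda(\beta+\alpha_i))^{\oplus J}$, which is projective. For $E_i^\Lambda$, I plan to invoke the anti-involution $\psi$ of \eqref{def:psi}; since $\psi$ fixes $a^\Lambda(x_1)$ and preserves the two-sided cyclotomic ideal, it descends to an anti-involution of $R^\Lambda(\beta+\alpha_i)$. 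Applying $\psi$ to Theorem \ref{Thm: P-injective} converts projectivity on the right into projectivity on the left, yielding that $e(\beta,i)R^\Lambda(\beta+\alpha_i)$ is projective as a left $R^\Lambda(\beta)$-supermodule. The same summand argument then shows that $E_i^\Lambda(N) = e(\beta,i)R^\Lambda(\beta+\alpha_i) \otimes_{R^\Lambda(\beta+\alpha_i)} N$ takes projectives to projectives.

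For preservation of $\Rep$, the case of $E_i^\Lambda$ is trivial, since $e(\beta,i)N \subset N$ is a $\k_0$-summand, so coherence over $\k_0$ passes to the summand. For $F_i^\Lambda$ it suffices to check that $F^\Lambda$ is finitely generated as a right $R^\Lambda(\beta)$-supermodule, for then $F_i^\Lambda(M)$ is a quotient of finitely many copies of $M$, hence coherent over $\k_0$. Finite generation follows from the PBW-type decomposition \eqref{eqn: R(n+1) wrt R(n,1)} together with Lemma \ref{Lem: finite dimension}: the former makes $R(\beta+\alpha_i)e(\beta,i)$ finitely generated as a right $R(\beta,1) = R(\beta) \otimes \k^I[x_{n+1}]$-module, while the latter forces $x_{n+1}$ to satisfy a monic polynomial relation in $R^\Lambda(\beta+\alpha_i)$, so that after passing to the cyclotomic quotient the generators form a finite set over $R^\Lambda(\beta)$ alone. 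Combining the four verifications, $E_i^\Lambda$ and $F_i^\Lambda$ are well-defined exact functors on $\Proj(R^\Lambda)$ and $\Rep(R^\Lambda)$, and the induced $\A^\pi$-linear endomorphisms of the Grothendieck groups are automatic from exactness. The main obstacle, already overcome, was Theorem \ref{Thm: P-injective} itself, whose proof required the delicate construction of the intertwiners $\varphi_a, g_a$ and the identification of $Q\circ P$ with multiplication by $\mathsf{A}(t_i)$; the present statement is essentially a formal packaging of that projectivity.
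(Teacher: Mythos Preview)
Your proof is correct and follows essentially the same approach as the paper's. The paper's own proof is a terse two-sentence version of what you have written: it simply asserts that $F^\Lambda=R^\Lambda(\beta+\alpha_i)e(\beta,i)$ is finitely generated projective on both sides, that the same holds for $e(\beta,i)R^\Lambda(\beta+\alpha_i)$ by the evident symmetry (your anti-involution argument is exactly what the word ``Similarly'' covers), and declares the rest immediate. Your expanded treatment of finite generation via \eqref{eqn: R(n+1) wrt R(n,1)} and Lemma~\ref{Lem: finite dimension} makes explicit a step the paper leaves implicit in its citation of Theorem~\ref{Thm: P-injective}.
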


\begin{proof}
 By  Theorem~\ref{Thm: P-injective},  $F^{\Lambda}$ is a
finitely generated projective module as a right
$R^{\Lambda}(\beta)$-supermodule and as a left
$R^{\Lambda}(\beta+\alpha_i)$-supermodule . Similarly,
$e(\beta,i)R^\Lambda(\beta+\alpha_i)$ is a finitely generated
projective module as a left $R^{\Lambda}(\beta)$-supermodule and as
a right $R^{\Lambda}(\beta+\alpha_i)$-supermodule. Now our
assertions follow from these facts immediately.
\end{proof}

\section{Commutation relations between $E_i^\Lambda$ and $F_i^\Lambda$}

 The main goal of this section is to show that the superfunctors
$E_i^\Lambda$ and $F_i^\Lambda$ satisfy certain commutation
relations, from which we obtain a supercategorification of
$V(\Lambda)$.

\begin{theorem}
For $i \neq j \in I$, there exists a natural isomorphism
\begin{align} \label{Thm: Comm E_i Lam F_j Lam}
 E^{\Lambda}_iF^{\Lambda}_j \simeq q^{-(\alpha_i|\alpha_j)}\Pi^{\pa(i)\pa(j)}F^{\Lambda}_j E^{\Lambda}_i.
\end{align}
\end{theorem}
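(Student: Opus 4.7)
The plan is to deduce the cyclotomic commutation relation from the non-cyclotomic version (Theorem~\ref{Thm: Comm E_i F_j}) by means of the defining short exact sequence
$$0 \to \Pi_j^{\Lambda_j+\pa(\gamma)}q^{(\alpha_j|2\Lambda-\gamma)} \overline{F}_j N \to F_j N \to F^{\Lambda}_j N \to 0$$
of Corollary~\ref{Cor: Exa F_i bar F_i} (valid for $N\in\Mod(R^\Lambda(\gamma))$), together with the commutations $E_i\overline{F}_j\simeq\overline{F}_jE_i$ (Theorem~\ref{Thm: Comm E_i bar F_j }(i)) and $E_iF_j\simeq q^{-(\alpha_i|\alpha_j)}\Pi^{\pa(i)\pa(j)}F_jE_i$ (Theorem~\ref{Thm: Comm E_i F_j}), both valid for $i\neq j$.

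Fix $M\in\Mod(R^\Lambda(\beta+\alpha_i))$. Applying the exact superfunctor $E_i$ to the sequence above with $N=M$ and rewriting the first two terms via the two commutations, I obtain
$$0 \to \Pi_j^{\Lambda_j+\pa(\beta+\alpha_i)}q^{(\alpha_j|2\Lambda-\beta-\alpha_i)} \overline{F}_j E_i M \to q^{-(\alpha_i|\alpha_j)}\Pi^{\pa(i)\pa(j)} F_j E_i M \to E_i F^{\Lambda}_j M \to 0.\quad(\ast)$$
In parallel, applying Corollary~\ref{Cor: Exa F_i bar F_i} to $N=E_iM\in\Mod(R^\Lambda(\beta))$ and multiplying through by $q^{-(\alpha_i|\alpha_j)}\Pi^{\pa(i)\pa(j)}$ gives
$$0 \to q^{-(\alpha_i|\alpha_j)}\Pi^{\pa(i)\pa(j)}\Pi_j^{\Lambda_j+\pa(\beta)}q^{(\alpha_j|2\Lambda-\beta)} \overline{F}_j E_i M \to q^{-(\alpha_i|\alpha_j)}\Pi^{\pa(i)\pa(j)} F_j E_i M \to q^{-(\alpha_i|\alpha_j)}\Pi^{\pa(i)\pa(j)} F^{\Lambda}_j E_i M \to 0.\quad(\ast\ast)$$
A routine computation using $\pa(\alpha_i)=\pa(i)$, $\Pi_j^{\pa(i)}=\Pi^{\pa(i)\pa(j)}$ and $(\alpha_j|\alpha_i)=(\alpha_i|\alpha_j)$ identifies the leftmost terms of $(\ast)$ and $(\ast\ast)$ canonically, while the middle terms are literally the same.

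The crux is to show that the two ``leftmost $\to$ middle'' maps agree under these identifications. Both natural isomorphisms $E_i\overline{F}_j\simeq\overline{F}_jE_i$ and $E_iF_j\simeq q^{-(\alpha_i|\alpha_j)}\Pi^{\pa(i)\pa(j)}F_jE_i$ descend from the same bimodule isomorphism of Proposition~\ref{Prop: twist by tau n}, namely right multiplication by $\tau_n$ (the second summand $e(n,i)R(n,1)e(n,j)$ vanishing when $i\neq j$). The inclusion $\overline{F}_jN\hookrightarrow F_jN$ is induced by right multiplication by $a^\Lambda(x_1)\tau_1\cdots\tau_n$, as in the definition of the map $P$ in \eqref{eqn: The map P}. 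Compatibility of the inclusions under the $\tau_n$-isomorphisms then reduces to commuting $\tau_{n+1}$ past $a^\Lambda(x_1)\tau_1\cdots\tau_n$ inside $R(\beta+\alpha_i+\alpha_j)$, which is straightforward from the relations of Definition~\ref{def:Quiver Hekce superalg} given that $i\neq j$.

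Once compatibility is established, the five lemma applied to $(\ast)$ and $(\ast\ast)$ produces a natural isomorphism $E_iF^\Lambda_jM\simeq q^{-(\alpha_i|\alpha_j)}\Pi^{\pa(i)\pa(j)}F^\Lambda_jE_iM$, functorial in $M$. The hard part is the compatibility check, since it requires carefully tracking the $\Z\times\Z_2$-grading shifts and the sign conventions inherent to the supercategory structure; everything else is formal bookkeeping.
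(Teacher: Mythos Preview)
Your approach is genuinely different from the paper's and is sound in outline, but your description of the compatibility check contains an error. The paper proves this theorem by a direct bimodule argument: it applies $R^{\Lambda}(n)\otimes_{R(n)}\bullet\otimes_{R(n)}R^{\Lambda}(n)e(\beta)$ to the isomorphism \eqref{Eq: E_iF_i} of Proposition~\ref{Prop: twist by tau n} (which for $i\neq j$ has no extra summand), and the main work is the ideal identity \eqref{Eq: claim Comm E_i Lam F_j Lam}, whose proof boils down to the vanishing $e(n,i)R(n,1)\tau_n\cdots\tau_1 a^{\Lambda}(x_1)\tau_1\cdots\tau_n R(n,1)e(n,j)=0$ by an idempotent mismatch at position~$1$. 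Your route via the five lemma applied to the sequence of Corollary~\ref{Cor: Exa F_i bar F_i} is exactly the $i\neq j$ specialization of the commutative diagram \eqref{Dia: Comm at Mod} that the paper later builds for $i=j$; so the strategy is legitimate and in some sense more conceptual.

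However, your justification of the crucial square is inaccurate. The isomorphism $\overline{F}_jE_i\simeq E_i\overline{F}_j$ of Theorem~\ref{Thm: Comm E_i bar F_j }(i) does \emph{not} come from Proposition~\ref{Prop: twist by tau n} via $\tau_n$; it comes from the map $\Phi$ of Proposition~\ref{Pro: R(n)R1(n)}, namely $x\,v(1,n-1)\otimes y\mapsto x\,\xi_n(y)\,v(1,n)$, which is an isomorphism on the relevant summand precisely because $i\neq j$ kills the cokernel. Consequently the compatibility you must check is between $\Phi$ and $\tilde\rho$ under the map $P$, not ``commuting $\tau_{n+1}$ past $a^{\Lambda}(x_1)\tau_1\cdots\tau_n$''. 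Concretely, one has to verify that $\Phi$ followed by $E_i(P_M)$ agrees with $P_{E_iM}$ followed by $\tilde\rho$ in $E_iK_0(M)$; this uses the twisting relation \eqref{eqn: anticommute P} (which holds modulo $R(n+2)a^{\Lambda}(x_1)R(\gamma)e(\gamma,j)$, a term that vanishes after tensoring with $R^{\Lambda}(\gamma)$) together with careful tracking of the parity shift $\phi_j^{\Lambda_j+\pa(\gamma)}$. Once you correct the source of the $\overline{F}_j$ isomorphism and redo the check along these lines, the argument goes through.
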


\begin{proof}
By Proposition \ref{Prop: twist by tau n}, we already know
\begin{align} \label{Eq: commutiation E_iF_j}
e(n,i)R(n+1)e(n,j) \simeq q^{-(\alpha_i|\alpha_j)}R(n)e(n-1,j)\otimes_{R(n-1)}\Pi^{\pa(i)\pa(j)} e(n-1,i)R(n).
\end{align}
Applying the functor $R^{\Lambda}(n)\otimes_{R(n)} \ \bullet \ \otimes_{R(n)}R^{\Lambda}(n)e(\beta)$ on
$\eqref{Eq: commutiation E_iF_j}$, we obtain
\begin{align*}
& \dfrac{e(n,i)R(n+1)e(\beta,j)}{e(n,i)R(n)a^{\Lambda}(x_1) R(n+1)e(\beta,j)+e(n,i)R(n+1)a^{\Lambda}(x_1)R(n)e(\beta,j)}
 \\
&\qquad \simeq q^{-(\alpha_i|\alpha_j)} R^{\Lambda}(n)e(n-1,j)\otimes_{R^{\Lambda}(n-1)}\Pi^{\pa(i)\pa(j)}
e(n-1,i)R^{\Lambda}(n)e(\beta) \\
&\qquad \simeq q^{-(\alpha_i|\alpha_j)}  \Pi^{\pa(i)\pa(j)} F^{\Lambda}_j E^{\Lambda}_iR^{\Lambda}(\beta).
\end{align*}

Note that
\begin{align*}
E^{\Lambda}_iF^{\Lambda}_jR^{\Lambda}(\beta)
=\left( \dfrac{e(n,i)R(n+1)e(n,j)}{e(n,i)R(n+1)a^{\Lambda}(x_1)R(n+1)e(n,j)} \right) e(\beta).
\end{align*}
Thus it suffices to show that
\begin{equation}\label{Eq: claim Comm E_i Lam F_j Lam}
\begin{aligned}
& e(n,i)R(n+1)a^{\Lambda}(x_1)R(n+1)e(n,j) \\
& =e(n,i)R(n)a^{\Lambda}(x_1)R(n+1)e(n,j)+e(n,i)R(n+1)a^{\Lambda}(x_1)R(n)e(n,j).
\end{aligned}
\end{equation}

Since, by \eqref{Eq: cylotomic polynomial}, $a^{\Lambda}(x_1)\tau_{k} = \pm \tau_{k}a^{\Lambda}(x_1)$ for all $k \ge 2$, we have
\begin{align*}
& \ R(n+1)a^{\Lambda}(x_1)R(n+1) = \sum_{a=1}^{n+1}R(n+1)a^{\Lambda}(x_1)\tau_a \cdots \tau_n R(n,1) \\
& = R(n+1)a^{\Lambda}(x_1)R(n,1)+R(n+1) a^{\Lambda}(x_1)
\tau_1 \cdots \tau_n R(n,1) \\
& = R(n+1)a^{\Lambda}(x_1)R(n,1)+\sum_{a=1}^{n+1}R(n,1)\tau_n \cdots \tau_a
a^{\Lambda}(x_1)\tau_1 \cdots \tau_n
    R(n,1) \\
& = R(n+1)a^{\Lambda}(x_1)R(n,1)+R(n,1)a^{\Lambda}(x_1)R(n+1)+  R(n,1)\tau_n \cdots \tau_1 a^{\Lambda}(x_1)\tau_1
\cdots \tau_n R(n,1).
\end{align*}

For $i \neq j$, we get
$$ e(n,i)R(n,1)\tau_n \cdots \tau_1 a^{\Lambda}(x_1)\tau_1 \cdots \tau_n R(n,1) e(n,j) =0,$$
and our assertion $\eqref{Eq: claim Comm E_i Lam F_j Lam}$ follows.
\end{proof}

Let us recall the following
commutative diagram, a super-version of  \cite[(5.5)]{KK11}, derived from Theorem \ref{Thm: Comm E_i F_j},
Theorem \ref{Thm: Comm E_i bar F_j } and Corollary \ref{Cor: Exa F_i bar F_i}:
\begin{equation} \label{Dia: Comm at Mod}
\begin{aligned}
\xymatrix@C=4ex
{ & 0 \ar[d] & 0 \ar[d]&  &  & \\
 0 \ar[r] & \Pi_i^{\Lambda_i+\pa(\beta)}q_i^{(\alpha_i|2\Lambda-\beta)}\overline{F}_iE_i M \ar[d] \ar[r]
 & \Pi_iq_i^{-2}F_iE_i M \ar[d]\ar[r] & \Pi_iq_i^{-2} F^{\Lambda}_iE^{\Lambda}_iM \ar[d]\ar[r] & 0 \\
 0 \ar[r] & \Pi_i^{\Lambda_i+\pa(\beta)}q_i^{(\alpha_i|2\Lambda-\beta)}E_i\overline{F}_i M \ar[d] \ar[r]
 & E_iF_i M \ar[d]\ar[r] & E^{\Lambda}_i F^{\Lambda}_i M \ar[r] & 0 \\
 & \Pi_i^{\Lambda_i}q_i^{(\alpha_i|2\Lambda-2\beta)}\k[t_i]\otimes M \ar[d]\ar[r] & \k[t_i]\otimes M \ar[d] & & \\
 & 0 & 0 & &
}
\end{aligned}
\end{equation}

The kernels of these functors provide the following commutative diagram of 
$(R(\beta),R^\Lambda(\beta))$-superbimodules : 
\begin{equation} \label{Dia: Comm at Ker}
\begin{aligned}
\xymatrix@C=3ex
{ & 0 \ar[d] & 0 \ar[d]&  &  & \\
 0 \ar[r] &
 L'_1 \ar[d] \ar[r]
 & L'_0 \ar[d]^F \ar[r]
 & q_i^{-2} F^{\Lambda}_i \Pi_i E^{\Lambda}_iR^{\Lambda}(\beta) \ar[d]\ar[r] & 0 \\
 0 \ar[r] & L_1
 \ar[d]_B \ar[r]^-{P}
 & L_0 \ar[d]^C\ar[r] & E^{\Lambda}_i F^{\Lambda}_i R^{\Lambda}(\beta) \ar[r] & 0 \\
 &  \k[t_i]T^{\Lambda_i}_i\otimes \Pi^{\Lambda_i}_iR^{\Lambda}(\beta) \ar[d]\ar[r]^-{A}
 & \k[t_i]\otimes R^{\Lambda}(\beta) \ar[d] & & \\
 & 0 & 0 & &
}
\end{aligned}
\end{equation}
where
\begin{align*}
& L'_0 = q^{-2}_i R(\beta)e(\beta-\alpha_i,i)\ot_{R(\beta-\alpha_i)}\Pi_ie(\beta-\alpha_i,i)R^\Lambda(\beta).\\
& L'_1 = q_i^{(\alpha_i|2\Lambda-\beta)}R(\beta)v(i,\beta-\alpha_i)T^{\Lambda_i}_i \ot_{R(\beta-\alpha_i)}\Pi_i^{\Lambda_i+\pa(\beta)}
        e(\beta-\alpha_i,i)R^\Lambda(\beta),\\
& L_0 = e(\beta,i)R(\beta+\alpha_i)e(\beta,i) \ot_{R(\beta)}R^\Lambda(\beta),\\
& L_1 =  q_i^{(\alpha_i|2\Lambda-\beta)}
e(\beta,i)R(\beta+\alpha_i)v(i,\beta)T^{\Lambda_i}_i\ot_{R(\beta)}\Pi_i^{\Lambda_i+\pa(\beta)}R^\Lambda(\beta).
\end{align*}
The homomorphisms in the diagram $\eqref{Dia: Comm at Ker}$ can be
described as follows (cf.\ \cite[\S 5.2]{KK11}):
\begin{itemize}
\item $P$ is given by \eqref{eqn: The map P}. It is
      $(R(\beta+\alpha_i),\k[t_i] \ot R^{\Lambda}(\beta))$-bilinear.
\item $A$ is defined by chasing the diagram. Note that it is $R^{\Lambda}(\beta)$-linear
      but {\it not\/} $\k[t_i]$-linear.
\item $B$ is given by taking the coefficient of $\tau_n \cdots \tau_1$. It is
      ($R(\beta),\k[t_i]\ot R(\beta)$)-linear (see the remark below).
\item $F$ is given by $a \ot \pi_i b \longmapsto a \tau_n \ot b$ for
      $a \in R(\beta)e(\beta-\alpha_i,i)$ and $b \in e(\beta-\alpha_i,i)R^\Lambda(\beta)$
      (See Proposition \ref{Prop: twist by tau n}).
\item $C$ is the cokernel map of $F$. It is $(R(\beta),R^{\Lambda}(\beta))$-bilinear but does {\it not} commute with $t_i$.
\end{itemize}

\begin{remark} \label{Rmk: the map B}
The map $B$ can be described as
\begin{align*}
&B\bl x_{n+1}^l a \tau_{n} \cdots \tau_k v(i,\beta)T^{\Lambda_i}_i \ot \pi_i^{\Lambda_i+\pa(\beta)}b\br
 = \delta_{k,1}t_i^l T^{\Lambda_i}_i \ot \pi_i^{\Lambda_i} \phi^{\Lambda_i}_i(a) b
\end{align*}
for  $ a \in R(\beta) $ and $ b \in R^\Lambda(\beta)$. Then
\begin{align*}
& B\left( (x_{n+1}^l a \tau_{n} \cdots \tau_k v(i,\beta)T^{\Lambda_i}_i \ot \pi^{\Lambda_i+\pa(\beta)}_ib) t_i
\right ) \\
& = B\left(\delta_{k,1}(-1)^{\pa(i)(\Lambda_i+\pa(\beta))}
(x_{n+1}^l a \tau_{n} \cdots \tau_k v(i,\beta)T^{\Lambda_i}_i t_i \ot \pi^{\Lambda_i+\pa(\beta)}_i \phi_i(b)
 \right) \\
& = B\left(\delta_{k,1}(x_{n+1}^l a x_{n+1}\tau_{n} \cdots \tau_k v(i,\beta)T^{\Lambda_i}_i \ot
\pi^{\Lambda_i+\pa(\beta)}_i \phi_i(b)\right) \\
& = B\left(\delta_{k,1}(x_{n+1}^{l+1} \phi_i(a) \tau_{n} \cdots \tau_k v(i,\beta)T^{\Lambda_i}_i \ot
\pi^{\Lambda_i+\pa(\beta)}_i \phi_i(b) \right) \\
& = \delta_{k,1} t^{l+1}_i T^{\Lambda_i}_i \ot \pi^{\Lambda_i}_i \phi^{\Lambda_i+1}_i(a)  \phi_i(b).
\end{align*}
On the other hand,
\begin{align*}
&B\left( (x_{n+1}^l a \tau_{n} \cdots \tau_k v(i,\beta)T^{\Lambda_i}_i \ot \pi^{\Lambda_i+\pa(\beta)}_ib)
\right ) t_i = \delta_{k,1} (t^{l}_iT^{\Lambda_i}_i \ot \pi^{\Lambda_i}_i \phi^{\Lambda_i}_i(a)  b) t_i\\
& = \delta_{k,1} t^{l+1}_i T^{\Lambda_i}_i \ot \pi^{\Lambda_i}_i\phi^{\Lambda_i+1}_i(a)  \phi_i(b). \\
\end{align*}
Thus $B$ is right $(\k[t_i]\otimes R(\beta))$-linear.
\end{remark}

\medskip
Define
\begin{align*}
\mathbf{T}=T_i^{\Lambda_i} \ot \pi^{\Lambda_i}_i1
\in \k[t_i]T^{\Lambda_i}_i \otimes \Pi^{\Lambda_i}_i R^{\Lambda}(\beta), \quad
\mathbf{T}_1=v(i,\beta)T_i^{\Lambda_i} \ot \pi^{\Lambda_i+\pa(\beta)}_i1
\in L_1.
\end{align*}
The element $\mathbf{T}$ has $\Z_2$-degree $\pa(i)\Lambda_i$ and
$\mathbf{T}_1$ has $\Z_2$-degree $\pa(i)(\Lambda_i+\pa(\beta))$.
Note that
$$\mathbf{T}t_i=t_i\mathbf{T} \quad \text{ and } \quad \mathbf{T}_1t_i=(-1)^{\pa(i)\pa(\beta)}t_i\mathbf{T}_1.$$

Let $p$ be the number of $\alpha_i$ appearing in $\beta$. Define an
invertible element $\gamma \in \k^{\times}$ by
\begin{equation} \label{Eq: def gamma}
\begin{aligned}
& (-1)^{\pa(i)\Lambda_i\pa(\beta)+p}\prod_{\substack{1 \le a \le n, \\[.8ex] \nu_a \neq i}}
\cQ_{i,\nu_a}(t_i,x_{a})
\prod_{\substack{1 \le a \le n, \\[.8ex] \nu_a = i \in \Iod}}(x_{a}-t_i)^2\\
& = \gamma^{-1}t^{-\langle h_i,\beta \rangle+2(1+\pa(i))p}_i +
   \bl\text{terms of degree $ < -\langle h_i,\beta \rangle+2(1+\pa(i))p $ in $t_i$} \br.
\end{aligned}
\end{equation}
Note that $\gamma$ does not depend on $\nu \in I^\beta$.

Set $\lambda=\Lambda - \beta$ and
\begin{align} \label{Eq: varphi k}
\varphi_k = A(\mathbf{T}t^k_i) \in \k[t_i] \otimes R^{\Lambda}(\beta).
\end{align}

{}From now on, we investigate the kernel and cokernel of the map $A$
which are the key ingredients of the proof of Theorem~\ref{Thm: Main} below.
For this purpose, the following proposition is
crucial.

\begin{proposition} \label{Prop: varphi k}
The element $\gamma\varphi_k$ is a monic \ro skew\rf-polynomial in $t_i$ of degree
$\langle h_i,\lambda\rangle+k$.
\end{proposition}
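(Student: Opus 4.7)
I plan to prove Proposition~\ref{Prop: varphi k} by tracing $\mathbf{T}t_i^k$ through the diagram~\eqref{Dia: Comm at Ker} and computing the top $t_i$-degree of its image under the connecting map $A$.  By the explicit formula for $B$ in Remark~\ref{Rmk: the map B}, the element
\[
\tilde{\mathbf{T}}_k \seteq e(\beta,i)\,x_{n+1}^k \tau_n \cdots \tau_1\, v(i,\beta)T_i^{\Lambda_i} \otimes \pi_i^{\Lambda_i+\pa(\beta)}e(\beta)
\]
of $L_1$ satisfies $B(\tilde{\mathbf{T}}_k) = \mathbf{T}t_i^k$, so $\varphi_k = C\bl P(\tilde{\mathbf{T}}_k)\br$ with $P(\tilde{\mathbf{T}}_k) = e(\beta,i)\,x_{n+1}^k \tau_n \cdots \tau_1\, a^\Lambda(x_1)\, \tau_1 \cdots \tau_n\, e(\beta,i) \otimes e(\beta)$.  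Writing $e(\beta) = \sum_{\nu \in I^\beta} e(\nu)$, it suffices to analyze the $\nu$-component of $\varphi_k$ for each $\nu \in I^\beta$.

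The crucial input is Corollary~\ref{cor: B nu}, which gives an absolute $\tau$-free identity in $K_0$ for $g_n \cdots g_1\, a^\Lambda(x_1)\,\tau_1 \cdots \tau_n\, e(\nu,i)\otimes e(\beta)$.  Now each $g_a$ equals $\tau_a$ when $\nu_a\ne i$, and equals $\mu_a - \mu'_a\tau_a$ when $\nu_a = i$, where $\mu'_a = (x_{a+1}-x_a)^2$ for $i\in\Iev$ and $\mu'_a = (x_{a+1}^2-x_a^2)^2$ for $i\in\Iod$.  Expanding the product $g_n\cdots g_1$, the maximal-$\tau$-length contribution equals $\bl\prod_{a:\,\nu_a=i}(-\mu'_a)\br\tau_n\cdots\tau_1$ up to polynomial commutations, while all other terms have strictly fewer $\tau$'s.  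Multiplying by $a^\Lambda(x_1)\tau_1\cdots\tau_n$ on the right and applying~$C$, the lower-$\tau$-length terms project to strictly lower powers of $t_i$; the same holds for the errors arising from commuting $x_{n+1}^k$ past $\tau_n\cdots\tau_1$.  Rigour is ensured by the injectivity of right multiplication by $\mu'_a$ on $K_0$ (Lemma~\ref{Lem: Proj}).

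After this reduction, modulo strictly lower powers of $t_i$, the top-degree contribution to $\varphi_k e(\nu)$ equals $(-1)^{\pa(i)\Lambda_i\pa(\beta)+p}\,t_i^k\,\mathsf{A}_\nu(t_i)\,\bl\prod_{a:\,\nu_a=i}\mu'_a\br^{-1}$ in $\k[t_i]\otimes R^\Lambda(\beta)$ (after the substitution $x_{n+1}\mapsto t_i$), and by~\eqref{Eq: def gamma} this has leading coefficient $\gamma^{-1}\,t_i^{\langle h_i,\lambda\rangle+k}$.  Summing over $\nu\in I^\beta$ and using $\lambda = \Lambda-\beta$ then establishes that $\gamma\varphi_k$ is a monic skew-polynomial in $t_i$ of degree $\langle h_i,\lambda\rangle + k$.

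The main obstacle is executing the second paragraph precisely.  The relation $g_a = \mu_a - \mu'_a\tau_a$ cannot literally be inverted to solve for $\tau_a$ within $R(\beta+\alpha_i)$, so the ``division by $\prod\mu'_a$'' must be realized via the injectivity of $\mu'_a\cdot$ on $K_0$, together with a careful accounting of the lower-$\tau$-length terms.  Moreover, the super signs arising from the anticommutation of $x$-generators for $i\in\Iod$ must be tracked throughout: they are precisely what produces the exponent $2(1+\pa(i))$ appearing in the definition of $\gamma$ in~\eqref{Eq: def gamma}.
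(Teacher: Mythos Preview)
Your overall strategy is sound and mirrors the paper's: lift $\mathbf{T}t_i^k$ through $B$, use Corollary~\ref{cor: B nu} to control $g_n\cdots g_1 a^\Lambda(x_1)\tau_1\cdots\tau_n$, and relate $g_n\cdots g_1$ to $\tau_n\cdots\tau_1$ via the $\tau$-coefficient.  However, the step you flag as the ``main obstacle'' is indeed a genuine gap, and your proposed remedy via injectivity of $\mu'_a$ does not close it.

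The problem is the assertion that ``lower-$\tau$-length terms project to strictly lower powers of $t_i$'' under $C$.  There is no direct reason for this: the map $C$ extracts the $e(\beta,i)\k[x_{n+1}]R(\beta)$-component inside $L_0$, and an element of $L_0$ arising from a term with fewer $\tau$'s in the $g$-expansion, once multiplied on the right by $a^\Lambda(x_1)\tau_1\cdots\tau_n$, can in principle contribute high powers of $x_{n+1}$.  Lemma~\ref{Lem: Proj} gives injectivity of right $t_i$-multiplication on $K_1$, not a degree bound on $C$-images in $K_0$, so it does not furnish the needed control.  Likewise, injectivity alone cannot recover the leading coefficient of $\varphi_k$ from that of $\mathsf{S}_i\cdot\varphi_k$ without knowing that the remainder has small degree.

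The paper resolves this not by bounding the lower-$\tau$-length terms individually, but by exploiting a recursion you do not mention.  From \eqref{Eq: Decom Im P wrt F } one derives $\varphi_{k+1}=\varphi_k t_i + E(\psi_k)$ with $E(\psi_k)\in R^\Lambda(\beta)$ of $t_i$-degree~$0$; iterating gives $A(af)-A(a)f$ of $t_i$-degree strictly below $\deg f$ for any central polynomial $f$ (Lemma~\ref{Lem: Rel between F and S}).  Applied with $f=\mathsf{S}_i$, this shows $\gamma\varphi_k\mathsf{S}_i = t_i^k\mathsf{F}_i + (\text{degree}<\deg\mathsf{S}_i)$, whence $\gamma\varphi_k$ is precisely the quotient of the monic polynomial $t_i^k\mathsf{F}_i$ by the monic $\mathsf{S}_i$, hence monic of degree $\langle h_i,\lambda\rangle+k$.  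This recursion is the missing mechanism that makes your heuristic division by $\prod\mu'_a$ rigorous; without it, the argument does not go through.
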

Here and in the sequel, for $m<0$, we say that a (skew)-polynomial $\varphi$ is a monic polynomial of degree $m$
if $\varphi=0$.

To prove Proposition \ref{Prop: varphi k}, we need some preparation.
Define a map $E\cl L'_0 \to R^\Lambda(\beta)$ by
\begin{align} \label{Eq: Def F}
a \ot \pi_i b \mapsto a\phi_i(b) \quad
\text{for $a \in R(\beta)e(\beta-\alpha_i,i)$ and $b \in e(\beta-\alpha_i,i)R^{\Lambda}(\beta)$.}
\end{align}

We define the endomorphism \  $\ \circ (x_n\ot 1)$ of $L'_0$ by
$$(a \ot \pi_i b)(x_n \ot 1) = (-1)^{\pa(i)} a x_n \ot \pi_i \phi_i(b).$$

\begin{lemma}
Let $$ L'_0\seteq R(\beta)e(\beta-\alpha_i,i)
\otimes_{R(\beta-\alpha_i)}\Pi_i
e(\beta-\alpha_i,i)R^{\Lambda}(\beta).$$ Then for any $z \in L'_0$,
we have
\begin{align}
F(z)t_i = F(z(x_n \otimes 1)) + e(\beta,i) \ot E(z).
\end{align}
\end{lemma}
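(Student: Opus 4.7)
The plan is to prove the identity by direct verification on generators. By linearity, it suffices to check the formula on elements $z = a \otimes \pi_i b$ with $a \in R(\beta)e(\beta-\alpha_i,i)$ and $b \in e(\beta-\alpha_i,i)R^\Lambda(\beta)$. For such $z$, we have $F(z) = a\tau_n \otimes b$, and when viewed in $R(\beta+\alpha_i)$ via the natural embedding $R(\beta)\hookrightarrow R(\beta+\alpha_i)$, the element $a\tau_n$ lies in $e(\beta,i)R(\beta+\alpha_i)e(\beta,i)$ with both positions $n$ and $n+1$ carrying color $i$.

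First I would compute $F(z)\,t_i$. The right $t_i$-action on $L_0$ is induced by right multiplication by $x_{n+1}$ on the first factor; consistency with the tensor product over $R(\beta)$ forces a $\phi_i$-twist on the second factor, so that $(c\otimes d)t_i = cx_{n+1}\otimes\phi_i(d)$. Applying the defining relation (R5) of the quiver Hecke superalgebra at position $n$ with both colors equal to $i$, namely
$$\tau_n x_{n+1}\,e(\cdot,i,i) = (-1)^{\pa(i)} x_n\tau_n\,e(\cdot,i,i) + e(\cdot,i,i),$$
we obtain
$$F(z)\,t_i = a\tau_n x_{n+1}\otimes\phi_i(b) = (-1)^{\pa(i)}\,a x_n\tau_n\otimes\phi_i(b) \,+\, a\otimes\phi_i(b).$$

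Next I would identify the two summands with the right-hand side. For the first, the definitions of $\circ(x_n\otimes 1)$ and of $F$ give
$$F\bl z(x_n\otimes 1)\br = F\bl (-1)^{\pa(i)} a x_n\otimes\pi_i\phi_i(b)\br = (-1)^{\pa(i)}\,a x_n\tau_n\otimes\phi_i(b),$$
matching the first summand. For the second, $E(z) = a\phi_i(b)$ by definition, and the tensor relation over $R(\beta)$ together with $e(\beta,i)\cdot a = a$ (since $a$ already lies in $e(\beta,i)R(\beta+\alpha_i)e(\beta,i)$) yields
$$e(\beta,i)\otimes E(z) = e(\beta,i)\otimes a\phi_i(b) = a\otimes\phi_i(b),$$
matching the second summand. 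Summing recovers $F(z)t_i$.

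The main obstacle is tracking the $\phi_i$-twists and super-signs, which enter at three distinct points: from the parity shift $\Pi_i$ in the second slot of $L_0'$ (producing the $\phi_i(b)$ in the definitions of $E$ and of the endomorphism $\circ(x_n\otimes 1)$), from relation (R5) when $\pa(i)=1$, and from the compatibility of the right $t_i$-action on $L_0$ with the super-tensor over $R(\beta)$. Once these are applied consistently, the identity follows directly from (R5) and the tensor relations.
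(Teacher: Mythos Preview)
Your proposal is correct and follows essentially the same approach as the paper: reduce by linearity to $z=a\otimes\pi_i b$, apply relation (R5) to expand $\tau_n x_{n+1}$ when both colors equal $i$, and match the two resulting terms with $F(z(x_n\otimes 1))$ and $e(\beta,i)\otimes E(z)$. Your explicit identification of the $t_i$-action on $L_0$ as $(c\otimes d)t_i = cx_{n+1}\otimes\phi_i(d)$ agrees with the paper's computation, and the sign bookkeeping is handled correctly.
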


\begin{proof} We may assume $z = a \ot \pi_i b$.
Note that
$$ F(z) = a \tau_n e(\beta-\alpha_i,i^2) \ot b, \qquad E(z)= a \phi_i(b).$$
Thus
\begin{align*}
F(z)t_{i} & = a \tau_n e(\beta-\alpha_i,i^2)x_{n+1} \ot \phi_i(b) \\
& =a((-1)^{\pa(i)}x_n \tau_n +1)e(\beta-\alpha_i,i^2) \ot \phi_i(b)\\
& = (-1)^{\pa(i)}a x_n \tau_n e(\beta-\alpha_i,i^2) \ot \phi_i(b) +  a e(\beta-\alpha_i,i^2)\ot \phi_i(b) \\
& = (-1)^{\pa(i)}F(ax_n \otimes \pi_i \phi_i(b)) + e(\beta,i) \ot E(z)\\
&=F(z(x_n \otimes 1))+e(\beta,i) \ot E(z).
\end{align*}
\end{proof}

By Proposition \ref{Prop: twist by tau n}, we have
\begin{equation} \label{Eq: Decom wrt F}
\begin{aligned}
& e(\beta,i)R(\beta+\alpha_i)e(\beta,i) \otimes_{R(\beta)} R^{\Lambda}(\beta) \\
& \ =
F\bl(R(\beta)e(\beta-\alpha_i,i)\otimes_{R(\beta-\alpha_i)}e(\beta-\alpha_i,i)R^{\Lambda}(\beta)\br)
\\
& \qquad \oplus e(\beta,i)(\k[t_i] \ot R^{\Lambda}(\beta)).
\end{aligned}
\end{equation}
Using the decomposition $\eqref{Eq: Decom wrt F}$, we may write
\begin{align} \label{Eq: Decom Im P wrt F }
P(e(\beta,i)\tau_n \cdots \tau_1\mathbf{T}_1 t^k_i )
= F(\psi_k) + e(\beta,i)\varphi_k
\end{align}
for uniquely determined $\psi_k \in L'_0$ and $\varphi_k \in \k[t_i]
\otimes R^{\Lambda}(\beta)$. On the other hand, 
we have
\begin{align*}
A( \mathbf{T}t^k_i)
&= AB(e(\beta,i)\tau_n\cdots\tau_1 \mathbf{T}_1 t^k_i) \\
        &= CP(e(\beta,i)\tau_n\cdots\tau_1 \mathbf{T}_1 t^k_i)
        = \varphi_k.
\end{align*}
Hence the definition of $\varphi_k$ coincides with the definition
given in $\eqref{Eq: varphi k}$. Note that
\begin{align*}
F(\psi_{k+1})+e(\beta,i)\varphi_{k+1}
&= P (e(\beta,i)\tau_n \cdots \tau_1 \mathbf{T}_1 t_i^{k+1} )
\\ &= P(e(\beta,i)\tau_n \cdots \tau_1 \mathbf{T}_1 t_i^{k} )t_i \\
&= (F(\psi_{k})+e(\beta,i)\varphi_{k})t_i \\
&= F(\psi_k(x_n \otimes 1))+e(\beta,i)E(\psi_k)+e(\beta,i)\varphi_k t_i\,,
\end{align*}
which yields
\begin{equation} \label{Eq: formulas psi varphi}
\begin{aligned}
\psi_{k+1}=\psi_k(x_n \otimes 1), \quad
\varphi_{k+1} =E(\psi_k)+\varphi_k t_i.
\end{aligned}
\end{equation}

Now we will prove Proposition \ref{Prop: varphi k}. By Corollary \ref{cor: B nu}, the equality
\begin{align*}
& g_n\cdots g_1 x^k_1e(i,\nu)a^\Lambda(x_1)\tau_1 \cdots \tau_n  \\
&=(-1)^{(k+\Lambda_i)\pa(i)\pa(\beta)}x^{k}_{n+1} a^{\Lambda}_i(x_{n+1})
\prod_{\substack{1 \le a \le n, \\ \nu_a \neq i}} \cQ_{\nu_a,
i}(x_a,x_{n+1}) \prod_{\substack{1 \le a \le n, \\ \nu_a = i \in
\Iod }} (x_{a}-x_{n+1})^{2}e(\nu,i)
\end{align*}
holds in $R(\beta+\alpha_i)e(\beta,i) \otimes_{R(\beta)} R^{\Lambda}(\beta)$, which implies
\begin{align*}
&AB(g_n\cdots g_1 x^k_1 e(i,\nu)\mathbf{T}_1) \\
&= C\Bigl( (-1)^{(k+\Lambda_i)\pa(i)\pa(\beta)}x^{k}_{n+1}
a^{\Lambda}_i(x_{n+1}) \prod_{\substack{1 \le a \le n, \\ \nu_a \neq
i}} \cQ_{\nu_a, i}(x_a,x_{n+1}) \prod_{\substack{1 \le a \le n, \\
\nu_a = i \in \Iod }}(x_{a}-x_{n+1})^{2}e(\nu,i) \Bigr)
\\
&= (-1)^{(k+\Lambda_i)\pa(i)\pa(\beta)}t^{k}_i a^{\Lambda}_i(t_i)
\prod_{\substack{1 \le a \le n, \\ \nu_a \neq i}}
\cQ_{\nu_a,i}(x_a,t_i) \prod_{\substack{1 \le a \le n, \\ \nu_a = i
\in \Iod }}(x_{a}-t_i)^{2}e(\nu).
\end{align*}

On the other hand, since $B$ is the map taking the coefficient of
$\tau_n \cdots \tau_1$, we have
\begin{align*}
& B(g_n\cdots g_1 x^k_1e(i,\nu)\mathbf{T}_1) \\
&=
B\left((-1)^{k\pa(i)\pa(\beta)}x^k_{n+1}\prod_{\nu_a =i}-(x^{1+\pa(i)}_{n+1}-x^{1+\pa(i)}_a)^2
e(\nu,i)\tau_n\cdots\tau_1\mathbf{T}_1\right)\\
&=(-1)^{k\pa(i)\pa(\beta)+p}\; t^k_i  \prod_{\nu_a =i} (t^{1+\pa(i)}_{i}-x^{1+\pa(i)}_a)^{2}\mathbf{T}e(\nu).
\end{align*}
Thus we have
\begin{equation} \label{Eq: the image A}
\begin{aligned}
&  A(t^k_i \prod_{\nu_a =i}(t^{1+\pa(i)}_i-x^{1+\pa(i)}_a)^2\mathbf{T}e(\nu) )  \\
& = (-1)^{\Lambda_i\pa(i)\pa(\beta)+p}\; t^{k}_i a^{\Lambda}_i(t_i) \prod_{\substack{1 \le a \le n, \\[.6ex]
\nu_a \neq i}} \cQ_{\nu_a,i}(x_a,t_i)
\prod_{\substack{1 \le a \le n, \\[.6ex] \nu_a = i \in \Iod }}  (x_{a}-t_i)^{2}e(\nu) \\
\end{aligned}
\end{equation}

Set
\begin{align*}
& \mathsf{S}_i =
\sum_{\nu \in I^{\beta}}
\prod_{\nu_a =i}(t^{1+\pa(i)}_i-x^{1+\pa(i)}_a)^{2}e(\nu) \in \k[t_i]\otimes  R^{\Lambda}(\beta),
\\
& \mathsf{F}_i = \gamma (-1)^{\Lambda_i\pa(i)\pa(\beta)+p} a^{\Lambda}_i(t_i) \sum_{\nu \in I^{\beta}}
\Bigl( \prod_{\substack{1 \le a \le n, \\ \nu_a \neq
i}}\cQ_{i,\nu_a}(t_i,x_{a}) \prod_{\substack{1 \le a \le n, \\ \nu_a
= i \in \Iod}}(x_a-t_i)^2 e(\nu) \Bigr) \\
&\hs{60ex}\in \k[t_i] \otimes
R^{\Lambda}(\beta).
\end{align*}
Then they are monic (skew)-polynomials in $t_i$ of degree
$2(1+\pa(i))p$ and $\langle h_i, \lambda \rangle+2(1+\pa(i))p$,
respectively. Note that $\mathsf{S}_i$ is contained in the center of
$\k[t_i] \otimes R^{\Lambda}(\beta)$ and $F_i$ commutes with $t_i$.
Hence $\eqref{Eq: the image A}$ can be expressed in the following
form:
\begin{align} \label{Eq: Rel F and S}
 \gamma A(t_i^{k}\mathsf{S}_i\mathbf{T})=t_i^{k}\mathsf{F}_i.
\end{align}

\begin{lemma} \label{Lem: Rel between F and S}
For any $k \ge 0$, we have
\begin{align} \label{Eq: remaining term}
 t^k_i \mathsf{F}_i= (\gamma \varphi_k)\mathsf{S}_i+\mathsf{h}_k,
\end{align}
where $\mathsf{h}_k\in \k[t_i] \ot R^\Lambda(\beta)$
is a polynomial in $t_i$  of degree $<2(1+\pa(i))p$.
In particular, $\gamma \varphi_k$ coincides with the
quotient of $t^k_i \mathsf{F}_i$ by $\mathsf{S}_i$.
\end{lemma}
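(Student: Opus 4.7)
The plan is to combine the identity $\gamma A(t_i^k\mathsf{S}_i\mathbf{T})=t_i^k\mathsf{F}_i$ from \eqref{Eq: Rel F and S} with the recursion \eqref{Eq: formulas psi varphi} to carry out a polynomial-division argument in $t_i$.

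First, since $\mathsf{S}_i$ lies in the center of $\k[t_i]\otimes R^{\Lambda}(\beta)$ and is a monic (skew)-polynomial in $t_i$ of degree $2(1+\pa(i))p$, I write
$$\mathsf{S}_i=\sum_{j=0}^{2(1+\pa(i))p}t_i^{j}\,s_j, \qquad s_j\in R^{\Lambda}(\beta),\ s_{2(1+\pa(i))p}=1,$$
where each coefficient $s_j$ is even (since $\mathsf{S}_i$ only involves the even elements $x_a^{1+\pa(i)}$ and $t_i^{1+\pa(i)}$ when $\nu_a=i$), hence commutes with $t_i$. Then, using the centrality of $\mathsf{S}_i$ to pass it past $\mathbf{T}$ and the right $R^{\Lambda}(\beta)$-linearity of $A$, I would obtain
$$A(t_i^{k}\mathsf{S}_i\mathbf{T})=A\bigl(\mathbf{T}\,t_i^{k}\mathsf{S}_i\bigr)=\sum_{j}A(\mathbf{T}\,t_i^{k+j})\,s_j=\sum_{j}\varphi_{k+j}\,s_j.$$

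Second, iterating the recursion $\varphi_{m+1}=E(\psi_m)+\varphi_m t_i$ from \eqref{Eq: formulas psi varphi} gives
$$\varphi_{k+j}=\varphi_k\,t_i^{j}+\sum_{m=0}^{j-1}E(\psi_{k+m})\,t_i^{j-1-m},$$
so that the leading piece of $\sum_j\varphi_{k+j}s_j$ is $\varphi_k\sum_j t_i^{j}s_j=\varphi_k\mathsf{S}_i$, and the remaining contribution
$$\mathsf{h}_k\;\seteq\;\gamma\!\!\sum_{j=0}^{2(1+\pa(i))p}\sum_{m=0}^{j-1}E(\psi_{k+m})\,t_i^{j-1-m}\,s_j$$
is a polynomial in $t_i$ of degree at most $2(1+\pa(i))p-1$. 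Multiplying by $\gamma$ and using \eqref{Eq: Rel F and S} yields the desired identity $t_i^{k}\mathsf{F}_i=(\gamma\varphi_k)\mathsf{S}_i+\mathsf{h}_k$. The ``in particular'' assertion then follows from the uniqueness of the division algorithm by the monic central polynomial $\mathsf{S}_i$.

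The main obstacle in turning this sketch into a careful proof is bookkeeping the super-signs produced when $i\in\Iod$ (so $t_i$ is odd): one needs to verify that in moving $t_i$ past the odd components of $\varphi_k$, $E(\psi_{k+m})$, and $\mathbf{T}$, all signs either cancel or are absorbed into the ``skew-polynomial'' convention, so that the leading-term identification $\sum_j\varphi_k t_i^{j}s_j=\varphi_k\mathsf{S}_i$ holds on the nose and the remainder $\mathsf{h}_k$ retains its degree bound. Once the sign analysis is verified, the argument is essentially formal.
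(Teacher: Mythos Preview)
Your proof is correct and uses essentially the same approach as the paper: both exploit the identity \eqref{Eq: Rel F and S}, the right $R^\Lambda(\beta)$-linearity of $A$, and the recursion \eqref{Eq: formulas psi varphi} to bound the $t_i$-degree of the difference $A(t_i^k\mathsf{S}_i\mathbf{T})-\varphi_k\mathsf{S}_i$. The paper packages this as an inductive degree estimate $\deg_{t_i}\bigl(A(af)-A(a)f\bigr)<\deg_{t_i}(f)$ for central $f$ and then specializes to $f=\mathsf{S}_i$, whereas you unroll that same induction by expanding $\mathsf{S}_i=\sum_j t_i^{j}s_j$ explicitly; the sign concerns you flag are indeed benign precisely because the coefficients $s_j$ are even.
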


\begin{proof}
By $\eqref{Eq: formulas psi varphi}$,
\begin{align} \label{Eq: Degree 1 }
A(a t_i)-A(a)t_i \in \k[t_i]\ot R^{\Lambda}(\beta) \text{ is of degree } \le 0 \text{ in }t_i,
\end{align}
for any $a \in \k[t_i]T^{\Lambda}_i \ot \Pi^{\Lambda_i}_i R^{\Lambda}(\beta)$.
We will show
\begin{equation}  \label{Eq: Degree 2 }
\begin{aligned}
& \text{for any polynomial $f$ in the center of $\k[t_i] \ot R^{\Lambda}(\beta)$ in
$t_i$ of degree $m \in \Z_{\ge 0}$} \\ &
\text{ and $a \in \k[t_i]T^{\Lambda_i}_i \ot R^{\Lambda}(\beta)$, $A(a f)- A(a)f$ is of degree $<m$.}
\end{aligned}
\end{equation}
We will use induction on $m$.  Since $A$ is right
$R^{\Lambda}(\beta)$-linear, 
\eqref{Eq: Degree 2 } holds for $m=0$.  Thus it suffices to show
\eqref{Eq: Degree 2 } when $f=t_ig$.  By the induction hypothesis,  \eqref{Eq:
Degree 2 } is true for $g$.  Then we have
\begin{align*}
A(af )-A(a)f
= (A(a t_i g) - A(a t_i)g)
+ ( A(a t_i) - A(a)t_i)g.
\end{align*}
It follows that the first term is of degree $<\deg(g)$ in $t_i$ and
the second term is of degree $<\deg(g)+1$, which proves $\eqref{Eq:
Degree 2 }$. Thus we have
$$ t^k_i \gamma^{-1}\mathsf{F}_i - \varphi_k \mathsf{S}_i = t^k_i \gamma^{-1}\mathsf{F}_i -A(t^k_i\mathbf{T})\mathsf{S}_i
= A(t^k_i\mathsf{S}_i\mathbf{T})-A(t^k_i\mathbf{T})\mathsf{S}_i$$
by $\eqref{Eq: Rel F and S}$ and it is of degree $<2(1+\pa(i))p$
by applying $\eqref{Eq: Degree 2 }$
for $f=\mathsf{S}_i$.
\end{proof}

Therefore, by Lemma \ref{Lem: Rel between F and S}, we conclude
$\gamma \varphi_k$ is a monic (skew)-polynomial in $t_i$ of degree
$\langle h_i, \lambda \rangle +k$, which completes the proof of
Proposition \ref{Prop: varphi k}.

\begin{theorem} \label{Thm: Main}
Let $\lambda=\Lambda-\beta$. Then there exist natural isomorphisms
of endofunctors on $\Mod(R^{\Lambda}(\beta))$ given below. \bnum
\item If $\langle h_i,\lambda \rangle \ge 0$, then we have
\begin{align} \label{Eq: Comm E_i Lam F_i Lam 1}
\Pi_iq_i^{-2}F^{\Lambda}_iE^{\Lambda}_i \oplus
\bigoplus^{\langle h_i,\lambda \rangle-1}_{k=0}\Pi_i^k q_i^{2k} \overset{\sim}{\to}
E^{\Lambda}_iF^{\Lambda}_i.
\end{align}
\item If $\langle h_i,\lambda \rangle < 0$, then we have
\begin{align} \label{Eq: Comm E_i Lam F_i Lam 2}
\Pi_iq_i^{-2}F^{\Lambda}_iE^{\Lambda}_i \overset{\sim}{\to}
E^{\Lambda}_iF^{\Lambda}_i \oplus \bigoplus^{-\langle h_i,\lambda \rangle-1}_{k=0}\Pi_i^{k+1}q_i^{-2k-2 }.
\end{align}
\end{enumerate}
\end{theorem}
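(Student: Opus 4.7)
My plan is to analyze the $3\times 3$ commutative diagram \eqref{Dia: Comm at Ker} via the snake lemma. Its rows and its first two columns are short exact sequences of $(R(\beta), R^\Lambda(\beta))$-superbimodules: the middle row is Theorem~\ref{Thm: P-injective}; the top row is the analogous sequence for $F^\Lambda_i E^\Lambda_i$ obtained by applying Theorem~\ref{Thm: P-injective} one step down at $\beta-\alpha_i$ and tensoring on the right with $e(\beta-\alpha_i,i)R^\Lambda(\beta)$ (which preserves exactness by the right projectivity of $K_0, K_1$ established there); the first column expresses $L'_1$ as $\ker B$; and the second column is the splitting from Proposition~\ref{Prop: twist by tau n} together with the cokernel map $C$ of the injection $F$. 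The snake lemma applied to the top two rows, with vertical injections $L'_1\hookrightarrow L_1$ and $F\cl L'_0\hookrightarrow L_0$, produces an induced map on cokernels
\[
f_3\cl q_i^{-2} F^\Lambda_i \Pi_i E^\Lambda_i R^\Lambda(\beta) \longrightarrow E^\Lambda_i F^\Lambda_i R^\Lambda(\beta),
\]
together with the four-term exact sequence
\[
0 \to \ker(A) \to q_i^{-2} F^\Lambda_i \Pi_i E^\Lambda_i R^\Lambda(\beta) \xrightarrow{\;f_3\;} E^\Lambda_i F^\Lambda_i R^\Lambda(\beta) \to \mathrm{coker}(A) \to 0,
\]
the connecting map being identified with $A$ by the very definition of $A$ as ``chasing the diagram.'' Using the canonical isomorphism $F^\Lambda_i \Pi_i \simeq \Pi_i F^\Lambda_i$ rewrites the middle term as $\Pi_i q_i^{-2} F^\Lambda_i E^\Lambda_i R^\Lambda(\beta)$.

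The next step is to compute $\ker(A)$ and $\mathrm{coker}(A)$ as free right $R^\Lambda(\beta)$-supermodules using Proposition~\ref{Prop: varphi k}: $\gamma\varphi_k = \gamma A(\mathbf{T}t_i^k)$ is a monic (skew)-polynomial in $t_i$ of degree $\langle h_i,\lambda\rangle+k$, with the convention that it vanishes in negative degree. When $\langle h_i,\lambda\rangle\ge 0$, every $\varphi_k$ has invertible leading coefficient in $t_i$, so a Euclidean-division argument in the variable $t_i$ shows that $A$ is injective and that $\mathrm{coker}(A)\simeq\bigoplus_{k=0}^{\langle h_i,\lambda\rangle-1}\Pi_i^kq_i^{2k} R^\Lambda(\beta)$, with the summands indexed by the residue basis $\{t_i^k\}_{0\le k<\langle h_i,\lambda\rangle}$ (the $(\Z\times\Z_2)$-degree of $t_i$ forces the shifts $\Pi_i^k q_i^{2k}$). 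When $\langle h_i,\lambda\rangle<0$ the proposition forces $\varphi_k=0$ for $0\le k<-\langle h_i,\lambda\rangle$, so $\mathbf{T},\mathbf{T}t_i,\ldots,\mathbf{T}t_i^{-\langle h_i,\lambda\rangle-1}$ freely generate $\ker(A)$ over $R^\Lambda(\beta)$, while the same division argument gives the surjectivity of $A$; tracking the $\Z\times\Z_2$-degrees of $\mathbf{T}$ and $t_i$ yields the shifts $\bigoplus_{k=0}^{-\langle h_i,\lambda\rangle-1}\Pi_i^{k+1}q_i^{-2k-2}$ required by the statement.

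The last step is to split the resulting short exact sequence to obtain the direct-sum decomposition. In Case (i), I would construct an explicit superbimodule section of $E^\Lambda_i F^\Lambda_i R^\Lambda(\beta) \twoheadrightarrow \mathrm{coker}(A)$ by lifting $t_i^k$ to $e(\beta,i)x_{n+1}^k e(\beta,i)\in e(\beta,i)R^\Lambda(\beta+\alpha_i)e(\beta,i)$; left $R(\beta)$-linearity modulo the shift $\Pi_i^k q_i^{2k}$ is encoded by the commutation relations \eqref{Eqn: twisting for P}--\eqref{eqn: anticommute P}, and right $R^\Lambda(\beta)$-linearity is direct. In Case (ii), the symmetric construction lifts $\mathbf{T}t_i^k\in\ker(A)$ through $B$ to the element $e(\beta,i)\tau_n\cdots\tau_1\mathbf{T}_1 t_i^k\in L_1$, with bimodule-linearity guaranteed by Remark~\ref{Rmk: the map B} and \eqref{Eqn: twisting for Q}. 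Naturality in $M\in\Mod(R^\Lambda(\beta))$ is automatic since every functor involved is $L\otimes_{R^\Lambda(\beta)}\bullet$ for a fixed superbimodule $L$. The main obstacle is careful bookkeeping of the $\Z\times\Z_2$-gradings and parity twists $\Pi_i,\pi_i$: the skew-commutativity of the $x_a$'s and the anticommutation between $\tau$'s and $x$'s produce many sign corrections (already visible in \eqref{Eq: sigma_a}--\eqref{Eq: Commute g_a} and \eqref{Rmk: superlagebra iso of tensors}), and one must check that the Euclidean-division splitting respects the superbimodule structure after absorbing all signs---this is the super analog of the corresponding step in \cite[\S 5]{KK11}.
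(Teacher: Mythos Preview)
Your approach matches the paper's almost exactly: both apply the snake lemma to diagram \eqref{Dia: Comm at Ker} to obtain the four-term exact sequence
\[
0 \to \ker A \to \Pi_i q_i^{-2} F^\Lambda_i E^\Lambda_i R^\Lambda(\beta) \to E^\Lambda_i F^\Lambda_i R^\Lambda(\beta) \to \operatorname{coker} A \to 0,
\]
and then use Proposition~\ref{Prop: varphi k} (that $\gamma\varphi_k$ is monic in $t_i$ of degree $\langle h_i,\lambda\rangle+k$) together with a Euclidean-division argument to identify $\ker A$ and $\operatorname{coker} A$. The paper then defers the remaining splitting to \cite[Theorem~5.2]{KK11}, while you attempt to make it explicit.

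Your Case~(i) section is correct in substance, though the relevant commutation of $x_{n+1}^k$ with $R(\beta)$ is just the defining relations (R2),\,(R4), not \eqref{Eqn: twisting for P}--\eqref{eqn: anticommute P}, which concern $a^\Lambda(x_1)\tau_1\cdots\tau_n$. In Case~(ii), however, there is a genuine gap: the ``lift through $B$'' you describe, sending $\mathbf{T}t_i^k$ to $e(\beta,i)\tau_n\cdots\tau_1\mathbf{T}_1 t_i^k\in L_1$, yields (after applying $P$ and identifying with $L'_0$ via $F$) precisely the snake-lemma connecting homomorphism $\ker A\hookrightarrow \Pi_i q_i^{-2}F^\Lambda_i E^\Lambda_i R^\Lambda(\beta)$. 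This is the \emph{inclusion} in the short exact sequence you are trying to split, not a retraction of it. What is needed is a superbimodule map in the opposite direction, and the paper's explicit pointer to \eqref{Eq: formulas psi varphi} is the key: the recursion $\varphi_{k+1}=E(\psi_k)+\varphi_k t_i$, together with $\varphi_0=\cdots=\varphi_{-a-1}=0$ and $\gamma\varphi_{-a}$ an invertible constant, is exactly what \cite{KK11} exploits (via the elements $\psi_k$ and the counit-type map $E$ of \eqref{Eq: Def F}) to build the splitting. Your plan invokes neither $\psi_k$ nor $E$, and without them the direct-sum decomposition in Case~(ii) is not established.
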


\begin{proof}
Due to Proposition \ref{Prop: varphi k} and  \eqref{Eq: formulas psi
varphi}, we can apply the arguments in \cite[Theorem 5.2]{KK11} with
a slight modification. Hence we will give only a sketch of proof.

{}From the Snake Lemma, we get an exact sequences of
$R^{\Lambda}(\beta)$-superbimodules:
$$0 \to {\rm Ker } A \to q_i^{-2}F^\Lambda_i \Pi_i E^\Lambda_iR^{\Lambda}(\beta) \to
 E^\Lambda_iF^\Lambda_iR^{\Lambda}(\beta) \to {\rm Coker } A \to 0.$$
 If $a\seteq\langle h_i,\lambda \rangle \ge 0$, then Proposition \ref{Prop: varphi k} yields
 $${\rm Ker A} =0, \quad \bigoplus_{k=0}^{a-1} \k t_i^k \ot R^{\Lambda}(\beta) \simeq
{\rm Coker} A $$ and our first assertion follows.

If $a\seteq\langle h_i,\lambda \rangle < 0$, then Proposition
\ref{Prop: varphi k} implies ${\rm Coker A} =0$. By \eqref{Eq:
formulas psi varphi}, we can prove that there is an isomorphism
$${\rm Ker } A \simeq \bigoplus_{k=0}^{-a-1} \k t_i^k \ot R^{\Lambda}(\beta),$$
which completes the proof. 
\end{proof}
`

\section{Supercategorification}

In this section, applying the results obtained in the previous sections,
we will show that the quiver Hecke superalgebra $R(\beta)$ and its cyclotomic quotient $R^\Lambda(\beta)$
($\beta \in \mathtt{Q}^+$) give supercategorifications of $U^-_\A(\g)$ and $V_\A(\Lambda)$, respectively.

{}From now on, we assume \eqref{cond:k0}; i.e.,  $\k_0$ is a field
and the $\k_i$'s are finite-dimensional over $\k_0$. Set
$$\Proj(R^\Lambda)= \bigoplus_{\beta \in \mathtt{Q}^+}\Proj(R^\Lambda(\beta)) \quad \text{ and }
\quad \Rep(R^\Lambda)= \bigoplus_{\beta \in \mathtt{Q}^+}\Rep(R^\Lambda(\beta)). $$

Recall the anti-involution $\psi\cl R^\Lambda(\beta) \to R^\Lambda(\beta)$  given
by \eqref{def:psi}.
For $N \in \Mod(R^\Lambda(\beta))$, let $N^{\psi}$ be the right $R^\Lambda(\beta)$-module
obtained from $N$ by the anti-involution $\psi$ of $R^{\Lambda}(\beta)$.
By \eqref{property of R(beta)-mod} and
Theorem \ref{Thm: Pi-invariant}, we have the pairing
$$ [\Proj(R^\Lambda)] \times [\Rep(R^\Lambda)] \to \A$$
given by
\begin{equation} \label{eqn: A-dual}
([P],[M]) \mapsto  \sum_{n \in \Z} q^n\dim_{\k_0}(P^\psi \ot_{R^{\Lambda}} M)_n.
\end{equation}
 Lemma~\ref{lem:abs_irr} implies
\begin{lemma}\label{lem:duality}
The Grothendieck groups
$[\Proj(R^\Lambda)]$ and $[\Rep(R^\Lambda)]$ are $\A$-dual to each
other by this pairing.
\end{lemma}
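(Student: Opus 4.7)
The plan is to exhibit mutually $\A$-dual bases of $[\Proj(R^\Lambda)]$ and $[\Rep(R^\Lambda)]$. Both Grothendieck groups decompose as direct sums over $\beta \in \mathtt{Q}^+$, and since $e(\beta)e(\beta')=0$ for $\beta \neq \beta'$ the pairing \eqref{eqn: A-dual} is block-diagonal in $\beta$; so I fix $\beta$ and work inside $R^\Lambda(\beta)$. On the $\Rep$ side, Theorem~\ref{Thm: choice of strong perfect basis} (whose proof transfers verbatim from $R(\beta)$ to its cyclotomic quotient, since $\psi$ descends to $R^\Lambda(\beta)$ and every simple $R^\Lambda(\beta)$-module is simple as an $R(\beta)$-module) supplies the $\A$-basis $\set{[M]}{M \in \Ir_0 R^\Lambda(\beta)}$ of self-dual irreducibles. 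On the $\Proj$ side, \eqref{property of R(beta)-mod} attaches to each such $M$ a graded indecomposable projective cover $P_M$, uniquely pinned down by requiring $P_M \twoheadrightarrow M$ to be of degree $0$; the Krull--Schmidt property then implies that $\set{[P_M]}{M \in \Ir_0 R^\Lambda(\beta)}$ is an $\A$-basis of $[\Proj(R^\Lambda(\beta))]$.

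Next I evaluate the pairing on these bases. The tensor--Hom adjunction together with the definition of the $\psi$-twisted left-module dual $M^*$ gives a natural graded $\k_0$-linear isomorphism
\begin{equation*}
\Hom_{\k_0}\bigl(P^\psi \otimes_{R^\Lambda(\beta)} M,\ \k_0\bigr) \ \simeq\ \Hom_{R^\Lambda(\beta)}\bigl(P,\ M^*\bigr),
\end{equation*}
natural in $P\in\Proj(R^\Lambda(\beta))$ and $M\in\Rep(R^\Lambda(\beta))$. For self-dual $M$ one has $M\simeq M^*$ as graded modules, and this rewrites the pairing as
\begin{equation*}
([P],[M]) \ =\ \sum_{n \in \Z} q^{-n}\,\dim_{\k_0}\Hom_{R^\Lambda(\beta)}(P,M)_n.
\end{equation*}
Setting $P = P_L$, the absolute irreducibility Lemma~\ref{lem:abs_irr} gives ${\rm End}_{R^\Lambda(\beta)}(L) \simeq \k_0$, so a non-zero map $P_L \to M$ with $M$ simple forces $M \simeq L$ as ungraded modules. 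Since a finite-dimensional graded simple cannot be isomorphic to any non-trivial grade-shift of itself, the self-dual normalizations of $L$ and $M$ then force $M = L$ as graded modules, and the corresponding $\Hom$-space is a single copy of $\k_0$ concentrated in degree $0$ by the chosen normalization of $P_L$.

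Assembling these two ingredients yields $([P_L],[M]) = \delta_{L,M}$ on basis elements, so the pairing matrix is the identity over $\A$, giving the desired $\A$-duality. The one genuinely technical step is the graded tensor--Hom identity above: one must track how the anti-involution $\psi$ interacts with the parity involution $\phi$ of \S\ref{sec:superalgebra} in the definitions of $P^\psi$ (as a right $R^\Lambda(\beta)$-module) and of $M^*$ (as a left $R^\Lambda(\beta)$-module via $\psi$), and check that the super-signs arising from the two sides cancel in each $\Z$-degree. Everything else is a standard Krull--Schmidt/orthogonality argument for graded finite-dimensional algebras.
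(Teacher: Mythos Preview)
Your proposal is correct and is essentially the standard unpacking of what the paper leaves implicit: the paper's entire proof is the phrase ``Lemma~\ref{lem:abs_irr} implies'', and your argument spells out how absolute irreducibility of simples, together with Krull--Schmidt and the basis of self-dual irreducibles, yields the orthonormality $([P_L],[M])=\delta_{L,M}$. The approaches coincide; you have simply supplied the details the paper omits.
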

{}From Theorem \ref{Thm: Exact}, we can define endomorphisms
$\mathsf{E}_i$ and $\mathsf{F}_i$, induced by $E^\Lambda_i$ and
$F_i^\Lambda$, on the Grothendieck groups $[\Proj(R^\Lambda)]$ and
$[\Rep(R^\Lambda)]$ as follows: \eqn &&
\xymatrix@C=13ex{[\Proj(R^\Lambda(\beta))]\ar@<.8ex>[r]^-{\mathsf{F}_i\seteq[F_{i}^{\Lambda}]}
&[\Proj(R^\Lambda(\beta+\alpha_i))]
\ar@<.8ex>[l]^-{\mathsf{E}_i\seteq[q_i^{1-\langle
h_i,\Lambda-\beta\rangle} E_{i}^{\Lambda}]}
},\\
&&\xymatrix@C=13ex{[\Rep(R^\Lambda(\beta))]
\ar@<.8ex>[r]^-{\mathsf{F}_i\seteq[q_i^{1-\langle
h_i,\Lambda-\beta\rangle} F_{i}^{\Lambda}]}
&[\Rep(R^\Lambda(\beta+\alpha_i))]\ar@<.8ex>[l]^-{\mathsf{E}_i\seteq[E_{i}^{\Lambda}]}.
} \eneqn

Then, from the isomorphisms $\eqref{Thm: Comm E_i Lam F_j Lam}$,
$\eqref{Eq: Comm E_i Lam F_i Lam 1}$, $\eqref{Eq: Comm E_i Lam F_i
Lam 2}$ and Theorem \ref{Thm: Pi-invariant}, we obtain the following
identities in $[\Proj(R^\Lambda(\beta))]$ and $[\Rep(R^\Lambda)(\beta)]$:

\begin{equation} \label{Eq: The com rel}
\begin{aligned}
& \mathsf{E}_i \mathsf{F}_j = \mathsf{F}_j \mathsf{E}_i \quad \text{ if } i \neq j, \\
& \mathsf{E}_i \mathsf{F}_i = \mathsf{F}_i \mathsf{E}_i  +
\dfrac{q_i^{\langle h_i,\Lambda-\beta \rangle}-q_i^{-\langle h_i,\Lambda-\beta \rangle}}{q_i-q^{-1}_i}
\quad\text{if $\langle h_i,\Lambda-\beta \rangle\ge0$,}\\
&\mathsf{E}_i \mathsf{F}_i +
\dfrac{q_i^{-\langle h_i,\Lambda-\beta \rangle}-q_i^{\langle
h_i,\Lambda-\beta \rangle}}{q_i-q^{-1}_i} =
\mathsf{F}_i \mathsf{E}_i \quad\text{ if $\langle h_i,\Lambda-\beta \rangle\le0$}.
\end{aligned}
\end{equation}

Let $\mathsf{K}_i$ be an endomorphism on $[\Proj(R^\Lambda(\beta))]$ and $[\Rep(R^\Lambda(\beta))]$ given by
$$ \mathsf{K}_i|_{[\Proj(R^{\Lambda}(\beta))]} \seteq q_i^{\langle h_i,\Lambda-\beta \rangle}, \quad
\mathsf{K}_i|_{[\Rep(R^{\Lambda}(\beta))]}\seteq q_i^{\langle h_i,\Lambda-\beta \rangle}.$$
Then \eqref{Eq: The com rel} can be rewritten as the commutation relation (Q3)
in Definition~\ref{Def: KM}:
\begin{equation} \label{eqn: Comm e_i f_i}
 [\mathsf{E}_i,\mathsf{F}_j] =\delta_{i,j}\dfrac{\mathsf{K}_i-\mathsf{K}^{-1}_i}{q_i-q^{-1}_i}.
\end{equation}

Define the  superfunctors ${E^\Lambda_i}^{(n)}$ and
${F^\Lambda_i}^{(n)}$ , \eqn &&
\xymatrix@C=13ex{\Mod(R^\Lambda(\beta))\ar@<.8ex>[r]^-{{F^\Lambda_i}^{(n)}}
&\Mod(R^\Lambda(\beta+n\alpha_i))
\ar@<.8ex>[l]^-{{E^\Lambda_i}^{(n)}} } \eneqn  by
\begin{align*}
& {E^\Lambda_i}^{(n)}\cl N\longmapsto (R^\Lambda(\beta)\ot P(i^{n}))^\psi
\ot_{R^\Lambda(\beta)\ot R(n\alpha_i)}e(\beta,i^n)N, \\
& {F^\Lambda_i}^{(n)}\cl M \longmapsto R^\Lambda(\beta+n\alpha_i) e(\beta,i^n)
\ot_{R^\Lambda(\beta)\ot R(n\alpha_i)}(M \bt P(i^{n}))
\end{align*}
for $M \in \Mod(R^\Lambda(\beta))$ and $N \in \Mod(R^\Lambda(\beta+n\alpha_i))$.
Then they induce the endomorphism $\mathsf{E}^n_i/[n]_i!$ and $\mathsf{F}^n_i/[n]_i!$
by \eqref{eq:divided}.

Note that
\eq
&&
\parbox{70ex}{
\be[(i)]
\item the action of $\mathsf{E}_i$ on $[\Proj(R^\Lambda)]$ and $[\Rep(R^\Lambda)]$ is locally nilpotent,
\item  if the module $[M]$ in $[\Rep(R^\Lambda(\beta))]$ satisfies
$\mathsf{E}_i[M]=0$
for all $i \in I$, then $\beta=0$.\label{prop:ht}
\ee
}\label{prop:gro}
\eneq
By Corollary \ref{Cor: integrability}, we see that
\begin{center}
the action $\mathsf{F}_i$ on $[\Proj(R^\Lambda)]$ and $[\Rep(R^\Lambda)]$ are locally nilpotent.
\end{center}
Therefore, by \eqref{eqn: Comm e_i f_i} and
\cite[Proposition B.1]{KMPY96}, the endomorphisms $\mathsf{F}_i$ and $\mathsf{E}_i$
satisfy the quantum Serre relation (Q4) in Definition \ref{Def: KM}.
Hence $[\Proj R^\Lambda]$ and $[\Rep R^\Lambda]$ are endowed with a $U_\A(\g)$-module structure.

Note that $[\Proj(R)]\seteq \bigoplus_{\beta \in
\mathtt{Q}^+}[\Proj(R(\beta))]$ and $[\Rep(R)]\seteq
\bigoplus_{\beta \in \mathtt{Q}^+}[\Rep(R(\beta))]$ are also
$\A$-dual to each other. The exact functors $E_i\cl
\Rep(R(\beta+\alpha_i)) \to \Rep(R(\beta))$ and
 $F_i'\cl \Rep(R(\beta)) \to \Rep(R(\beta+\alpha_i))$ defined in \eqref{eqn: ceystal operators}
 induce endomorphisms $\mathsf{E}_i'$ and
$\mathsf{F}_i'$ on $[\Rep(R)]$, respectively.
Hence, \eqref{eqn: Bqg structure} implies the following commutation relation in $[\Rep(R)]$:
\begin{align} \label{eqn: Bqg identity}
\mathsf{E}_i'\mathsf{F}_j' = q^{-(\alpha_i|\alpha_j)}\mathsf{F}_j'\mathsf{E}_i'+ \delta_{i,j}.
\end{align}

Similarly, we define \eqn &&
\xymatrix@C=13ex{\Proj(R(\beta))\ar@<.8ex>[r]^-{F_i}
&\Proj(R(\beta+\alpha_i)) \ar@<.8ex>[l]^-{E'_i}
},\\
\eneqn
by
\begin{align*}
 F_iP \seteq R(\beta+\alpha_i)e(\beta,i) \ot_{R(\beta)} P \quad \text{ and } \quad
 E_i'Q \seteq \dfrac{e(\beta,i)R(\beta+\alpha_i)}{e(\beta,i)x_{n+1}R(\beta+\alpha_i)} \ot_{R(\beta+\alpha_i)} Q,
\end{align*}
where $|\beta|=n$. Then they are well-defined on $\Proj(R)$, and we
obtain an exact sequence \eqn &&\xymatrix{ 0\ar[r]
&\delta_{i,j}\;\id\ar[r]& E_i'F_j\ar[r]&
\Pi^{\pa(i)\pa(j)}q^{-(\alpha_i|\alpha_j)}F_jE_i'\ar[r]&0. } \eneqn
Thus the exact functors induce the endomorphisms $\mathsf{E}_i'$ and
$\mathsf{F}'_i$ on $[\Proj(R)]$ and satisfy the same equation in
\eqref{eqn: Bqg identity}. (See \cite[Lemma 5.1]{KOP11}, for more
details.)

 Let $\Ir_0(R^\Lambda(\beta))$ be the set of isomorphism classes of
self-dual irreducible $R^\Lambda(\beta)$-modules,
and $\Ir_0(R^\Lambda)\seteq\bigsqcup_{\beta\in\mathtt{Q}^+}\Ir_0(R^\Lambda(\beta))$.
Then $\set{[S]}{S\in\Ir_0(R^\Lambda)}$ is a strong perfect basis
of $[\Rep(R^\Lambda)]$ by Theorem \ref{Thm: choice of strong perfect basis}.
By Proposition \ref{Prop: dual-integral form},
(\ref{prop:gro} \ref{prop:ht}) and Lemma~\ref{lem:duality}, we  conclude:
\begin{theorem}\label{th:main1}
 For $\Lambda \in \mathtt{P}^+$, we have
\begin{align}
V_\A(\Lambda)^\vee \simeq [\Rep(R^\Lambda)] \quad \text{ and } \quad
V_\A(\Lambda) \simeq [\Proj(R^\Lambda)]
\end{align}
as $U_{\A}(\g)$-modules.
\end{theorem}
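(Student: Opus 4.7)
The plan is to apply Proposition~\ref{Prop: dual-integral form} with $M = \Q(q) \otimes_\A [\Rep(R^\Lambda)]$, $M_\A = [\Rep(R^\Lambda)]$, and $B = \set{[S]}{S \in \Ir_0(R^\Lambda)}$, and then to deduce the statement about $[\Proj(R^\Lambda)]$ by duality. The groundwork is already in place: the $U_\A(\g)$-module structure on both Grothendieck groups has been established via \eqref{Eq: The com rel}, \eqref{eqn: Comm e_i f_i}, local nilpotency of $\mathsf{E}_i$ and $\mathsf{F}_i$, and the quantum Serre relations obtained from \cite[Proposition B.1]{KMPY96}; and $\set{[S]}{S\in\Ir_0(R^\Lambda)}$ is a strong perfect basis of $[\Rep(R^\Lambda)]$ by Theorem~\ref{Thm: choice of strong perfect basis}.

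First, I would verify the hypotheses of Proposition~\ref{Prop: dual-integral form}. The weight decomposition $[\Rep(R^\Lambda)] = \soplus_{\beta\in \mathtt{Q}^+}[\Rep(R^\Lambda(\beta))]$ shows $\wt(M)\subset \Lambda - \mathtt{Q}^+$, and the weight spaces are of finite rank because there are only finitely many self-dual irreducibles in each $\Rep(R^\Lambda(\beta))$ up to grading shift (by \eqref{property of R(beta)-mod}). The divided-power stability $e_i^{(n)}M_\A \subset M_\A$ is immediate from the fact that the superfunctors ${E_i^\Lambda}^{(n)}$ are well-defined exact functors on $\Rep(R^\Lambda)$ (Theorem~\ref{Thm: Exact} together with the construction via $P(i^n)$). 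The condition $(M_\A)_\Lambda = \A v_\Lambda$ holds because $R^\Lambda(0) \simeq \k_0$ is simple, so $\Rep(R^\Lambda(0))$ has a unique self-dual irreducible object $\mathbf 1$, which I identify with $v_\Lambda$.

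The crucial hypothesis $B^H = \{v_\Lambda\}$ follows from property~(\ref{prop:gro}\ref{prop:ht}): if $S \in \Ir_0(R^\Lambda(\beta))$ satisfies $\mathsf{E}_i[S] = 0$ for all $i \in I$, then $\beta = 0$, hence $S \simeq \mathbf{1}$. Since $M$ has a unique highest weight vector (of weight $\Lambda$), it is irreducible, hence isomorphic to $V(\Lambda)$; Proposition~\ref{Prop: dual-integral form} then yields $M_\A = [\Rep(R^\Lambda)] \simeq V_\A(\Lambda)^\vee$ as $U_\A(\g)$-modules, and additionally tells us that $\{[S]\}$ is an $\A$-basis of $[\Rep(R^\Lambda)]$.

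For the second isomorphism, I apply Lemma~\ref{lem:duality}: the pairing \eqref{eqn: A-dual} identifies $[\Proj(R^\Lambda)]$ with $\Hom_\A([\Rep(R^\Lambda)], \A)$ as an $\A$-module. The adjunction $(F_i^\Lambda, E_i^\Lambda)$ (together with its grading-shifted version and compatibility with the duality functor on $\Rep$) ensures that this identification intertwines the $U_\A(\g)$-actions appropriately, sending $\mathsf E_i$ to the transpose of $\mathsf F_i$ and vice versa. Since by definition $V_\A(\Lambda)^\vee$ is $\A$-dual to $V_\A(\Lambda)$ under the bilinear form on $V(\Lambda)$ (and this duality likewise swaps $e_i$ and $f_i$), the isomorphism $[\Rep(R^\Lambda)] \simeq V_\A(\Lambda)^\vee$ dualizes to $[\Proj(R^\Lambda)] \simeq V_\A(\Lambda)$. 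The main subtlety I expect is the bookkeeping of grading shifts and parity factors in matching the adjunction-induced pairing with the canonical bilinear form on $V(\Lambda)$; this is exactly where the normalizations $\mathsf{E}_i = [q_i^{1-\langle h_i,\Lambda-\beta\rangle}E_i^\Lambda]$ on $[\Rep(R^\Lambda)]$ and $\mathsf{E}_i = [E_i^\Lambda]$ on $[\Proj(R^\Lambda)]$ were set up to make the duality clean.
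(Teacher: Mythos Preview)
Your proposal is correct and follows essentially the same route as the paper: the paper's proof consists of precisely the three ingredients you identify --- the strong perfect basis from Theorem~\ref{Thm: choice of strong perfect basis}, the application of Proposition~\ref{Prop: dual-integral form} using the highest-weight property (\ref{prop:gro}\,\ref{prop:ht}), and the passage to $[\Proj(R^\Lambda)]$ via the duality of Lemma~\ref{lem:duality}. Your write-up simply makes explicit the hypothesis-checking that the paper leaves to the reader.
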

The fully faithful exact functor
$\Rep(R^\Lambda(\beta))\to\Rep(R(\beta))$ induces an
$\A$-linear homomorphism $[\Rep(R^\Lambda)]\to
{[\Rep(R)]}$. Hence  $[\Rep(R^\Lambda)]\to {[\Rep(R)]}$ is
injective and its cokernel is a free $\A$-module. By the
duality, the homomorphism $[\Proj(R)]\to{[\Proj(R^\Lambda)]}$ is surjective.

Denote by $B_\A^{\mathrm{low}}(\g)$ (resp.\
$B_\A^{\mathrm{up}}(\g)$) the $\A$-subalgebra of $B_q(\g)$ generated
by $e_i'$ and $f_i^{(n)}$ (resp.\ by ${e'_i}^n/[n]_i!$ and $f_i$)
for all $i \in I$ and $n \in \Z_{>0}$.

As a $U^-_\A(\g)$-module, $U^-_\A(\g)$ is  the  projective limit of
$V_\A(\Lambda)$.
Hence,  Theorem~\ref{th:main1} implies the following corollary:

\begin{corollary}\label{cor:main2}
There exist isomorphisms:
$$
\begin{aligned}
& U^-_\A(\g)^\vee \simeq [\Rep(R)] \quad \text{as a
$B_\A^{\mathrm{up}}(\g)$-module}, \\
& U^-_\A(\g) \simeq [\Proj(R)] \quad \ \text{as a
$B_\A^{\mathrm{low}}(\g)$-module}.
\end{aligned}
$$
\end{corollary}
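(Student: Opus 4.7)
The plan is to derive Corollary~\ref{cor:main2} from Theorem~\ref{th:main1} by passing to appropriate limits over $\Lambda\in \mathtt{P}^+$. The excerpt already supplies the crucial ingredient: $U^-_\A(\g)=\varprojlim_\Lambda V_\A(\Lambda)$ as a $U^-_\A(\g)$-module. More precisely, weight-by-weight the canonical surjection $V_\A(\Lambda')\twoheadrightarrow V_\A(\Lambda)$ (for $\Lambda'-\Lambda\in\mathtt{P}^+$) restricts at weight $\Lambda-\beta$ to a map which is an isomorphism as soon as $\langle h_i,\Lambda\rangle$ exceeds the $\alpha_i$-coefficient of $\beta$ for every $i$. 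So I will prove the companion statement on the categorification side and match structures.

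First I would establish the limit identifications for the Grothendieck groups. On one side, the paragraph after Theorem~\ref{th:main1} already gives that $[\Rep(R^\Lambda(\beta))]\hookrightarrow[\Rep(R(\beta))]$ is injective with free $\A$-cokernel. The key input going the other way is integrability: any simple $M$ in $\Rep(R(\beta))$ is finite-dimensional over $\k_0$, so $x_1$ acts on $M$ with finitely many eigenvalues; choosing $a^\Lambda_i(u)$ with all these eigenvalues among its roots makes $M$ descend to $R^\Lambda(\beta)$. Combined with Theorem~\ref{Thm: choice of strong perfect basis}, this yields
\begin{equation*}
[\Rep(R(\beta))]=\varinjlim_{\Lambda}\,[\Rep(R^\Lambda(\beta))].
\end{equation*}
Applying the $\A$-duality of Lemma~\ref{lem:duality} weight by weight produces the dual statement $[\Proj(R(\beta))]\simeq \varprojlim_{\Lambda}[\Proj(R^\Lambda(\beta))]$.

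Next I would verify that the isomorphisms of Theorem~\ref{th:main1} are functorial in $\Lambda$: the surjection $V_\A(\Lambda')\twoheadrightarrow V_\A(\Lambda)$ corresponds to the natural quotient $[\Proj(R^{\Lambda'})]\twoheadrightarrow[\Proj(R^\Lambda)]$ induced by $R^{\Lambda'}(\beta)\twoheadrightarrow R^\Lambda(\beta)$, and dually the inclusion $V_\A(\Lambda)^\vee\hookrightarrow V_\A(\Lambda')^\vee$ matches the inflation $[\Rep(R^\Lambda)]\hookrightarrow[\Rep(R^{\Lambda'})]$. This compatibility is pinned down by the uniqueness supplied by Corollary~\ref{Cor: strong perfect}: both systems of isomorphisms send the class of the trivial module at $\beta=0$ to $v_\Lambda^\vee$ (resp.\ $v_\Lambda$) and commute with the operators $\te_i$, so they must agree under the transition maps. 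Passing to the limit yields the $\A$-linear isomorphisms $U^-_\A(\g)^\vee\simeq [\Rep(R)]$ and $U^-_\A(\g)\simeq[\Proj(R)]$.

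It remains to transport the $B^{\text{up}}_\A(\g)$- and $B^{\text{low}}_\A(\g)$-module structures. The functors $F_i$ and $E'_i$ defined on $\Proj(R)$ commute with the quotient $R(\beta)\twoheadrightarrow R^\Lambda(\beta)$ in the obvious way, and induce $\mathsf{F}_i$, $\mathsf{E}'_i$ on $[\Proj(R)]$ satisfying the boson relation \eqref{eqn: Bqg identity}; Corollary~\ref{Cor: integrability} provides the needed local nilpotency of $\mathsf{F}_i$. To identify these with the algebraic $f_i^{(n)}$ and $e_i'$ on $U^-_\A(\g)$, I would compare with the situation on each $V_\A(\Lambda)$: the $\mathsf{F}_i$-action is stable under the transition maps, while the normalizing shift $q_i^{1-\langle h_i,\Lambda-\beta\rangle}$ built into the definition of $\mathsf{E}_i$ on $[\Proj(R^\Lambda)]$ is precisely what converts the action of $e_i$ (given on $V_\A(\Lambda)$ through the commutation relation $e_i P v_\Lambda=\tfrac{K_i e''_i(P)-K_i^{-1}e'_i(P)}{q_i-q_i^{-1}}v_\Lambda$) into an operator with a well-defined $\Lambda\to\infty$ limit that reproduces $e_i'$. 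The parallel argument on $[\Rep(R)]$ gives the $B^{\text{up}}$-structure. The main obstacle will be precisely this last bookkeeping: tracking the grading shifts and $K_i$-eigenvalues carefully enough to confirm that the shifted $\mathsf{E}_i$'s on $[\Proj(R^\Lambda)]$ converge to $\mathsf{E}'_i$ on $[\Proj(R)]$ (rather than $\mathsf{E}''_i$ or a rescaling), so that the limit is genuinely a $B^{\text{low}}_\A(\g)$-module isomorphism and not merely an $\A$-module isomorphism.
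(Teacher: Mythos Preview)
Your approach via the projective limit is exactly what the paper does; its entire argument is the single sentence immediately preceding the corollary, and you have simply filled in the details it omits.

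One step, however, is misstated and would not work as written. Your reason that a simple $M\in\Rep(R(\beta))$ descends to some $R^\Lambda(\beta)$---that $x_1$ has finitely many eigenvalues and one can choose $a^\Lambda_i$ with those as roots---is the ungraded Hecke-algebra picture and does not apply here. In the present setting $x_1e(\nu)$ has strictly positive $\Z$-degree $(\alpha_{\nu_1}\mid\alpha_{\nu_1})$, so on a finite-dimensional graded $M$ there are no nonzero eigenvalues; moreover the coefficients $c_{i;k}$ live in prescribed graded pieces $\k_{k(\alpha_i\mid\alpha_i)}$, so you cannot freely prescribe roots. The correct argument is that $a^\Lambda(x_1)e(\nu)$ is homogeneous of degree $\langle h_{\nu_1},\Lambda\rangle(\alpha_{\nu_1}\mid\alpha_{\nu_1})$, hence annihilates the bounded graded module $M$ once every $\langle h_i,\Lambda\rangle$ is large enough. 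Relatedly, for the transition maps $R^{\Lambda'}(\beta)\twoheadrightarrow R^\Lambda(\beta)$ you invoke to exist, you must fix a compatible family of cyclotomic polynomials (e.g.\ $a^\Lambda_i(u)=u^{\langle h_i,\Lambda\rangle}$) so that $a^\Lambda_i\mid a^{\Lambda'}_i$ whenever $\Lambda'-\Lambda\in\mathtt{P}^+$; this is implicit in the paper as well but should be said.
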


\bibliographystyle{amsplain}

\end{document}